\newcommand{\norm}[1]{\left\lVert#1\right\rVert}
\newcommand{\chibar}{\bar{\chi}_A}
\newcommand\sbullet[1][.5]{\mathbin{\vcenter{\hbox{\scalebox{#1}{$\bullet$}}}}}
\newcommand{\restatetext}{Restatement}
\newcommand{\prooftext}{Proof on p.~}
\renewcommand{\b}[1]{{#1}}
\newcommand{\mylabel}[2]{#2\def\@currentlabel{#2}\label{#1}}
\renewcommand{\ker}{\operatorname{Ker}}
\newcommand{\diag}{\operatorname{Diag}}
\newcommand{\Le}{\kappa}
\newcounter{clmcounter}
\newtheorem{theorem}{Theorem}[section]
\newtheorem{definition}[theorem]{Definition}
\newtheorem{lemma}[theorem]{Lemma}
\newtheorem{claim}[theorem]{Claim}
\newtheorem{restatementclaim}[clmcounter]{Claim}
\newtheorem{proposition}[theorem]{Proposition}
\newtheorem{remark}[theorem]{Remark}
\newtheorem{corollary}[theorem]{Corollary}
\newtheorem{example}[theorem]{Example}
\newenvironment{claimproof}[1][\proofname]
{
    \proof[Proof of #1]
}
{
    \endproof

}
\newtheoremstyle{case}{}{}{\itshape}{1em}{}{:}{ }{}
\theoremstyle{case}
\definecolor{darkred}{RGB}{180, 0, 0}
\definecolor{darkgreen}{rgb}{0, 0.6, 0}
\definecolor{darkblue}{RGB}{51,51,178}
\definecolor{lightgray}{RGB}{231,231,231}
\definecolor{lightblue}{RGB}{180,180,254}
\definecolor{lightred}{HTML}{FEB4B4}
\definecolor{darkcyan}{HTML}{7FBFBF}
\DeclareMathOperator*{\argmax}{arg\,max}
\DeclareMathOperator*{\argmin}{arg\,min}
\renewcommand{\phi}{\varphi}
\renewcommand{\rho}{\varrho}
\newcommand{\T}{\top}
\newcommand{\R}{\mathbb{R}}
\newcommand{\eps}{\varepsilon}
\newcommand{\supp}{\mathrm{supp}}
\newcommand{\set}[1]{\left\{ #1 \right\}}
\newcommand{\pr}[2]{\langle #1, #2 \rangle}
\newcommand{\potentialshorthand}{\Gamma}
\author[1]{Daniel Dadush}
\author[2]{Sophie Huiberts}
\author[3]{Bento Natura}
\author[4]{L{\'{a}}szl{\'{o}} A. V{\'{e}}gh}
\affil[1]{Centrum Wiskunde \& Informatica, The Netherlands}
\affil[2]{Columbia University, USA \thanks{This work was done while SH was at Centrum Wiskunde \& Informatica, The Netherlands.}}
\affil[3]{Brown University, USA and Georgia Tech, USA \thanks{This work was done while BN was at the London School of Economics and Political Science, United Kingdom.}}
\affil[4]{Department of Mathematics, London School of
  Economics and Political Science, United Kingdom}
\newcommand{\ubar}[1]{\underaccent{\bar}{#1}}
\newcommand{\Rx}{\mathit{Rx}}
\newcommand{\Rs}{\mathit{Rs}}
\newcommand{\barx}{\bar{x}}	
\newcommand{\ubarx}{\ubar{x}}
\newcommand{\bars}{\bar{s}}
\newcommand{\ubars}{\ubar{s}}
\newcommand{\rk}{\mathrm{rk}}
\newcommand{\lal}{\mathrm{ll}}
\newcommand{\as}{\mathrm{a}}
\newcommand{\cs}{\mathrm{c}}
\renewcommand{\epsilon}{\varepsilon}
\newcommand{\circuits}{\mathcal{C}}
\newcommand{\llspartition}[1]{\bm{#1}}
\title{A scaling-invariant algorithm for linear programming whose
  running time depends only on the constraint matrix\thanks{
This project has received funding from the European Research Council (ERC) under the European Union's Horizon 2020 research and innovation programme: DD and SH from grant agreement no.  805241-QIP,  BN and LAV from grant agreements no. 757481-ScaleOpt.  A preliminary version of this paper has appeared in the proceedings of the 52nd Annual ACM Symposium on Theory of Computing (STOC) \cite{DHNV20}.}}
\date{}
\let\original@algocf@latexcaption\algocf@latexcaption
\long\def\algocf@latexcaption#1[#2]{\@ifundefined{NR@gettitle}{\def\@currentlabelname{#2}}{\NR@gettitle{#2}}\original@algocf@latexcaption{#1}[{#2}]}
\begin{document}
\maketitle

\begin{abstract}
Following the breakthrough work of Tardos (Oper. Res. '86) in the bit-complexity
model, Vavasis and Ye (Math. Prog. '96) gave the first exact algorithm for
linear programming in the real model of computation with running time depending
only on the constraint matrix. For solving a linear program (LP)
$\max\, c^\T x,\:  Ax = b,\:  x \geq 0,\:  A \in \R^{m \times n}$, Vavasis and Ye
developed a primal-dual interior point method using a \emph{`layered least
squares'} (LLS) step, and showed that $O(n^{3.5} \log
(\bar{\chi}_A+n))$ iterations suffice to solve (LP) exactly, where $\bar{\chi}_A$ is a condition measure
controlling the size of solutions to linear systems related to $A$. 

Monteiro and Tsuchiya (SIAM J. Optim. '03), noting that the central path is
invariant under rescalings of the columns of $A$ and $c$, asked whether there
exists an LP algorithm depending instead on the measure $\bar{\chi}^*_A$,
defined as the minimum $\bar{\chi}_{AD}$ value achievable by a column rescaling $AD$ of
$A$, and gave strong evidence that this should be the case. We resolve
this open question affirmatively. 

Our first main contribution is an $O(m^2 n^2 + n^3)$ time algorithm which works
on the linear matroid of $A$ to compute a nearly optimal diagonal
rescaling $D$ satisfying $\bar{\chi}_{AD} \leq n(\bar{\chi}_A^*)^3$. This
algorithm also allows us to approximate the value of $\bar{\chi}_A$ up to a
factor $n (\bar{\chi}_A^*)^2$. This result is in surprising contrast to that of
Tun{\c{c}}el (Math.  Prog. '99), who showed NP-hardness for approximating
$\bar{\chi}_A$ to within $2^{{\rm poly}({\rm rank}(A))}$. The key insight for
our algorithm is to work with ratios $g_i/g_j$ of circuits of $A$---i.e.,~minimal linear dependencies $Ag=0$---which allow us to approximate the
value of $\bar{\chi}_A^*$ by a maximum geometric mean cycle computation in what we
call the \emph{`circuit ratio digraph'} of $A$.   

While this resolves Monteiro and Tsuchiya's question by appropriate
preprocessing, it falls short of providing either a truly scaling invariant
algorithm or an improvement upon the base LLS analysis. In this vein, as our
second main contribution we develop a \emph{scaling invariant} LLS algorithm,
which uses and dynamically maintains improving estimates of the circuit ratio
digraph, together with a refined potential function based analysis for LLS
algorithms in general. With this analysis, we derive an improved $O(n^{2.5} \log
(n)\log (\bar{\chi}^*_A+n))$ iteration bound for optimally solving (LP) using
our algorithm. The same argument also yields a factor $n/\log n$ improvement on the
iteration complexity bound of the original Vavasis-Ye algorithm.
\end{abstract}

\section{Introduction}
The linear programming (LP) problem in primal-dual form is to solve
\begin{equation}
\label{LP_primal_dual} \tag{LP}
\begin{aligned}
\min \; &c^\top x \quad \\
Ax& =b \\
x &\geq 0, \\
\end{aligned}
\quad\quad\quad
\begin{aligned}
\max \; & y^\top b \\
A^\top y + s &= c \\
s & \geq 0, \\
\end{aligned}
\end{equation}
where $A\in \R^{m\times n}$, ${\rm rank}(A) = m$, $b\in \R^m$, $c\in
\R^n$ are given in the input, and $x,s\in\R^n$, $y\in \R^m$ are the variables.
The program in $x$ will be referred to as the primal problem and the program in $(y,s)$ as the
 dual problem.

Khachiyan~\cite{Khachiyan79} used the ellipsoid method to give the first
polynomial time LP algorithm in the bit-complexity model, that is, polynomial in the
bit description length of $(A,b,c)$. \b{An outstanding open question is the existence of a \emph{strongly polynomial}
algorithm for LP, listed by Smale as one of the most prominent mathematical challenges for the 21st century \cite{Smale98}. Such an algorithm amounts to solving LP using ${\rm poly}(n,m)$
basic arithmetic operations in the real model of computation.\footnote{In the bit-complexity model, a further requirement is that the algorithm must be in PSPACE.}}
Known strongly polynomially
solvable LP problems classes include: feasibility for two variable per inequality
systems~\cite{Megiddo83}, the minimum-cost circulation problem~\cite{Tardos85},
the maximum generalized flow problem~\cite{Vegh17,OV17}, and
discounted Markov decision problems~\cite{Ye2005,Ye2011}.

Towards this goal, the principal line of attack has been to develop LP algorithms whose running time is bounded in terms of natural \emph{condition measures}. Such condition
measures attempt to  measure the ``intrinsic complexity'' of LPs.
 An
important line of work in this area has been to parametrize LPs by the
``niceness'' of their solutions (e.g.~the depth of the most interior point),
where relevant examples include the Goffin measure~\cite{Goffin80} for conic
systems and Renegar's distance to ill-posedness for general
LPs~\cite{Renegar94,Renegar95}, and bounded ratios between the nonzero
entries in basic feasible solutions \cite{Chubanov-s-poly,KitaharaMizuno2013}.

\paragraph{Parametrizing by the constraint matrix}
A second line of
research, and the main focus of this work,
focuses on the complexity of the constraint
matrix $A$.
The first breakthrough in this area was given by Tardos~\cite{Tardos86}, who
showed that if $A$ has integer entries and all square submatrices of $A$ have
determinant at most $\Delta$ in absolute value, then \eqref{LP_primal_dual} can
be solved in \b{poly$(n,m,\log \Delta)$ arithmetic operations, independent of the encoding length of the vectors $b$ and $c$}.
This is achieved by finding the exact solutions to $O(nm)$ rounded LPs derived from the original LP, with the right hand side vector and cost function being integers of absolute
value bounded in terms of $n$ and $\Delta$. From $m$ such rounded problem instances, one can
infer, via proximity results, that  $x_i=0$ must hold
for every optimal solution for some index $i$.  The
process continues by induction until the optimal primal face is identified.

\paragraph{Path-following methods and  the Vavasis--Ye algorithm} In a seminal work, Vavasis and
Ye~\cite{Vavasis1996} introduced a new type of interior-point method
that optimally solves  \eqref{LP_primal_dual} within $O(n^{3.5} \log
(\bar{\chi}_A+n))$ iterations, where the condition number $\bar{\chi}_A$ controls the size of solutions to certain linear systems related
to the kernel of $A$ (see Section~\ref{sec:rescale} for the formal
  definition).

Before detailing the Vavasis--Ye (henceforth VY) algorithm, we recall the basics
of path following interior-point methods.
If both the primal and dual problems in \eqref{LP_primal_dual} are strictly feasible, the {\em central path} for \eqref{LP_primal_dual} is the curve $((x(\mu),y(\mu),s(\mu)): \mu >
0)$ defined by
\begin{equation}\tag{CP}\label{eq:central-path}
\begin{aligned}
x({\mu})_i s({\mu})_i &= \mu, \quad \forall i \in [n]\\ Ax(\mu) &= b, ~x(\mu) > 0, \\
A^\T y(\mu) + s(\mu) &= c, ~s(\mu) > 0,
\end{aligned}
\end{equation}
which converges to complementary optimal primal and dual solutions $(x^*,y^*,s^*)$ as
$\mu \rightarrow 0$,
 recalling that
the duality gap at time $\mu$ is exactly $x(\mu)^\top s(\mu) = n \mu$. We thus
refer to $\mu$ as the normalized \b{duality} gap. Methods
that ``follow the path'' generate iterates that stay in a certain neighborhood
around it while trying to achieve rapid multiplicative progress w.r.t.~to $\mu$,
where given $(x,y,s)$ `close' to the path, we define the normalized duality gap as $\mu(x,y,s) =
\sum_{i=1}^n x_i s_i/n$.
Given a target
parameter $\mu'$ and starting point close to the path at parameter $\mu$,
standard path following methods~\cite{Gonzaga92} can compute a point at
parameter below $\mu'$ in at most $O(\sqrt{n} \log(\mu/\mu'))$ iterations, and
hence the quantity $\log(\mu/\mu')$ can be usefully interpreted as the length of
the corresponding segment of the central path.

\paragraph{Crossover events and layered least squares steps}
At a very high level, Vavasis and Ye show that the central path can be decomposed into at
most $\binom{n}{2}$ short but curved segments,
possibly joined by long (apriori unbounded) but very straight segments. At the
end of each curved segment, they show that a new ordering relation $x_i(\mu) >
x_j(\mu)$---called a \emph{`crossover event'}---is implicitly learned. This
inequality did not hold at the start of the segment, but is guaranteed to
hold at every point from the end of the segment onwards. These
$\binom{n}{2}$  relations give a combinatorial way to measure
progress along the central path. In contrast to Tardos's algorithm,
where the main progress is setting variables to zero explicitly, the variables
participating in  crossover events cannot be identified; the analysis only shows their
existence.

At a technical level, the VY-algorithm is a variant of the
Mizuno--Todd--Ye~\cite{MTY} predictor-corrector method (MTY P-C). In predictor-corrector
methods, corrector steps bring an iterate closer to the path, i.e., improve
centrality, and predictor steps ``shoot down'' the path, i.e., reduce $\mu$
without losing too much centrality. Vavasis and Ye's main algorithmic innovation was the
introduction of a new predictor step, called the \emph{`layered least squares'}
(LLS) step, which crucially allowed them to cross each aforementioned
``straight'' segment of the central path in \emph{a single step}, recalling that
these straight segments may be arbitrarily long. To traverse the short and curved
segments of the path, the standard predictor step, known as \emph{affine
scaling} (AS), in fact suffices.

To compute the LLS direction, the variables are
decomposed into `layers' $J_1\cup J_2\cup\ldots\cup J_p=[n]$. The goal
of such a decomposition is to eventually learn a refinement of the
optimal partition of the variables
$ B^* \cup N^*=[n]$, where $B^* := \{i \in [n]: x^*_i
> 0\}$ and $N^* := \{i \in [n]: s^*_i > 0\}$ for the limit optimal
solution $(x^*,y^*,s^*)$.

 The primal affine scaling direction can be equivalently described by solving a
weighted least squares problem in $\ker(A)$, with respect to a weighting
defined according to the current iterate. The primal LLS direction is obtained
by solving a series of weighted least squares problems, starting with focusing
only on the final layer $J_p$. This solution is gradually extended to the higher
layers (i.e., layers with lower indices). The dual directions have analogous interpretations, with the solutions
on the layers obtained in the opposite direction, starting with $J_1$. If we use
the two-level layering $J_1=B^*$, $J_2=N^*$, and are sufficiently
close to the limit $(x^*,y^*,s^*)$ of the
central path, then the LLS step reaches an exact optimal solution in a
single step. We note that standard AS steps generically never find an exact optimal solution, and thus some form of ``LLS rounding'' \b{in the final iteration} is
always necessary to achieve finite termination \b{with an exact optimal solution.}

Of course, guessing $B^*$ and $N^*$ correctly is just as hard as
solving \eqref{LP_primal_dual}. Still, if we work with a ``good'' layerings,
these will reveal new information about the ``optimal order'' of the
variables, where $B^*$ is placed on higher layers than $N^*$.
 The crossover events correspond to swapping two wrongly ordered variables
into the correct ordering. Namely, a variable  $i\in B^*$ and $j\in
N^*$ are currently ordered on the same layer, or $j$ is in a higher layer than $i$. After the crossover event, $i$ will always be
placed on a  higher layer than $j$.

\paragraph{Computing good layerings and the $\bar{\chi}_A$ condition measure}
Given the above discussion, the obvious question is how to come up with ``good''
layerings?
The philosophy
behind LLS can be stated as saying that if modifying a set of variables $x_I$ barely
affects the variables in $x_{[n] \setminus I}$ (recalling that movement is
constrained to $\Delta x \in \ker(A)$), then one should optimize over $x_I$ without
regard to the effect on $x_{[n] \setminus I}$; hence $x_I$ should be
placed on lower layers.

VY's strategy for computing such layerings was to directly use the size of the
coordinates of the current iterate $x$ (where $(x,y,s)$ is a point
near the central path).
In particular, assuming $x_1\ge x_2\ge \ldots \ge x_n$, the layering
$J_1 \cup J_2\cup
\ldots \cup J_p = [n]$ corresponds to consecutive intervals constructed in
decreasing order of $x_i$ values. The break between $J_i$ and $J_{i+1}$
occurs if the gap $x_r/x_{r+1}
> g$, where $r$ is the rightmost element of $J_i$ and $g > 0$ is a threshold
parameter. Thus, the expectation is that if $x_i> g x_j$, then a
small multiplicative change to $x_j$, subject to moving in $\ker(A)$, should induce a small multiplicative change to $x_i$.
By proximity to the central path, the dual ordering is reversed as mentioned above.

The threshold $g$ for which this was justified in the VY-algorithm is a function of the
$\bar{\chi}_A$ condition measure. We now provide a convenient
definition that immediately
yields this justification (see Proposition~\ref{prop:subspace-chi}).
Letting $W = \ker(A)$ and $\pi_I(W) = \{x_I : x \in W\}$, we define
$\bar{\chi}_A := \bar{\chi}_W$ as the minimum number $M \geq 1$ such that for
any $\emptyset \neq I \subseteq [n]$ and  $z \in \pi_I(W)$, there
exists $y \in W$ with $y_I = z$ and $\|y\| \leq M \|z\|$. Thus,
a change of norm $\epsilon$ in the 
variables in $I$ can be lifted to a change of norm at most
$\bar{\chi}_A\epsilon$ in the variables in $[n]\setminus I$.
Crucially, $\bar{\chi}$ is a ``self-dual'' quantity. That is,
$\bar{\chi}_W = \bar{\chi}_{W^\perp}$, where $W^\perp = {\rm range}(A^\T)$ is the movement subspace for the
dual problem, justifying the reversed layering for the dual (see
Sections~\ref{sec:rescale} for more
details).

\paragraph{The question of scale invariance and  $\bar{\chi}^*_A$}
While the VY layering procedure is powerful, its properties are somewhat mismatched with those of the central path. In
particular, variable ordering information has \emph{no intrinsic meaning}
on the central path, as the path itself is \emph{scaling
  invariant}. Namely, the central path point $(x(\mu),y(\mu),s(\mu))$
w.r.t.\ the problem instance $(A,b,c)$ is
in bijective correspondence with the central path point $(D^{-1} x(\mu), D
y(\mu), D
s(\mu)))$ w.r.t.\ the problem instance $(AD,Dc,b)$
for any positive diagonal matrix $D$.
The standard path following algorithms are
also scaling invariant in this sense.

This lead Monteiro and Tsuchiya~\cite{Monteiro2003} to
ask whether a scaling invariant LLS algorithm exists. They noted that any such
algorithm would then depend on the potentially much smaller parameter
\begin{equation}
\label{chibar_star}
\bar{\chi}^*_A := \inf_D \bar{\chi}_{AD}\, ,
\end{equation}
where the infimum is taken over the set of $n \times n$ positive diagonal
matrices. Thus, Monteiro and Tsuchiya's question can be rephrased as to whether
there exists an exact LP algorithm with running time poly$(n,m,\log\bar{\chi}^*_A)$.

Substantial progress on this question was made in the followup
works~\cite{MonteiroT05,LMT09}. The paper \cite{MonteiroT05} showed that the
number of iterations of the MTY predictor-corrector
algorithm~\cite{MTY} can get from $\mu_0>0$ to
$\eta>0$ on the central path in $$O\left(n^{3.5}\log
  \bar\chi^*_A+\min\{n^2\log\log(\mu^0/\eta), \log(\mu^0/\eta)\}\right)$$
iterations.
This is attained by showing that the standard AS steps are reasonably close to the LLS steps. This proximity can be used to
show that the AS steps can traverse the ``curved'' parts of the central
path in the same iteration complexity bound as the VY
algorithm. Moreover, on the ``straight'' parts of the path, the rate
of progress amplifies geometrically, thus attaining a
 $\log \log$ convergence on these parts.
Subsequently, \cite{LMT09} developed an affine invariant
\emph{trust region} step, which traverses the full path in $O(n^{3.5}
\log(\bar{\chi}_A^*+n))$ iterations. However, \b{the running time of }each iteration is weakly polynomial
in $b$ and $c$. The question of developing an LP algorithm with
complexity bound
poly$(n,m,\log\bar{\chi}_A^*)$ thus remained open.

A related open problem to the above is whether it is possible to compute a
near-optimal rescaling $D$ for program~\eqref{chibar_star}? This would give an
alternate pathway to the desired LP algorithm by simply preprocessing the matrix
$A$. The related question of approximating $\bar{\chi}_A$ was already studied by
Tun{\c{c}}el~\cite{Tuncel1999}, who showed NP-hardness for approximating $\bar{\chi}_A$
to within a $2^{{\rm poly}({\rm rank}(A))}$ factor. Taken at face value, this
may seem to suggest that approximating the rescaling $D$ should be hard.

A further open question is whether Vavasis and Ye's cross-over analysis
can be improved. Ye showed in~\cite{Ye:2006}  that the
iteration complexity can be reduced to $O(n^{2.5} \log (\bar{\chi}_A+n))$ for
feasibility problems and further to $O(n^{1.5} \log(\bar{\chi}_A+n))$ for
homogeneous systems, though the $O(n^{3.5} \log(\bar{\chi}_A+n))$ bound for
optimization has not been improved since~\cite{Vavasis1996}.

\subsection{Our contributions}
In this work, we resolve all of the above questions in the affirmative. We
detail our contributions below.

\medskip

\noindent{\em 1. Finding an approximately optimal rescaling.} As our first contribution, we give an $O(m^2 n^2 + n^3)$ time algorithm that
works on the linear matroid of $A$ to compute a diagonal rescaling
matrix $D$ which achieves $\bar{\chi}_{AD} \leq n (\bar{\chi}_A^*)^3$, given any $m
\times n$ matrix $A$. Furthermore, this same algorithm allows us to approximate
$\bar{\chi}_A$ to within a factor $n(\bar{\chi}_A^*)^2$. The algorithm bypasses
Tun\c{c}el's hardness result by allowing the approximation factor to depend on
$A$ itself, namely on $\bar{\chi}_A^*$. This gives a simple first answer to
Monteiro and Tsuchiya's question: by applying the Vavasis-Ye algorithm directly
on the preprocessed $A$ matrix, we may solve any LP with constraint matrix $A$ using $O(n^{3.5}\log(
\bar{\chi}^*_A+n))$ iterations. Note that the approximation factor
$n(\bar{\chi}_A^*)^2$ increases the runtime only by a constant factor.

To achieve this result, we work  with the circuits of $A$, where a
circuit $C\subseteq [n]$ \b{corresponds to an inclusion-wise minimal set of linearly dependent columns.}
With each circuit, we can associate a vector
$g^C\in\ker(A)$ with $\supp(g^C)=C$ that is unique up to scaling.
By the {\em `circuit ratio'} $\kappa_{ij}$ associated with the pair of nodes $(i,j)$, we mean the  largest ratio $|g^C_j/g^C_i|$ taken over every
circuit $C$ of $A$ such that $i,j\in C$.
As our
first observation, we show that the maximum of all circuit ratios,
which we call the {\em `circuit imbalance measure'}, in fact characterizes $\bar{\chi}_A$ up to a
factor $n$. This measure was first studied by Vavasis~\cite{Vavasis1994}, who
showed that it lower bounds $\bar{\chi}_A$, though, as far as we are aware, our
upper bound is new. The circuit ratios of each pair $(i,j)$ induce a
weighted directed graph we call the \emph{`circuit ratio digraph'} of $A$. From
here, our main result is that $\bar{\chi}^*_A$ is up to a factor $n$ equal to
the maximum geometric mean cycle in the circuit ratio digraph. Our
algorithm populates the circuit ratio digraph with \b{approximations of the $\Le_{ij}$ ratios for each $i,j\in [n]$ using standard techniques from matroid theory}, and then computes a rescaling by solving the dual of
the maximum geometric mean ratio cycle on the \emph{`approximate circuit ratio digraph'}.

\medskip

\noindent{\em 2. Scaling invariant LLS algorithm.} While the above yields an LP
algorithm with  poly$(n,m,\log\bar{\chi}^*_A)$ running time, it does not
satisfactorily address Monteiro and Tsuchiya's question on a scaling invariant
algorithm. As our second contribution, we use the circuit ratio digraph directly
to give a natural scaling invariant LLS layering algorithm together with a
scaling invariant crossover analysis.

At a conceptual level, we show that the circuit ratios give a scale invariant
way to measure whether `$x_i >x_j$' and enable a natural layering algorithm. Assume for now that the circuit imbalance value
$\Le_{ij}$ is known for every pair $(i,j)$.
Given the circuit ratio graph induced by the $\Le_{ij}$'s and given a primal point $x$
near the path, our layering algorithm can be described as follows. We first rescale
the variables so that $x$ becomes the all ones vector, which rescales $\Le_{ij}$ to
$\Le_{ij} x_i/x_j$.  We then restrict the graph to its edges of length $\Le_{ij}x_i/x_j\geq 1/{\rm
poly}(n)$---the \emph{long edges} of the (rescaled) circuit ratio graph---and let the layering $J_1 \cup J_2\cup \ldots \cup J_p$ be a topological ordering of its
strongly connected components (SCC) with edges going from left to right.
Intuitively, variables that ``affect each other'' should be in the same layer,
which motivates the SCC definition.

We note that our layering algorithm does not have access to the true
circuit ratios $\Le_{ij}$; these are in fact NP-hard to compute. Getting a good
enough initial estimate for our purposes however is easy: we let $\hat\Le_{ij}$
be the ratio corresponding to an {\em arbitrary} circuit containing $i$ and $j$.
This already turns out to be within a factor $(\bar\chi^*_A)^2$ from the true value
$\Le_{ij}$---recall this is the maximum over all such circuits. Our layering
algorithm learns better circuit ratio estimates if the `lifting costs
of our SCC layering, i.e., how much it costs to lift changes from lower layer
variables to higher layers (as in the definition of $\bar{\chi}_A$), are larger
than we expected them to be based on the previous estimates.

We develop a scaling-invariant analogue of cross-over events as follows.
Before the crossover event, ${\rm poly}(n)(\bar\chi^*_A)^{n}>\Le_{ij} x_i/x_j$, and
after the crossover event, ${\rm poly}(n)(\bar\chi^*_A)^{n}<\Le_{ij} x_i/x_j$
for
all further central path points. Our analysis relies on
$\bar{\chi}_A^*$ in only a minimalistic way, and does not require an estimate on the value of $\bar{\chi}_A^*$.
  Namely, it is only used to show
that if $i,j\in J_q$, for a layer $q \in [p]$, then the rescaled circuit ratio $\Le_{ij} x_i/x_j$ is
in the range $({\rm poly}(n) \bar{\chi}_A^*)^{\pm O( |J_q|)}$. The argument to show this crucially
utilizes the maximum geometric mean cycle characterization. Furthermore, unlike
prior analyses \cite{Vavasis1996,Monteiro2003}, our definition of a ``good'' layering (i.e., `balanced'
layerings, see Section~\ref{sec:layering}), is completely independent of
$\bar{\chi}^*_A$.

\medskip

\noindent{\em 3. Improved potential analysis.} As our third contribution, we improve the Vavasis--Ye crossover analysis
using a new and simple potential function based approach.  When applied to our
new LLS algorithm, we derive an $O(n^{2.5} \log n \log(\bar{\chi}_A^*+n))$
iteration bound for path following, improving the polynomial term by an
$\Omega(n/\log n)$ factor compared to the VY analysis.

Our potential function can be seen as a fine-grained version of the
crossover events as described above. In case of such a crossover
event, it is guaranteed that in every subsequent iteration, $i$ is in
a layer before $j$. We analyze less radical changes instead: an
``event'' parametrized by $\tau$ means that $i$ and $j$ are currently together on a
layer of size $\le \tau$, and after the event,
$i$ is on a layer before
$j$, or if they are together on the same layer, then this layer must have  size $\ge2\tau$.
For every LLS step, we can find a parameter $\tau$ such that an event
of this type happens concurrently for at least
$\tau-1$ pairs within
the next
$O(\sqrt{n} \tau
\log(\bar\chi_A^*+n))$ iterations,

\medskip

Our improved analysis is also applicable to the original VY-algorithm. Let us now comment on the relation between the VY-algorithm and our new algorithm.
The VY-algorithm starts a new layer once $x_{\pi(i)}> g x_{\pi(i+1)}$ between
two consecutive variables where the permutation $\pi$ is a
non-increasing order of the $x_i$
variables, and $g={\rm
  poly}(n) \bar\chi_A$. Setting the initial `estimates' $\hat
\Le_{ij}=\bar\chi_A$ for a suitable polynomial, our algorithm runs the same way as the VY
algorithm. Using these estimates, the layering procedure becomes much
simpler: there is no need to verify `balancedness' as in our
algorithm.

However, using estimates $\hat\Le_{ij}=\bar\chi_A$ has drawbacks. Most importantly, it does not
give a lower bound on the true circuit ratio $\Le_{ij}$---to the contrary, $g$
will be an upper bound. In effect, this causes VY's layers to be  ``much
larger'' than ours, and for this reason, the connection to $\bar\chi^*_A$ is lost.
Nevertheless, our potential function analysis can still be adapted to the VY-algorithm to obtain the same $\Omega(n/\log n)$ improvement on the iteration
complexity bound; see Section~\ref{sec:VY} for more details.

\subsection{Related work}

Since the seminal works of Karmarkar~\cite{Karmarkar84} and
Renegar~\cite{Renegar1988}, there has been a tremendous amount of work on
speeding up and improving interior-point methods. In contrast to the present
work, the focus of these works has mostly been to improve complexity of
\emph{approximately solving} LPs. Progress has taken many forms, such as the
development of novel barrier methods, such as Vaidya's volumetric
barrier~\cite{Vaidya1989} and the recent entropic barrier of Bubeck and
Eldan~\cite{BE14} and the weighted log-barrier of Lee and
Sidford~\cite{LS14,LS19}, together with new path following techniques, such as
the predictor-corrector framework~\cite{Mehrotra1992,MTY}, as well as advances
in fast linear system solving~\cite{ST04,LS15}. For this last line, there has been
substantial progress in improving IPM by amortizing the cost of the iterative
updates, and working with approximate computations, \b{see
e.g.~\cite{Renegar1988,Vaidya1989} for classical results.} Recently, Cohen, Lee and
Song~\cite{CLS19} developed a new inverse maintenance scheme to get a randomized
$\tilde{O}(n^{\omega}\log(1/\eps))$-time algorithm for $\eps$-approximate LP,
which was derandomized by van den Brand~\cite{vdb20}; \b{here $\omega\approx 2.37$ is the matrix multiplication exponent}. \b{A very recent result by van den Brand et al. \cite{vdb20-tall-dense} obtained a randomized $\tilde O(nm+m^3)$ algorithm.
For special classes of LP
such as network flow and matching problems, even faster algorithms have been obtained using, among other techniques, fast
Laplacian solvers, see e.g.~\cite{Daitch2008,Madry2013,Brand2020,Brand2021}. }
Given the progress
above, we believe it to be an interesting problem to understand to what extent
these new numerical techniques can be applied to speed up LLS computations,
though we expect that such computations will require very high precision. We
note that no attempt has been made in the present work to optimize the
complexity of the linear algebra.

Subsequent to the conference version of this paper \cite{DHNV20}, some of the authors extended Tardos's framework to the real model of computation \cite{DadushNV20}, showing that  poly$(n,m,\log\bar{\chi}_A)$ running time can be achieved using approximate solvers in a black box manner. Combined with \cite{vdb20}, one obtains a deterministic $O(mn^{\omega+1} \log^{O(1)}(n) \log(\bar\chi_A))$ LP algorithm; using the initial rescaling subroutine from this paper, the dependence can be improved to ${\bar\chi}^*_A$ resulting in a running time of 
$O(mn^{\omega+1} \log^{O(1)}(n) \log(\bar\chi_A^* + n))$.
A weaker extension of Tardos's framework to the real model of computation was previously given by
Ho and Tun{\c{c}}el~\cite{ho2002}.

With regard to LLS algorithms, the original VY-algorithm required explicit
knowledge of $\bar{\chi}_A$ to implement their layering algorithm. The paper \cite{MMT98}
showed that this could be avoided by computing all LLS steps associated with $n$
candidate partitions and picking the best one. In particular, they showed that
all such LLS steps can be computed in $O(m^2 n)$ time. In \cite{Monteiro2003},
an alternate approach was presented to  compute an LLS partition directly from the
coefficients of the AS step. We note that these methods crucially rely on the
variable ordering, and hence are not scaling invariant. Kitahara and
Tsuchiya~\cite{KitaharaTsuchiya2013}, gave a 2-layer LLS step which achieves a
running time depending only on $\bar{\chi}_A^*$ and right-hand side $b$, but with no
dependence on the objective, assuming the primal feasible region is bounded.

A series of papers have studied the central path from a differential geometry perspective. Monteiro and
Tsuchiya~\cite{Monteiro2008} showed that a curvature integral of the central
path, first introduced by Sonnevend, Stoer, and Zhao~\cite{Sonnevend1991}, is in fact upper bounded by
$O(n^{3.5} \log(\bar{\chi}^*_A+n))$. This has been extended to SDP and symmetric cone programming \cite{kakihara2014}, and also studied in the context of information geometry \cite{kakihara2013}.

Circuits have appeared in several papers on linear and integer
optimization (see \cite{DKS19} and references within).
The idea of using circuits within the context of LP algorithms also appears
in~\cite{DHL15}. They develop a circuit augmentation framework for LP (as well ILP) and
show that simplex-like algorithms that take steps according to the ``best
circuit'' direction achieves linear convergence, though these steps are hard
to compute. Recently,  \cite{DKNV22} used circuit imbalance measures to obtain a circuit augmentation algorithm for LP with poly$(n,\log(\bar\chi_A))$ iterations.
We refer to \cite{ENV21} for an overview on circuit imbalances and their applications.

\medskip

Our algorithm makes progress towards strongly polynomial solvability of LP, by
improving the dependence poly$(n,m,\log\bar\chi_A)$ to poly$(n,m,\log\bar\chi^*_A)$.
However, in a remarkable recent paper, Allamigeon, Benchimol, Gaubert, and Joswig~\cite{allamigeon2018} have shown, using tools from tropical geometry, that
path-following methods for the standard logarithmic barrier {\em cannot} be
strongly polynomial. In particular, they give a parametrized family of
instances, where, for sufficiently large parameter values, any sequence of
iterations following the central path must be of exponential length---thus,
$\bar\chi^*_A$ will be doubly exponential. We note that very recently, Allamigeon, Gaubert, and Vandame \cite{AGV22} strengthened this result, showing that no interior point method using a self-concordant barrier function may be strongly polynomial.

As a further recent development, Allamigeon, Dadush, Loho, Natura, and V\'egh \cite{ADLNV22} complement these negative results by giving a weakly polynomial interior point method that always terminates in at most $O(2^n n^{1.5}\log n)$ iterations---even when $\log\bar\chi^*_A$ is unbounded. Moreover, their interior point method is `universal': it matches the number of iterations of any interior point method that uses a self-concordant barrier function up to a factor $O(n^{1.5} \log n)$. The `subspace LLS' step used in the paper is a generalization of the LLS step, using restricted movements in general subspaces, not only coordinate subspaces.

\subsection{Organization}
The rest of the paper is organized as follows. We conclude this section by introducing some notation. 
Section~\ref{sec:rescale} discusses our results on the circuit imbalance measure.
It starts with Section~\ref{sec:bar-chi}
on  the necessary background on the condition measures
$\bar{\chi}_A$ and $\bar{\chi}^*_A$.
Section~\ref{sec:circ} introduces the circuit imbalance measure, and formulates and explains all main results of Section~\ref{sec:rescale}. The proofs are given in the rest of the sections:
basic properties in Section~\ref{sec:basic-kappa-proof},
 the min-max characterization
in Section~\ref{sec:min-max}, the circuit finding algorithm in
Section~\ref{sec:matroid}, the algorithms for
approximating $\bar{\chi}^*_A$ and $\bar{\chi}_A$ in Section~\ref{sec:approx-alg}.

In Section~\ref{sec:scaling-invariant}, we develop our scaling invariant interior-point method. Interior-point preliminaries are given in
Section~\ref{sec:ipm-prelims}. Section~\ref{sec:aff-lls} introduces the affine scaling and layered-least-squares directions, and proves some basic properties.
Section~\ref{sec:lls-overview} provides a detailed overview of
the high level ideas and a roadmap to the analysis. Section~\ref{sec:linsys} further develops the theory of LLS directions and introduces partition lifting scores.
 Section~\ref{sec:layering} gives our scaling invariant layering procedure,
and our overall algorithm can be found
in Section~\ref{sec: the_algorithm}.

 In Section~\ref{sec:analysis}, we give the
potential function proof for the improved iteration bound, relying on technical
lemmas. The full proof of these lemmas is deferred to
Section~\ref{sec:main-lemmas}; however, Section~\ref{sec:analysis}
provides the high-level ideas to each proof. Section~\ref{sec:VY}
shows that our argument also leads to a factor $\Omega(n/\log n)$ improvement
in the iteration complexity bound of the VY-algorithm.

 In Section~\ref{sec:prox-proof},
we prove the technical properties of our LLS step, including its proximity to AS
and step length estimates. Finally, in Section~\ref{sec:initialization}, we discuss the
initialization of the interior-point method.

\medskip

Besides reading the paper linearly, we suggest two other possible ways of navigating the paper.
Readers mainly interested in the circuit imbalance measure and its approximation may focus only on Section~\ref{sec:rescale}; this part can be understood without any familiarity with interior point methods. Other readers, who wish to mainly focus on our interior point algorithm may read Section~\ref{sec:rescale} only up to Section~\ref{sec:circ}; this includes all concepts and statements necessary for the algorithm.

\subsection{Notation} 

Our notation will largely follow
\cite{Monteiro2003,MonteiroT05}. We let $\R_{++}$ denote the set of positive reals, and $\R_+$ the set of nonnegative reals.
 For $n\in \mathbb{N}$, we let
$[n]=\{1,2,\ldots,n\}$.
Let $e^i\in \R^n$ denote the $i$th unit
vector, and $e\in \R^n$ the all 1s vector. For a vector $x\in \R^n$, we let $\diag(x)\in \R^{n\times n}$
denote the diagonal matrix with $x$ on the diagonal. We let $\mathbf D$ denote the set of all positive $n\times n$
diagonal matrices and $\mathbf I_k$ denote the $k \times k$ identity matrix.
For $x,y\in
\R^n$, we use the notation $xy\in \R^n$ to denote
$xy=\diag(x)y=(x_iy_i)_{i\in [n]}$.
The \b{inner} product of the two vectors is denoted as $x^\top y$.
For $p\in \mathbb{Q}$, we also use the notation $x^{p}$ to denote the
vector $(x_i^{p})_{i\in [n]}$. Similarly, for $x,y\in \R^n$, we let $x/y$ denote
the vector $(x_i/y_i)_{i\in [n]}$.
We denote the support of a vector $x \in \R^n$ by $\supp(x) = \{i\in [n]: x_i \neq 0\}$.

For an index subset $I\subseteq [n]$, we use $\pi_I: \R^n \rightarrow \R^I$ for the coordinate
projection. That is, $\pi_I(x)=x_I$, and for a subset $S\subseteq
\R^n$, $\pi_I(S)=\{x_I:\, x\in S\}$.
We let $\R^n_I = \{x \in \R^n : x_{[n]\setminus I} = 0\}$.

For a matrix $B\in \R^{n\times k}$, $I\subset [n]$ and $J\subset [k]$ we let $B_{I,J}$ denote the submatrix of $B$ restricted to the set of rows in $I$ and columns in $J$. We also use $B_{I,\sbullet}=B_{I,[k]}$ and $B_J=B_{\sbullet,J}=B_{[n],J}$.
We let $B^{\dagger}\in \R^{k\times n}$ denote the
 pseudo-inverse of $B$.

We let $\ker(A)$ denote the kernel of the matrix
$A \subseteq \R^{m\times n}$. Throughout, we assume that the matrix $A$ in
\eqref{LP_primal_dual} has full row rank, and that $n\ge 3$.

\b{We use the real model of computation, allowing basic arithmetic operations $+$, $-$, $\times$, $/$,  comparisons, and square root computations. We keep (exact) square root computations for simplicity but we note that these could be avoided.}

\paragraph{Subspace formulation} Throughout the paper, we let $W=\ker(A)\subseteq \R^n$ denote the
kernel of the matrix $A$. Using this notation,
\eqref{LP_primal_dual} can be written in the form
\begin{align}\label{LP-subspace}
\begin{aligned}
\min \; & c^\top x \\
x &\in W + d \\
x &\geq 0,
\end{aligned}
\quad\quad
\begin{aligned}
\max \; &  d^\top(c-s) \\
s  &\in W^\perp+c \\
s &\geq 0,
\end{aligned}
\end{align}
where $d\in \R^n$ satisfies $Ad = b$.
One can e.g., choose $d$ as the minimum norm
solution  $d = \argmin \{\|x\|: Ax=b\} = A^\T (AA^\T)^{-1} b$. 
Note that $s \in W^\perp+c$ is equivalent to $\exists y \in \R^m$ such that
$A^\T y + c = s$. Hence, the 
 original variable $y$ is implicit in this formulation.

\begin{table}
\begin{tabular}{|l|l|l|}
    \hline
    {\bf Symbol} & {\bf Description} & {\bf Defined in} \\
    \hline
    $w = (x,y,s)$ & tuple of feasible solutions & \Cref{sec:ipm-prelims} \\
    \hline
    $\mu(w)$ & normalized duality gap & \Cref{sec:ipm-prelims} \\
    \hline
    $\bar\chi_W$ & subspace condition number & \Cref{sec:bar-chi} \\
    \hline
    $L_I^W$ & lifting map $\pi_I(W) \to W$ & \Cref{def:liftingmap} \\
    \hline
    $\Le^W, \Le^W_{ij}, \Le_{ij}^\delta$ & circuit imbalances & \Cref{def:circuit_imbalance} \\
    \hline
    $\ell^W(\cal J)$ & lifting score & \Cref{def:lifting-score} \\
    \hline
    $\mathcal{N}(\beta)$ & $\ell_2$-neighborhood of the central path & \Cref{sec:ipm-prelims} \\
    \hline
    $\Delta w^\as = (\Delta x^\as, \Delta y^\as, \Delta s^\as)$ & affine scaling direction & \Cref{sec:aff-lls} \\
    \hline
    $\Delta w^\cs = (\Delta x^\cs, \Delta y^\cs, \Delta s^\cs)$ & centrality direction & \Cref{sec:aff-lls} \\
    \hline
    $\Delta w^\lal = (\Delta x^\lal, \Delta y^\lal, \Delta s^\lal)$ & layered least-squares scaling direction & \Cref{sec:lls} \\
    \hline
    $\epsilon^\as_I(w)$ & norm of AS residuals on $I \subseteq [n]$ & \Cref{def:epsilon} \\
    \hline
    $w^+ = w + \alpha \Delta w$ & iterate after predictor step & \Cref{sec:aff-lls} \\
    \hline
    $\delta = \delta(w)$ & approximate rescaled dual $\delta = s^{1/2}x^{-1/2}$  & \Cref{def:delta} \\
    \hline
    $\Rx^\as, \Rs^\as, \Rx^\lal, \Rs^\lal$ & residuals & \Cref{eq:residuals} \\
    \hline
    $G_{w,\sigma} = ([n], E_{w,\sigma})$ & long edge graph & Page \pageref{longedgegraph} \\
    \hline
    $\mathcal J = (J_1,J_2,\dots,J_p)$ & ordered partition & Page \pageref{orderedpartition} \\
    \hline
    $W_{{\cal{J}}, k}$ & subspace & \Cref{sec:linsys} \\
    \hline
    $\gamma$ & parameter & \Cref{def:gamma} \\
    \hline
    $\hat G_{\delta,\sigma} = ([n], \hat E_{\delta,\sigma})$ & auxiliary graph & \Cref{def:aux-graph} \\
    \hline
    $\rho^\mu(i,j), \Psi^\mu(i,j), \Psi(\mu)$ & potential function & Page \pageref{sec:analysis} \\
    \hline
    $\xi^\lal_I(w)$ & norm of LLS residuals on $I \subseteq [n]$ & \Cref{eq:xi} \\
    \hline
    $\llspartition B, \llspartition N$ & Partition of variables based on $\Rx^\lal, \Rs^\lal$ & \Cref{eq:B-N} \\
    \hline
\end{tabular}
\caption{Recurring symbols that will be defined throughout the paper.}
\end{table}

\section{Finding an approximately optimal rescaling}
\label{sec:rescale}

\subsection[The condition number bar-chi]{The condition number {$\bar\chi$}}
\label{sec:bar-chi}
The condition number $\bar\chi_A$ is defined as
\begin{equation}
\begin{aligned}
\bar\chi_A&=\sup\left\{\|A^\top \left(A D A^\top\right)^{-1}AD\|\, : D\in
    {\mathbf D}\right\}\\
&=\sup\left\{\frac{\norm{A^\top y}}{\norm{p}} :
\text{$y$ minimizes $\norm{D^{1/2}(A^\top y - p)}$ for some $0 \neq p \in \R^n$ and $D \in \mathbf D$}\right\}.
\end{aligned}
\end{equation}
This condition number was first studied by Dikin \cite{dikin,dikin1974speed}, Stewart \cite{stewart}, and
Todd \cite{todd-90}, among others, and plays a key role in the analysis of the
Vavasis--Ye interior point method \cite{Vavasis1996}. There is an
extensive literature on the properties and applications of $\bar\chi_A$,
as well as its
relations to other condition numbers. We refer the reader to
the papers \cite{ho2002,Monteiro2003,Vavasis1996} for further results
and references.

It is important to note that $\bar\chi_A$ only depends on the subspace
$W=\ker(A)$. Hence, we can also write $\bar\chi_W$ for a subspace
$W\subseteq \R^n$, defined to be equal to $\bar\chi_A$ for some matrix
$A\in \R^{k\times n}$ with $W=\ker(A)$. We will use the notations $\bar\chi_A$ and
$\bar\chi_W$ interchangeably.

The next lemma summarizes some important known properties of $\bar\chi_A$.
\begin{proposition}\label{prop:chibar}~
Let $A\in \R^{m\times n}$ with full row rank and $W=\ker(A)$.
\begin{enumerate}[label=(\roman*)]
\item \label{i:bitcomp} If the entries of $A$ are all integers, then $\bar\chi_A$ is bounded by $2^{O(L_A)}$,
 where $L_A$ is the input bit length of $A$.
\item \label{i:submatrix_characterisation}  $\bar\chi_A = \max\{ \|B^{-1} A\| : B \text{ non-singular $m \times m$-submatrix of } A\} $.
\item \label{i:dagger} Let the columns of $B \in \R^{n \times (n-m)}$ form an orthonormal basis of $W$.
 Then
 \[
 \bar\chi_W = \max\left\{ \|B B_{I,\sbullet}^\dagger\| : \emptyset \neq I \subset [n]\right\}\, . \]
\item \label{i:chibar-dual} $\bar\chi_W=\bar\chi_{W^\perp}$.
\end{enumerate}
\end{proposition}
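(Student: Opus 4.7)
The plan is to prove the four items in an order that lets each build on the previous ones: first (iii), which gives the cleanest subspace-level handle, then (iv) as a self-duality consequence of (iii), then (ii), and finally (i) as an application of (ii).

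For (iii), the key observation is that if $B\in\R^{n\times(n-m)}$ has orthonormal columns spanning $W$, then every $y\in W$ can be uniquely written as $y=B\alpha$ with $\|y\|=\|\alpha\|$. So for any nonempty $I\subset[n]$ and $z\in\pi_I(W)$, the liftings $\{y\in W:y_I=z\}$ correspond to $\{\alpha:B_{I,\sbullet}\alpha=z\}$, and the minimum-$\ell_2$-norm choice is $\alpha=B_{I,\sbullet}^\dagger z$, yielding $y=BB_{I,\sbullet}^\dagger z$. The operator norm of $z\mapsto BB_{I,\sbullet}^\dagger z$ on $\pi_I(W)$ is therefore exactly the smallest $M$ for which every such $z$ admits a lift of norm $\le M\|z\|$; maximizing over $I$ matches the definition of $\bar\chi_W$.

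For (iv), let $N\in\R^{n\times m}$ have orthonormal columns spanning $W^\perp$, so that $[B\mid N]$ is orthogonal and $BB^\top+NN^\top=I_n$. Using this identity, or equivalently a CS decomposition of the block $B_{I,\sbullet}$ against $N_{[n]\setminus I,\sbullet}$, I would show directly that $\|BB_{I,\sbullet}^\dagger\|=\|NN_{[n]\setminus I,\sbullet}^\dagger\|$ for every nonempty proper $I$; intuitively both norms equal the tangent of the largest principal angle between $W$ and $\R^n_I$. Taking maxima over $I$ and invoking (iii) yields $\bar\chi_W=\bar\chi_{W^\perp}$.

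For (ii), I would compute $A^\top(ADA^\top)^{-1}AD$ in two complementary ways. Choosing $D$ supported (in the limit) on an $m$-subset $B\subseteq[n]$ with $A_B$ nonsingular, a direct calculation collapses the operator to $A^\top A_B^{-\top}\pi_B$, whose norm equals $\|A_B^{-1}A\|$ by taking transposes; this gives $\bar\chi_A\ge\max_B\|A_B^{-1}A\|$. For the reverse inequality I would apply the Cauchy--Binet formula to $\det(ADA^\top)$ and its cofactors to expand $(ADA^\top)^{-1}AD$ as a convex combination, weighted by the basis probabilities $\det(A_BA_B^\top)\prod_{i\in B}D_{ii}/\det(ADA^\top)$, of the operators $A_B^{-\top}\pi_B$; since the operator norm of a convex combination is bounded by the maximum, this gives $\bar\chi_A\le\max_B\|A_B^{-1}A\|$. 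Part (i) then follows from (ii) via Cramer's rule plus Hadamard's inequality, which bound the entries of $A_B^{-1}$ and hence $\|A_B^{-1}A\|$ by a single exponential in $L_A$. The main obstacle I anticipate is the upper bound in (ii): assembling the Cauchy--Binet weights and checking they form a genuine probability distribution over bases requires care, whereas (i), (iii), and (iv) are relatively mechanical once (ii) and the orthonormal-basis calculus are in hand.
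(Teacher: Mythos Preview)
The paper does not actually prove this proposition: its proof is a list of citations (Vavasis--Ye, Todd, Stewart, O'Leary, Gonzaga--Lara), so there is no in-paper argument to compare against. Your sketches for (i), (ii), and (iv) are along standard lines and are plausible; in particular, the limiting-$D$ argument for the lower bound in (ii) together with the Cauchy--Binet convex-combination argument for the upper bound is essentially the approach in the cited references, and your principal-angle route to (iv) is a legitimate alternative to both the cited Gonzaga--Lara proof and the paper's own later proof via Proposition~\ref{prop:self-dual}.

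There is, however, a genuine gap in your argument for (iii). You correctly show that $BB_{I,\sbullet}^\dagger$ realizes the minimum-norm lifting map $L_I^W$; this is exactly Lemma~\ref{lem:char-pseudo-inverse} in the paper. But you then conclude by saying that ``maximizing over $I$ matches the definition of $\bar\chi_W$,'' taking the lifting characterization $\bar\chi_W=\max_I\|L_I^W\|$ as the definition. In the paper's logical order that is not the definition: $\bar\chi_A$ is defined as $\sup_{D\in\mathbf D}\|A^\top(ADA^\top)^{-1}AD\|$, and the lifting characterization (Proposition~\ref{prop:subspace-chi}) is \emph{derived from} part~(iii), not the other way around. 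So the actual content of (iii)---the equivalence between the supremum-over-$D$ quantity and $\max_I\|BB_{I,\sbullet}^\dagger\|$---is precisely the step you have skipped. To close it you would need to argue, for instance, that as $D$ ranges over $\mathbf D$ the weighted projection $A^\top(ADA^\top)^{-1}AD$ (equivalently the oblique projection onto $W^\perp$ along $D^{-1}W$) has its norm extremized at limits where $D$ concentrates on a coordinate subset $I$, and that in those limits the operator degenerates to the lifting map on the complementary set; this is the substance of the Stewart/O'Leary results the paper cites.
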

\begin{proof}~
Part \ref{i:bitcomp} was proved in \cite[Lemma
24]{Vavasis1996}. For part~\ref{i:submatrix_characterisation}, see
\cite[Theorem 1]{Todd2001}  and \cite[Lemma 3]{Vavasis1996}. In
part~\ref{i:dagger}, the
direction $\ge$ was proved in \cite{stewart}, and the direction $\le$
in \cite{OLeary1990}.
The duality statement \ref{i:chibar-dual} was shown in \cite{gonzaga_lara}.
\end{proof}
In \Cref{prop:self-dual}, we will also give another proof of \ref{i:chibar-dual}.
We now define the lifting map, a key operation in this paper, and
explain its connection to $\bar\chi_A$.
\begin{definition}\label{def:liftingmap}
Let us define the {\em lifting map}
$L_I^W : \pi_{I}(W) \to W$ by
\[
L_I^W(p) = \argmin\left\{\|z\| : z_I = p, z \in W\right\}.
\]
\end{definition}
Note that $L_I^W$ is the unique linear map from $\pi_{I}(W)$ to $W$
such that $\left(L_I^W(p)\right)_I = p$ and $L_I^W(p)$ is orthogonal to
$W \cap \R^n_{[n]\setminus I}$.

\begin{lemma} \label{lem:char-pseudo-inverse}
Let $W \subseteq \R^n$ be an $(n-m)$-dimensional linear subspace. Let
the columns of $B \in \R^{n \times (n-m)}$ denote an orthonormal basis of $W$. Then,
viewing $L_I^W$ as a matrix in $\R^{n\times |I|}$,
\[
L_I^W = B B_{I,\sbullet}^\dagger\, .
\]
\end{lemma}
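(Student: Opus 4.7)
The plan is to reduce the lifting problem to a standard minimum-norm linear-system problem by using the orthonormal basis $B$ of $W$ as a coordinate system, and then to invoke the defining property of the Moore--Penrose pseudo-inverse.

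First, I would parametrize $W$ via $B$: any $z \in W$ is uniquely of the form $z = Bu$ for some $u \in \R^{n-m}$. Since the columns of $B$ are orthonormal, $B^\top B = I$, and therefore $\|z\|^2 = u^\top B^\top B u = \|u\|^2$. The constraint $z_I = p$ translates into the linear system $B_{I,\sbullet} u = p$ in the variable $u$. Hence the minimization defining $L_I^W(p)$,
\[
\min\{\|z\| : z \in W,\ z_I = p\},
\]
is equivalent to the minimum-norm problem $\min\{\|u\| : B_{I,\sbullet} u = p\}$, related by $z = Bu$.

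Second, I would use the characterization of the pseudo-inverse: for any $p$ in the column space of $B_{I,\sbullet}$, the unique minimum-$\ell_2$-norm solution of $B_{I,\sbullet} u = p$ is $u^* = B_{I,\sbullet}^\dagger p$. Since $p \in \pi_I(W) = B_{I,\sbullet}(\R^{n-m})$ lies in this column space, $u^*$ is well-defined and the constraint is satisfied. Therefore the corresponding optimal $z$ is $z^* = B u^* = B B_{I,\sbullet}^\dagger p$, which is exactly $L_I^W(p)$.

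Finally, since this identity holds for every $p \in \pi_I(W)$ and both sides are linear in $p$, I conclude $L_I^W = B B_{I,\sbullet}^\dagger$ as matrices in $\R^{n \times |I|}$. I do not anticipate a real obstacle: the only subtlety is to note that $p \in \pi_I(W)$ ensures $p \in \operatorname{range}(B_{I,\sbullet})$, so that $B_{I,\sbullet} B_{I,\sbullet}^\dagger p = p$ and the pseudo-inverse delivers an actual feasible point rather than a least-squares approximation. One could also give an alternative verification by checking the two characterizing properties of $L_I^W$ stated after its definition, namely that $(B B_{I,\sbullet}^\dagger p)_I = p$ for $p \in \pi_I(W)$ and that $B B_{I,\sbullet}^\dagger p \perp W \cap \R^n_{[n]\setminus I}$, the latter following from the fact that $B_{I,\sbullet}^\dagger p \perp \ker(B_{I,\sbullet})$ together with the identification of $\ker(B_{I,\sbullet})$ with $B^{-1}(W \cap \R^n_{[n]\setminus I})$.
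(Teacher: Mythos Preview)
Your proof is correct and follows essentially the same approach as the paper: parametrize $W$ by $z=Bu$, use orthonormality of $B$ to identify $\|z\|=\|u\|$, and invoke the minimum-norm property of the pseudo-inverse to solve $B_{I,\sbullet}u=p$. The paper's argument is slightly more terse but the ideas are identical.
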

\begin{proof}
If $p \in \pi_I(W)$, then $p = B_{I,\sbullet} y$ for some
$y \in \R^{n-m}$. By the well-known property of the pseudo-inverse we get $B_{I,\sbullet}^\dagger p = \argmin_{p = B_{I,\sbullet} y}\|y\|$.
This solution satisfies
$\pi_I(BB_{I,\sbullet}^\dagger p) = p$ and
$BB_{I,\sbullet}^\dagger p \in W$. Since the columns of $B$ form an orthonormal basis of $W$, we have $\|BB_{I,\sbullet}^\dagger p\|=\|B_{I,\sbullet}^\dagger p\|$. Consequently, $BB_{I,\sbullet}^\dagger p$ is the minimum-norm point with the above properties.
\end{proof}
The above lemma and \Cref{prop:chibar}\ref{i:dagger} yield the
following characterization. This will be the most suitable characterization of $\bar\chi_W$ for our purposes.
\begin{proposition}\label{prop:subspace-chi}
For a linear subspace $W  \subseteq \R^n$,
\[
\bar{\chi}_W =\max\left\{ \|L_I^W\|\, : {I\subseteq [n]}, I\neq\emptyset\right\}\, .
\]
\end{proposition}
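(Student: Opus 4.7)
The plan is to combine the two results already stated: Proposition~\ref{prop:chibar}\ref{i:dagger} gives the characterization of $\bar{\chi}_W$ in terms of the quantities $\|B B_{I,\sbullet}^{\dagger}\|$ (where $B$ has columns forming an orthonormal basis of $W$), and Lemma~\ref{lem:char-pseudo-inverse} identifies the lifting map $L_I^W$ with exactly the matrix $B B_{I,\sbullet}^\dagger$. So the proof is essentially a one-line substitution: for any nonempty $I \subseteq [n]$, we have $\|L_I^W\| = \|B B_{I,\sbullet}^\dagger\|$, and taking the maximum over $I$ on both sides yields the claim.

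The only tiny subtlety I would flag is the index range: Proposition~\ref{prop:chibar}\ref{i:dagger} is stated for $\emptyset \neq I \subsetneq [n]$, whereas the proposition to be proved ranges over $\emptyset \neq I \subseteq [n]$. The case $I = [n]$ is harmless: then $\pi_I(W) = W$ and $L_{[n]}^W$ is the identity map on $W$, which has operator norm $1$, while $\bar\chi_W \geq 1$ always, so including $I=[n]$ does not change the maximum. (Equivalently, for $I=[n]$, $B_{I,\sbullet} = B$ has orthonormal columns, so $BB^\dagger$ is the orthogonal projection onto $W$, of norm $1$.)

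There is no real obstacle here; the statement is just a convenient repackaging of the two facts above. The only work is to state it cleanly and apply Lemma~\ref{lem:char-pseudo-inverse} correctly as an identification of the linear map $L_I^W$ with its matrix representation. I would write:
\begin{equation*}
\bar\chi_W \;=\; \max\bigl\{\|BB_{I,\sbullet}^\dagger\| : \emptyset \neq I \subseteq [n]\bigr\}
\;=\; \max\bigl\{\|L_I^W\| : \emptyset \neq I \subseteq [n]\bigr\},
\end{equation*}
where the first equality is Proposition~\ref{prop:chibar}\ref{i:dagger} (extended to $I=[n]$ by the remark above) and the second is Lemma~\ref{lem:char-pseudo-inverse}. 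This completes the proof.
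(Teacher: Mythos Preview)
Your proposal is correct and matches the paper's own approach exactly: the paper simply states that the proposition follows from Lemma~\ref{lem:char-pseudo-inverse} and Proposition~\ref{prop:chibar}\ref{i:dagger}, which is precisely the substitution you carry out. Your additional remark handling the $I=[n]$ case is a nice touch that the paper leaves implicit.
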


The following notation will be convenient for our algorithm.
For a subspace $W\subseteq \R^n$ and an index set $I\subseteq [n]$, if $\pi_I(W) \neq \set{0}$ then
we define the \emph{lifting score}
\begin{equation}\label{def:lifting-score}
\ell^W(I):=\sqrt{\|L^W_{I}\|^2-1}\, .
\end{equation}
Otherwise, we define $\ell^W(I) = 0$.
This means that for any $z\in \pi_I(W)$ and $x = L_I^W(z)$, $\|x_{[n]\setminus I}\|\le \ell^W(I)\|z\|$.

\paragraph{The condition number $\bar\chi^*_A$}
For every $D\in {\mathbf D}$, we can consider the condition number
$\bar\chi_{DW}=\bar\chi_{AD^{-1}}$. We let
\[
\bar\chi^*_W=\bar\chi^*_A=\inf\{\bar\chi_{DW}\, : D\in {\mathbf D}\}\,
\]
denote the best possible value of $\bar\chi$ that can be attained by
rescaling the coordinates of $W$. The main result of this section is the
following theorem.
\begin{theorem}[name=\prooftext\pageref{proof:bar-chi-star}, {restate=[name=\restatetext]barchistar}]\label{thm:bar-chi-star}
There is an $O(n^2m^2 + n^3)$ time algorithm that for any matrix $A\in\R^{m\times n}$ computes an estimate $\xi$ of $\bar\chi_W$ such that
\[
\xi \leq \bar\chi_W \leq  n(\bar\chi_W^*)^2 \xi
\]
and a $D\in {\mathbf D}$ such that
\[
\bar\chi^*_W\le \bar\chi_{DW}\le n(\bar\chi_W^*)^3\, .
\]
\end{theorem}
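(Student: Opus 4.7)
The plan is to prove Theorem~\ref{thm:bar-chi-star} by way of the circuit imbalance measure $\kappa_{ij}$ sketched in the introduction, reducing the approximation of $\bar\chi_W$ and of $\bar\chi_W^*$ to computations on the circuit ratio digraph. I would carry out the argument in three conceptual stages, each of which is independently justified and together yields the claimed approximation guarantees and running time.

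First, I would establish the key structural link between $\bar\chi_W$ and the circuits of $A$. Specifically, I would prove that the \emph{circuit imbalance measure} $\kappa(W) := \max_{i,j} \kappa_{ij}$ satisfies $\kappa(W) \le \bar\chi_W \le n\,\kappa(W)$. The lower bound is essentially the Vavasis observation: for any circuit $C$ and $i,j \in C$, plugging $z = e^j$ into the lifting characterization of Proposition~\ref{prop:subspace-chi} with $I=[n]\setminus (C\setminus\{j\})$ forces $\|L_I^W\| \ge \kappa_{ij}$. The upper bound should follow by writing any lifted vector $L_I^W(p)$ as a nonnegative combination of at most $n$ signed circuit vectors via a Carathéodory-style decomposition in $W$, controlling each summand's magnitude by the relevant $\kappa_{ij}$. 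Second, and this is the scaling-invariant heart of the argument, I would show that after taking logarithms the rescaling problem $\log\bar\chi_W^* = \inf_D \log \kappa(WD)$ becomes a maximum-mean-cycle problem on the directed graph with edge weights $\log \kappa_{ij}$: rescaling a column by $d_i$ transforms $\kappa_{ij}$ to $\kappa_{ij} d_j/d_i$, so picking an optimal $d$ is LP-dual to maximizing the geometric mean $\bigl(\prod_{e\in C} \kappa_e\bigr)^{1/|C|}$ over directed cycles $C$. Combining with step one yields that this max-mean-cycle value equals $\bar\chi_W^*$ up to a factor $n$.

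Third, since computing $\kappa_{ij}$ exactly is NP-hard, I would populate the digraph with \emph{any} circuit-derived estimate $\hat\kappa_{ij}$, i.e.~take $\hat\kappa_{ij} = |g^C_j / g^C_i|$ for an arbitrary circuit $C$ of $A$ through $i,j$. Using the lifting characterization I would show that $\hat\kappa_{ij} \le \kappa_{ij} \le (\bar\chi_W^*)^2 \hat\kappa_{ij}$, since once $W$ is close to optimally rescaled every circuit through $i,j$ must give essentially the same ratio up to $(\bar\chi_W^*)^2$. Consequently, the estimate $\xi := \max_{i,j}\hat\kappa_{ij}$ lies between $\kappa(W)/(\bar\chi_W^*)^2$ and $\kappa(W)$, and via the first step this yields $\xi \le \bar\chi_W \le n(\bar\chi_W^*)^2 \xi$. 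For the rescaling, I would solve the maximum geometric mean cycle problem on the \emph{approximate} digraph and extract its LP dual optimizer $D$; by step two applied to the approximate weights, $\bar\chi_{WD} \le n \kappa(WD) \le n (\bar\chi_W^*)^3$, absorbing one extra factor $(\bar\chi_W^*)^2$ from the circuit-estimate slack.

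For the running time, computing one circuit through each pair $(i,j)$ is standard matroid work: it can be organized so that $O(n)$ Gaussian-elimination style updates yield all circuits in time $O(m^2 n^2 + n^3)$, by pivoting on the representation matrix of the linear matroid of $A$. Solving max-mean-cycle plus its dual takes $O(n^3)$ via Karp's algorithm on the $O(n)$-vertex digraph, which is dominated by the circuit-finding cost. The main obstacle I anticipate is the careful bookkeeping to show the clean factor $(\bar\chi_W^*)^2$ loss from replacing max-over-circuits $\kappa_{ij}$ by an arbitrary circuit $\hat\kappa_{ij}$; this needs the self-duality $\bar\chi_W = \bar\chi_{W^\perp}$ from Proposition~\ref{prop:chibar}\ref{i:chibar-dual} together with a lifting argument comparing two circuits through the same pair via their symmetric difference. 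Everything else reduces either to standard matroid algorithmics or to the LP duality between min-rescaling and max-mean-cycle.
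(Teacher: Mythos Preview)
Your proposal follows essentially the same three-stage architecture as the paper: (i) relate $\bar\chi_W$ to the circuit imbalance measure $\kappa_W$ up to a factor $n$, (ii) characterize $\kappa_W^*$ as the maximum geometric-mean cycle in the circuit ratio digraph via LP duality with the rescaling problem, and (iii) populate the digraph with arbitrary-circuit ratios $\hat\kappa_{ij}$, set $\xi=\max_{i,j}\hat\kappa_{ij}$, solve the approximate max-mean-cycle problem, and extract the dual rescaling. The paper's proof in Section~\ref{sec:approx-alg} does exactly this, invoking \Cref{thm:chi-approx}, \Cref{thm:low_bound_chi_bar_with_circuits}, \Cref{cor:2-circuit}, and \Cref{thm:pairwise_circuits_in_matroids} for the individual pieces.

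One place where you mislocate the difficulty: the $(\kappa_W^*)^2$ approximation guarantee for $\hat\kappa_{ij}$ does \emph{not} require self-duality of $\bar\chi$ or a symmetric-difference comparison between circuits. It is a two-line corollary of your own step~(ii). The length-2 cycle $i\to j\to i$ in the circuit ratio digraph gives $\kappa_{ij}\kappa_{ji}\le(\kappa_W^*)^2$, and any single circuit $C$ through $i,j$ simultaneously yields $|g^C_j/g^C_i|\le\kappa_{ij}$ and $|g^C_i/g^C_j|\le\kappa_{ji}$, whence $|g^C_j/g^C_i|\ge 1/\kappa_{ji}\ge\kappa_{ij}/(\kappa_W^*)^2$. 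So once the min-max theorem is established, step~(iii) is immediate; the machinery you anticipate is unnecessary. Conversely, you are vague about the circuit-finding subroutine: the paper's $O(n^2m^2)$ algorithm (\Cref{thm:pairwise_circuits_in_matroids}) is a specific construction using fundamental circuits with respect to a fixed basis and shortest paths in an auxiliary graph on $B$, which takes more care than ``standard matroid work'' suggests.
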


\subsection{The circuit imbalance measure}\label{sec:circ}
\b{The key tool in proving \Cref{thm:bar-chi-star} is to study
a more combinatorial condition number, the {\em circuit imbalance measure} which turns out to give a good proxy to
$\bar\chi_A$.}
\begin{definition}
For a linear subspace $W \subseteq \R^n$ and a matrix $A$ such that
$W = \ker(A)$, a circuit is an inclusion-wise
minimal dependent set of columns of $A$.
Equivalently, a circuit is a set $C \subseteq [n]$ such
that $W \cap \R^n_C$ is one-dimensional and that no strict subset of $C$
has this property.
The set of circuits of $W$ is denoted by $\circuits_W$.
\end{definition}
Note that circuits defined above are the same as the circuits in the linear matroid
associated with  $A$.
Every circuit $C\in \circuits_W$ can be associated with a vector $g^C \in W$ such that $\supp(g^C) = C$; this vector is unique up to  scalar multiplication.

\begin{definition}\label{def:circuit_imbalance}
For a circuit $C \in \circuits_W$ and
$i,j \in C$, we let
\begin{equation}\label{eq:Le-C-def}
\Le^W_{ij}(C)=\frac{\left|g^C_j\right|}{\left|g^C_i\right|}\ .
\end{equation}
Note that since $g^C$ is unique up to scalar multiplication, this is independent of the choice of $g^C$.
For any $i,j\in [n]$, we define the \emph{circuit ratio} as the maximum of $\Le^W_{ij}(C)$ over all choices of
the circuit $C$:
\begin{equation}\label{eq:Le-def}
\Le^W_{ij}=\max\left\{\Le^W_{ij}(C):\, C\in\circuits_W, i,j\in C\right\}\ .
\end{equation}
By convention we set $\Le^W_{ij} = 0$ if there is no circuit supporting $i$ and $j$.
Further, we define the \emph{circuit imbalance measure} as
\[
\Le_W=\max\left\{\Le^W_{ij}:\, i, j\in [n]\right\}\, .
\]
Minimizing over all coordinate rescalings, we define
\[
\Le_W^* = \min\left\{ \Le_{DW}:\,  D \in \mathbf D\right\}\, .
\]
We omit the index $W$ whenever it is clear from context.
\b{Further, for a vector $d\in \R^n_{++}$,
we write $\Le_{ij}^d = \Le_{ij}^{\diag(d)W}$ and $\Le^d = \Le^d_W=\Le_{\diag(d)W}$.}
\end{definition}
We want to remark that a priori it is not clear that $\kappa_W^*$ is well-defined. \Cref{thm:low_bound_chi_bar_with_circuits} will show that the minimum of  $\{\kappa_{DW}:\, D\in \mathbf D\}$ is indeed attained.

\medskip

We next formulate the main statements on the circuit imbalance measure; proofs will be given in the subsequent subsections.
Crucially, we show that the circuit imbalance $\Le_W$ is a good proxy to  the condition number $\bar\chi_W$. The lower bound was already proven in \cite{Vavasis1994}, and the upper bound is from \cite{DadushNV20}. A slightly weaker upper bound
 $\sqrt{1 + (n\Le_W)^2}$ was previously given in the conference version of this paper \cite{DHNV20}.
\begin{theorem}[name=\prooftext\pageref{proof:thm:chi-approx}, {restate=[name=\restatetext]chiapprox},label=thm:chi-approx]
For a linear subspace $W\subseteq \R^n$,
\[
\sqrt{1 + (\Le_W)^2} \le \bar\chi_W\le n\Le_W.
\]
\end{theorem}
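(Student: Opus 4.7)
The plan is to derive both inequalities from the submatrix characterisation $\bar\chi_A=\max_B\|B^{-1}A\|$ of Proposition~\ref{prop:chibar}\ref{i:submatrix_characterisation}, by interpreting the entries of $B^{-1}A$ as signed circuit ratios. Fix any nonsingular $m\times m$ submatrix $B$ of $A$ whose columns are indexed by $J\subseteq[n]$ in the ordering $\sigma\colon J\to[m]$. For $k\in J$ the $k$-th column of $B^{-1}A$ is $e_{\sigma(k)}$. For $k\notin J$ there is a unique circuit $C=C(J,k)\subseteq J\cup\{k\}$, and rewriting $\sum_{\ell\in C}g^C_\ell A_{\sbullet,\ell}=0$ as an expansion of $A_{\sbullet,k}$ in the basis $B$ gives $(B^{-1}A)_{\sigma(\ell),k}=-g^C_\ell/g^C_k$ for $\ell\in C\setminus\{k\}$ and $0$ otherwise. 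In particular every off-$J$ entry of $B^{-1}A$ is a signed circuit ratio of absolute value at most $\Le_W$.

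For the lower bound, I pick indices $i,j\in[n]$ and a circuit $C$ with $\Le^W_{ij}(C)=\Le_W$, so that $|g^C_j/g^C_i|=\Le_W$. Since $C$ is a minimal dependent set, $C\setminus\{i\}$ is independent and extends to a column basis $J$, with $j\in J$ and $i\notin J$. Row $\sigma(j)$ of $B^{-1}A$ then contains $1$ in column $j$ (because $j\in J$) and $-g^C_j/g^C_i$ in column $i$, of magnitude $\Le_W$. Since the spectral norm dominates every row norm, $\bar\chi_A\ge\|B^{-1}A\|\ge\sqrt{1+\Le_W^2}$.

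For the upper bound, I bound the Frobenius norm of $B^{-1}A$ for an arbitrary basis $B$. The $m$ columns indexed by $J$ are unit vectors and contribute $m$ to $\|B^{-1}A\|_F^2$. Each of the other $n-m$ columns is supported on a fundamental circuit of size at most $m+1$, so it has at most $m$ nonzero entries, each of absolute value at most $\Le_W$, contributing at most $m\Le_W^2$. Using $\Le_W\ge 1$ (which follows from $\Le^W_{ij}(C)\cdot\Le^W_{ji}(C)=1$ for any circuit $C$ with $i,j\in C$), this yields $\|B^{-1}A\|_F^2\le m+(n-m)m\Le_W^2\le nm\Le_W^2$, hence $\|B^{-1}A\|\le\|B^{-1}A\|_F\le\sqrt{nm}\,\Le_W\le n\Le_W$. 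Taking the maximum over $B$ gives $\bar\chi_A\le n\Le_W$.

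The one subtle point is keeping the direction of the circuit ratio straight: to make the large value $|g^C_j/g^C_i|=\Le_W$ (rather than its reciprocal) appear inside $B^{-1}A$, one must place $C\setminus\{i\}$, not $C\setminus\{j\}$, inside the basis. The degenerate case $\Le_W=0$ (no pair of indices is spanned by a common circuit) should be excluded from the statement, since the upper bound then collapses and the inequalities become trivial or need a separate convention.
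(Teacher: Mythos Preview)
Your argument is correct and takes a genuinely different route from the paper. The paper works with the lifting-map characterisation $\bar\chi_W=\max_{I}\|L_I^W\|$ (Proposition~\ref{prop:subspace-chi}): for the lower bound it sets $I=([n]\setminus C)\cup\{i\}$ so that $g^C$ is the unique lift of $g_i e^i$, giving $\bar\chi_W\ge\|g^C\|/|g^C_i|\ge\sqrt{1+\Le_W^2}$; for the upper bound it invokes Proposition~\ref{lem:kappa-lift} (from \cite{DadushNV20}), which identifies $\Le_W$ with the maximum $\ell_1\to\ell_\infty$ norm of the same lifting maps, and then passes to the $\ell_2\to\ell_2$ norm by standard inequalities. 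Your approach instead stays with the submatrix characterisation of Proposition~\ref{prop:chibar}\ref{i:submatrix_characterisation} and the observation that the nontrivial entries of $B^{-1}A$ are exactly fundamental-circuit ratios. The lower bounds are morally the same construction in two languages; your upper bound is more self-contained (it does not need the $\ell_1\to\ell_\infty$ characterisation), and in fact yields the slightly sharper $\bar\chi_A\le\sqrt{nm}\,\Le_W$ before relaxing to $n\Le_W$. Your remark on the degenerate case $\Le_W=0$ is valid but harmless here: the paper works under the standing assumption $m<n$, so a circuit always exists and $\Le_W\ge 1$.
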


We now overview some basic properties of $\Le_W$.
\Cref{prop:subspace-chi} asserts that $\bar\chi_W$ is the maximum $\ell_2\to\ell_2$ operator norm of the mappings $L_I^W$ over $I\subseteq [n]$. In \cite{DadushNV20}, it was shown that $\Le_W$ is in contrast the maximum $\ell_1\to\ell_\infty$ operator norm of the same mappings; this easily implies the upper bound $\bar\chi_W\le n\Le_W$.

\begin{proposition}[{\cite{DadushNV20}}]\label{lem:kappa-lift}
For a linear subspace $W  \subseteq \R^n$,
\[
\kappa_W =\max\left\{ \frac{\|L_I^W(p)\|_\infty}{\|p\|_1}\, : {I\subseteq [n]}, I\neq\emptyset, p\in \pi_I(W)\setminus\{0\}\right\}\, .
\]
\end{proposition}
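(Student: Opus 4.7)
I would prove both inequalities of the stated equality separately.

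\emph{Direction $\kappa_W \le \mathrm{RHS}$.} For each circuit $C \in \circuits_W$ and each pair $i, j \in C$, I would construct $I \subseteq [n]$ and $p \in \pi_I(W) \setminus \{0\}$ whose lift achieves ratio at least $\kappa^W_{ij}(C)$. Since $i$ lies in a circuit, it is not a coloop of the linear matroid of $A$, so the independent set $C \setminus \{i\}$ extends to a basis $B \subseteq [n] \setminus \{i\}$ of the matroid. Setting $I := [n] \setminus B$, we have $i \in I$ and $I \cap C = \{i\}$, so $p := g^C_I$ is supported on the single coordinate $i$ with $p_i = g^C_i$. Since $B$ is a basis, $W \cap \R^n_B = \{0\}$, so the lift is unique and $L^W_I(p) = g^C$. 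Then $\|L^W_I(p)\|_\infty \ge |g^C_j|$ and $\|p\|_1 = |g^C_i|$, yielding ratio $\ge \kappa^W_{ij}(C)$; maximizing over $(C, i, j)$ gives $\kappa_W \le \mathrm{RHS}$.

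\emph{Direction $\kappa_W \ge \mathrm{RHS}$.} Fix $I \subseteq [n]$, $0 \neq p \in \pi_I(W)$, and set $z := L^W_I(p)$. For $i \in I$, $|z_i| = |p_i| \le \|p\|_1 \le \kappa_W \|p\|_1$ (using $\kappa_W \ge 1$, which holds whenever $W \neq \{0\}$). For $i^* \in [n] \setminus I$, I would expand $z$ in its conformal circuit decomposition $z = \sum_k y^k$, where each $y^k = \lambda_k g^{C_k}$ with $\lambda_k > 0$, $C_k \in \circuits_W$, and the sign pattern of $y^k$ agrees with that of $z$ on $C_k$.

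The crux is to show that no $C_k$ can be contained in $[n] \setminus I$. If one were, then $y^k \in W \cap \R^n_{[n] \setminus I}$; the defining orthogonality of the minimum $\ell_2$-norm lift forces $\langle z, y^k \rangle = 0$, yet sign consistency gives $\langle z, y^k \rangle = \sum_{i \in C_k} |z_i|\,|y^k_i| > 0$, a contradiction. With this established, pick any $j(k) \in C_k \cap I$ for each $k$ with $i^* \in C_k$. The circuit-ratio identity yields $|y^k_{i^*}|/|y^k_{j(k)}| = \kappa^W_{j(k), i^*}(C_k) \le \kappa_W$, so $|y^k_{i^*}| \le \kappa_W |y^k_{j(k)}|$. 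Combined with $|z_{i^*}| = \sum_{k : i^* \in C_k}|y^k_{i^*}|$ (sign consistency) and the bookkeeping $\sum_{k : j(k) = j} |y^k_j| \le \sum_{k : j \in C_k}|y^k_j| = |z_j|$, this yields $|z_{i^*}| \le \kappa_W \sum_{j \in I}|z_j| = \kappa_W \|p\|_1$.

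\emph{Main obstacle.} The nontrivial step is the orthogonality-versus-sign-consistency argument that rules out circuits $C_k \subseteq [n] \setminus I$. The lift $L^W_I(p)$ is an $\ell_2$-geometric object, whereas the target bound controls its $\ell_\infty/\ell_1$ ratio; the conformal decomposition is what bridges these norms, but it must be combined with the orthogonality characterization of $L^W_I(p)$ to restrict attention to circuits that reach into $I$, where their contribution to $z_{i^*}$ can be charged against the coordinates of $p$ via $\kappa_W$.
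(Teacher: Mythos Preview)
Your proof is correct. The paper does not include its own proof of this proposition (it cites \cite{DadushNV20}), but your argument aligns closely with the techniques the paper uses elsewhere: your lower-bound construction is essentially the one in the proof of \Cref{thm:chi-approx} (there with the simpler choice $I=([n]\setminus C)\cup\{i\}$ rather than extending to a basis), and your upper-bound argument via conformal circuit decomposition plus the observation that every circuit in the decomposition must meet $I$ is exactly the mechanism behind the paper's \Cref{lem:identify}, which is the special case $p=e^i_I$. The one stylistic difference is that the paper rules out a circuit $C_k\subseteq [n]\setminus I$ by noting that $z-g^k$ would then be a strictly shorter lift (using componentwise domination in the conformal decomposition), whereas you use the equivalent orthogonality characterization $z\perp W\cap\R^n_{[n]\setminus I}$; your formulation is arguably cleaner.
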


Similarly to $\bar\chi_W$, $\kappa_W$ is self-dual;  this holds for all individual $\Le_{ij}^W$ values as well.
\begin{lemma}[name=\prooftext\pageref{proof:lem:self-dual}, {restate=[name=\restatetext]selfdual}]\label{lem:self-dual}
For any subspace $W \subseteq \R^n$ and $i,j \in [n]$, $\Le_{ij}^W = \Le_{ji}^{W^\perp}$.
\end{lemma}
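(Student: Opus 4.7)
My plan is to exploit the orthogonality of $W$ and $W^\perp$ together with a standard matroid-duality fact. Viewing $W$ as the kernel of some matrix $A$, the circuits of $W$ are precisely the circuits of the linear matroid $M(A)$, and the circuits of $W^\perp$ are the cocircuits of $M(A)$ (equivalently, the circuits of the dual matroid $M(A)^*$, represented by any matrix whose rowspan equals $W$). The key claim I will establish is: for every circuit $C\in \circuits_W$ with $i,j\in C$, there exists a circuit $D\in \circuits_{W^\perp}$ with $C\cap D = \{i,j\}$.

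Granting this claim, the lemma reduces to a one-line orthogonality computation. Taking representative vectors $g^C\in W$ supported on $C$ and $h^D\in W^\perp$ supported on $D$, the identity $\pr{g^C}{h^D}=0$ simplifies to $g^C_i h^D_i + g^C_j h^D_j = 0$, since only coordinates in $C\cap D = \{i,j\}$ contribute. Rearranging gives $|g^C_j|/|g^C_i| = |h^D_i|/|h^D_j|$, so $\Le^W_{ij}(C) = \Le^{W^\perp}_{ji}(D) \le \Le^{W^\perp}_{ji}$. Maximizing over $C$ yields $\Le^W_{ij}\le \Le^{W^\perp}_{ji}$, and the reverse inequality follows by applying the same argument with $W$ replaced by $W^\perp$, using $(W^\perp)^\perp=W$. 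The degenerate case where no circuit of $W$ contains both $i$ and $j$ is handled by the classical self-dual matroid fact that two elements lie in a common circuit iff they lie in a common cocircuit; then both sides equal $0$ by convention.

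The only nontrivial step is the existence of $D$. I would construct it as a fundamental cocircuit: since $C$ is a circuit of $M(A)$, the set $C\setminus\{j\}$ is independent, so it extends to a basis $B$; note that $i\in B$ and $j\notin B$ (else $C\subseteq B$ would be dependent). The fundamental circuit $C_B(j)$ is the unique circuit contained in $B\cup\{j\}$, and since $C\subseteq B\cup\{j\}$ this forces $C_B(j) = C$. The fundamental cocircuit $D := D_B(i)$ then satisfies $D\cap B=\{i\}$, and the standard duality relation $j\in D_B(i) \iff i\in C_B(j)$ gives $j\in D$ because $i\in C$. Since $C\cap([n]\setminus B) = \{j\}$, one concludes $C\cap D = \{i,j\}$, as desired. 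The principal---if standard---obstacle is precisely this circuit/cocircuit exchange; everything else in the argument is a direct consequence of $W \perp W^\perp$ and the supports of $g^C,h^D$.
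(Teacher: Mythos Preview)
Your argument is correct. Both your proof and the paper's establish the same intermediate fact---that for any circuit $C\in\circuits_W$ through $i,j$ there is a circuit $D\in\circuits_{W^\perp}$ with $C\cap D=\{i,j\}$, after which the orthogonality $\pr{g^C}{h^D}=0$ immediately gives the ratio equality---but the routes differ. The paper constructs $D$ by hand: it defines $h\in\R^C$ with $h_i=g^C_j$, $h_j=-g^C_i$, $h_k=0$ otherwise, observes $h\in\pi_C(W^\perp)$ since $h\perp g^C_C$, and then lifts $h$ to a support-minimal $\bar h\in W^\perp$, arguing directly that $\supp(\bar h)$ is a circuit of $W^\perp$. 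Your proof instead invokes matroid duality: extending $C\setminus\{j\}$ to a basis $B$ and taking the fundamental cocircuit $D_B(i)$, with the exchange relation $j\in D_B(i)\iff i\in C_B(j)$ doing the work. Your approach is shorter and more structural for a reader fluent in matroid theory, while the paper's is entirely self-contained within linear algebra and does not appeal to fundamental-cocircuit machinery; both handle the degenerate case (no common circuit) via the convention $\Le_{ij}=0$ and the classical fact that sharing a circuit is equivalent to sharing a cocircuit.
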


The next lemma provides a subroutine that efficienctly yields upper bounds on $\ell^W(I)$ or lower bounds on some circuit imbalance values. Recall the definition of the lifting score $\ell^W(I)$ from \eqref{def:lifting-score}.

\begin{lemma}[name=\prooftext\pageref{proof:lem:purify}, {restate=[name=\restatetext]purify}]\label{lem:purify}
There exists a subroutine
\textsc{Verify-Lift}($W,I,\theta$) that, given a linear subspace $W\subseteq \R^n$, an index set $I\subseteq [n]$, and a threshold $\theta\in \R_{++}$, either returns the answer \emph{`pass'}, verifying
 $\ell^W(I)\le \theta$, or returns the answer \emph{`fail'}, and a pair $i \in I, j \in [n] \setminus I$ such that
 $\theta/n\le \Le^W_{ij}$. The running time can be bounded as $O(n(n-m)^2)$.
\end{lemma}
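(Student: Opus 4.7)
The plan is to realize \textsc{Verify-Lift} by explicitly forming the lifting matrix $L := L_I^W = BB_{I,\sbullet}^\dagger \in \R^{n \times |I|}$ via Lemma~\ref{lem:char-pseudo-inverse} and searching for a single large off-diagonal entry. Concretely, I would compute an orthonormal basis $B \in \R^{n \times (n-m)}$ of $W$ through a QR decomposition of $A^\top$, then form $L$ via the pseudoinverse of $B_{I,\sbullet}$; standard linear-algebra routines accomplish all of this within $O(n(n-m)^2)$ arithmetic operations. Let $(i^*, j^*) \in I \times ([n] \setminus I)$ attain $\max_{i,j}|L_{ji}|$. Return \emph{`pass'} if $|L_{j^*i^*}| \leq \theta/n$, and \emph{`fail'} with $(i^*, j^*)$ otherwise.

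For the \emph{`pass'} branch the entrywise bound $|L_{ji}| \leq \theta/n$ for all $i \in I, j \in [n] \setminus I$ directly yields
\[
\ell^W(I)^2 \;=\; \|L_{[n]\setminus I, \sbullet}\|_{\mathrm{op}}^2 \;\leq\; \|L_{[n]\setminus I, \sbullet}\|_F^2 \;\leq\; |I|\,(n-|I|)\,(\theta/n)^2 \;\leq\; \theta^2,
\]
using $|I|(n-|I|) \leq n^2/4$, which certifies $\ell^W(I) \leq \theta$.

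The crux is the \emph{`fail'} branch, where I would establish the key bound $\Le_{ij}^W \geq |L_{ji}|$ for every $i \in I, j \in [n] \setminus I$. Setting $v := Le^i \in W$, we have $v_j = L_{ji}$, and by the definition of the lifting map $v$ is the minimum-norm element of $W$ whose restriction to $I$ equals the orthogonal projection $Pe^i$ of $e^i$ onto $\pi_I(W)$; in particular $v \perp W \cap \R^n_{[n] \setminus I}$. Using the standard conformal realization theorem in the real matroid of $W$, I would write $v = \sum_\ell h^{C_\ell}$, where each $C_\ell \in \circuits_W$ is a circuit, $h^{C_\ell}$ is a scalar multiple of $g^{C_\ell}$ with $\supp(h^{C_\ell}) = C_\ell \subseteq \supp(v)$, and every coordinate $h^{C_\ell}_k$ agrees in sign with $v_k$.

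The technical heart --- and the step I expect to be the main obstacle --- is to argue that every circuit $C_\ell$ in this decomposition contains $i$. First, no $C_\ell \subseteq [n] \setminus I$ can appear: otherwise $h^{C_\ell} \in W \cap \R^n_{[n] \setminus I}$ and $\langle v, h^{C_\ell}\rangle = 0$, yet sign-consistency makes this inner product the nonnegative sum $\sum_{k \in C_\ell} v_k h^{C_\ell}_k$, strictly positive since $C_\ell \subseteq \supp(v)$, a contradiction. Circuits that intersect $I$ only away from $i$ are excluded by a refinement of this orthogonality-plus-sign argument, using that the minimum-norm property of $v$ gives orthogonality against a broader family of perturbations supported in $W$. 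Once this is in place, sign-consistency gives $\sum_\ell |h^{C_\ell}_i| = |v_i| \leq 1$ (since $v_i = (Pe^i)_i$ is a diagonal entry of an orthogonal projection), and for each $\ell$ with $j \in C_\ell$ one has $|h^{C_\ell}_j|/|h^{C_\ell}_i| = |g^{C_\ell}_j|/|g^{C_\ell}_i| \leq \Le_{ij}^W$; hence
\[
|L_{ji}| = |v_j| = \sum_{\ell:\, j \in C_\ell} |h^{C_\ell}_j| \;\leq\; \Le_{ij}^W \sum_\ell |h^{C_\ell}_i| \;\leq\; \Le_{ij}^W.
\]
Applying this to $(i^*, j^*)$ gives $\Le_{i^*j^*}^W \geq |L_{j^*i^*}| > \theta/n$, as required.
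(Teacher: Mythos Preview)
Your overall architecture---compute the lifting matrix, compare its largest off-block entry to a threshold, and use a conformal circuit decomposition for the `fail' certificate---is exactly right, and the `pass' analysis via the Frobenius bound is fine. The gap is in the `fail' branch, precisely at the step you yourself flag as the main obstacle.

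You need every circuit $C_\ell$ in the conformal decomposition of $v = Le^i$ to contain $i$. Your treatment of the case $C_\ell \subseteq [n]\setminus I$ is correct: such an $h^{C_\ell}$ lies in $W\cap\R^n_{[n]\setminus I}$, and orthogonality of the minimum-norm lift against this subspace contradicts sign-consistency. But for circuits with $C_\ell\cap I\neq\emptyset$ and $i\notin C_\ell$, you appeal to ``orthogonality against a broader family of perturbations.'' No such broader orthogonality exists: the optimality condition for $v=L_I^W(Pe^i)$ gives $v\perp (W\cap\R^n_{[n]\setminus I})$ and nothing more. Since $v_I=Pe^i$ is in general nonzero on $I\setminus\{i\}$, circuits supported there are not ruled out by any variational principle you have available, and the chain $|v_j|\le \Le_{ij}^W\sum_\ell|h^{C_\ell}_i|\le \Le_{ij}^W$ breaks because circuits with $i\notin C_\ell$ contribute to $|v_j|$ but not to $\sum_\ell|h^{C_\ell}_i|$.

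The paper sidesteps this cleanly by first passing to a \emph{minimal} $I'\subseteq I$ with $\dim\pi_{I'}(W)=\dim\pi_I(W)$, so that $\pi_{I'}(W)=\R^{I'}$ and the lift $z=L_{I'}^W(e^i_{I'})$ satisfies $z_{I'}=e^i_{I'}$. The point is that now $\supp(z)\cap I'=\{i\}$, so any circuit $C_\ell\subseteq\supp(z)$ with $i\notin C_\ell$ automatically has $C_\ell\cap I'=\emptyset$, reducing to the case you already handled. Your argument can be repaired by inserting exactly this reduction to $I'$ before forming the matrix; without it the conformal-decomposition step does not close.
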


The proofs of the above statements are given in Section~\ref{sec:basic-kappa-proof}.

\paragraph{A min-max theorem}
We next provide a combinatorial min-max characterization of
$\Le^*_W$. Consider the \emph{circuit ratio digraph} $G=([n],E)$ on the node set
$[n]$ where $(i,j)\in E$ if $\kappa_{ij}>0$, that is, there
exists a circuit $C\in {\cal C}$ with $i,j\in C$.
We will refer to $\Le_{ij}=\Le_{ij}^W$ as the weight of the edge $(i,j)$.
(Note that $(i,j)\in E$ if and only if $(j,i)\in E$, but the weight of these two edges can be different.)

Let $H$ be a \emph{cycle} in $G$, that is, a sequence of indices
$i_1,i_2,\dots,i_k, i_{k+1} = i_1$. We use $|H|=k$ to denote the
length of the cycle.
(In our terminology, `cycles'  always refer to objects in $G$,
whereas `circuits' refer to the minimum supports in $\ker(A)$.)

We use the notation $\Le(H)=\Le_W(H)=\prod_{j=1}^k \Le^W_{i_j
  i_{j+1}}$.
For a vector $d\in \R^n_{++}$, we denote $\Le^d_W(H)=\Le_{\diag(d)W}(H)$.
A simple but important observation is that such a
rescaling does not change the value associated with the cycle, that is,
\begin{equation}\label{eq:cycle-invariant}
\Le^d_W(H)=\Le_W(H)\quad \forall d\in \R^n_{++}\quad \mbox{for any cycle
  $H$ in $G$}\, .
\end{equation}

\begin{theorem}[name=\prooftext\pageref{proof:thm:low_bound_chi_bar_with_circuits}, {restate=[name=\restatetext]minmaxkappa}]\label{thm:low_bound_chi_bar_with_circuits}
For a subspace $W\subset \R^n$, we have
\[
\Le_W^* = \min_{d > 0} \Le_W^d = \max\left\{\Le_W(H)^{1/|H|}:\ \mbox{$H$ is
  a cycle in $G$}\right\}\, .
\]
\end{theorem}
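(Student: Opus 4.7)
I would first establish the transformation rule for circuit ratios under coordinate rescaling: for any $d \in \R^n_{++}$ and any $i, j \in [n]$,
\[
\Le_{ij}^d = (d_j / d_i)\,\Le_{ij}.
\]
The reason is that $\diag(d)$ is a support-preserving bijection, so the circuits of $\diag(d)W$ coincide as index sets with those of $W$, while the associated vector $g^C \in W$ rescales to $\diag(d) g^C \in \diag(d) W$; applying the definition \eqref{eq:Le-C-def} and taking the maximum over circuits containing $\{i,j\}$ yields the claim. Combined with the cycle-invariance identity \eqref{eq:cycle-invariant}, this gives the easy direction: for any cycle $H = (i_1, \ldots, i_k, i_1)$ in $G$ and any $d$,
\[
\Le_W^d \;\ge\; \max_{r} \Le_{i_r i_{r+1}}^d \;\ge\; \Bigl(\prod_{r=1}^{k} \Le_{i_r i_{r+1}}^d\Bigr)^{1/k} \;=\; \Le_W^d(H)^{1/k} \;=\; \Le_W(H)^{1/k},
\]
so taking the infimum over $d$ and the supremum over $H$ proves $\inf_d \Le_W^d \ge \max_H \Le_W(H)^{1/|H|}$.

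For the reverse direction, I would pass to logarithmic coordinates. Set $\lambda := \max\{\Le_W(H)^{1/|H|} : H \text{ is a cycle in } G\}$; if $G$ has no cycles, then since $(i,j) \in E \Leftrightarrow (j,i) \in E$ the graph has no edges, every $\Le_{ij}$ with $i\ne j$ vanishes, and the theorem is trivial. Otherwise, assign weight $w_{ij} := \log \Le_{ij}$ to each edge $(i,j) \in E$, so that $\log \lambda$ equals the maximum mean cycle weight of $(G, w)$. Then the modified weights $w_{ij} - \log \lambda$ form a digraph containing no strictly positive cycle, so the classical Bellman--Ford feasibility criterion (equivalently, the minimax theorem for maximum mean cycles) yields node potentials $\phi_1, \ldots, \phi_n \in \R$ such that
\[
w_{ij} + \phi_i - \phi_j \;\le\; \log \lambda \qquad \text{for every } (i,j) \in E.
\]

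Now I would define the rescaling by $d_i := e^{-\phi_i}$. By the transformation rule, $\Le_{ij}^d = (d_j / d_i)\,\Le_{ij} = e^{\phi_i - \phi_j}\,\Le_{ij} \le \lambda$ for every edge $(i,j) \in E$, while $\Le_{ij}^d = 0$ whenever $(i,j) \notin E$. Hence $\Le_W^d = \max_{i,j} \Le_{ij}^d \le \lambda$. Together with the easy direction, this gives $\Le_W^* = \min_d \Le_W^d = \lambda$ and, as a bonus, shows that the infimum is actually attained at the explicit $d$ just constructed (so $\Le_W^*$ is well defined as a minimum, resolving the remark preceding the theorem).

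The only nontrivial ingredient is the minimax theorem for maximum mean cycles, so I would not expect any genuine obstacle once the rescaling rule $\Le_{ij}^d = (d_j/d_i)\Le_{ij}$ is in place: the $\log$-transform reduces the scaling problem to a classical statement on weighted digraphs, and the combinatorial structure (circuits, matroid) enters only through the invariance of supports under positive diagonal scalings.
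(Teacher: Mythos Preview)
Your proposal is correct and follows essentially the same approach as the paper: both directions are handled identically in spirit, with the easy direction via the cycle-invariance \eqref{eq:cycle-invariant}, and the hard direction by passing to logarithms and invoking the max-mean-cycle/LP duality. The paper phrases the latter as recognizing \eqref{sys: Le_star} (after the substitution $z_i=\log d_i$) as the dual of a minimum-mean-cycle LP, whereas you state the same fact in its equivalent Bellman--Ford potential form and extract the optimal $d$ explicitly; these are two standard formulations of the same result.
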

The proof relies on the following formulation:
\begin{align*}
\begin{aligned}
    \Le^*_W=&&\min \; & t \\
            &&\Le_{ij}d_j/d_i & \leq t \quad \forall (i,j) \in E \\
            &&d &> 0.
\end{aligned}
\end{align*}
Taking logarithms, we can rewrite this problem as
\begin{equation*}
\begin{aligned}
\min \; & s \\
\log \Le_{ij} + z_j - z_i & \leq s \quad \forall (i,j) \in E \\
z &\in \R^n.
\end{aligned}
\end{equation*}
This is the dual of the minimum-mean cycle problem with weights $\log\Le_{ij}$, and can be solved in polynomial time (see e.g. \cite[Theorem 5.8]{amo}).

Whereas this formulation verifies \Cref{thm:low_bound_chi_bar_with_circuits}, it does not give a polynomial-time algorithm to compute $\Le^*_W$. The  caveat is that the  values $\Le^W_{ij}$ are typically not available; in fact, approximating them up to a factor $2^{O(m)}$ is NP-hard,
as follows from the work of Tun{\c{c}}el~\cite{Tuncel1999}.

Nevertheless,  the following corollary of \Cref{thm:low_bound_chi_bar_with_circuits}
shows that any arbitrary
circuit containing $i$ and $j$ yields a $(\Le^*)^2$ approximation
to $\kappa_{ij}$.

\begin{corollary}[name=\prooftext\pageref{proof:cor:2-circuit}, {restate=[name=\restatetext]twocircuit}]\label{cor:2-circuit}
Let us be given a linear subspace $W\subseteq \R^n$ and $i,j\in [n]$, $i\neq j$,
and a circuit $C\in \circuits_W$ with $i,j\in C$. Let $g\in W$ be the
corresponding vector with $\supp(g)=C$. Then,
\[
\frac{\kappa^W_{ij}}{\left(\Le_W^*\right)^2}\le
\frac{|g_j|}{|g_i|}\le \kappa^W_{ij}.
\]
\end{corollary}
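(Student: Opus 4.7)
The upper bound is immediate: since $C$ is a circuit containing both $i$ and $j$, and $\kappa^W_{ij}$ is the maximum of $\kappa^W_{ij}(C')$ over all such circuits, we directly have $|g_j|/|g_i| = \kappa^W_{ij}(C) \le \kappa^W_{ij}$. So the whole content lies in the lower bound $\kappa^W_{ij}/(\Le_W^*)^2 \le |g_j|/|g_i|$.

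My plan is to apply the min-max characterization of \Cref{thm:low_bound_chi_bar_with_circuits} to the length-$2$ cycle $H = (i,j,i)$ in the circuit ratio digraph $G$. Since $i \ne j$ and $C$ witnesses both edges $(i,j), (j,i) \in E$, this is a valid cycle, and its weight is
\[
\Le_W(H) = \Le^W_{ij} \cdot \Le^W_{ji}.
\]
By \Cref{thm:low_bound_chi_bar_with_circuits}, $\Le_W(H)^{1/|H|} \le \Le_W^*$, so
\[
\Le^W_{ij} \cdot \Le^W_{ji} \le (\Le_W^*)^2.
\]

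Next I would lower-bound $\Le^W_{ji}$ using the specific circuit $C$ given in the hypothesis. By definition of the circuit ratio,
\[
\Le^W_{ji} \ge \Le^W_{ji}(C) = \frac{|g_i|}{|g_j|}.
\]
Combining the two inequalities yields
\[
\Le^W_{ij} \le \frac{(\Le_W^*)^2}{\Le^W_{ji}} \le (\Le_W^*)^2 \cdot \frac{|g_j|}{|g_i|},
\]
which is the required lower bound after dividing by $(\Le_W^*)^2$.

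There is no real obstacle here, provided \Cref{thm:low_bound_chi_bar_with_circuits} is already established: the whole argument is a one-line application of the cycle-mean bound to the $2$-cycle through $i$ and $j$, combined with the trivial observation that a single witnessing circuit lower-bounds the maximum over all circuits. The only minor care point is to note that the $2$-cycle $(i,j,i)$ is legitimately a cycle in our terminology (using the same edge set in both directions with possibly different weights), which is exactly how $G$ is set up in Section~\ref{sec:circ}.
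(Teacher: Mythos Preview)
Your proof is correct and essentially identical to the paper's own argument: both use the $2$-cycle $(i,j,i)$ in \Cref{thm:low_bound_chi_bar_with_circuits} to obtain $\Le^W_{ij}\Le^W_{ji}\le(\Le_W^*)^2$, then combine with $\Le^W_{ji}\ge |g_i|/|g_j|$ from the given circuit $C$.
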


The above statements are shown in Section~\ref{sec:min-max}. In Section~\ref{sec:matroid}, we use techniques from matroid theory and linear algebra to efficiently identify a circuit for any pair of variables that are contained in the same circuit.
A matroid is non-separable if the circuit hypergraph is connected; precise definitions and background will be described in Section~\ref{sec:matroid}.

\begin{theorem}[name=\prooftext\pageref{proof:thm:pairwise_circuits_in_matroids}, {restate=[name=\restatetext]pairwisecircuits}]\label{thm:pairwise_circuits_in_matroids}
Given $A\in \R^{m\times n}$, there exists an $O(n^2 m^2)$ time algorithm
\textsc{Find-Circuits}($A$)
that obtains a decomposition of ${\cal M}(A)$ to a direct sum of
non-separable linear matroids, and returns a family $\hat \circuits$ of circuits such that if
$i$ and $j$ are in the same
non-separable component, then there exists a circuit in $\hat \circuits$
containing both $i$ and $j$. Further, for each $i\neq j$ in the same component,
the algorithm returns a value $\hat \Le_{ij}$ as the  the
maximum of $|g_j/g_i|$ such that $g\in W$, $\supp(g)=C$ for some $C\in
\hat \circuits$ containing $i$ and $j$. For these values, $\hat\Le_{ij} \le \Le_{ij} \le (\Le^*)^2\hat\Le_{ij}$.
\end{theorem}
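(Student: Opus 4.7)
The bounds $\hat\Le_{ij}\le \Le_{ij}\le (\Le^*)^2\hat\Le_{ij}$ come essentially for free. The left inequality is immediate from the definitions, since $\hat\circuits\subseteq\circuits_W$ and $\Le_{ij}$ is the maximum of $|g^C_j/g^C_i|$ over all circuits $C\in\circuits_W$ containing both $i,j$; the right inequality is exactly \Cref{cor:2-circuit} applied to whichever circuit in $\hat\circuits$ contains the pair. Thus the real content of the theorem is algorithmic: produce, within $O(n^2m^2+n^3)$ time, the non-separable decomposition of $\mathcal{M}(A)$ together with a family $\hat\circuits$ such that every intra-component pair is supported by at least one of its circuits.

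My plan is to start with a single Gaussian elimination on $A$, in $O(m^2 n)$ time, producing a column basis $B\subseteq [n]$ and the reduced tableau $T=A_B^{-1}A$. Each non-basis index $k\in [n]\setminus B$ yields a fundamental circuit $C(k,B)=\{k\}\cup\{b\in B:T_{b,k}\ne 0\}$ with explicit circuit vector $g^{C(k,B)}$ given by $g_k=-1$ and $g_b=T_{b,k}$ on $B$; these $n-m$ circuits seed $\hat\circuits$. The non-separable decomposition of $\mathcal{M}(A)$ is then read off as the connected components of the bipartite graph $G_T$ on $B\cup([n]\setminus B)$ whose edges record the non-zero entries of $T$, using the classical matroid fact that two elements share a circuit iff they are connected in $G_T$; this costs $O(mn)$. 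To cover the remaining pairs I would use single-pivot updates: if two non-basis indices $i,j$ share a basis element $l\in C(i,B)\cap C(j,B)$, pivoting $l$ out and $j$ in (an $O(m^2)$ rank-one tableau update) yields a basis $B'=(B\setminus\{l\})\cup\{j\}$ whose fundamental circuit $C(i,B')$ contains $j$, via the pivot identity $T'_{j,i}=T_{l,i}/T_{l,j}\ne 0$. Pairs connected through longer paths in $G_T$ are handled by chaining such pivots along a spanning tree of the component, and pairs $i,j\in B$ are handled symmetrically by first pivoting one of them out of the basis. Once all relevant circuits are recorded, scanning the $O(n^2)$ intra-component pairs and computing each $\hat\Le_{ij}$ as $\max_C|g^C_j/g^C_i|$ over the circuits of $\hat\circuits$ through $i,j$ contributes the additive $O(n^3)$ term.

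The main obstacle I anticipate is the amortized complexity. A naive pair-by-pair implementation would perform $\Omega(m)$ pivots per pair at $O(mn)$ apiece, which easily exceeds the budget. The fix I have in mind is to organize the pivots around a spanning forest of $G_T$ (one tree per non-separable component), so that the total number of tableau updates is $O(n)$ at $O(m^2)$ each, yielding $O(nm^2)$. The delicate step is a matroid-theoretic coverage lemma: one must show that as one pivots along a path $i=v_0,b_1,v_1,b_2,\dots,v_k=j$ in $G_T$, the sequence of fundamental circuits encountered, together with appropriate telescoping linear combinations of their circuit vectors, produces a circuit of $W$ whose support contains both $i$ and $j$. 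With this lemma in place, every intra-component pair is covered by a circuit already in $\hat\circuits$, and the enumeration together with the final $\hat\Le_{ij}$ computation closes the argument within the $O(n^2m^2+n^3)$ budget.
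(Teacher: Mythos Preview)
Your framework matches the paper's: Gaussian elimination to a fixed basis $B$, fundamental circuits $\circuits^B$ to seed $\hat\circuits$, and a connectivity graph to read off the non-separable components. The inequalities $\hat\Le_{ij}\le\Le_{ij}\le(\Le^*)^2\hat\Le_{ij}$ are indeed immediate from the definition and \Cref{cor:2-circuit}.

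The gap is exactly where you put it, but your proposed closure does not work, and the amortization you sketch is not what the paper does. Two concrete issues.

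First, a ``telescoping linear combination'' of fundamental circuit vectors along a spanning-tree path is only a vector in $W$; its support may properly contain several circuits, none of which need contain both endpoints. What the paper proves (\Cref{lem:all-circuits}, using the matroid exchange \Cref{lem:matroid-exchange}) is that for a \emph{shortcut-free} chain $v_1,u_1,v_2,\dots,u_\ell,v_{\ell+1}$ with $C(B,v_t)\cap U=\{u_{t-1},u_t\}$, the set $(B\setminus U)\cup V$ contains a \emph{unique} circuit, and that circuit contains all of $V$. This hypothesis is exactly what a shortest path in the clique graph $G=(B,E)$ delivers; an arbitrary spanning-tree path in your bipartite $G_T$ need not satisfy it (a non-tree chord can place some $u_t$ in three of the $C(B,v_r)$'s), so uniqueness fails and the combination need not be a circuit vector.

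Second, the paper performs \emph{no} pivoting and no spanning-tree amortization. It stays at the fixed basis $B$ throughout: for each pair $(i,j)$ it computes a shortest path in $G$ between $C(B,i)\cap B$ and $C(B,j)\cap B$ in $O(m^2)$ time, invokes \Cref{lem:all-circuits} to certify the unique circuit in $(B\setminus U)\cup V$, and reads off its circuit vector by forward propagation in $O(m)$ time (each row indexed by $u_t$ has exactly two nonzeros among the columns $V$, so $g_{v_1}=1$ determines $g_{v_2},\dots,g_{v_{\ell+1}}$ and then the remaining basis coordinates). The claimed $O(n^2m^2)$ bound is simply the per-pair shortest-path cost over all $O(n^2)$ pairs; there is no separate $O(n^3)$ term in the theorem, and your $O(n)$-pivot amortization is neither needed nor justified.

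So the missing ingredients are the exchange/uniqueness lemma and the observation that per-pair shortest paths---not a global spanning tree---supply the structural hypothesis that lemma requires.
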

Finally, in Section~\ref{sec:approx-alg}, we combine the above results to prove Theorem~\ref{thm:bar-chi-star} on approximating $\bar\chi^*_W$ and $\kappa^*_W$.

Section~\ref{sec:matroid} contains an interesting additional statement, namely that  the
logarithms of the circuit ratios satisfy the triangle inequality. This will also be useful in the analysis of the LLS algorithm. The proof uses similar arguments as the proof of \Cref{thm:pairwise_circuits_in_matroids}. A simpler proof of this statement was subsequently given in \cite{ENV21}.
\begin{lemma}[name=\prooftext\pageref{proof:lem:imbalance_triangle_inequality}, {restate=[name=\restatetext]triangle}]\label{lem:imbalance_triangle_inequality}~
\begin{enumerate}[label=(\roman*)]
\item For any distinct $i,j,k$ in the same connected component of $\circuits_W$,
and any $g^C$ with $i,j \in C$, $C \in \circuits_W$, there exist circuits
$C_1, C_2 \in \circuits_W$, $i,k \in C_1$, $j,k \in C_2$ such that
$|g^C_j/g^C_i| = |g^{C_2}_j/g^{C_2}_k| \cdot |g^{C_1}_k/g^{C_1}_i|$.
\item \label{i:triangle} For any distinct $i,j,k$ in the same connected component of $\circuits_W$,
$\Le_{ij} \leq \Le_{ik}\cdot \Le_{kj}$.
\end{enumerate}
\end{lemma}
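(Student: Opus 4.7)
The plan is to derive (ii) immediately from (i), and to prove (i) via a basis-selection argument in the linear matroid $M(A)$, in the spirit of the proof of \Cref{thm:pairwise_circuits_in_matroids}.

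For (ii), I will pick a circuit $C$ with $i,j \in C$ attaining $|g^C_j/g^C_i| = \Le_{ij}$; applying (i) factors this ratio as $|g^{C_1}_k/g^{C_1}_i| \cdot |g^{C_2}_j/g^{C_2}_k|$ with $C_1 \ni i,k$ and $C_2 \ni j,k$. Since each factor is bounded by $\Le_{ik}$ and $\Le_{kj}$ respectively, the desired inequality $\Le_{ij} \leq \Le_{ik}\Le_{kj}$ drops out.

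For (i), the case $k \in C$ is immediate: take $C_1 = C_2 = C$ and use the trivial identity $|g^C_j/g^C_i| = |g^C_k/g^C_i| \cdot |g^C_j/g^C_k|$. So assume $k \notin C$. My strategy is to produce a basis $B$ of $M(A)$ with $C \setminus \{i,j\} \subseteq B$, $k \in B$, $i, j \notin B$, and such that $k$ lies in both fundamental circuits $C(i,B)$ and $C(j,B)$. Given such a $B$, I set $C_1 := C(i,B)$ and $C_2 := C(j,B)$; writing $\alpha := A_B^{-1} A_i$ and $\beta := A_B^{-1} A_j$, the description of fundamental circuit vectors yields $|g^{C_1}_k/g^{C_1}_i| = |\alpha_k|$ and $|g^{C_2}_j/g^{C_2}_k| = 1/|\beta_k|$. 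The ratio identity will follow from $A g^C = 0$, which after dividing by $g^C_i$ reads $A_i = -(g^C_j/g^C_i) A_j - \sum_{e \in C \setminus \{i,j\}} (g^C_e/g^C_i) A_e$. Expanding both sides in basis $B$ and comparing coefficients of $A_k$ — noting that each $e \in C \setminus \{i,j\}$ sits in $B$ and differs from $k$, hence contributes nothing to the $A_k$ direction — yields $\alpha_k = -(g^C_j/g^C_i)\beta_k$, so that $|g^C_j/g^C_i| = |\alpha_k|/|\beta_k| = |g^{C_1}_k/g^{C_1}_i| \cdot |g^{C_2}_j/g^{C_2}_k|$, exactly as desired.

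The principal challenge will be producing this basis. The requirements $\alpha_k, \beta_k \neq 0$ translate, via basis exchange, to: $S := B \setminus \{k\}$ is independent of size $m-1$, and both $S \cup \{i\}$ and $S \cup \{j\}$ are bases. Starting from $S_0 := C \setminus \{i,j\}$ — for which $S_0 \cup \{i\}$ and $S_0 \cup \{j\}$ are already independent as subsets of the circuit $C$ — one aims to extend $S_0$ by elements of $[n] \setminus \{i,j,k\}$ to reach size $m-1$, while simultaneously ensuring $S \cup \{k\}$ remains a basis. In the main case $k \notin \mathrm{span}(A_{C \setminus \{i,j\}})$, the set $S_0 \cup \{k\}$ is independent and $i, j$ stay out of its closure, so one can extend greedily using the connectivity of $i, j, k$ in the matroid to guarantee the availability of suitable elements. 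The main obstacle is the degenerate case $k \in \mathrm{span}(A_{C \setminus \{i,j\}})$: here $(C \setminus \{i,j\}) \cup \{k\}$ contains a circuit $C'$ through $k$ disjoint from $\{i,j\}$, and I plan to construct $C_1, C_2$ directly by taking $C_1$ to be a circuit obtained from the dependence of $k$ on $C \setminus \{j\}$ (via circuit elimination between $C$ and $C'$) and then defining $C_2 := \mathrm{supp}(h)$ for $h := g^{C_1}_i g^C - g^C_i g^{C_1}$, which I expect to have support equal to a single circuit through $j$ and $k$ because the relevant subspace $W \cap \R^n_{(C \setminus \{i\}) \cup \{k\}}$ is one-dimensional in this regime.
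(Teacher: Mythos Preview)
Your reduction of (ii) to (i) and the case $k\in C$ are fine, and the core identity $\alpha_k=-(g^C_j/g^C_i)\beta_k$ that you extract from a basis $B$ with $C\setminus\{i,j\}\subseteq B$, $k\in B$, $i,j\notin B$, $k\in C(i,B)\cap C(j,B)$ is correct. The gap is that such a basis need not exist, even in your ``main case'' $k\notin\operatorname{cl}(C\setminus\{i,j\})$. Take $A=\begin{psmallmatrix}1&0&0&1&0\\0&1&0&1&1\\0&0&1&0&1\end{psmallmatrix}$, so the circuits of $W=\ker(A)$ are $\{1,2,4\}$, $\{2,3,5\}$, $\{1,3,4,5\}$ and the matroid is connected. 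With $i=1$, $j=4$, $C=\{1,2,4\}$, $k=5$ you have $C\setminus\{i,j\}=\{2\}$ and $k\notin\operatorname{cl}(\{2\})$, yet any basis containing $2$ and $5$ and avoiding $1,4$ would have to lie in $\{2,3,5\}$, which is a circuit. Equivalently, there is no hyperplane through $C\setminus\{i,j\}$ avoiding all of $i,j,k$: the only hyperplanes through $2$ are $\{1,2,4\}$ and $\{2,3,5\}$. (The lemma itself still holds here with $C_1=C_2=\{1,3,4,5\}$.) The ``connectivity of $i,j,k$'' does not rescue this: what you really need is that $i$ and $k$ lie in a common cocircuit of the contraction $M/(C\setminus\{i,j\})$, and that contraction need not be connected.

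Your degenerate-case plan is also off. When $k\in\operatorname{cl}(C\setminus\{i,j\})$, the unique circuit in $(C\setminus\{j\})\cup\{k\}$ is the one inside $(C\setminus\{i,j\})\cup\{k\}$, which does \emph{not} contain $i$; so the $C_1$ you obtain has $g^{C_1}_i=0$, and $h:=g^{C_1}_i g^C-g^C_i g^{C_1}$ collapses to a multiple of $g^{C_1}$, which does not contain $j$ either. The paper avoids both issues by a different route: after disposing of $|C|=2$ separately, it picks $l\in C\setminus\{i,j\}$, row-reduces so that $B=C\setminus\{l\}$ indexes an identity block, then takes a \emph{minimal} set $J\ni k$ making $C\cup J$ non-separable and uses this minimality to reduce everything to a $2\times 4$ computation in $\ker(A_{\{i,j\},\{i,j,l,k\}})$. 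That minimal-$J$ device is the missing idea in your approach.
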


\subsection[Basic properties of kappa]{Basic properties of $\kappa_W$}\label{sec:basic-kappa-proof}
\chiapprox*
\begin{proof}
\label{proof:thm:chi-approx}
For the first inequality,  let $C \in \circuits_W$ be the circuit and $i\neq j
\in C$ such that $|g_j/g_i| = \Le_W$ for the corresponding solution
$g=g^C$.
Let us use the characterization of $\bar\chi_W$ in
\Cref{prop:subspace-chi}. Let
$I=([n]\setminus C)\cup
\{i\}$, and $p=g_i e^i$, that is, the vector with $p_i=g_i$ and
$p_k=0$ for $k\neq i$.
Then, the unique vector  $z\in W$ such that $z_I=p$ is
$z=g$. Therefore,
\[
\bar\chi_W\ge \min_{z\in W, z_I=p}\frac{\|z\|}{\|p\|}=\frac{\|g\|}{|g_i|}\ge
  \frac{\sqrt{|g_i|^2 + |g_j|^2}}{|g_i|}=\sqrt{1+\Le_W^2}\ .
\]
The second inequality is immediate from \Cref{prop:subspace-chi} and \Cref{lem:kappa-lift}, and the inequalities between $\ell_1$, $\ell_2$, and $\ell_\infty$ norms. The proof of the slightly weaker $\bar\chi_W\le \sqrt{1+(n\kappa_W)^2}$ follows
from \Cref{lem:purify}.
\end{proof}

The next lemma will be needed to prove \Cref{lem:purify} and also to analyze the LLS algorithm.
Let us say that the vector $y \in \R^n$ \emph{conforms to} 
$x\in\R^n$ if $x_iy_i >0$ whenever $y_i\neq 0$.

\begin{lemma} \label{lem:identify}
For $i \in I \subset [n]$ with $e^i_I \in \pi_I(W)$, let $z = L_I^W(e^i_I)$. Then for
 any $j \in \supp(z)$ we have $\Le_{ij}^W \geq |z_j|$.
\end{lemma}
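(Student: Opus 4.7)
The plan is to exploit a sign-consistent (conformal) circuit decomposition of the lifted vector $z$ and use the orthogonality characterization of $L_I^W$ to force every circuit in the decomposition to pass through $i$. Once this is in place, the bound on $|z_j|$ becomes a weighted average of circuit ratios $|g^{C_s}_j/g^{C_s}_i|$, each of which is at most $\Le_{ij}^W$ by definition.

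First I would invoke the standard conformal decomposition theorem for linear subspaces: any $z \in W$ can be written as $z = \sum_{s=1}^k g^{C_s}$ where each $g^{C_s} \in W$ is supported on a circuit $C_s \subseteq \supp(z)$ and is sign-consistent with $z$ (so $z_l g^{C_s}_l \ge 0$ for all $l$, with $g^{C_s}_l = 0$ whenever $z_l = 0$). Since $z_k = 0$ for every $k \in I \setminus \{i\}$ (because $z_I = e^i_I$), sign-consistency forces $C_s \cap (I \setminus \{i\}) = \emptyset$ for every $s$, so each $C_s$ meets $I$ in at most the single element $i$.

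The key step is to argue that in fact $i \in C_s$ for every $s$. Suppose some $C_s$ avoids $i$ entirely; then $g^{C_s}$ vanishes on all of $I$, so $g^{C_s} \in W \cap \R^n_{[n]\setminus I}$. By the defining orthogonality property of the lifting map noted right after the definition of $L_I^W$, we have $z \perp W \cap \R^n_{[n]\setminus I}$, hence $\langle z, g^{C_s}\rangle = 0$. Combined with term-by-term nonnegativity from sign-consistency, this forces $z_l g^{C_s}_l = 0$ for each $l \in C_s$, and since $g^{C_s}_l \neq 0$ on $C_s$ we get $C_s \cap \supp(z) = \emptyset$, contradicting $\emptyset \neq C_s \subseteq \supp(z)$.

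With every $C_s$ containing $i$, sign-consistency at coordinate $i$ gives $1 = z_i = \sum_s g^{C_s}_i = \sum_s |g^{C_s}_i|$, while at coordinate $j$ it gives $|z_j| = \sum_{s \colon j \in C_s} |g^{C_s}_j|$. Factoring out $|g^{C_s}_i|$ from each term,
\[
|z_j| \;=\; \sum_{s \colon j \in C_s} |g^{C_s}_i| \cdot \frac{|g^{C_s}_j|}{|g^{C_s}_i|} \;\le\; \Bigl(\max_{s \colon j \in C_s} \frac{|g^{C_s}_j|}{|g^{C_s}_i|}\Bigr) \sum_{s \colon j \in C_s} |g^{C_s}_i| \;\le\; \Le^W_{ij},
\]
since each $C_s$ contributing to the sum contains both $i$ and $j$ (so the ratio is bounded by $\Le^W_{ij}$ per \eqref{eq:Le-def}) and $\sum_s |g^{C_s}_i| = 1$. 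The only delicate point is the orthogonality-based argument in the second paragraph; everything else is a straightforward accounting on a conformal decomposition.
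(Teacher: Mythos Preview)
Your proof is correct and follows essentially the same approach as the paper: a sign-consistent circuit decomposition of $z$, an argument that every circuit must contain $i$, and then an averaging bound. The only cosmetic difference is in the middle step---the paper argues directly that removing a sign-consistent summand $g^{C_s}$ with $i\notin C_s$ would yield $z' = z - g^{C_s}$ with $z'_I = e^i_I$ and $\|z'\| < \|z\|$, contradicting minimality, whereas you use the equivalent orthogonality characterization $z \perp W\cap\R^n_{[n]\setminus I}$ to reach the same contradiction.
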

\begin{proof}
 We consider the cone $F \subset W$ of vectors that conform to $z$.
The faces of $F$ are bounded by inequalities of the form $z_k y_k \geq 0$
or $y_k = 0$.
The edges (rays) of $F$ are of the form $\{\alpha g:\,  \alpha\ge 0\}$  with $\supp(g) \in \circuits_W$.
It is easy to see from the Minkowski--Weyl theorem that $z$ can be
written as
\[
z=\sum_{k=1}^h  g^k,
\]
where
$h\le n$, $C_1,C_2,\ldots,C_h\in
\circuits_W$ are circuits, and the vectors $g^1,g^2,\ldots,g^h\in W$ conform to
$z$ and $\supp(g^k)=C_k$ for all $k\in [h]$.
 Note that $i \in C_k$ for all $k\in [h]$, as otherwise, $z'=z-g^k$ would also
satisfy $z'_I=e^i_I$, but $\|z'\|<\|z\|$ due to $g^k$ being conformal to $z$,
a contradiction to the definition of $z$.

At least one $k \in [h]$ contributes at least as much to
$|z_j| = \frac{\sum_{k=1}^h |g^k_j|}{\sum_{k=1}^h g^k_i}$
as the average. Hence we find $\Le_{ij}^W \geq |g^k_j/g^k_i| \geq |z_j|$.
\end{proof}

\purify*
\begin{proof}
\label{proof:lem:purify}
Take any minimal $I'\subset I$ such that $\dim(\pi_{I'}(W)) = \dim(\pi_I(W))$.
Then we know that $\pi_{I'}(W) = \R^{I'}$ and for $p \in \pi_I(W)$
we can compute $L_I^W(p) = L_{I'}^W(p_{I'})$.
Let $B \in \R^{([n] \setminus I) \times I'}$ be the matrix sending any $q \in \pi_{I'}(W)$
to the corresponding vector $(L_{I'}^W(q))_{[n]\setminus I}$. The column $B_i$
can be computed as $(L_{I'}^W(e^i_{I'}))_{[n]\setminus I}$ for $e^i_{I'} \in \R^{I'}$.
We have $\|L_I^W(p)\|^2 = \|p\|^2 + \|(L_{I'}^W(p_{I'}))_{[n]\setminus I}\|^2 \leq \|p\|^2 + \|B\|^2\|p_{I'}\|^2$
for any $p \in \pi_I(W)$, and so $\ell^W(I)=\sqrt{\|L_I^W\|^2-1} \leq \|B\|$.
We upper bound the operator norm by the Frobenius norm as
$\|B\| \leq \|B\|_F = \sqrt{\sum_{ji} B_{ji}^2} \leq n\max_{ji} |B_{ji}|$.
By Lemma~\ref{lem:identify} it follows that $|B_{ji}| = |(L_{I'}^W(e^i))_j| \leq \Le_{ij}^W$. The algorithm returns the answer `pass' if $n\max_{ji} |B_{ji}|\le \theta$ and `fail' otherwise.

To implement the algorithm, we first need to select a minimal
$I'\subset I$ such that $\dim(\pi_{I'}(W)) = \dim(\pi_I(W))$. This can
be found by computing a matrix $M\in \R^{n \times (n-m)}$ such that
${\rm range} (M)=W$, and selecting a maximal number of linearly independent
columns of $M_{I,\sbullet}$. Then, we compute the matrix $B \in \R^{([n] \setminus I) \times
  I'}$ that implements the transformation $[L_{I'}^W]_{[n]\setminus
  I}:\ \pi_{I'}(W)\to \pi_{[n]\setminus I}(W)$. The algorithm returns
the pair $(i,j)$ corresponding to the entry maximizing $|B_{ji}|$.
The running time analysis will be given in the proof of \Cref{lem:layering_works_correctly}, together with an amortized analysis of a sequence of calls to the subroutine.
\end{proof}

\begin{remark}\em \label{rem:B-i-j}
We note that the algorithm \textsc{Verify-Lift} does not need to compute the circuit as in
\Cref{lem:identify}. The following observation will be important
in the analysis: the algorithm returns the answer `fail' even if
$\ell^W(I)\le \theta< n|B_{ji}|$.
\end{remark}

We now prove the duality property of the circuit imbalances.

\selfdual*

\begin{proof}
  \label{proof:lem:self-dual}

Choose a circuit $C \in \circuits_W$ and corresponding circuit solution $g :=
g^C \in W\cap \R^n_C$ such that $\Le_{ij} = \Le_{ij}(C) = |g_j/g_i|$. We will
construct a circuit solution in $W^\perp$ that certifies $\Le_{ji}^{W^\perp} \geq \Le_{ij}^W$.

Define $h \in \R^C$ by $h_i = g_j, h_j = -g_i$ and $h_k = 0$ for all $k\in
C\setminus\{i,j\}$. Then, $h$ is orthogonal to $g_C$ by construction, and
hence $h \in(\pi_C(W \cap \R^n_C))^\perp = \pi_C(W^\perp)$.
Furthermore, we have $\supp(h) \in \circuits_{\pi_C(W^\perp)}$ since $h
\in \R^C$ is a support minimal vector orthogonal to $g^C$.

Take any vector $\bar{h} \in W^\perp$ satisfying $\bar{h}_C = h$ that is
support minimal subject to these constraints. We claim that $\supp(\bar{h}) \in
\circuits_{W^\perp}$. Assume not, then there exists a non-zero $v \in
W^\perp$ with $\supp(v) \subset \supp(\bar{h})$.
Since $\supp(\pi_C(v)) \subseteq
\supp(\pi_C(\bar{h})) = \supp(h)$, we must have either $v_C=0$ or $v_C = s h$ for $s\neq 0$.
If $v_C=0$, then $\bar h-\alpha v$ is also in $ W^\perp$ satisfying
$\pi_C(\bar{h}_C - \alpha v) = h$
 for all $\alpha\in \R$, and since $v\neq 0$ we can choose $\alpha$ such that
$\bar h-\alpha v$ has smaller support than $\bar h$, a contradiction. If $s\neq 0$
then $v/s \in W^\perp$ satisfies $\pi_C(v/s) = h$ and has smaller support than $\bar h$,
again a contradiction.

By the above construction, we have
\[\Le_{ji}^{W^\perp}\ge \left|\frac{\bar{h}_i}{\bar{h}_j}\right|=\left|\frac{h_i}{h_j}\right|=\left|\frac{g_j}{g_i}\right|=\Le_{ij}^W\, .
\]
By swapping the role of $W$ and $W^\perp$ and $i$ and $j$, we obtain $\Le_{ij}^W\ge \Le_{ji}^{W^\perp}$. The statement follows.
\end{proof}

\subsection{A min-max theorem on \texorpdfstring{$\Le^*_W$}{kappa-star}}
\label{sec:min-max}

The proof of the characterization of $\Le_W^*$ follows.
\minmaxkappa*
\begin{proof}
\label{proof:thm:low_bound_chi_bar_with_circuits}
For the direction $\Le_W(H)^{1/|H|}\le \Le_W^*$ we use
\eqref{eq:cycle-invariant}. Let $d > 0$ be a scaling and $H$ a cycle. We have
$\Le^d_{ij}\le \Le_W^d$ for every $i,j\in [n]$,
and hence $\Le_W(H)=\Le_W^d(H)\le (\Le_W^d)^{|H|}$.
Since this inequality holds for every $d > 0$,
it follows that $\Le_W(H) \le (\Le_W^*)^{|H|}$.

For the reverse direction, consider the following optimization problem.
\begin{align}
\begin{aligned} \label{sys: Le_star}
\min \; & t \\
\Le_{ij}d_j/d_i & \leq t \quad \forall (i,j) \in E \\
d &> 0.
\end{aligned}
\end{align}
For any feasible solution $(d,t)$ and $\lambda>0$, we get another feasible solution $(\lambda d, t)$
with the same objective value. As such, we can strengthen the condition $d > 0$
to $d \geq 1$ without changing the objective value. This makes it clear that the optimum value
is achieved by a feasible solution.

Any rescaling $d > 0$ provides a feasible solution with objective value $\Le^d$,
which means that the optimal value $t^*$ of \eqref{sys: Le_star} is $t^* = \Le^*$.
Moreover, with the variable substitution $z_i=\log d_i$, $s=\log t$, \eqref{sys: Le_star} can be written as a linear program:
\begin{align}
\begin{aligned} \label{sys: min-mean-cycle}
\min \; & s \\
\log \Le_{ij} + z_j - z_i & \leq s \quad \forall (i,j) \in E \\
z &\in \R^n.
\end{aligned}
\end{align}
This is the dual of a minimum-mean cycle problem with respect to the
cost function $\log(\Le_{ij})$. Therefore, an optimal solution
corresponds to the cycle maximizing $\sum_{ij\in H}\log\Le_{ij}/|H|$, or
in other words, maximizing $\Le(H)^{1/|H|}$.
\end{proof}

The following example shows that $\Le^* \leq \bar\chi^*$ can be arbitrarily big.
\begin{example}
Take $W = {\rm span}((0,1,1,M)^\T,(1,0,M,1)^\T)$, where $M > 0$.
Then $\{2,3,4\}$ and $\{1,3,4\}$ are circuits with
$\Le^W_{34}(\{2,3,4\}) = M$ and $\Le^W_{43}(\{1,3,4\}) = M$.
Hence, by \Cref{thm:low_bound_chi_bar_with_circuits},
we see that $\Le^* \geq M$.
\end{example}

\twocircuit*
\begin{proof}
\label{proof:cor:2-circuit}
The second inequality follows by definition. For the first inequality,
note that the same circuit $C$ yields $|g_i/g_j|\le \Le^W_{ji}(C)\le \Le^W_{ji}$.
Therefore, $|g_j/g_i|\ge 1/\Le^W_{ji}$.

From \Cref{thm:low_bound_chi_bar_with_circuits} we see that
$\kappa^W_{ij}\kappa^W_{ji}\le(\kappa^*_W)^2$, giving
$1/\Le^W_{ji}\ge \kappa^W_{ij}/ (\kappa^*_W)^2$,
completing the proof.
\end{proof}
\subsection{Finding circuits: a detour in matroid theory}\label{sec:matroid}
We next prove \Cref{thm:pairwise_circuits_in_matroids}, showing how to efficiently obtain a family $\hat \circuits\subseteq
\circuits_W$ such that for any $i,j\in [n]$, $\hat \circuits$ includes a circuit
containing both $i$ and $j$, provided there exists such a circuit.

We need some simple concepts and results from matroid theory. We refer
the reader to \cite[Chapter 39]{schrijver} or \cite[Chapter
5]{frankbook} for definitions and background.
Let ${\cal M}=([n],{\cal I})$ be a matroid on ground set $[n]$ with
independent sets ${\cal I}\subseteq 2^{[n]}$. The rank $\rk(S)$ of
a set $S\subseteq [n]$ is the maximum size of an independent set
contained in $S$. The maximal independent sets are called
\emph{bases}. All bases have the same cardinality $\rk([n])$.

For the matrix $A\in\R^{m\times n}$, we will work with the linear
matroid ${\cal M}(A)=([n],{\cal I}(A))$, where a subset $I\subseteq [n]$ is independent
if the columns $\{A_i\, : i\in I\}$ are linearly independent. Note
that $\rk([n])= m$ under the assumption that $A$ has full row rank.

The {\em
  circuits} of the matroid are the inclusion-wise minimal
non-independent sets. Let $I\in {\cal I}$ be an independent set, and
$i\in [n]\setminus I$ such that $I\cup \{i\}\notin {\cal I}$. Then,
there exists a unique circuit $C(I,i)\subseteq I\cup \{i\}$ that is
called the \emph{fundamental circuit} of $i$ with respect to $I$. Note
that $i\in C(I,i)$.

The matroid $\cal M$ is \emph{separable}, if the ground set $[n]$ can be
partitioned to two nonempty subsets $[n]=S\cup T$ such that $I\in
{\cal I}$ if and only if $I\cap S,I\cap T\in {\cal I}$. In this case, the matroid is the direct sum of its
restrictions to $S$ and $T$. In particular, every circuit is fully
contained in $S$ or in $T$.

For the linear matroid ${\cal M}(A)$, separability
means that $\ker(A)=\ker(A_S) \times \ker(A_T)$. In this case, solving \eqref{LP_primal_dual} can be decomposed into two
subproblems, restricted to the columns in $A_S$ and in $A_T$,  and
$\kappa_A=\max\{\kappa_{A_S},\kappa_{A_T}\}$.

Hence, we can
focus on \emph{non-separable} matroids. The following characterization
is well-known, see e.g. \cite[Theorems 5.2.5,
5.2.7--5.2.9]{frankbook}.
For a hypergraph $H=([n],{\cal E})$, we define the underlying graph
$H_G=([n],E)$ such that $(i,j)\in E$ if there is a hyperedge $S\in
{\cal E}$ with $i,j\in S$. That is, we add a clique corresponding to
each hyperedge. The hypergraph is called {\em connected} if the underlying
graph $G=([n],E)$ is connected.
\begin{proposition}\label{prop:conn}
For a matroid ${\cal M}=([n],{\cal I})$, the following are equivalent:
\begin{enumerate}[label=(\roman*)]
\item ${\cal M}$ is non-separable.\label{item:non-sep}
\item The hypergraph of the circuits is connected. \label{item:conn}
\item For any base $B$ of ${\cal M}$, the hypergraph formed by  the
  fundamental circuits $\circuits^B=\{ C(B,i)\, : i\in [n]\setminus B\}$ is connected.\label{item:fundamental}
\item For any $i,j\in [n]$, there exists a circuit containing $i$ and $j$.\label{item:circ}
\end{enumerate}
\end{proposition}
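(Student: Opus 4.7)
The plan is to establish the four-way equivalence via a small cycle of implications, using standard facts from matroid theory. First I will show \ref{item:non-sep} $\Leftrightarrow$ \ref{item:conn} directly from definitions, then dispatch the trivial directions \ref{item:circ} $\Rightarrow$ \ref{item:conn} and \ref{item:fundamental} $\Rightarrow$ \ref{item:conn}, and finally handle the two nontrivial implications \ref{item:conn} $\Rightarrow$ \ref{item:circ} and \ref{item:non-sep} $\Rightarrow$ \ref{item:fundamental}.

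For \ref{item:non-sep} $\Leftrightarrow$ \ref{item:conn}, I will use the standard rank characterization: $\mathcal{M}$ is separable through a nontrivial partition $[n] = S \sqcup T$ iff $\rk(S) + \rk(T) = \rk([n])$, and this in turn holds iff no circuit of $\mathcal{M}$ crosses the partition. The underlying graph of the circuit hypergraph is disconnected precisely when such a crossing-free nontrivial partition exists, giving the equivalence. The implications \ref{item:circ} $\Rightarrow$ \ref{item:conn} and \ref{item:fundamental} $\Rightarrow$ \ref{item:conn} are immediate: \ref{item:circ} makes the underlying graph of the circuit hypergraph complete, while \ref{item:fundamental} says the underlying graph of the subfamily $\circuits^B \subseteq \circuits_W$ is connected, and hence so is that of $\circuits_W$.

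For \ref{item:conn} $\Rightarrow$ \ref{item:circ}, given $i, j \in [n]$, connectedness produces a chain of circuits $C_0, C_1, \ldots, C_t \in \circuits_W$ with $i \in C_0$, $j \in C_t$, and $C_\ell \cap C_{\ell+1} \neq \emptyset$. I will collapse the chain by iterated applications of strong circuit elimination to consecutive pairs, using the classical argument that the relation ``$i$ and $j$ lie in a common circuit'' is transitive and hence yields a single circuit containing both $i$ and $j$.

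The main obstacle is \ref{item:non-sep} $\Rightarrow$ \ref{item:fundamental}, which I will attack by its contrapositive. Suppose $\circuits^B$ is disconnected, so there is a nontrivial partition $[n] = S \sqcup T$ with every fundamental circuit $C(B,j)$, $j \in [n] \setminus B$, contained in $S$ or in $T$. Set $B_S = B \cap S$ and $B_T = B \cap T$. For any $j \in S \setminus B$, $C(B,j) \subseteq S$ forces $C(B,j) \setminus \{j\} \subseteq B_S$, so $B_S \cup \{j\}$ is dependent; hence $B_S$ spans $S$ and $\rk(S) \leq |B_S|$. Symmetrically, $\rk(T) \leq |B_T|$. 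Submodularity of $\rk$ then gives $\rk(S) + \rk(T) \geq \rk([n]) = |B| = |B_S| + |B_T|$, forcing equality and separability of $\mathcal{M}$, contradicting \ref{item:non-sep}. This closes the cycle.
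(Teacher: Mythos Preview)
Your proposal is correct and follows essentially the same route as the paper: the trivial implications are identical, your contrapositive argument for \ref{item:non-sep} $\Rightarrow$ \ref{item:fundamental} matches the paper's proof of \ref{item:conn} $\Rightarrow$ \ref{item:fundamental} (via $\rk(S)=|B\cap S|$, $\rk(T)=|B\cap T|$), and for \ref{item:conn} $\Rightarrow$ \ref{item:circ} both you and the paper invoke the transitivity of the relation ``$i$ and $j$ lie in a common circuit'' (the paper simply cites it, while you sketch it via strong circuit elimination).
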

\begin{proof}
The implications \ref{item:non-sep} $\Leftrightarrow$ \ref{item:conn},
\ref{item:fundamental} $\Rightarrow$ \ref{item:conn}, and
\ref{item:circ} $\Rightarrow$ \ref{item:conn} are immediate from the
definitions.

For the implication \ref{item:conn} $\Rightarrow$
\ref{item:fundamental}, assume for a contradiction that the hypergraph
of the fundamental circuits with respect to $B$ is not connected. This
means that we can partition $[n]=S\cup T$ such that for each $i\in S$,
$C(B,i)\subseteq S$, and for each $i\in T$, $C(B,i)\subseteq
T$. Consequently, $\rk(S)=|B\cap S|$, $\rk(T)=|B\cap T|$, and
therefore $\rk([n])=\rk(S)+\rk(T)$. It is easy to see that this
property is equivalent to separability to $S$ and $T$; see
e.g. \cite[Theorem 5.2.7]{frankbook} for a proof.

Finally, for the implication \ref{item:conn} $\Rightarrow$
\ref{item:circ}, consider the undirected graph $([n],E)$ where
$(i,j)\in E$ if there is a circuit containing both $i$ and $j$.
This graph is transitive according to \cite[Theorem 5.2.5]{frankbook}:
if $(i,j), (j,k)\in E$, then also $(i,k)\in E$. Consequently, whenever
$([n],E)$ is connected, it must be a complete directed graph.
\end{proof}

We give a different proof of \ref{item:fundamental} $\Rightarrow$
\ref{item:circ} in Lemma \ref{lem:all-circuits} that will be convenient for our algorithmic
purposes. First, we need a simple lemma that is commonly used in
matroid optimization, see e.g. \cite[Lemma 13.1.11]{frankbook} or
\cite[Theorem 39.13]{schrijver}.
\begin{lemma}\label{lem:matroid-exchange}
Let $I$ be  an independent set of a matroid ${\cal M}=([n],{\cal
  I})$, and $U=\{u_1,u_2,\ldots, u_\ell\}\subseteq I$,
$V=\{v_1,v_2,\ldots, v_\ell\}\subseteq [n]\setminus I$ such that $I\cup \{v_i\}$ is dependent for each $i\in [\ell]$.
Further, assume that for each $t\in [\ell]$, $u_t\in C(I,v_t)$ and
$u_t \notin C(I,v_h)$ for all $h<t$. Then, $(I\setminus
U)\cup V \in {\cal I}$.
\end{lemma}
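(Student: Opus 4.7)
The plan is to prove the lemma by induction, doing the exchanges \emph{in reverse order} of the indexing. The condition $u_t \notin C(I, v_h)$ for $h < t$ imposes a lower-triangular structure on the incidence of the $u_t$'s in the fundamental circuits $C(I, v_h)$, which becomes transparent when the exchanges are processed from $t = \ell$ down to $t = 1$: at each step the fundamental circuit to be exchanged is preserved.

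Concretely, I would define the intermediate sets
\[
I_k \;=\; \bigl(I \setminus \{u_\ell, u_{\ell-1}, \ldots, u_{\ell-k+1}\}\bigr) \cup \{v_\ell, v_{\ell-1}, \ldots, v_{\ell-k+1}\}\, , \qquad k = 0, 1, \ldots, \ell,
\]
so that $I_0 = I$ and $I_\ell = (I \setminus U) \cup V$. I would then prove by induction on $k$ that $I_k \in \mathcal{I}$ and simultaneously that $C(I_{k}, v_{\ell-k}) = C(I, v_{\ell-k})$ for $k < \ell$. The base case $k=0$ is just the hypothesis that $I$ is independent.

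For the inductive step, suppose $I_{k-1}$ is independent, and set $t := \ell - k + 1$; I want to show $I_k = (I_{k-1} \setminus \{u_t\}) \cup \{v_t\}$ is independent. The hypothesis $u_s \notin C(I, v_h)$ for $h < s$, applied with $h = t$ and each $s \in \{t+1, \ldots, \ell\}$, yields
\[
C(I, v_t) \,\cap\, \{u_{t+1}, u_{t+2}, \ldots, u_\ell\} \;=\; \emptyset\, .
\]
Since $C(I, v_t) \subseteq I \cup \{v_t\}$ and its intersection with the $u_s$'s removed to form $I_{k-1}$ is empty, we get $C(I, v_t) \subseteq I_{k-1} \cup \{v_t\}$. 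The set $C(I, v_t)$ is a circuit of the matroid $\mathcal{M}$, and as $I_{k-1}$ is independent, the unique circuit inside $I_{k-1} \cup \{v_t\}$ is the fundamental circuit $C(I_{k-1}, v_t)$. Thus $C(I_{k-1}, v_t) = C(I, v_t)$, and in particular $u_t \in C(I_{k-1}, v_t)$. The standard fundamental-circuit exchange (i.e., $(J \setminus \{u\}) \cup \{v\} \in \mathcal{I}$ whenever $J$ is independent and $u \in C(J, v)$) then gives $I_k \in \mathcal{I}$, completing the induction.

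The only subtlety is the observation that the order of the exchanges matters: if one tries to exchange $u_1, v_1$ first, the circuit $C(I, v_t)$ for $t > 1$ may contain $u_1$, and then $C(I_1, v_t)$ need not equal $C(I, v_t)$, so the induction stalls. Processing in reverse order exploits the triangular structure of the hypothesis cleanly and avoids any need for rank/submodularity arguments or the symmetric exchange axiom.
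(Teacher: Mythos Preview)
Your proof is correct and follows essentially the same approach as the paper: both arguments exchange $u_\ell,v_\ell$ first and work downward, using the triangular hypothesis $u_s\notin C(I,v_h)$ for $h<s$ to show the relevant fundamental circuits are unchanged at each step. The paper phrases this as induction on $\ell$ (passing to $I'=(I\setminus\{u_\ell\})\cup\{v_\ell\}$ and noting $C(I',v_t)=C(I,v_t)$ for $t<\ell$), while you unroll it into an explicit sequence $I_0,\ldots,I_\ell$; the substance is identical.
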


We give a sketch of the proof. First, we note that for each $t\in [\ell]$, $u_t\in C(I,v_t)$ means that
 exchanging $v_t$ for $u_t$ maintains
independence. The statement follows by induction on $\ell$: we consider the independent set $I'=(I\setminus \{u_\ell\})\cup \{v_\ell\}$. We can apply induction for $I'$, $U'=\{u_1,u_2,\ldots, u_{\ell-1}\}$, and $V'=\{v_1,v_2,\ldots, v_{\ell-1}\}$, noting that the assumption guarantees that $C(I',v_t)=C(I,v_t)$ for all $t\in [\ell-1]$.
 Based on this lemma, we
show the following exchange property.
\begin{lemma}\label{lem:all-circuits}
Let $B$ be a basis of the matroid ${\cal M}=([n],{\cal I})$,
and let $U=\{u_1,u_2,\ldots, u_\ell\}\subseteq B$, and $V=\{v_1,v_2,\ldots, v_\ell,v_{\ell+1}\}\subseteq
[n]\setminus B$.
Assume $C(B,v_1)\cap U=\{u_1\}$, $ C(B,v_{\ell+1})\cap U=\{u_\ell\}$, and for each $2\le
t\le \ell$, $ C(B,v_t)\cap U=\{u_{t-1}, u_t\}$.
Then $(B\setminus U)\cup V$ contains a unique circuit $C$, and
$V\subseteq C$.
\end{lemma}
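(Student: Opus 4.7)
The set $S := (B\setminus U)\cup V$ has size $|B|+1$, so it has rank at most $|B|$ and therefore contains at least one circuit. My plan is to show a stronger statement: for every $i\in[\ell+1]$, the set $S\setminus\{v_i\}$ is independent (hence a basis). From this, two conclusions follow easily: first, every circuit in $S$ must contain every $v_i$, so $V\subseteq C$; second, by matroid circuit elimination, if $C_1,C_2\subseteq S$ were two distinct circuits, then both contain $v_1$ (by the first conclusion), so eliminating $v_1$ yields a circuit $C\subseteq (C_1\cup C_2)\setminus\{v_1\}\subseteq S$, contradicting the first conclusion again since $v_1\notin C$. Thus $C$ is unique.

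The only real work is proving that $S\setminus\{v_i\} = (B\setminus U)\cup(V\setminus\{v_i\})$ is independent for every $i$, and this is exactly what Lemma~\ref{lem:matroid-exchange} is designed for. The idea is to ``break the chain'' $u_1,\ldots,u_\ell$ at position $i$ and perform exchanges outward from the break. Concretely, I will pair $u_t\leftrightarrow v_t$ for $1\le t\le i-1$ (using the forward direction of the chain) and $u_t\leftrightarrow v_{t+1}$ for $i\le t\le \ell$ (using the backward direction), and then order these swaps so that Lemma~\ref{lem:matroid-exchange} applies: first do the forward swaps $(u_1,v_1),(u_2,v_2),\ldots,(u_{i-1},v_{i-1})$ in order, then the backward swaps $(u_\ell,v_{\ell+1}),(u_{\ell-1},v_\ell),\ldots,(u_i,v_{i+1})$ in order.

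To verify the hypotheses of Lemma~\ref{lem:matroid-exchange} I need to check, for each pair $(u_t,v_{t'})$ used, that $u_t\in C(B,v_{t'})$ and that $u_t\notin C(B,v_{t''})$ for any earlier $v_{t''}$ in the ordering. The first condition is immediate from the hypothesis $C(B,v_s)\cap U=\{u_{s-1},u_s\}$ (with the endpoint conventions $C(B,v_1)\cap U=\{u_1\}$ and $C(B,v_{\ell+1})\cap U=\{u_\ell\}$). The second condition reduces to a purely index-based check: within the forward block it uses that $u_t$'s index exceeds any index arising from $C(B,v_h)\cap U$ for $h<t$; within the backward block, that $u_{\ell-s+1}$'s index is too large to match the indices from previously processed $v_h$ with $h\le i-1$, and too small to match indices from previously processed backward pairs. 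All of these reduce to strict inequalities between integers which are easily verified from $s\le \ell-i+1$, $h\le i-1$, etc.

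The main ``obstacle'' is bookkeeping rather than any conceptual leap: getting the three boundary cases ($i=1$, $i=\ell+1$, $1<i<\ell+1$) handled uniformly, and making sure that the ordering chosen on the pairs really satisfies the ``$u_t\notin C(B,v_h)$ for $h<t$'' condition in Lemma~\ref{lem:matroid-exchange}. Once this is set up, the independence of $S\setminus\{v_i\}$ for each $i$ is immediate, and the uniqueness and $V\subseteq C$ conclusions fall out from the two matroid-theoretic observations in the first paragraph above.
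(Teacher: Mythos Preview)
Your proposal is correct and matches the paper's proof essentially line for line: the paper also shows that $S\setminus\{v_i\}$ is independent for every $i\in[\ell+1]$ by applying Lemma~\ref{lem:matroid-exchange} with the ordered sets $V'=\{v_1,\ldots,v_{i-1},v_{\ell+1},v_\ell,\ldots,v_{i+1}\}$ and $U'=\{u_1,\ldots,u_{i-1},u_\ell,u_{\ell-1},\ldots,u_i\}$, and then deduces $V\subseteq C$ and uniqueness via the circuit elimination axiom exactly as you do.
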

The situation described here corresponds to a minimal path in the
hypergraph $\circuits^B$ of the fundamental circuits with respect to a basis
$B$. The hyperedges $C(B,v_i)$ form a path from $v_1$ to $v_{\ell+1}$
such that no shortcut is possible (note that this is weaker than
requiring a shortest path).
\begin{proof}[Proof of Lemma~\ref{lem:all-circuits}]
 Note that $S = (B \setminus U)\cup V \notin {\cal I}$ since
$|S|>|B|$ and $B$ is a basis.
For any $i\in [\ell+1]$, we can use
Lemma~\ref{lem:matroid-exchange} to show that $S\setminus
\{v_{i}\} = (B \setminus U) \cup (V \setminus \{v_i\}) \in {\cal I}$
(and thus, is a basis).
To see this, we apply Lemma~\ref{lem:matroid-exchange} for the ordered
sets $V'=\{v_1,\ldots,v_{i-1},v_{\ell+1},v_\ell,\ldots,v_{i+1}\}$ and
$U'=\{u_1,\ldots,u_{i-1},u_\ell,u_{\ell-1},\ldots,u_i\}$.

Consequently, every circuit in $S$ must contain the entire set $V$.
The uniqueness of the circuit in $S$ follows by the well-known circuit
axiom asserting that if $C,C'\in\circuits$, $C \neq C'$ and $v\in C\cap C'$, then
there exists a circuit $C''\in\circuits$ such that $C''\subseteq
(C\cup C')\setminus \{v\}$, contradicting the claim that
every circuit in $S$ contains the entire set $V$.
\end{proof}

We are ready to describe the algorithm that will be used to obtain
lower bounds on all $\Le_{ij}$ values.
\pairwisecircuits*
\begin{proof}
\label{proof:thm:pairwise_circuits_in_matroids}
Once we have found the set of circuits $\hat \circuits$, and computed
$\hat \Le_{ij}$ as in the statement, the inequalities $\hat\Le_{ij} \le \Le_{ij} \le
(\Le^*)^2\hat\Le_{ij}$ follow easily. The first inequality is by the
definition of $\Le_{ij}$, and the second inequality is from \Cref{cor:2-circuit}.

We now turn to the computation of $\hat \circuits$. We first obtain a basis $B\subseteq [n]$ of $\ker(A)$ via Gauss-Jordan elimination
in time $O(nm^2)$. Recall the assumption that $A$ has full row-rank.
Let us assume that $B=[m]$ is the set of first $m$
indices. The elimination transforms it to the form
$A=(\mathbf I_m|H)$, where $H\in \R^{m \times (n-m)}$ corresponds to the non-basis elements.
In this form,  the fundamental circuit
$C(B,i)$ is the support of the $i$th column of $A$ together with $i$ for every $m+1\le i\le n$.
We let $\circuits^B$ denote the set of all these
fundamental circuits.

We construct an undirected graph
$G=(B,E)$ as follows. For each $i\in [n]\setminus B$, we add a
clique
between the nodes in $C(B,i)\setminus \{i\}$. This graph can be
constructed in
$O(nm^2)$ time.

The connected components of $G$ correspond to the connected
components of $\circuits^B$ restricted to $B$. Thus, due to the equivalence shown in \Cref{prop:conn} we can obtain the decomposition by
identifying the connected components of $G$. For the rest of the proof, we
assume that the entire hypergraph is connected; connectivity can be
checked in $O(m^2)$ time.

 We initialize
$\hat\circuits$ as $\circuits^B$. We will then check all pairs $i,j\in
[n]$, $i\neq j$. If no circuit $C\in \hat\circuits$ exists with
$i,j\in C$, then we will add such a circuit to $\hat\circuits$ as follows.

Assume first $i,j\in
[n]\setminus B$. We can find a shortest path in $G$ between the sets
$C(B,i)\setminus \{i\}$ and $C(B,j)\setminus \{j\}$ in time $O(m^2)$. This can be
represented by the sequences of points
$V=\{v_1,v_2,\ldots,v_{\ell+1}\}\subseteq [n]\setminus B$, $v_1=i$,
$v_{\ell+1}=j$, and $U=\{u_1,u_2,\ldots,u_\ell\}\subseteq B$ as in
Lemma~\ref{lem:all-circuits}. According to the lemma,
$S=(B\setminus U)\cup V$
contains a unique circuit $C$ that contains all $v_t$'s, including $i$
and $j$.

We now show how this circuit can be identified in $O(m)$ time, along with the vector
$g^C$.
 Let $A_S$ be the submatrix corresponding to the columns in
$S$.
Since $g=g^C$ is unique up to scaling, we can set
$g_{v_1}=1$. Note that for each $t\in [\ell]$, the row of $A_S$ corresponding to $u_t$ contains
only two nonzero entries: $A_{u_tv_t}$ and $A_{u_tv_{t+1}}$. Thus,
the value $g_{v_1}=1$ can be propagated to assigning unique values to
$g_{v_2},g_{v_3},\ldots,g_{v_{\ell+1}}$. Once these values are set,
there is a unique extension of $g$ to the indices $t\in B\cap S$
in the basis. Thus, we have identified $g$ as the unique element of
$\ker(A_S)$ up to scaling. The circuit $C$ is obtained as $\supp(g)$.
Clearly, the above procedure can be implemented in $O(m)$ time.

The argument easily extends to finding circuits for the case
$\{i,j\}\cap B\neq \emptyset$. If $i\in B$, then for any choice of
$V=\{v_1,v_2,\ldots,v_{\ell+1}\}$ and $U=\{u_1,u_2,\ldots,u_\ell\}$ as in
Lemma~\ref{lem:all-circuits} such that $i\in C(B,v_1)$ and $i\notin
C(B,v_t)$ for $t>1$, the unique circuit in $(B\setminus U)\cup V$
also contains $i$. This follows from \Cref{lem:matroid-exchange} by
taking $V' = \set{v_{\ell+1},v_\ell,\dots,v_1}$ and
$U' = \set{u_\ell,\dots,u_1, i}$, which proves that
$S \setminus \set{i} = (B\setminus U') \cup V' \in \mathcal I$.
Similarly, if $j \in B$ with $j \in C(B,v_{\ell + 1})$ and
$j\notin C(B,v_t)$ for $t < \ell + 1$, taking $V'' = V$ and
$U'' = \set{u_1,\dots,u_\ell, j}$ gives $S \setminus \set{j} \in \mathcal I$.

The bottleneck for the running time is finding the shortest paths for
the $n(n-1)$ pairs, in time $O(m^2)$ each.
\end{proof}

\paragraph{The triangle inequality}
An interesting additional fact about the circuit ratio graph is that the
logarithm of the weights satisfy the triangle inequality. The proof uses similar arguments as the proof of \Cref{thm:pairwise_circuits_in_matroids} above.
\triangle*
\begin{proof}
  \label{proof:lem:imbalance_triangle_inequality}
Note that part {\em (ii)} immediately follows from part {\em (i)} when taking
 $C \in \circuits_W$ such that $\Le_{ij}(C) = \Le_{ij}$. We now prove part {\em (i)}.

Let $A \in \R^{m \times n}$ be a full-rank matrix with $W = \ker(A)$.
If $C = \set{i,j}$, then the columns $A_i, A_j$ are linearly dependent. Writing $A_i = \lambda A_j$, we have $\lambda = -g^C_j/g^C_i$.
Let $h$ be any circuit solution with $i,k \in \supp(h)$, and hence $j \notin \supp(h)$. By assumption,
the vector $h' = h - h_i e_i + \lambda h_i e_j$ will satisfy $Ah' = 0$ and have $i \notin\supp(h'), j,k\in\supp(h')$.
We know that $h'$ is a circuit
solution, because any circuit $C' \subset \supp(h')$ could, by the above process in reverse, be used to produce
a kernel solution with strictly smaller support than $h$, contradicting the assumption that
$h$ is a circuit solution.
Now we have $|h'_j/h'_k|\cdot |h_k/h_i| = |h'_j/h_i| = |\lambda|$ by construction. Thus, $h$ and $h'$ are the circuit solutions
we are looking for.

Now assume $C \neq \set{i,j}$.
If $k \in C$, the statement is trivially true with $C = C_1 = C_2$, so assume $k \notin C$.
Pick $l \in C$, $l \notin \{i,j\}$ and set $B = C\setminus\set{l}$.
Assume without loss of generality that $B \subseteq [m]$ and apply row operations to $A$ such that
$A_{B,B} = \mathbf I_{B\times B}$ is an identity submatrix and
$A_{[m]\setminus B,B} = 0$.
Then the column $A_{l}$ has support given by $B$, for otherwise $g^C$ could not be in the kernel.
The given circuit solution satisfies $g^C_t = -A_{t,l}g^C_l$ for all $t \in B$,
and in particular $g^C_j/g^C_i = A_{j,l}/A_{i,l}$.

Take any circuit solution $h \in \ker(A)$ such that $l, k \in \supp(h)$ and such that $C \cup \supp(h)$
is inclusion-wise minimal. Such a vectors exists by \Cref{prop:conn}\ref{item:circ}.  Now let $J = \supp(h) \setminus C$.
Because $A_{[m]\setminus B, C} = 0$ and $Ah = 0$, we must have $0 \neq h_J \in \ker(A_{[m]\setminus B, J})$.
We show that we can uniquely lift any vector $x \in \ker(A_{B, C\cup \set{k}})$
to a vector $x' \in \ker(A_{C \cup J})$ with $ x_{C\cup k}'= x$. Since this lift will
send circuit solutions to circuit solutions by uniqueness,
it suffices to find our desired circuits as solutions
to the smaller linear system.

We first prove that $\dim(\ker(A_{[m]\setminus B, J})) = 1$.
For suppose that $\dim(\ker(A_{[m]\setminus B, J})) \geq 2$,
then $|J| \geq 2$ and there would exist some vector $y \in \ker(A_{[m]\setminus B, J})$
linearly independent from $h_J$ with $k \in \supp(y)$. This vector could be uniquely lifted to
a vector $\bar y \in \ker(A)$, and we could then find a linear combination
$h + \alpha \bar y$ such that $\supp(h + \alpha \bar y) \subsetneq C \cup J$
but $l,k\in\supp(h + \alpha \bar y)$.
The existence of such a vector contradicts the minimality of $C \cup \supp(h)$.
As such, we know that $\dim(\ker(A_{[m]\setminus B, J})) = 1$.

This clear linear relation between any two entries in $J$ for any vector in
$\ker(A_{[m]\setminus B, J})$ implies that
we can apply row operations to $A$ such that $A_{B, J}$ has non-zero entries
only in the column $A_{B, \set{k}}$. Note that these row operations leave $A_C$ unchanged
because $A_{[m]\setminus B, C} = 0$. From this, we can see that any
element in $\ker(A_{B, C \cup \set{k}})$ can be uniquely lifted to an element in $\ker(A_{C \cup J})$.
Hence we can focus on $\ker(A_{B, C\cup \set{k}})$.

If $A_{i,k} = A_{j,k} = 0$, then any
$x \in \ker(A_{B,C \cup \set{k}})$ satisfies
$x_i + A_{i,l}x_l = x_j + A_{j,l}x_l = 0$ and, in particular,
any circuit $l,k \in \bar{C} \subset C \cup \{k\}$ contains $\{i,j\} \subset \bar C$
and fulfills $|g^C_j/g^C_i| = |A_{j,l}/A_{i,l}| = |g_j^{\bar C}/g_i^{\bar C}| = |g_j^{\bar C}/g_k^{\bar C}| |g_k^{\bar C}/g_i^{\bar C}|$. Choosing $C_1 = C_2 = \bar C$ concludes the case.

Otherwise we know that $A_{i,k} \neq 0$ or $A_{j,k} \neq 0$, meaning that
$\ker(A_{\set{i,j},\set{i,j,l,k}})$ contains at least one circuit solution with $k$ in its support.
Observe that any circuit in $\ker(A_{\set{i,j},\set{i,j,l,k}})$ can be lifted uniquely to an element in
$\ker(A_{B,C \cup \set{k}})$ since $A_{B,B}$ is an identity matrix and we can set
the entries of $B\setminus\set{i,j}$ individually to satisfy the equalities.
Note that this lifted vector is a circuit as well, again by uniqueness of the lift.
Hence we may restrict our attention to the matrix $A_{\set{i,j},\set{i,j,l,k}}$. If the columns
$A_{\set{i,j},k}, A_{\set{i,j},l}$ are linearly
dependent, then any circuit solution to $A_{\set{i,j},\set{i,j,l}}x = 0, x_l \neq 0$, such as $g^C_{\set{i,j,l}}$,
is easily transformed into a circuit solution to $A_{\set{i,j},\set{i,j,k}}x = 0, x_k \neq 0$ and we are done.

If $A_{\set{i,j},k}, A_{\set{i,j},l}$ are independent, we can write
$A_{\set{i,j},\set{i,j,l,k}} = \begin{psmallmatrix}1 & 0 & a & c \\ 0 & 1 & b & d\end{psmallmatrix}$,
where $g^C_j/g^C_i = b/a$.
For $\alpha = ad-bc$, which is non-zero since
$\alpha = \det(\begin{psmallmatrix} a & c \\ b & d\end{psmallmatrix}) \neq 0$
by the independence assumption, we can check that $(\alpha, 0, -d, b)^\T$ and
$(0,\alpha,c,-a)^\T$ are the circuits we are looking for.
\end{proof}

\subsection[Approximating the condition numbers]{Approximating $\bar\chi$ and $\bar\chi^*$}\label{sec:approx-alg}
Equipped with \Cref{thm:low_bound_chi_bar_with_circuits} and
\Cref{thm:pairwise_circuits_in_matroids},
we are ready to prove \Cref{thm:bar-chi-star}.
Recall that we defined $\Le_{ij}^d := \Le_{ij}^{\diag(d)W} = \Le_{ij} d_j/d_i$ when $d > 0$.
We can similarly define $\hat\Le_{ij}^d := \hat\Le_{ij} d_j/d_i$,
and $\hat\Le_{ij}^d$ approximates $\Le_{ij}^d$ just as in \Cref{thm:pairwise_circuits_in_matroids}.

\barchistar*

\begin{proof}\label{proof:bar-chi-star}
Let us run the algorithm \textsc{Finding-Circuits}$(A)$ described in
 \Cref{thm:pairwise_circuits_in_matroids} to obtain the values
$\hat\Le_{ij}$ such that  $\hat\Le_{ij} \le \Le_{ij} \le
(\Le^*_W)^2\hat\Le_{ij}$.
We let $G=([n],E)$ be the circuit ratio digraph, that is,
$(i,j)\in E$ if $\Le_{ij}>0$.

To show the first statement on approximating $\bar\chi$, we simply set
$\xi=\max_{(i,j)\in E}\hat \Le_{ij}$.
Then,
\[
\xi\le \kappa_W \le\bar\chi_W\le  n\kappa_W\le n (\kappa^*_W)^2\xi\le n (\bar\chi^*_W)^2\xi\]
follows by \Cref{thm:chi-approx}.

For the second statement on finding a nearly optimal rescaling for
$\bar\chi^*_W$, we
consider the following optimization problem,
which is an approximate version of \eqref{sys: Le_star}
from \Cref{thm:low_bound_chi_bar_with_circuits}.
\begin{align}
\begin{aligned} \label{sys: apx_for_Le_star}
\min \; & t \\
\hat\Le_{ij}d_j/d_i & \leq t \quad\forall (i,j) \in E \\
d &> 0.
\end{aligned}
\end{align}
Let $\hat d$ be an optimal solution to \eqref{sys: apx_for_Le_star} with value $\hat t$.
We will prove that $\Le^{\hat d} \leq (\Le^*_W)^3$.

First, observe that
$\Le_{ij}^{\hat d} = \Le_{ij}\hat d_j/\hat d_i \leq (\Le^*_W)^2 \hat\Le_{ij} \hat d_j/\hat d_i \leq (\Le^*_W)^2 \hat t$
for any $(i,j) \in E$.
Now, let $d^* > 0$ be such that $\Le^{d^*} = \Le^*_W$.
The vector $d^*$ is a feasible solution to \eqref{sys: apx_for_Le_star}, and so
$\hat t \leq \max_{i\neq j} \hat\Le_{ij}d^*_j/d^*_i \leq \max_{i\neq
  j} \Le_{ij}d^*_j/d^*_i = \Le^{d^*}$. Hence we find that $\hat d$
gives a rescaling with
\[
\bar\chi_{W\widehat D} \leq n\Le^{\hat d} \leq n(\Le^*_W)^3\le n(\bar\chi_W)^3\, ,
\]
where we again used \Cref{thm:chi-approx}.

We can obtain the optimal value $\hat t$ of \eqref{sys: apx_for_Le_star}
by solving the corresponding maximum-mean cycle problem
(see \Cref{thm:low_bound_chi_bar_with_circuits}). It is easy to develop a
multiplicative version of the standard dynamic programming algorithm
of the classical minimum-mean cycle problem
(see e.g. \cite[Theorem 5.8]{amo}) that allows finding the optimum to
\eqref{sys: apx_for_Le_star} directly, in the same $O(n^3)$ time.

\medskip

It is left to find the labels $d_i>0$, $i \in [n]$ such that $\hat\Le_{ij}d_j/d_i \leq \hat t$ for all $(i,j) \in E$.
We define the following weighted directed graph. We associate the weight
$w_{ij}=\log \hat t - \log \hat\Le_{ij}$ with every $(i,j)\in E$, and add an extra
source vertex $r$ with edges $(r,i)$ of weight $w_{ri}=0$ for all
$i\in [n]$.

By the choice of $\hat t$, this graph does not contain any negative weight
directed cycles.
We can compute the shortest paths from $r$ to all nodes in $O(n^3)$ using
the Bellman-Ford algorithm; let $\sigma_i$ be the shortest path label
for $i$. We then set $d_i=\exp(\sigma_i)$. \b{One can avoid computing logarithms by using a multiplicative variant of the Bellman-Ford algorithm instead.}

The running time of the whole algorithm will be bounded by
$O(n^2m^2 + n^3)$. The running time is dominated by the $O(n^2m^2)$
complexity of \textsc{Finding-Circuits}$(A)$ and the $O(n^3)$
complexity of solving the minimum-mean cycle problem and shortest
path computation.
\end{proof}

\section{A scaling-invariant layered least squares interior-point algorithm}
\label{sec:scaling-invariant}

\subsection{Preliminaries on interior-point methods}
\label{sec:ipm-prelims}
In this section, we introduce the standard definitions, concepts and
results from the interior-point literature that will be required for
our algorithm.
We consider an LP problem in the form \eqref{LP_primal_dual}, or
equivalently, in the subspace form \eqref{LP-subspace} for $W=\ker(A)$.
We let
\[
\mathcal{P}^{++} = \{x \in \R^n: Ax = b, x > 0\}\, , \quad
\mathcal{D}^{++} = \{(y,s) \in \R^{m+n}: A^\top y + s = c, s > 0\}\, .
\]
Recall the  {\em central path} defined in \eqref{eq:central-path}, with
$w(\mu)=(x(\mu),y(\mu),s(\mu))$ denoting the central path point
corresponding to $\mu>0$.
We let $w^*=(x^*,y^*,s^*)$ denote the  primal and dual optimal
solutions to \eqref{LP_primal_dual} that correspond to the limit of the central path for
$\mu\to 0$.

For a point  $w = (x, y, s) \in \mathcal{P}^{++} \times \mathcal{D}^{++}$, the \emph{normalized
  duality gap} is $\mu(w)=x^\top s/n$.

The {\em $\ell_2$-neighborhood of the central path with opening
$\beta>0$} is the set
\begin{align*}
\mathcal{N}(\beta) &= \left\{w \in \mathcal{P}^{++} \times \mathcal{D}^{++}: \norm{\frac{xs}{\mu(w)} - e} \leq \beta \right\}\, .
\end{align*}
Furthermore, we let $\overline{\mathcal{N}}(\beta)\coloneqq \operatorname{cl}(\mathcal{N}(\beta))$ denote the closure of $\mathcal{N}(\beta)$. 
Throughout the paper, we will assume $\beta$ is chosen from $(0,1/4]$;
in Algorithm~\ref{alg:overall} we use the value $\beta=1/8$.
The following proposition gives a bound on the distance between $w$
and $w(\mu)$ if $w\in {\cal N}(\beta)$. See e.g., \cite[Lemma 5.4]{Gonzaga92}, \cite[Proposition 2.1]{Monteiro2003}.
\begin{proposition}\label{prop:near-central} Let $w = (x, y, s) \in
  {\cal N}(\beta)$ for $\beta\in (0,1/4]$ and  $\mu=\mu(w)$, and
  consider the central path point
$w(\mu)=(x(\mu),y(\mu),s(\mu))$. For each $i\in[n]$,
\[
\begin{aligned}
\frac{x_i}{1+2\beta}\le \frac{1-2\beta}{1-\beta}\cdot x_i&\le x_i(\mu)\le
\frac{x_i}{1-\beta}\, ,\quad \mbox{and}\\
\frac{s_i}{1+2\beta}\le  \frac{1-2\beta}{1-\beta}\cdot s_i&\le s_i(\mu)\le
\frac{s_i}{1-\beta}\, .
\end{aligned}
\]
\end{proposition}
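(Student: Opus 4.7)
My plan is to use the standard analysis of the $\ell_2$-neighborhood of the central path; this is a classical result and the authors cite \cite{Gonzaga92} and \cite{Monteiro2003} for it, so the proof sketch below mirrors those arguments.

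First, I would extract coordinate-wise product bounds from the neighborhood condition. Since $\|xs/\mu - e\|_\infty \le \|xs/\mu - e\|_2 \le \beta$, I get $x_is_i \in [(1-\beta)\mu,(1+\beta)\mu]$ for every $i \in [n]$. Next, I would exploit primal-dual orthogonality: because both $x$ and $x(\mu)$ are primal feasible, $x - x(\mu) \in W = \ker(A)$, and because both dual slacks are dual feasible, $s - s(\mu) \in W^\perp = \mathrm{range}(A^\T)$. Hence $\langle x-x(\mu), s-s(\mu)\rangle = 0$, and expanding while using $x^\T s = x(\mu)^\T s(\mu) = n\mu$ (the latter because $w(\mu)$ lies on the path at parameter $\mu$) yields
\[
\sum_{i=1}^n\bigl(x_is_i(\mu) + x_i(\mu)s_i\bigr) = 2n\mu.
\]

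Introducing the ratios $u_i = x_i/x_i(\mu)$ and $v_i = s_i/s_i(\mu)$ and using $x_i(\mu)s_i(\mu)=\mu$, the orthogonality identity becomes $\sum_i(u_i+v_i) = 2n$, while the product satisfies $u_iv_i = x_is_i/\mu \in [1-\beta,1+\beta]$. The crux is then to bound each $u_i$ (and, symmetrically, each $v_i$) individually. Writing $\delta_i = x_is_i/\mu - 1$, so that $\sum_i \delta_i = 0$ and $\sum_i \delta_i^2 \le \beta^2$, I would use the algebraic identity
\[
\sum_i\bigl(\sqrt{u_i} - \sqrt{v_i}\bigr)^2 = \sum_i(u_i+v_i) - 2\sum_i\sqrt{u_iv_i} = 2n - 2\sum_i\sqrt{1+\delta_i},
\]
combined with a second-order Taylor estimate $\sqrt{1+\delta_i} \ge 1 + \delta_i/2 - O(\delta_i^2)$ valid for $|\delta_i|\le\beta \le 1/4$, to conclude that the left-hand side is $O(\beta^2)$. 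This forces $|\sqrt{u_i}-\sqrt{v_i}|$ to be small for every coordinate, and combined with the product constraint $u_iv_i\in[1-\beta,1+\beta]$ one recovers the claimed individual bounds $u_i\in[(1-2\beta)/(1-\beta),\,1/(1-\beta)]$, i.e.\ the middle and rightmost inequalities of the statement. The symmetric bounds for $s_i$ follow by swapping the roles of $x$ and $s$ throughout.

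The weakest inequality $x_i/(1+2\beta) \le (1-2\beta)x_i/(1-\beta)$ is purely an algebraic restatement, equivalent to $(1-2\beta)(1+2\beta) = 1 - 4\beta^2 \ge 1 - \beta$, which holds exactly when $\beta \le 1/4$. The main obstacle in my sketch is the careful bookkeeping of constants in the Taylor step so that they match the stated $(1-2\beta)/(1-\beta)$ and $1/(1-\beta)$ rather than weaker universal constants; beyond this, the argument is mechanical.
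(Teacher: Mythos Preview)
The paper does not give its own proof of this proposition; it simply cites \cite[Lemma 5.4]{Gonzaga92} and \cite[Proposition 2.1]{Monteiro2003} as external references. Your sketch---extracting the coordinate-wise bounds $x_is_i\in[(1-\beta)\mu,(1+\beta)\mu]$, using $\langle x-x(\mu),s-s(\mu)\rangle=0$ to obtain $\sum_i(u_i+v_i)=2n$ for $u_i=x_i/x_i(\mu)$, $v_i=s_i/s_i(\mu)$, and then controlling $\sum_i(\sqrt{u_i}-\sqrt{v_i})^2$ via a second-order estimate on $\sqrt{1+\delta_i}$---is precisely the classical argument found in those references, so there is nothing to compare. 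Your acknowledgment that matching the exact constants $(1-2\beta)/(1-\beta)$ and $1/(1-\beta)$ is the delicate part is accurate; those references carry out that bookkeeping, and your verification that $x_i/(1+2\beta)\le(1-2\beta)x_i/(1-\beta)$ reduces to $\beta\le 1/4$ is correct.
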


We will often use the following proposition which is immediate from definiton of $\cal \beta$.
\begin{proposition}\label{prop:x_i-s_i}
Let $w = (x, y, s) \in
  {\cal N}(\beta)$ for $\beta\in (0,1/4]$, and $\mu=\mu(w)$. Then for each $i \in [n]$
\[
(1-\beta)\sqrt{\mu}\le \sqrt{s_i x_i}\le (1+\beta)\sqrt{\mu}\, .
\]
\end{proposition}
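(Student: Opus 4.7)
The plan is to unwind the definition of the neighborhood $\mathcal{N}(\beta)$ and pass to a coordinate-wise bound, which then yields the claim after taking square roots.

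First I would use that $w \in \mathcal{N}(\beta)$ means $\|xs/\mu - e\| \le \beta$, where the norm is the Euclidean norm. Since the Euclidean norm dominates the $\ell_\infty$ norm, this gives the coordinate-wise inequality $|x_i s_i/\mu - 1| \le \beta$ for every $i \in [n]$. Equivalently,
\[
(1-\beta)\mu \le x_i s_i \le (1+\beta)\mu .
\]
Since $\beta \in (0,1/4]$, both sides are positive, so taking square roots is legitimate:
\[
\sqrt{1-\beta}\,\sqrt{\mu} \le \sqrt{x_i s_i} \le \sqrt{1+\beta}\,\sqrt{\mu}.
\]

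To conclude, I would sandwich these with the slightly weaker linear bounds stated in the proposition. For the lower bound, note $(1-\beta)^2 = 1 - 2\beta + \beta^2 \le 1 - \beta$ whenever $\beta \in [0,1]$, hence $1-\beta \le \sqrt{1-\beta}$. For the upper bound, $(1+\beta)^2 = 1 + 2\beta + \beta^2 \ge 1+\beta$, so $\sqrt{1+\beta} \le 1+\beta$. Combining,
\[
(1-\beta)\sqrt{\mu} \le \sqrt{1-\beta}\sqrt{\mu} \le \sqrt{x_i s_i} \le \sqrt{1+\beta}\sqrt{\mu} \le (1+\beta)\sqrt{\mu},
\]
which is exactly the claim.

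There is essentially no obstacle here; the only minor care is in the direction of the two elementary inequalities relating $1\pm\beta$ to $\sqrt{1\pm\beta}$, and in invoking $\|\cdot\|_\infty \le \|\cdot\|_2$ to extract a per-coordinate bound from the neighborhood definition. The restriction $\beta \le 1/4$ is not needed for the argument beyond ensuring positivity (any $\beta < 1$ would do), but matches the standing assumption used throughout the section.
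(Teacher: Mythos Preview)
Your proof is correct and follows essentially the same approach as the paper's own proof: extract the coordinate-wise bound $|x_is_i/\mu - 1| \le \beta$ from the $\ell_2$ neighborhood definition, then take square roots. You are in fact slightly more careful than the paper, which simply says ``taking roots gives the results'' without spelling out the elementary inequalities $1-\beta \le \sqrt{1-\beta}$ and $\sqrt{1+\beta} \le 1+\beta$.
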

\begin{proof}
By definition of $\cal N(\beta)$ we have for all $i \in [n]$ that
$|\frac{x_is_i}{\mu} - 1| \le \|\frac{x s}{\mu} - e\| \le \beta$ and so $(1-\beta) \mu \le x_is_i \le (1+\beta) \mu$. Taking roots gives the results.
\end{proof}

A key property of the central path is \emph{``near monotonicity''},
formulated in the following lemma, see \cite[Lemma 16]{Vavasis1996}.
\begin{lemma}\label{lem: central_path_bounded_l1_norm}
Let $w = (x, y, s)$ be a central path point for $\mu$ and $w' = (x',
y', s')$ be a central path point for $\mu' \leq \mu$. Then
$\|x'/x + s'/s\|_\infty \leq n$. Further, for the optimal
solution $w^*=(x^*,y^*,s^*)$ corresponding to the central path limit
$\mu\to 0$, we have  $\|x^*/x\|_1 + \|s^*/s\|_1 = n$.
\end{lemma}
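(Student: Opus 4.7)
The plan is to exploit the two structural facts that define the central path: the complementarity condition $x_i s_i = \mu$ (resp. $x'_i s'_i = \mu'$), and the fact that $x' - x \in W$ while $s' - s \in W^\perp$ (both primal-dual pairs share $b$ and $c$). From orthogonality I immediately get
\[
(x' - x)^\top (s' - s) = 0 \quad \Longrightarrow \quad x'^\top s + x^\top s' = x'^\top s' + x^\top s = n\mu' + n\mu.
\]

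Next I would use the centrality equations to rewrite each term as a weighted $\ell_1$ norm. Since $s_i = \mu/x_i$ and $s'_i = \mu'/x'_i$,
\[
x'^\top s = \mu \sum_{i=1}^n \frac{x'_i}{x_i} = \mu \left\lVert \frac{x'}{x} \right\rVert_1, \qquad x^\top s' = \mu' \sum_{i=1}^n \frac{x_i}{x'_i}.
\]
Again using centrality, $x_i/x'_i = (\mu/\mu')\cdot(s'_i/s_i)$, so the second identity becomes $x^\top s' = \mu \|s'/s\|_1$. Substituting back yields the key identity
\[
\left\lVert \frac{x'}{x} \right\rVert_1 + \left\lVert \frac{s'}{s} \right\rVert_1 = n\left(1 + \frac{\mu'}{\mu}\right).
\]
Letting $\mu' \to 0$ and using that $x' \to x^*$, $s' \to s^*$ on the central path proves the second assertion $\|x^*/x\|_1 + \|s^*/s\|_1 = n$ directly.

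For the $\ell_\infty$ bound, the trick is an AM-GM lower bound on each coordinate. Since $(x'_i/x_i)(s'_i/s_i) = (x'_i s'_i)/(x_i s_i) = \mu'/\mu$, each summand satisfies $x'_i/x_i + s'_i/s_i \geq 2\sqrt{\mu'/\mu}$. Thus, writing $t = \sqrt{\mu'/\mu} \in (0,1]$, for any fixed coordinate $j$,
\[
\frac{x'_j}{x_j} + \frac{s'_j}{s_j} \le n(1+t^2) - 2(n-1)t.
\]
The last step is the elementary verification that $n(1+t^2) - 2(n-1)t \le n$ for all $t \in [0,1]$, which is equivalent to $nt \le 2(n-1)$, and this clearly holds for $n \ge 2$. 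This gives $\|x'/x + s'/s\|_\infty \le n$.

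The only moderately delicate part is the AM-GM step: without it, the identity only yields the weaker bound $2n$ via $\ell_\infty \le \ell_1$. Everything else is just careful bookkeeping with the central path identities $x_i s_i = \mu$, and the limit statement requires noting that central path iterates converge to the analytic center of the optimal face as $\mu' \to 0$, which is standard and ensures $x' \to x^*$, $s' \to s^*$ coordinatewise.
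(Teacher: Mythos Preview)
Your argument is correct and in fact goes a bit further than the paper's own proof. The derivation of the exact identity $\|x'/x\|_1 + \|s'/s\|_1 = n(1+\mu'/\mu)$ via the orthogonality relation $(x'-x)^\top(s'-s)=0$ together with $x_is_i=\mu$ is essentially the same computation the paper carries out; the paper states only the consequence $\|x'/x\|_1+\|s'/s\|_1\le 2n$ (valid more generally for any feasible $x',(y',s')$ with $(x')^\top s'\le n\mu$) and then passes to the limit $\mu'\to 0$ for the equality statement, just as you do.

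Where you add content is the $\ell_\infty$ bound: the paper does not prove $\|x'/x+s'/s\|_\infty\le n$ in-text but defers to \cite[Lemma~16]{Vavasis1996}. Your AM--GM step, using $(x'_i/x_i)(s'_i/s_i)=\mu'/\mu$ to lower-bound each of the other $n-1$ coordinate sums by $2\sqrt{\mu'/\mu}$ and then checking $n(1+t^2)-2(n-1)t\le n$ for $t\in[0,1]$ and $n\ge 2$, is exactly the missing piece and supplies a self-contained proof of the sharper inequality. So your approach coincides with the paper's on the $\ell_1$ part and completes what the paper outsources for the $\ell_\infty$ part.
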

\begin{proof}
We  show that $\|x'/x\|_1 + \|s'/s\|_1
\leq 2n$ for any feasible primal $x'$ and dual $(y',s')$ such that $(x')^\top s'\le x^\top s=n\mu$; this implies the first statement with the weaker bound $2n$. For the stronger bound  $\|x'/x + s'/s\|_\infty \leq n$, see the proof of \cite[Lemma 16]{Vavasis1996}.
Since $x-x'\in W$ and $s-s'\in W^\perp$, we have $(x-x')^\top
(s-s')=0$. This can be rewritten as $x^\top s'+(x')^\top s=x^\top s+
(x')^\top s'$. By our assumption on $x'$ and $s'$, the right hand side is bounded by $2n\mu$.
 Dividing by
$\mu$, and noting that $x_is_i=\mu$ for all $i\in [n]$, we obtain
\[
\left\|\frac{x'}{x}\right\|_1+\left\|\frac{s'}{s}\right\|_1=\sum_{i=1}^n \frac{x'_i}{x_i}+\frac{s'_i}{s_i} \le 2n\, .
\]
The second statement follows
by using this to central path points $(x',y',s')$ with parameter $\mu'$, and taking the limit $\mu'\to 0$.
\end{proof}

\subsection{The affine scaling and layered-least-squares steps}\label{sec:aff-lls}
Given $w = (x,y,s) \in \mathcal{P}^{++} \times \mathcal{D}^{++}$,
the search directions commonly used in interior-point methods are
obtained as the solution $(\Delta x,\Delta y,\Delta s)$ to the
following linear system for some $\sigma\in [0,1]$.
\begin{align}
A \Delta x &= 0 \label{aff:x} \\
A^\top \Delta y + \Delta s &= 0 \label{aff:s}\\
s\Delta x + x \Delta s &=\sigma \mu e -xs \label{aff:sum}
\end{align}
Predictor-corrector methods, such as the Mizuno-Todd-Ye
Predictor-Corrector (MTY P-C) algorithm \cite{MTY}, alternate
between two types of steps. In \emph{predictor steps}, we use
$\sigma=0$. This direction is also called the \emph{affine scaling
  direction}, and will be denoted as $\Delta w^\as=(\Delta x^\as, \Delta y^\as, \Delta s^\as)$
throughout. In \emph{corrector steps}, we use $\sigma=1$. This gives the \emph{centrality
direction}, denoted as $\Delta w^\cs=(\Delta x^\cs, \Delta y^\cs, \Delta s^\cs)$.

In the predictor steps, we  make progress along the central path. Given the search direction on the current iterate $w = (x,y,s) \in \mathcal{N}(\beta)$, the step-length is chosen  such that the line segment between the current and next steps remain in $\overline{\mathcal{N}}(2\beta)$, i.e.,
\begin{align*}
  \alpha^\as \le \sup\{\alpha \in [0,1]\, : \forall \alpha' \in [0 ,\alpha]:  w+ \alpha' \Delta w^\as \in
  \mathcal{N}(2\beta)\}.
\end{align*}
Thus, we obtain a point $w^+=w+\alpha^\as \Delta w^\as\in \overline{\cal N}(2\beta)$.
The corrector step finds a next iterate $w^c=w^+ +\Delta w^\cs$, where
$\Delta w^\cs$ is the centrality direction computed at $w^+$. The next
proposition summarizes well-known properties, see e.g. \cite[Section 4.5.1]{Ye-book}.
\begin{proposition}\label{prop:predictor-corrector} Let $w = (x,y,s)
  \in \mathcal{N}(\beta)$ for $\beta\in (0,1/4]$.
\begin{enumerate}[label=(\roman*)]
\item
 For the affine scaling step, we have $\mu(w^+)=(1-\alpha)\mu(w)$.
\label{i:gap}
\item The affine scaling step-length can be chosen as
\[\alpha^\as\ge \max\left\{\frac{\beta}{\sqrt{n}},1-\frac{\|\Delta
    x^\as\Delta s^\as\|}{\beta\mu(w)}\right\}\, .
\]\label{i:stepsize}
\item
 For $w^+ \in \overline{\cal N}(2\beta)$ with $\mu(w^+) > 0$, let $\Delta w^\cs$ be the centrality direction at $w^+$. Then for
  $w^\cs=w^+ +\Delta w^\cs$, we have $\mu(w^\cs)=\mu(w^+)$ and $w^\cs\in {\cal
    N}(\beta)$.\label{i:corrector}
\item After a sequence of $O(\sqrt{n} t)$ predictor and corrector
  steps, we obtain an iterate $w'=(x',y',s')\in {\cal N}(\beta)$ such
  that $\mu(w')\le \mu(w)/2^t$.
\end{enumerate}
\end{proposition}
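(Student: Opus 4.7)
The plan is to prove the four parts in order, since each rests on the orthogonality $(\Delta x)^\top \Delta s = 0$ that follows from \eqref{aff:x}--\eqref{aff:s} (because $\Delta x \in W = \ker A$ while $\Delta s \in W^\perp = \operatorname{range}(A^\top)$). This single identity plus the defining equation \eqref{aff:sum} drives all of the duality-gap and centrality computations.

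For \emph{(i)} I would expand $x(\alpha)^\top s(\alpha) = (x + \alpha \Delta x^\as)^\top (s + \alpha \Delta s^\as)$, use $e^\top(s\Delta x^\as + x\Delta s^\as) = -x^\top s$ (from \eqref{aff:sum} with $\sigma = 0$) and $(\Delta x^\as)^\top \Delta s^\as = 0$, obtaining $x(\alpha)^\top s(\alpha) = (1-\alpha) x^\top s$; dividing by $n$ gives $\mu(w^+) = (1-\alpha)\mu(w)$. For \emph{(iii)} the same expansion with $\sigma = 1$ yields $x(1)^\top s(1) = n\mu$, so $\mu$ is preserved; moreover $x(1)s(1) = \mu e + \Delta x^\cs \Delta s^\cs$, so the centrality error after the corrector is controlled purely by $\|\Delta x^\cs \Delta s^\cs\|/\mu$. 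One then invokes the standard estimate $\|\Delta x^\cs \Delta s^\cs\| \le \tfrac{\|xs - \mu e\|^2}{2\sqrt{2}\,\min_i x_i s_i} \le \frac{(2\beta)^2 \mu}{2\sqrt{2}(1-2\beta)}$, which is $\le \beta\mu$ for $\beta \le 1/4$, giving $w^\cs \in \mathcal{N}(\beta)$.

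For \emph{(ii)}, the key identity is $x(\alpha)s(\alpha) = (1-\alpha)xs + \alpha^2 \Delta x^\as\Delta s^\as$, so
\begin{equation*}
\left\|\frac{x(\alpha)s(\alpha)}{\mu(\alpha)} - e\right\|
\le \left\|\frac{xs}{\mu} - e\right\| + \frac{\alpha^2}{(1-\alpha)\mu}\|\Delta x^\as \Delta s^\as\|
\le \beta + \frac{\alpha^2}{(1-\alpha)\mu}\|\Delta x^\as \Delta s^\as\|.
\end{equation*}
The membership condition $w^+ \in \mathcal{N}(2\beta)$ therefore holds as long as $\alpha^2 \|\Delta x^\as \Delta s^\as\| \le \beta(1-\alpha)\mu$. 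Setting $\alpha = 1 - \|\Delta x^\as \Delta s^\as\|/(\beta\mu)$ makes the right-hand side exactly $\beta\mu \alpha^2 \ge \alpha^2 \|\Delta x^\as \Delta s^\as\|$, yielding the second bound in \emph{(ii)}. The uniform lower bound $\alpha^\as \ge \beta/\sqrt{n}$ then follows by plugging in the standard a priori estimate $\|\Delta x^\as \Delta s^\as\| \le \mu n/(something)$; more carefully, substituting the scaled variables $u = \sqrt{s/x}\,\Delta x^\as$ and $v = \sqrt{x/s}\,\Delta s^\as$ gives $u^\top v = 0$ and $u + v = -\sqrt{xs}$, whence $\|\Delta x^\as \Delta s^\as\| = \|uv\| \le \tfrac{1}{2\sqrt{2}} \|u+v\|^2 = \tfrac{1}{2\sqrt{2}} \|xs\|_1 \le \tfrac{n\mu}{2\sqrt{2}}$; combined with the quadratic inequality this yields $\alpha^\as \ge \beta/\sqrt{n}$ after a short calculation using $\beta \le 1/4$.

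Finally, \emph{(iv)} is an immediate bookkeeping consequence: part \emph{(i)} combined with the $\alpha^\as \ge \beta/\sqrt n$ bound of \emph{(ii)} shows each predictor multiplies $\mu$ by at most $1 - \beta/\sqrt n$, while \emph{(iii)} guarantees the interleaved corrector preserves $\mu$ and restores $\mathcal{N}(\beta)$-centrality. Hence after $k$ predictor-corrector pairs $\mu$ shrinks by $(1-\beta/\sqrt n)^k \le e^{-k\beta/\sqrt n}$, and choosing $k = \lceil \sqrt{n}\,t\ln 2 / \beta\rceil = O(\sqrt{n} t)$ (with $\beta$ a fixed constant like $1/8$) drives $\mu$ below $\mu(w)/2^t$. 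The main obstacle is the technical estimate $\|\Delta x^\as \Delta s^\as\| = O(n\mu)$ needed to extract the $\beta/\sqrt n$ step length from the quadratic condition; everything else is a straightforward expansion using the orthogonality of $\Delta x$ and $\Delta s$.
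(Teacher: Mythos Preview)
The paper does not actually prove this proposition: it is stated as a summary of ``well-known properties'' with a citation to Ye's textbook, and no argument is given. Your write-up supplies exactly the standard textbook proof (orthogonality of $\Delta x$ and $\Delta s$, the identity $x(\alpha)s(\alpha)=(1-\alpha)xs+\alpha^2\Delta x^\as\Delta s^\as$ for the predictor, the Mizuno-type bound $\|uv\|\le \tfrac{1}{2\sqrt{2}}\|u+v\|^2$ for orthogonal $u,v$, and the analogous computation for the corrector), so there is nothing to compare against.

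Two small clean-ups. In part~(ii), your sentence ``makes the right-hand side exactly $\beta\mu\alpha^2$'' is garbled: with $\alpha=1-\|\Delta x^\as\Delta s^\as\|/(\beta\mu)$ the right-hand side $\beta(1-\alpha)\mu$ equals $\|\Delta x^\as\Delta s^\as\|$, which dominates $\alpha^2\|\Delta x^\as\Delta s^\as\|$ since $\alpha\le 1$; the conclusion is correct but the bookkeeping should be stated this way. In part~(iii), to conclude $w^\cs\in\mathcal{N}(\beta)$ you also need $x^\cs,s^\cs>0$ (membership in $\mathcal{P}^{++}\times\mathcal{D}^{++}$); the centrality bound alone only gives $x^\cs_i s^\cs_i>0$. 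The standard fix is to bound $\|\delta\Delta x^\cs\|,\|\delta^{-1}\Delta s^\cs\|\le\|u+v\|=\|(xs)^{-1/2}(xs-\mu e)\|\le 2\beta\sqrt{\mu}/\sqrt{1-2\beta}$, which yields $|\Delta x^\cs_i/x_i|,|\Delta s^\cs_i/s_i|<1$ for $\beta\le 1/4$.
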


\paragraph{Minimum norm viewpoint and residuals}
For any point $w = (x,y,s) \in \mathcal{P}^{++} \times \mathcal{D}^{++}$ we define
\begin{align}\label{def:delta}
\delta = \delta(w) = s^{1/2}x^{-1/2} \in \R^n.
\end{align}
With this notation, we can write \eqref{aff:sum} \b{for $\sigma = 0$} in the form
\begin{equation}\label{eq:delta-sum}
\delta\Delta x+\delta^{-1} \Delta s = -s^{1/2}x^{1/2}\, .
\end{equation}
\b{Note that for a point $w(\mu)=(x(\mu),y(\mu),s(\mu))$ on the central path, we have $\delta_i(w(\mu))=s_i(\mu)/\sqrt{\mu}=\sqrt{\mu}/x_i(\mu)$ for all $i\in [n]$.}
From Proposition~\ref{prop:near-central}, we see that if $w\in {\cal
  N}(\beta)$, and $\mu=\mu(w)$, then for each $i\in [n]$,
\begin{equation}\label{eq:delta-beta}
  \sqrt{1-2\beta} \cdot \delta_i(w(\mu)) \le \delta_i(w)\le
  \frac{1}{\sqrt{1-2\beta}} \cdot \delta_i(w(\mu))\, .
\end{equation}
The matrix $\diag(\delta(w))$ will be often used for rescaling in
the algorithm. That is, for the current iterate $w=(x,y,s)$ in the
interior-point method, we will perform projections in the space
$\diag(\delta(w))W$. To simplify notation, for $\delta=\delta(w)$, we use
$L^\delta_I$ and $\kappa^\delta_{ij}$ as shorthands for
$L^{\diag(\delta)W}_I$ and $\kappa^{\diag(\delta)W}_{ij}$. The subspace
$W=\ker(A)$ will be fixed throughout.

It is easy to see from the optimality conditions that the components
of the affine
scaling direction $\Delta w^\as=(\Delta x^\as,\Delta y^\as,\Delta s^\as)$ are the
optimal solutions of the following minimum-norm problems.
\begin{equation}\label{eq:aff-minnorm}
\begin{aligned}
\Delta x^\as &= \argmin_{\Delta x \in \R^n}\{\|\delta(x+\Delta x)\|^2 : A\Delta x = 0\}  \\
(\Delta y^\as, \Delta s^\as) &= \argmin_{(\Delta y, \Delta s) \in \R^m \times \R^n} \{\|\delta^{-1}(s+\Delta s)\|^2 : A^\top \Delta y + \Delta s = 0 \}
\end{aligned}
\end{equation}
Following \cite{MonteiroT05}, for a search direction $\Delta w = (\Delta x, \Delta y, \Delta s)$, we
define the \emph{residuals} as
\begin{align}\label{eq:residuals}
\Rx&\coloneqq  \frac{\delta(x+\Delta x)}{\sqrt{\mu}},& \Rs &\coloneqq  \frac{\delta^{-1}(s
                                                  + \Delta
                                                  s)}{\sqrt{\mu}}\, .
\end{align}
We let $\Rx^\as$ and $\Rs^\as$ denote the residuals for the affine scaling direction $\Delta w^\as$.
Hence, the primal affine scaling direction  $\Delta x^\as$ is the one
that minimizes the $\ell_2$-norm of the primal residual $\Rx^\as$, and the dual affine scaling
direction $(\Delta y^\as,\Delta s^\as)$ minimizes the $\ell_2$-norm of
the dual residual $\Rs^\as$. The next lemma summarizes simple properties of
the residuals, see \cite{MonteiroT05}.

\begin{lemma}\label{lem:affscale}
 For $\beta \in (0,1/4]$ such that $w = (x,y,s) \in \mathcal{N}(\beta)$ and the affine scaling direction $\Delta w = (\Delta x^\as, \Delta y^\as,
  \Delta s^\as)$, we have
\begin{enumerate}[label=(\roman*)]
\item\label{i:affscale-identity}
\begin{align}
\Rx^\as \Rs^\as=\frac{\Delta x^\as\Delta s^\as}{\mu},\quad  \Rx^\as+\Rs^\as=\frac{x^{1/2}s^{1/2}}{\sqrt{\mu}}\, , \label{eq: affscal_identity}
\end{align}
\item\label{i:sum-residuals}
\[
\|\Rx^\as\|^2+\|\Rs^\as\|^2= n \, ,
\]
\item\label{i:lower-bound-in-neighbourhood}
We have $\|\Rx^\as\|,\|\Rs^\as\|\le\sqrt{n}$, and for each $i\in [n]$,
$\max\{\Rx_i^\as, \Rs_i^\as\} \geq \frac12(1-\beta)$.
\item\label{i:affscal:residuals_relation_to_dual_side}
\[
\Rx^\as = -\frac{1}{\sqrt{\mu}}\delta^{-1}\Delta s^\as, \quad \Rs^\as =
- \frac{1}{\sqrt{\mu}}\delta \Delta x^\as \, .
\]
\end{enumerate}
\end{lemma}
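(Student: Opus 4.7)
The plan is to derive everything from the defining system \eqref{aff:x}--\eqref{aff:sum} specialized to $\sigma = 0$, together with the definition $\delta = s^{1/2}x^{-1/2}$ and the rescaled form \eqref{eq:delta-sum}.

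First I would prove \textbf{(i)}. From the definition \eqref{eq:residuals},
\[
\Rx^\as \Rs^\as \;=\; \frac{(x+\Delta x^\as)(s+\Delta s^\as)}{\mu} \;=\; \frac{xs + s\Delta x^\as + x\Delta s^\as + \Delta x^\as\Delta s^\as}{\mu},
\]
and substituting $s\Delta x^\as + x\Delta s^\as = -xs$ (the specialization of \eqref{aff:sum} to $\sigma=0$) cancels the first two groups and yields the first identity. For the sum, using $\delta x = x^{1/2}s^{1/2} = \delta^{-1}s$ gives
\[
\Rx^\as + \Rs^\as \;=\; \frac{2x^{1/2}s^{1/2} + \delta \Delta x^\as + \delta^{-1}\Delta s^\as}{\sqrt{\mu}},
\]
and equation \eqref{eq:delta-sum} turns the numerator into $x^{1/2}s^{1/2}$.

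Next I would prove \textbf{(iv)} by rearranging \eqref{aff:sum} at $\sigma=0$ as $x\Delta s^\as = -xs - s\Delta x^\as = -s(x+\Delta x^\as)$, dividing by $x$, and multiplying by $\delta^{-1} = x^{1/2}s^{-1/2}$ to get $\delta^{-1}\Delta s^\as = -\delta(x+\Delta x^\as) = -\sqrt{\mu}\,\Rx^\as$; the symmetric manipulation gives the formula for $\Rs^\as$. For \textbf{(ii)}, I would square the identity from (i):
\[
\left\|\tfrac{x^{1/2}s^{1/2}}{\sqrt{\mu}}\right\|^2 \;=\; \|\Rx^\as\|^2 + 2\,\langle \Rx^\as, \Rs^\as\rangle + \|\Rs^\as\|^2 .
\]
The left-hand side equals $\sum_i x_i s_i/\mu = n$ by definition of $\mu = \mu(w)$. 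For the cross term, by the first part of (i), $\langle \Rx^\as,\Rs^\as\rangle = \langle \Delta x^\as, \Delta s^\as\rangle/\mu$, and this inner product vanishes because $\Delta x^\as \in W = \ker(A)$ and $\Delta s^\as \in \mathrm{range}(A^\top) = W^\perp$ by \eqref{aff:x}--\eqref{aff:s}.

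Finally, for \textbf{(iii)}, the $\sqrt{n}$ upper bound on each norm is immediate from (ii). For the pointwise lower bound, I combine the second identity of (i) with Proposition~\ref{prop:x_i-s_i}: for $w \in \mathcal{N}(\beta)$,
\[
\Rx^\as_i + \Rs^\as_i \;=\; \frac{\sqrt{x_i s_i}}{\sqrt{\mu}} \;\ge\; 1-\beta,
\]
and since both summands are real, at least one is at least $(1-\beta)/2$. None of these steps is a real obstacle; the only point to be slightly careful about is that (iv) requires $\sigma=0$ in \eqref{aff:sum}, and (ii) uses both halves of (i) plus the orthogonality $W \perp W^\perp$, so the proof should be structured as (i)$\Rightarrow$(iv)$\Rightarrow$(ii)$\Rightarrow$(iii).
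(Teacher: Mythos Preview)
Your proposal is correct and follows essentially the same approach as the paper's proof, which merely states that parts \ref{i:affscale-identity} and \ref{i:affscal:residuals_relation_to_dual_side} are immediate from the definitions and \eqref{aff:x}--\eqref{aff:sum}, \eqref{eq:delta-sum}, that part \ref{i:sum-residuals} uses part \ref{i:affscale-identity} together with $(\Rx^\as)^\top \Rs^\as=0$, and that part \ref{i:lower-bound-in-neighbourhood} follows from \ref{i:sum-residuals} and from \ref{i:affscale-identity} with Proposition~\ref{prop:x_i-s_i}. You have simply filled in these details explicitly; the one minor inaccuracy is your closing remark that the logical order should be (i)$\Rightarrow$(iv)$\Rightarrow$(ii)$\Rightarrow$(iii), whereas in fact (iv) is not used for (ii)---your own argument for (ii) relies only on (i) and orthogonality.
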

\begin{proof}
 \b{ Parts \ref{i:affscale-identity} and
  \ref{i:affscal:residuals_relation_to_dual_side} are immediate from
  the definitions and from \eqref{aff:x}-\eqref{aff:sum} and \eqref{eq:delta-sum}. In part \ref{i:sum-residuals}, we use part  \ref{i:affscale-identity} and $({\Rx^\as})^\top \Rs^\as=0$.
  In part, \ref{i:lower-bound-in-neighbourhood}, the first statement follows
by part \ref{i:sum-residuals}, and the
second statement
 follows from
  \ref{i:affscale-identity} and \Cref{prop:x_i-s_i}.}
\end{proof}
For a subset $I \subset [n]$,
we define
\begin{equation}
\epsilon^\as_I(w) \coloneqq  \max_{i \in I} \min\{|\Rx^\as_i|, |\Rs^\as_i|\}\, ,\quad
  \mbox{and}\quad \epsilon^\as(w) \coloneqq  \epsilon_{[n]}^\as(w)\, . \label{def:epsilon}
\end{equation}

The next claim shows that for the affine scaling direction, a small
$\epsilon(w)$ yields a long step; see \cite[Lemma 2.5]{MonteiroT05}.
\begin{lemma}\label{lem:affscale-progress}
Let $w = (x,y,s) \in \mathcal{N}(\beta)$ for $\beta\in (0,1/4]$. Then the affine scaling
step can be chosen such that 
\[
\frac{\mu(w+\alpha^\as \Delta w^\as) }{\mu(w)}\le
\min \left\{1-\frac{\beta}{\sqrt n},\frac{2\sqrt{n}\epsilon^\as(w)}{\beta}\right\}\, .
\]
\end{lemma}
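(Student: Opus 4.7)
The plan is to combine the step-length bound from Proposition~\ref{prop:predictor-corrector}\ref{i:stepsize} with the identity $\mu(w+\alpha\Delta w^\as) = (1-\alpha)\mu(w)$ (part \ref{i:gap} of the same proposition), and then bound the quantity $\|\Delta x^\as \Delta s^\as\|/\mu(w)$ in terms of $\epsilon^\as(w)$ using the residual framework from Lemma~\ref{lem:affscale}.

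First, by Proposition~\ref{prop:predictor-corrector}\ref{i:gap},
\[
\frac{\mu(w+\alpha^\as \Delta w^\as)}{\mu(w)} = 1-\alpha^\as,
\]
and by Proposition~\ref{prop:predictor-corrector}\ref{i:stepsize},
\[
1-\alpha^\as \le \min\left\{1-\frac{\beta}{\sqrt{n}},\, \frac{\|\Delta x^\as \Delta s^\as\|}{\beta\,\mu(w)}\right\}.
\]
So it suffices to show $\|\Delta x^\as \Delta s^\as\|/\mu(w) \le \sqrt{n}\,\epsilon^\as(w)$.

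By Lemma~\ref{lem:affscale}\ref{i:affscale-identity}, $\Delta x^\as \Delta s^\as/\mu = \Rx^\as \Rs^\as$, so $\|\Delta x^\as\Delta s^\as\|/\mu = \|\Rx^\as \Rs^\as\|$. For each coordinate $i$, using $|\Rx^\as_i \Rs^\as_i| = \min\{|\Rx^\as_i|,|\Rs^\as_i|\}\cdot\max\{|\Rx^\as_i|,|\Rs^\as_i|\}$ and the definition of $\epsilon^\as(w)$, we get $|\Rx^\as_i \Rs^\as_i| \le \epsilon^\as(w)\cdot\max\{|\Rx^\as_i|,|\Rs^\as_i|\}$. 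Squaring and summing, and bounding $\max\{a,b\}^2 \le a^2+b^2$, gives
\[
\|\Rx^\as \Rs^\as\|^2 \le \epsilon^\as(w)^2 \sum_{i=1}^n \bigl((\Rx^\as_i)^2 + (\Rs^\as_i)^2\bigr) = n\,\epsilon^\as(w)^2,
\]
where the last equality is Lemma~\ref{lem:affscale}\ref{i:sum-residuals}. Taking square roots and substituting back yields the desired bound.

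There is no real obstacle here: both factors in the $\min$ drop out directly from Proposition~\ref{prop:predictor-corrector}\ref{i:stepsize}, and the only genuine estimate is the coordinate-wise factorization of $|\Rx^\as_i \Rs^\as_i|$ into $\min\cdot\max$ to extract $\epsilon^\as(w)$, after which Lemma~\ref{lem:affscale}\ref{i:sum-residuals} closes the argument. The only subtle point worth flagging is the conversion between $\|\Delta x^\as\Delta s^\as\|$ and $\|\Rx^\as \Rs^\as\|$ via part \ref{i:affscale-identity}, which turns the quantity appearing in the step-length bound into something expressible purely through residuals.
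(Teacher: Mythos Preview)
Your proof is correct and follows essentially the same approach as the paper: both reduce to bounding $\|\Rx^\as \Rs^\as\|$ by $\sqrt{n}\,\epsilon^\as(w)$ via the coordinate-wise $\min\cdot\max$ factorization, then invoke Proposition~\ref{prop:predictor-corrector}\ref{i:gap}--\ref{i:stepsize}. The only cosmetic difference is that you close the estimate with Lemma~\ref{lem:affscale}\ref{i:sum-residuals} (the exact identity $\|\Rx^\as\|^2+\|\Rs^\as\|^2=n$), whereas the paper cites part~\ref{i:lower-bound-in-neighbourhood}; your route is arguably cleaner and avoids a small imprecision in the paper's intermediate inequality.
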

\begin{proof}
Let $\epsilon\coloneqq \epsilon^\as(w)$.
From Lemma~\ref{lem:affscale}\ref{i:affscale-identity}, we get
$\|\Delta x^\as\Delta s^\as\|/\mu=\|\Rx^\as \Rs^\as\|$.
We can bound $\|\Rx^\as \Rs^\as\|\le \epsilon(\|\Rx^\as\|+\|\Rs^\as\|)\le
2\epsilon\sqrt{n}$, where the latter inequality follows by Lemma~\ref{lem:affscale}\ref{i:lower-bound-in-neighbourhood}.
From
Proposition~\ref{prop:predictor-corrector}\ref{i:stepsize}, we get
$\alpha^\as\ge\max\{\beta/\sqrt n,1-2\sqrt{n}\epsilon/\beta\}$. The claim follows by
part \ref{i:gap} of the same proposition.
\end{proof}

\subsubsection{The layered-least-squares direction}
\label{sec:lls}

Let ${\cal J}=(J_1,J_2,\ldots, J_p)$ be an \emph{ordered partition} of
$[n]$.\footnote{In contrast to how ordered partitions were defined in \cite{MonteiroT05}, we use the term \emph{ordered} only to the $p$-tuple $(J_1, \ldots, J_p)$, which is to be viewed independently of $\delta$.}
For $k\in [p]$, we use the notations $J_{<k}\coloneqq J_1\cup \ldots\cup
J_{k-1}$, $J_{>k}\coloneqq J_{k+1}\cup\ldots\cup J_p$, and similarly $J_{\le
  k}$ and $J_{\ge k}$. We will also refer to the sets $J_k$ as
\emph{layers}, and ${\cal J}$ as a \emph{layering}. Layers with lower indices will be referred to as `higher' layers.

Given $w = (x,y,s) \in \mathcal{P}^{++} \times \mathcal{D}^{++}$, and
the layering ${\cal J}$, the \emph{layered-least-squares
  (LLS) direction} is defined as follows. For the primal direction, we
proceed backwards, with $k=p,p-1,\ldots,1$. Assume the
components on the lower layers $\Delta x_{J_{>k}}^\lal$ have already been determined.
 We define the
components in $J_k$ as the coordinate projection
$\Delta x_{J_k}^\lal = \pi_{J_k}(X_k)$, where the affine subspace $X_k$ is
defined as the set of minimizers
\begin{align}\label{def:aff-x}
X_k &\coloneqq  \argmin_{\Delta x \in \R^n}\left\{\left\|\delta_{J_k}(x_{J_k} +
           \Delta x_{J_k})\right\|^2 :\, A\Delta x=0, \Delta x_{J_{>k}} =
           \Delta x_{J_{>k}}^\lal \right\}\, .
\end{align}
The dual direction $\Delta s^\lal$ is determined in the forward
order of the layers $k=1,2,\ldots, p$. Assume we already fixed the
components $\Delta s_{J_{<k}}^\lal$ on the higher layers. Then,
$\Delta s_{J_k}^\lal =
\pi_{J_k}(S_k)$ for
\begin{align}\label{def:aff-s}
S_k &= \argmin_{\Delta s \in \R^{n}}\left\{\left\|\delta_{J_k}^{-1}(s_{J_k} +
           \Delta s_{J_k})\right\|^2 :\,\exists y \in \R^m, A^\top\Delta y+\Delta s=0, \Delta
           s_{J_{<k}} = \Delta s_{J_{<k}}^\lal \right\}\, .
\end{align}
The component $\Delta y^\lal$ is obtained as the optimal $\Delta
y$ for the final layer $k=p$. We use the notation $\Rx^\lal$ and
$\varepsilon^\lal(w)$ analogously to the affine scaling direction.
This search direction was first introduced in \cite{Vavasis1996}.

\medskip

The affine scaling direction is a special
case for the single element partition. In this case, the definitions
\eqref{def:aff-x} and \eqref{def:aff-s}
coincide with those in \eqref{eq:aff-minnorm}.

\subsection{Overview of ideas and techniques}\label{sec:lls-overview}

A key technique in the analysis of layered least-squares algorithms \cite{Vavasis1996,Monteiro2003,LMT09} is to argue about variables that have `converged'. According to
\Cref{prop:near-central} and \Cref{lem: central_path_bounded_l1_norm},
for any iterate $w=(x,y,s)\in {\cal N}(\beta)$ and the limit optimal solution $w^*=(x^*,y^*,s^*)$, the bounds $x^*_i\le O(n) x_i$ and $s^*_i\le O(n) s_i$ hold.
We informally say that $x_i$ (or $s_i$) has \emph{converged}, if  $x_i\le O(n)x_i^*$ ($s_i\le O(n) s_i^*$) hold for the current iterate. Thus, the value of $x_i$ (or $s_i$) remains within a multiplicative factor $O(n^2)$ for the rest of the algorithm. Note that if $\mu>\mu'$ and  $x_i$ has converged at $\mu$, then $\frac{s_i(\mu')/s_i(\mu)}{\mu'/\mu}\in \left[\frac{1}{O(n^2)},O(n^2)\right]$; thus, $s_i$ keeps ``shooting down'' with the central path parameter.

\paragraph{Converged variables in the affine scaling algorithm} Let us start by showing that at any point of the algorithm, at least one primal or dual variable has converged.

Suppose for simplicity that our current iterate is exactly on the central path,
i.e., that $xs = \mu e$. This assumption will be maintained throughout this overview. In this case, the residuals can be simply written
as $\Rx^\as=(x+\Delta x^\as)/x$, $\Rs^\as=(s+\Delta s^\as)/s$.
Recall from \eqref{eq:aff-minnorm} that the affine scaling direction corresponds to minimizing the residuals
$\Rx^\as$ and $\Rs^\as$. From this choice, we see that
\begin{equation}\label{eq:x-star-R-x}
 \left\| \frac{x^*}{x} \right\| \geq \left\|\frac{x + \Delta x^\as}{x}\right\|\, ,\quad
 \left\| \frac{s^*}{s} \right\| \geq \left\|\frac{s + \Delta s^\as}{s}\right\|\, .
\end{equation}
We have $\|\Rx^\as\|^2 + \|\Rs^\as\|^2 = n$ by \Cref{lem:affscale}\ref{i:sum-residuals}.
Let us assume  $\|\Rx^\as\|^2\ge n/2$; thus,
there exists a $i \in [n]$ such that $x^*_i \geq x_i/\sqrt 2$. In other words,
just by looking at the residuals, we get the guarantee that a primal or a dual variable has already converged. Based on the value of the residuals, we can guarantee this to be a primal or a dual variable, but cannot identify which particular $x_i$ or $s_i$ this might be.

\medskip

For $\|\Rx^\as\|^2\ge n/2$, a primal variable has already converged \emph{before} performing the predictor and corrector steps.
We now show that even if $\|\Rx^\as\|$ is small, a primal variable will have converged \emph{after} a single iteration. From \eqref{eq:x-star-R-x}, we see that
there is an index $i$ with $x^*_i/x_i \geq \|\Rx^\as\|/\sqrt{n}$.

Furthermore, \Cref{prop:predictor-corrector}\labelcref{i:stepsize} and \Cref{lem:affscale} imply that
$1-\alpha \leq  {\|\Rx^\as\|\cdot\|\Rs^\as\|}/{\beta}\leq {\sqrt{n} \|\Rx^\as\|}/{\beta}$, since $\|\Rs^\as\|\le \sqrt{n}$.
The predictor step moves to $x^+ \coloneqq  x + \alpha \Delta x^\as = (1-\alpha) x + \alpha(x + \Delta x^\as)$.
Hence, $x^+\leq \left(\frac{\sqrt{n} \|\Rx^\as\|}{\beta} + \|\Rx^\as\|\right)x$.
Putting the two inequalities together, we learn that $x^+_i\le O(n)x^*_i$
for some $i \in [n]$.
Since $w^+=(x^+,y^+,s^+)\in \overline{\cal N}(2\beta)$, \Cref{prop:near-central}  implies that $x_i$ will have converged after this iteration.
An analogous argument proves that  some $s_j$ will also have converged after the iteration. We again emphasize that the argument only shows the existence of converged variables, but we cannot identify them in general.

\paragraph{Measuring combinatorial progress}
Tying the above together, we find that after a single affine scaling step,
at least one primal variable $x_i$ and at least one dual variable $s_j$ has converged. This means that for any $\mu'<\mu$,
 $\frac{x_i(\mu')/x_j(\mu')}{x_i(\mu)/x_j(\mu)}\in \left[\frac{\mu}{O(n^4)\mu'},\frac{O(n^4)\mu}{\mu'}\right]$;
thus, the ratio of these variables keeps asymptotically increasing. The  $x_i/x_j$ ratios serve as the main progress measure in the Vavasis--Ye algorithm. If $x_i/x_j$ is between $1/(\mathrm{poly}(n)\bar\chi)$ and  $\mathrm{poly}(n)\bar\chi$ before the affine scaling step for
 the pair of converged variables $x_i$ and $s_j$, then after $\mathrm{poly}(n)\log\bar\chi$ iterations, the $x_i/x_j$ ratio must leave this interval  and never return. Thus, we obtain a `crossover-event' that cannot again occur for the same pair of variables. In the affine scaling algorithm, there is no guarantee that $x_i/x_j$ falls in such a bounded interval for the converging variables $x_i$ and $s_j$; in particular, we  may obtain the same pairs of converged variables after each step.

The main purpose of layered-least-squares methods is to proactively force that in every certain number of iterations, some `bounded' $x_i/x_j$ ratios become `large' and remain so for the rest of the algorithm.

\medskip

In our approach, the first main insight is to focus on the scaling invariant quantities $\kappa^W_{ij} x_i/x_j$ instead. For simplicity's sake, we first present the algorithm with the assumption that all values $\kappa^W_{ij}$ are known. We will then explain how this assumption can be removed by using gradually improving estimates on the values.

The combinatorial progress will be observed in the \emph{`long edge graph'}.
\label{longedgegraph}
For a primal-dual feasible point $w = (x,y,s)$ and $\sigma =1/O(n^6)$, this is defined as  $G_{w,\sigma}=([n], E_{w,\sigma})$ with edges $(i,j)$ such that $ \Le^W_{ij} x_i/x_j \ge
\sigma$. Observe that for any $i,j\in [n]$, at least one of $(i,j)$ and $(j,i)$ are long edges: this follows since for any circuit $C$ with $i,j\in C$, we get lower bounds $|g^C_j/g^C_i|\le \Le^W_{ij}$ and $|g^C_i/g^C_j|\le \Le^W_{ji}$.

Intuitively, our algorithm  will enforce the following two types of events. The analysis in Section~\ref{sec:analysis} is based on a potential function analysis capturing roughly the same progress.
\begin{itemize}
 \item For an iterate $w$ and a value $\mu > 0$, we have $i,j\in [n]$ in a strongly
 connected component in $G_{w,\sigma}$ of size $\le \tau$, and for any iterate $w'$ with
        $\mu(w') > \mu$, if $i, j$ are in a strongly connected component of $G_{w',\sigma}$
        then this component has size  $\ge 2\tau$.
 \item For an iterate $w$ and a value $\mu > 0$, we have $(i,j) \notin E_{w,\sigma}$, and for
 any iterate $w'$ with $\mu(w') > \mu$ we have $(i,j) \in E_{w',\sigma}$.
\end{itemize}
At most $O(n^2 \log n)$ such events can happen overall, so if we can prove that
on average an event will happen every $O(\sqrt{n} \log(\bar\chi^*_A + n))$
iterations or the algorithm terminates, then we have the desired convergence bound of
$O(n^{2.5}\log(n) \log(\bar\chi^*_A + n))$  iterations.

\usetikzlibrary{positioning}

\usepgfplotslibrary{groupplots}

\usetikzlibrary{decorations.pathreplacing}
\usetikzlibrary{pgfplots.groupplots}
\usetikzlibrary{arrows,shapes}
\usetikzlibrary{patterns}
\usetikzlibrary{graphs}
\usetikzlibrary{graphs.standard}
\colorlet{colorx}{blue!20}
\colorlet{colorxopt}{blue!90}

\pgfplotstableread{images/layers.dat}\loadedtable

\pgfmathsetmacro\curmu{60}
\pgfmathsetmacro\nextmu{45}
\pgfmathsetmacro\aftermu{35}

\pgfmathsetmacro\barwidth{7}
\pgfmathsetmacro\optwidth{\barwidth - 3}

\pgfplotsset{
    node near coord/.style args={#1/#2/#3}{nodes near coords*={
            \ifnum\coordindex=#1 #2\fi
        },
        scatter/@pre marker code/.append code={
            \ifnum\coordindex=#1 \pgfplotsset{every node near coord/.append style=#3}\fi
        }
    },
    nodes near some coords/.style={ scatter/@pre marker code/.code={},scatter/@post marker code/.code={},node near coord/.list={#1} }
}

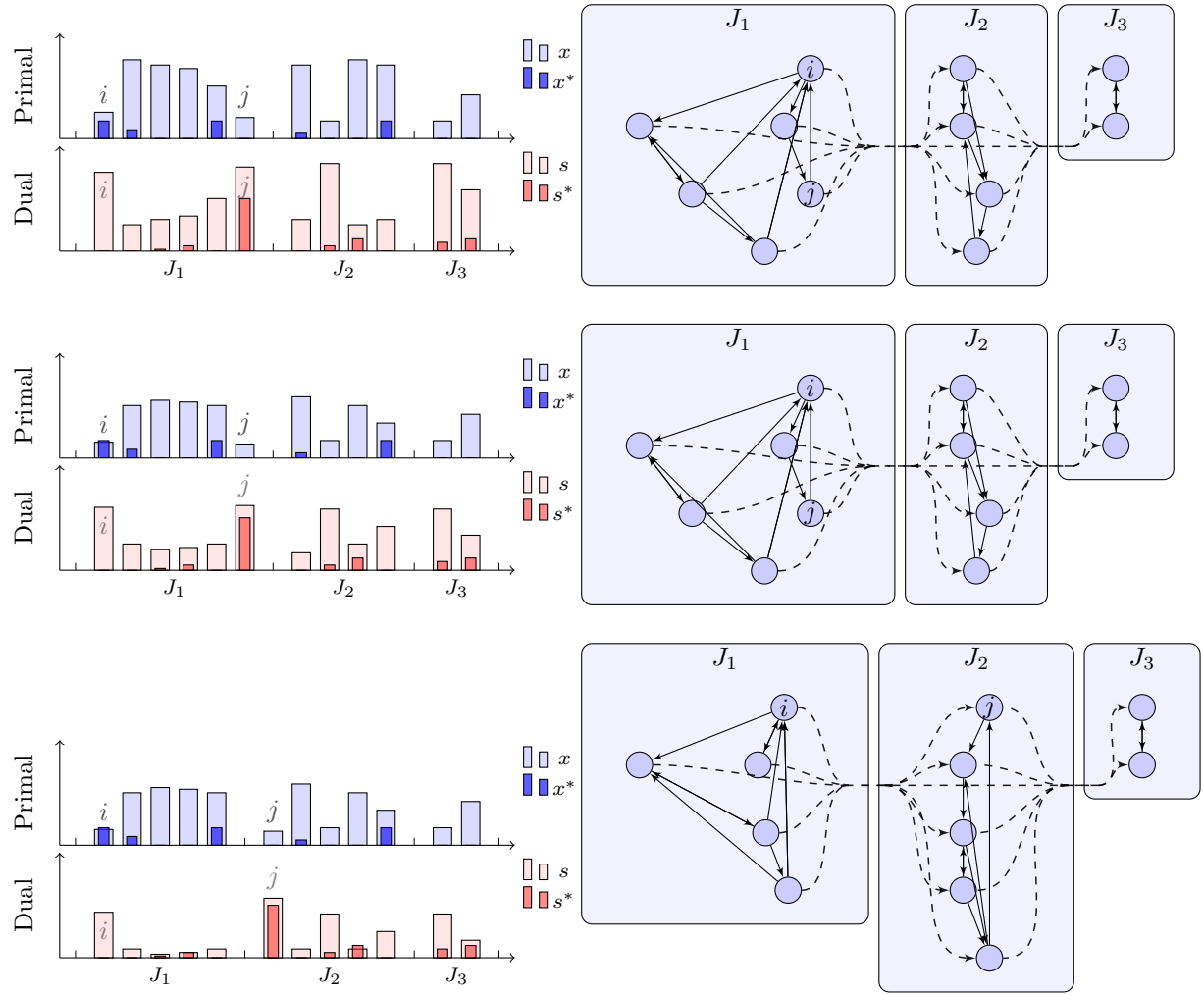
\begin{figure}[ht!]
  \captionsetup{singlelinecheck=off}

  \begin{subfigure}[b]{0.475\textwidth}

\begin{tikzpicture}[scale=1]
    \begin{groupplot}[
    group style={
    group name=my plots,
    group size=1 by 2,
x descriptions at=edge bottom,
    y descriptions at=edge right,
    horizontal sep=0.5cm,
    vertical sep=.1cm,
    },
    footnotesize,
    width= 1.1\linewidth,
    height=3cm,
    ybar,
    ymin=0,
    ymax=60,
    ylabel near ticks,
    axis lines=left,
    axis line style={->},
    enlarge x limits=.12,
y tick style={draw=none},
    yticklabels={},
    xtick={-1,6,11,14},
    xticklabels={$J_1$,$J_2$,$J_3$}, 
    x tick label as interval,
    tick align=inside,
    tick style={black, font=\boldmath},
legend style={at={(1,1)},anchor=north west},
    legend style={draw=none},
    ]
      \nextgroupplot[ylabel=Primal]
  
      \addplot[bar width = \barwidth, fill opacity=.7,fill=colorx,bar shift=0pt, 
      nodes near some coords={0/{$i$}/above,5/{$j$}/above}
      ,]  table [x expr=\coordindex + 1 * (\thisrow{layer} - 1),y=x_cur] from \loadedtable ;
      \addplot[bar width = \optwidth, fill opacity=.7,fill=colorxopt,bar shift=0pt]  table [x expr=\coordindex + 1 * (\thisrow{layer} - 1),y=x_opt] from \loadedtable ;
      
      \legend{$x$,$x^*$}
      
      \nextgroupplot[ylabel=Dual]
  
      \addplot[bar width = \barwidth, fill opacity=0.5,fill=red!20,bar shift=0pt,
      nodes near some coords={0/{$i$}/below,5/{$j$}/below}
      ] table [x expr=\coordindex + 1 * (\thisrow{layer} - 1),y expr= (\curmu - \thisrow{x_cur}),] from \loadedtable ;
      \addplot[bar width = \optwidth, fill opacity=0.5,fill=red!90,bar shift=0pt] table [x expr=\coordindex + 1 * (\thisrow{layer} - 1),y expr=\thisrow{s_opt})] from \loadedtable ;
  
      \legend{$s$,$s^*$}
    \end{groupplot} 
  \end{tikzpicture}    \end{subfigure}
  \hfill
  \begin{subfigure}[b]{0.475\textwidth}
    \begin{tikzpicture}[>=latex',scale=0.5]
\tikzstyle{n} = [draw,shape=circle,minimum size=1em,
                        inner sep=0pt,fill=colorx]
    
    \begin{scope}
  \pgfsetstrokecolor{black}
  \definecolor{strokecol}{rgb}{0.0,0.0,0.0};
  \pgfsetstrokecolor{strokecol}
  \draw [pattern color=red,name=cl1,rounded corners,,fill=blue!5] (8.0bp,8.0bp) -- (8.0bp,223.0bp) -- (246.0bp,223.0bp) -- (246.0bp,8.0bp) -- cycle;
  \draw (127.0bp,211.5bp) node {$J_1$};
\end{scope}
\begin{scope}
  \pgfsetstrokecolor{black}
  \definecolor{strokecol}{rgb}{0.0,0.0,0.0};
  \pgfsetstrokecolor{strokecol}
  \draw [pattern color=red,name=cl2,rounded corners,fill=blue!5] (254.0bp,8.0bp) -- (254.0bp,223.0bp) -- (362.0bp,223.0bp) -- (362.0bp,8.0bp) -- cycle;
  \draw (308.0bp,211.5bp) node {$J_2$};
\end{scope}
\begin{scope}
  \pgfsetstrokecolor{black}
  \definecolor{strokecol}{rgb}{0.0,0.0,0.0};
  \pgfsetstrokecolor{strokecol}
  \draw [pattern color=red,rounded corners,fill=blue!5] (370.0bp,104.0bp) -- (370.0bp,223.0bp) -- (458.0bp,223.0bp) -- (458.0bp,104.0bp) -- cycle;
  \draw (414.0bp,211.5bp) node {$J_3$};
\end{scope}
  \node (1) at (182.0bp,174.0bp) [n] {$$};
  \node (2) at (52.0bp,130.0bp) [n] {$$};
  \node (5) at (162.0bp,130.0bp) [n] {$$};
  \node (3) at (92.0bp,78.0bp) [n] {$$};
  \node (4) at (147.0bp,34.0bp) [n] {$$};
  \node (6) at (182.0bp,78.0bp) [n] {$$};
  \node (7) at (298.0bp,174.0bp) [n] {$$};
  \node (8) at (298.0bp,130.0bp) [n] {$$};
  \node (9) at (318.0bp,78.0bp) [n] {$$};
  \node (10) at (308.0bp,34.0bp) [n] {$$};
  \node (11) at (414.0bp,174.0bp) [n] {$$};
  \node (12) at (414.0bp,130.0bp) [n] {$$};
  \draw [->] (1) -- (2);
  \draw [->] (1) -- (5);
  \draw [->] (2) -- (3);
  \draw [->] (3) -- (1);
  \draw [->] (3) -- (2);
  \draw [->] (3) -- (4);
  \draw [->] (4) -- (1);
  \draw [->] (4) -- (1);
  \draw [->] (4) -- (2);
  \draw [->] (5) -- (6);
  \draw [->] (6) -- (1);
  \draw [->] (7) -- (8);
  \draw [->] (7) -- (9);
  \draw [->] (8) -- (7);
  \draw [->] (8) -- (9);
  \draw [->] (9) -- (10);
  \draw [->] (10) -- (8);
  \draw [->] (11) -- (12);
  \draw [->] (12) -- (11);
    
    \node (j1) at (1) {$i$};
    \node (i1) at (6) {$j$};
    
    \node[draw=none,below right = 3em] at (1.south)  (M1in) {};
    \node[draw=none,right=1em] at (M1in) (M2out) {};
  
    \node[draw=none,right=6em] at (M1in) (M2in) {};
    \node[draw=none,right=.8em] at (M2in) (M3out) {};
  
    \tikzset{
    crosslayer/.style={
      opacity=.9, dashed, line width=.2mm, looseness=1
    },
  }
  
    \foreach \i in {11,...,12} 
       \draw[->,crosslayer] (M3out.center) to[out=0,in=180, looseness=.8] (\i);
  
    \foreach \i in {7,...,10} 
        { \draw[->,crosslayer] (M2out.center) to[out=0, in=180] (\i);
        \draw[crosslayer] (\i)  to[in=180, out=0, looseness=0.5] (M2in.center);
        }
    \foreach \j in {1,...,6} 
        \draw[crosslayer] (\j) to[in=180, out=0] (M1in.center);

    \draw[crosslayer] (M2out.center) to[out=180,in=0] (M1in.center);
    \draw[crosslayer] (M3out) to[out=180,in=0] (M1in);  
    \draw[crosslayer] (M3out.center) to[out=180,in=0] (M2in.center);

  \end{tikzpicture}   \end{subfigure}
  \begin{subfigure}[b]{0.475\textwidth}

\begin{tikzpicture}
    \edef\mylst{"An arbitrary string","String","Custom label","Not this data"}
    \begin{groupplot}[
    group style={
    group name=my plots,
    group size=1 by 2,
x descriptions at=edge bottom,
    y descriptions at=edge right,
    horizontal sep=0.5cm,
    vertical sep=.1cm,
    },
    footnotesize,
    width=1.1 \linewidth,
    height=3cm,
    ybar,
    ymin=0,
    ymax=60,
    ylabel near ticks,
    axis lines=left,
    axis line style={->},
    enlarge x limits=.12,
y tick style={draw=none},
    yticklabels={},
    xtick={-1,6,11,14},
    xticklabels={$J_1$,$J_2$,$J_3$}, 
    x tick label as interval,
    tick align=inside,
    tick style={black, font=\boldmath},
legend style={at={(1,1)},anchor=north west},
    legend style={draw=none},
    ]
      \nextgroupplot[ylabel=Primal]
  
      \addplot[bar width = \barwidth, fill opacity=.7,fill=colorx,bar shift=0pt, 
      nodes near some coords={0/{$i$}/above,5/{$j$}/above}
      ,]  table [x expr=\coordindex + 1 * (\thisrow{layer} - 1),y=x_next] from \loadedtable ;
      \addplot[bar width = \optwidth, fill opacity=.7,fill=colorxopt,bar shift=0pt]  table [x expr=\coordindex + 1 * (\thisrow{layer} - 1),y=x_opt] from \loadedtable ;
      
      \legend{$x$,$x^*$}
      
      \nextgroupplot[ylabel=Dual]
  
      \addplot[bar width = \barwidth, fill opacity=0.5,fill=red!20,bar shift=0pt,
      nodes near some coords={0/{$i$}/below,5/{$j$}/above}
      ] table [x expr=\coordindex + 1 * (\thisrow{layer} - 1),y expr= (\nextmu - \thisrow{x_next}),] from \loadedtable ;
      \addplot[bar width = \optwidth, fill opacity=0.5,fill=red!90,bar shift=0pt] table [x expr=\coordindex + 1 * (\thisrow{layer} - 1),y expr=\thisrow{s_opt})] from \loadedtable ;
  
      \legend{$s$,$s^*$}
    \end{groupplot} 
  \end{tikzpicture}   \end{subfigure}
  \hfill
  \begin{subfigure}[b]{0.475\textwidth}
    \begin{tikzpicture}[>=latex',scale=0.5]
\tikzstyle{n} = [draw,shape=circle,minimum size=1em,
                        inner sep=0pt,fill=colorx]

   \begin{scope}
  \pgfsetstrokecolor{black}
  \definecolor{strokecol}{rgb}{0.0,0.0,0.0};
  \pgfsetstrokecolor{strokecol}
  \draw [pattern color=red,name=cl1,rounded corners,,fill=blue!5] (8.0bp,8.0bp) -- (8.0bp,223.0bp) -- (246.0bp,223.0bp) -- (246.0bp,8.0bp) -- cycle;
  \draw (127.0bp,211.5bp) node {$J_1$};
\end{scope}
\begin{scope}
  \pgfsetstrokecolor{black}
  \definecolor{strokecol}{rgb}{0.0,0.0,0.0};
  \pgfsetstrokecolor{strokecol}
  \draw [pattern color=red,name=cl2,rounded corners,fill=blue!5] (254.0bp,8.0bp) -- (254.0bp,223.0bp) -- (362.0bp,223.0bp) -- (362.0bp,8.0bp) -- cycle;
  \draw (308.0bp,211.5bp) node {$J_2$};
\end{scope}
\begin{scope}
  \pgfsetstrokecolor{black}
  \definecolor{strokecol}{rgb}{0.0,0.0,0.0};
  \pgfsetstrokecolor{strokecol}
  \draw [pattern color=red,rounded corners,fill=blue!5] (370.0bp,104.0bp) -- (370.0bp,223.0bp) -- (458.0bp,223.0bp) -- (458.0bp,104.0bp) -- cycle;
  \draw (414.0bp,211.5bp) node {$J_3$};
\end{scope}
  \node (1) at (182.0bp,174.0bp) [n] {$$};
  \node (2) at (52.0bp,130.0bp) [n] {$$};
  \node (5) at (162.0bp,130.0bp) [n] {$$};
  \node (3) at (92.0bp,78.0bp) [n] {$$};
  \node (4) at (147.0bp,34.0bp) [n] {$$};
  \node (6) at (182.0bp,78.0bp) [n] {$$};
  \node (7) at (298.0bp,174.0bp) [n] {$$};
  \node (8) at (298.0bp,130.0bp) [n] {$$};
  \node (9) at (318.0bp,78.0bp) [n] {$$};
  \node (10) at (308.0bp,34.0bp) [n] {$$};
  \node (11) at (414.0bp,174.0bp) [n] {$$};
  \node (12) at (414.0bp,130.0bp) [n] {$$};
  \draw [->] (1) -- (2);
  \draw [->] (1) -- (5);
  \draw [->] (2) -- (3);
  \draw [->] (3) -- (1);
  \draw [->] (3) -- (2);
  \draw [->] (3) -- (4);
  \draw [->] (4) -- (1);
  \draw [->] (4) -- (1);
  \draw [->] (4) -- (2);
  \draw [->] (5) -- (6);
  \draw [->] (6) -- (1);
  \draw [->] (7) -- (8);
  \draw [->] (7) -- (9);
  \draw [->] (8) -- (7);
  \draw [->] (8) -- (9);
  \draw [->] (9) -- (10);
  \draw [->] (10) -- (8);
  \draw [->] (11) -- (12);
  \draw [->] (12) -- (11); 

    \node (j1) at (1) {$i$};
    \node (i1) at (6) {$j$};
    
    \node[draw=none,below right = 3em] at (1.south)  (M1in) {};
    \node[draw=none,right=1em] at (M1in) (M2out) {};
  
    \node[draw=none,right=6em] at (M1in) (M2in) {};
    \node[draw=none,right=.8em] at (M2in) (M3out) {};
  
    \tikzset{
    crosslayer/.style={
      opacity=.9, dashed, line width=.2mm, looseness=1
    },
  }
  
    \foreach \i in {11,...,12} 
       \draw[->,crosslayer] (M3out.center) to[out=0,in=180, looseness=.8] (\i);
  
    \foreach \i in {7,...,10} 
        { \draw[->,crosslayer] (M2out.center) to[out=0, in=180] (\i);
        \draw[crosslayer] (\i)  to[in=180, out=0, looseness=0.5] (M2in.center);
        }
    \foreach \j in {1,...,6} 
        \draw[crosslayer] (\j) to[in=180, out=0] (M1in.center);

    \draw[crosslayer] (M2out.center) to[out=180,in=0] (M1in.center);
    \draw[crosslayer] (M3out) to[out=180,in=0] (M1in);  
    \draw[crosslayer] (M3out.center) to[out=180,in=0] (M2in.center);

  \end{tikzpicture}   \end{subfigure}
  \begin{subfigure}[b]{0.475\textwidth}

\begin{tikzpicture}
    \edef\mylst{"An arbitrary string","String","Custom label","Not this data"}
    \begin{groupplot}[
    group style={
    group name=my plots,
    group size=1 by 2,
x descriptions at=edge bottom,
    y descriptions at=edge right,
    horizontal sep=0.5cm,
    vertical sep=.1cm,
    },
    footnotesize,
    width=1.1 \linewidth,
    height=3cm,
    ybar,
    ymin=0,
    ymax=60,
    ylabel near ticks,
    axis lines=left,
    axis line style={->},
    enlarge x limits=.12,
y tick style={draw=none},
    yticklabels={},
    xtick={-1,5,11,14},
    xticklabels={$J_1$,$J_2$,$J_3$}, 
    x tick label as interval,
    tick align=inside,
    tick style={black, font=\boldmath},
legend style={at={(1,1)},anchor=north west},
    legend style={draw=none},
    ]
      \nextgroupplot[ylabel=Primal]
  
      \addplot[bar width = \barwidth, fill opacity=.7,fill=colorx,bar shift=0pt, 
      nodes near some coords={0/{$i$}/above,5/{$j$}/above}
      ,]  table [x expr=\coordindex + 1 * (\thisrow{layer_after} - 1),y=x_next] from \loadedtable ;
      \addplot[bar width = \optwidth, fill opacity=.7,fill=colorxopt,bar shift=0pt]  table [x expr=\coordindex + 1 * (\thisrow{layer_after} - 1),y=x_opt] from \loadedtable ;
      
      \legend{$x$,$x^*$}
      
      \nextgroupplot[ylabel=Dual]
  
      \addplot[bar width = \barwidth, fill opacity=0.5,fill=red!20,bar shift=0pt,
      nodes near some coords={0/{$i$}/below,5/{$j$}/above}
      ] table [x expr=\coordindex + 1 * (\thisrow{layer_after} - 1),y expr= (\aftermu - \thisrow{x_after}),] from \loadedtable ;
      \addplot[bar width = \optwidth, fill opacity=0.5,fill=red!90,bar shift=0pt] table [x expr=\coordindex + 1 * (\thisrow{layer_after} - 1),y expr=\thisrow{s_opt})] from \loadedtable ;
  
      \legend{$s$,$s^*$}
    \end{groupplot} 
  \end{tikzpicture}
     \end{subfigure}
  \hfill
  \begin{subfigure}[b]{0.475\textwidth}

\begin{tikzpicture}[>=latex',scale=0.5]
\tikzstyle{n} = [draw,shape=circle,minimum size=1em,
                        inner sep=0pt,fill=colorx]
    
    \begin{scope}
  \pgfsetstrokecolor{black}
  \definecolor{strokecol}{rgb}{0.0,0.0,0.0};
  \pgfsetstrokecolor{strokecol}
  \draw [pattern color=red,name=cl1,rounded corners,,fill=blue!5] (8.0bp,60.0bp) -- (8.0bp,275.0bp) -- (226.0bp,275.0bp) -- (226.0bp,60.0bp) -- cycle;
  \draw (117.0bp,263.5bp) node {$J_1$};
\end{scope}
\begin{scope}
  \pgfsetstrokecolor{black}
  \definecolor{strokecol}{rgb}{0.0,0.0,0.0};
  \pgfsetstrokecolor{strokecol}
  \draw [pattern color=red,name=cl2,rounded corners,fill=blue!5] (234.0bp,8.0bp) -- (234.0bp,275.0bp) -- (382.0bp,275.0bp) -- (382.0bp,8.0bp) -- cycle;
  \draw (308.0bp,263.5bp) node {$J_2$};
\end{scope}
\begin{scope}
  \pgfsetstrokecolor{black}
  \definecolor{strokecol}{rgb}{0.0,0.0,0.0};
  \pgfsetstrokecolor{strokecol}
  \draw [pattern color=red,rounded corners,fill=blue!5] (390.0bp,156.0bp) -- (390.0bp,275.0bp) -- (478.0bp,275.0bp) -- (478.0bp,156.0bp) -- cycle;
  \draw (434.0bp,263.5bp) node {$J_3$};
\end{scope}
  \node (1) at (162.0bp,226.0bp) [n] {$$};
  \node (2) at (52.0bp,182.0bp) [n] {$$};
  \node (5) at (142.0bp,182.0bp) [n] {$$};
  \node (3) at (148.0bp,130.0bp) [n] {$$};
  \node (4) at (165.0bp,86.0bp) [n] {$$};
  \node (6) at (318.0bp,226.0bp) [n] {$$};
  \node (10) at (298.0bp,182.0bp) [n] {$$};
  \node (7) at (298.0bp,86.0bp) [n] {$$};
  \node (8) at (298.0bp,130.0bp) [n] {$$};
  \node (9) at (318.0bp,34.0bp) [n] {$$};
  \node (11) at (434.0bp,226.0bp) [n] {$$};
  \node (12) at (434.0bp,182.0bp) [n] {$$};
  \draw [->] (1) -- (2);
  \draw [->] (1) -- (5);
  \draw [->] (2) -- (3);
  \draw [->] (3) -- (1);
  \draw [->] (3) -- (2);
  \draw [->] (3) -- (4);
  \draw [->] (4) -- (1);
  \draw [->] (4) -- (1);
  \draw [->] (4) -- (2);
  \draw [->] (5) -- (1);
  \draw [->] (6) -- (10);
  \draw [->] (7) -- (8);
  \draw [->] (7) -- (9);
  \draw [->] (8) -- (7);
  \draw [->] (8) -- (9);
  \draw [->] (9) -- (6);
  \draw [->] (9) -- (10);
  \draw [->] (10) -- (8);
  \draw [->] (11) -- (12);
  \draw [->] (12) -- (11); 
    
    \node (j1) at (1) {$i$};
    \node (i1) at (6) {$j$};
    
    \node[draw=none,below right = 3em] at (1.south)  (M1in) {};
    \node[draw=none,right=1em] at (M1in) (M2out) {};
  
    \node[draw=none,right=8em] at (M1in) (M2in) {};
    \node[draw=none,right=.8em] at (M2in) (M3out) {};
  
    \tikzset{
    crosslayer/.style={
      opacity=.9, dashed, line width=.2mm, looseness=1
    },
  }
  
    \foreach \i in {11,...,12} 
       \draw[->,crosslayer] (M3out.center) to[out=0,in=180] (\i);
  
    \foreach \i in {6,...,10} 
        { \draw[->,crosslayer] (M2out.center) to[out=0, in=180] (\i);
        \draw[crosslayer] (M2in.center) to[in=0,out=180] (\i);
        }
    \foreach \j in {1,...,5} 
        \draw[crosslayer] (\j) to[in=180, out=0, looseness=.8] (M1in.center);
    \draw[crosslayer] (M2out.center) to[out=180,in=0] (M1in.center);
    \draw[crosslayer] (M3out) to[out=180,in=0] (M1in);  
    \draw[crosslayer] (M3out.center) to[out=180,in=0] (M2in.center);

\end{tikzpicture}
     \end{subfigure}
  \caption[]{Top-down we have a chart of primal/dual variables and the \emph{estimated} subgraph of the circuit ratio digraph (\Cref{def:aux-graph}) for three different iterations: 
  \begin{enumerate*}[label=\arabic*)]
    \item All variables except $x_i$ are far away from their optimal values. 
    \item On $J_1$ there is a primal variable ($i$) and dual variable ($j$) that have converged, i.e. $x_i$ is close to $x_i^*$ and $s_i$ is close to $s_i^*$. 
    \item $j$ moves to layer $J_2$ due to a change in the underlying subgraph of the circuit ratio digraph.
  \end{enumerate*}
  }
\end{figure}
 
\paragraph{Converged variables cause combinatorial progress}
We now show that combinatorial progress as above must happen in  the affine scaling step in the case when the graph $G_{w,\sigma}$ is  strongly
connected. As noted above, for the pair of converged variables $x_i$ and $s_j$ after the affine scaling step, $x_i/x_j$, and thus $\kappa^W_{ij} x_i/x_j$, will asymptotically increase by a factor 2 in every $O(\sqrt{n})$ iterations.

By the strong connectivity assumption, there is a directed path
in the long edge graph from $i$ to $j$ of length at most $n-1$.
Each edge has length at least $\sigma$, and by the cycle characterization (\Cref{thm:low_bound_chi_bar_with_circuits}) we know that
$(\kappa^W_{ji} x_j/x_i) \cdot \sigma^{n-1} \leq (\kappa_W^*)^n$.
As such, $\kappa^W_{ji} x_j / x_i \leq (\kappa_W^*)^n/\sigma^{n-1}$. Since $\kappa^W_{ij} \kappa^W_{ji}\ge 1$, we obtain the lower bound
$\kappa^W_{ij} x_i / x_j \geq \sigma^{n-1}/(\kappa_W^*)^{n}$.

This means that after $O(\sqrt{n} \log((\kappa_W^*/\sigma)^n)) = O(n^{1.5}\log(\kappa_W^* + n))$
affine scaling steps,
the weight of the edge $(i,j)$ will be more than $(\kappa_W^*/\sigma)^{4n}$.
There can never again be a length $n$ or shorter path from $j$ to $i$ in the long edge graph, for otherwise
the resulting cycle would violate \Cref{thm:low_bound_chi_bar_with_circuits}.
Moreover, by the triangle inequality (\Cref{lem:imbalance_triangle_inequality}), any other $k \neq i,j$ will have either $(i,k)$ or $(k,j)$
of length at least $(\kappa_W^*/\sigma)^{2n}$, similarly causing a pair of variables
to never again be in the same connected component.
As such, we took $O(n^{1.5}\log(\kappa_W^* + n))$ affine scaling steps
and in that time at least $n-1$ combinatorial progress events have occured.

\paragraph{The layered least squares step}
 Similarly to the Vavasis--Ye algorithm \cite{Vavasis1996} and subsequent literature, our algorithm is a predictor-corrector method  using  \emph{layered least squares (LLS)} steps as in Section~\ref{sec:lls}
 for certain predictor iterations.  Our algorithm (\Cref{alg:overall}) uses LLS steps only sometimes, and most steps are the simpler affine scaling steps; but for simplicity of this overview, we can assume every predictor iteration uses an LLS step.

We define the ordered partition
\label{orderedpartition}
${\cal J}=(J_1,J_2,\ldots, J_p)$ corresponding to the strongly connected components in  topological ordering. Recalling that either $(i,j)$ or $(j,i)$ is a long edge for every pair $i,j\in [n]$, this order is unique and such that there is a complete directed graph of long edges from every $J_k$ to $J_{k'}$ for $1\le k<k'\le p$.

The first important property of the LLS step is that it is very close to the affine scaling step.
In Section~\ref{sec:partition-lifting}, we introduce the partition lifting cost
$\ell^W({\cal J})=\max_{2 \leq k \leq p}\ell^W(J_{\ge k})$ as the cost of lifting from lower to higher layers; we let $\ell^{1/x}({\cal J})$ be a shorthand for $\ell^{\diag(1/x)W}({\cal J})$. Note that this same rescaling is used for the affine scaling step in \eqref{eq:aff-minnorm}, since  $\delta=\sqrt{\mu}/x$ if $w$ is on the central path.
In \Cref{lem:ll-decompose}\ref{i:lls-aff}, we show that for a small partition lifting cost, the LLS residuals will remain near the affine scaling residuals. Namely,
\[
\|\Rx^\lal -\Rx^\as\|, \|\Rs^\lal-\Rs^\as\|\le  6n^{3/2}\ell^{1/x}({\cal J})\, .
\]
Recall that the LLS residuals can be written as $\Rx^\lal = ({x + \Delta x^\lal})/{x}$, $\Rs^\lal = (s + \Delta s^\lal)/{s}$ for a point on the central path.
For ${\cal J}$ defined as above, Lemma~\ref{lem:purify}  yields
$\ell^{1/x}({\cal J}) \le
n \max_{i \in J_{> k}, j \in J_{\le  k}, k \in [p]} \Le^W_{ij}{x_i}/{x_j}$.
This will be sufficiently small as this maximum is taken over `short' edges (not in $E_{w,\sigma}$).

\medskip

A second, crucial property of the LLS step is that it
``splits'' our LP into $p$ separate LPs that have ``negligible'' interaction. Namely, the direction $(\Delta x_{J_k}^\lal,\Delta s_{J_k}^\lal)$ will be very close to the affine scaling step obtained in the problem restricted to the subspace
$W_{{\cal J},k} = \{x_{J_k}: x \in W, x_{J_{>k}} = 0\}$ (\Cref{lem:ll-decompose}\ref{i:prox})

Since each component $J_k$
is strongly connected in the long edge graph $G_{w,\sigma}$, if there is at least one primal $x_i$ and dual $s_j$ in $J_k$ that have converged after the LLS step, we can use the above argument to show combinatorial progress regarding the $\Le^W_{ij}x_i/x_j$ value (\Cref{lem:potential-master}).

Exploiting the proximity between the LLS and affine scaling steps, \Cref{lem:ll-decompose}\ref{i:progress} gives a lower bound on the step size
$\alpha\ge 1-\frac{3\sqrt{n}}{\beta}\max_{i\in [n]}\min\{|\Rx_i^\lal|,|\Rs_i^\lal|\}$.
Let $J_k$ be the component  where $\min\{\|\Rx_{J_k}^\lal\|,\|\Rs_{J_k}^\lal\|\}$ is the largest. Hence, the step size $\alpha$ can be lower bounded in terms of $\min\{\|\Rx_{J_k}^\lal\|,\|\Rs_{J_k}^\lal\|\}$.

The analysis now distinguishes two cases. Let $w^+=w+\alpha\Delta s^\lal$ be the point obtained by the predictor LLS step. If the corresponding partition lifting cost $\ell^{1/x^+}({\cal J})$ is still small, then a similar argument that has shown the convergence of primal and dual variables in the affine scaling step will imply that after the LLS step, at least one $x_i$ and one $s_j$ will have converged for $i,j\in J_k$. Thus, in this case we obtain the combinatorial progress (\Cref{lem: case_lemma_does_not_crash}).

The remaining case is when $\ell^{1/x^+}({\cal J})$ becomes large. In \Cref{lem: case_layer_crashes}, we show that in this case a new edge will enter the long edge graph, corresponding to the second combinatorial event listed previously. Intuitively, in this case one layer ``crashes'' into another.

\paragraph{Refined estimates on circuit imbalances} In the above overview, we assumed the circuit imbalance values $\Le^W_{ij}$ are given, and thus the graph $G_{w,\sigma}$ is available. Whereas these quantities are difficult to compute, we can naturally work with lower estimates. For each $i,j\in [n]$ that are contained in a circuit together, we start with the lower bound $\hat\Le^W_{ij}=|g^C_j/g^C_i|$ obtained for an arbitrary circuit $C$ with $i,j\in C$. We use the graph $\hat G_{w,\sigma}=([n],\hat E_{w,\sigma})$ corresponding to these estimates. Clearly, $\hat E_{w,\sigma}\subseteq E_{w,\sigma}$, but some long edges may be missing. We determine the partition $\cal J$ of the strongly connected components of $\hat G_{w,\sigma}$ and estimate the partition lifting cost $\ell^{1/x}({\cal J})$. If this is below the desired bound, the argument works correctly. Otherwise, we can identify a pair $i,j$ responsible for this failure. Namely, we find a circuit $C$ with $i,j\in C$ such that
$\hat \Le^W_{ij}<|g^C_j/g^C_i|$. In this case, we update our estimate, and recompute the partition; this is described in Algorithm~\ref{alg:layering_procedure}. At each LLS step, the number of updates is bounded by $n$, since every update leads to a decrease in the number of partition classes. This finishes the overview of the algorithm.

\subsection{A linear system viewpoint of layered least squares}\label{sec:linsys}
We now continue with the detailed exposition of our algorithm. We
present an equivalent definition of the LLS step introduced in Section~\ref{sec:lls}, generalizing
the linear system \eqref{aff:s}--\eqref{aff:sum}. We use the subspace
notation. With this notation, \eqref{aff:s}--\eqref{aff:sum} for the
affine scaling direction can be written as
\begin{equation}\label{eq:aff-delta}
s\Delta x^\as+x\Delta s^\as=-xs\, , \quad \Delta
x^\as\in W\, , \quad \mbox{and}\quad \Delta s^\as\in
W^\perp\, ,\
\end{equation}
which is further equivalent to $\delta\Delta x^\as+\delta^{-1}\Delta s^\as=-x^{1/2}s^{1/2}$.

Given the layering ${\cal J}$ and $w=(x,y,s)$, for each $k\in [p]$ we define the subspaces
\[
W_{{\cal J},k} \coloneqq  \{x_{J_k}: x \in W, x_{J_{>k}} = 0\}\,\quad
\mbox{and}\quad  W_{{\cal J},k}^\perp \coloneqq  \{x_{J_k}: x \in W^\perp, x_{J_{< k}}
= 0\}\, .\]
We  emphasize that $W_{{\cal J},k}$ and $W_{{\cal J},k}^\perp$ live on the variables in layer $k$. That is, $W_{{\cal J},k},  W_{{\cal J},k}^\perp \subseteq \R^{J_k}$.
It is easy to see that these two subspaces are orthogonal
complements. Our next goal is to show that, analogously to \eqref{eq:aff-delta},
the primal LLS step $\Delta x^{\lal}$ is obtained as the unique
solution to the linear system
\begin{equation}\label{eq:ll-primal}
\delta \Delta x^\lal + \delta^{-1} \Delta s = -x^{1/2} s^{1/2}\, , \quad \Delta x^\lal \in W\,
,\quad \mbox{and}\quad  \Delta s \in W_{{\cal J},1}^\perp \times \cdots
\times W_{{\cal J},p}^\perp\, ,
\end{equation}
and the dual LLS step $\Delta s^{\lal}$ is the unique solution to
\begin{equation}\label{eq:ll-dual}
\delta \Delta x  + \delta^{-1} \Delta s^\lal = -x^{1/2} s^{1/2}\, , \quad \Delta x\in W_{{\cal J},1}
\times \cdots \times W_{{\cal J},p}\, ,\quad \mbox{and} \quad \Delta s^\lal
\in W^\perp\, .
\end{equation}
It is important to note that $\Delta s$ in \eqref{eq:ll-primal} may be
different from $\Delta s^\lal$, and $\Delta x$ in \eqref{eq:ll-dual} may be
different from $\Delta x^\lal$. In fact, $\Delta s^\lal=\Delta s$ and
$\Delta x^\lal=\Delta x$ can only be the case for the affine scaling step.

The following lemma proves that the above linear systems are indeed
uniquely solved by the LLS step.
\begin{lemma}\label{lem:lls-as-linsys}
For $t \in \R^n$, $W \subseteq \R^n$, $\delta \in \R^n_{++}$, and
$\mathcal J = (J_1,J_2,\dots,J_p)$,
let $w = \mathrm{LLS}^{W,\delta}_{\mathcal J}(t)$ be defined by
\[
\delta w + \delta^{-1} v = \delta t,\qquad w \in W, \qquad
v \in W_{{\cal J},1}^\perp \times \cdots \times W_{{\cal J},p}^\perp.
\]
Then $\mathrm{LLS}^{W,\delta}_{\mathcal J}(t)$ is well-defined and
\[
\norm{\delta_{J_k}(t_{J_k} - w_{J_k})}
= \min\set{\norm{\delta_{J_k}(t_{J_k} - z_{J_k})} : z \in W, z_{J_{>k}} = w_{J_{>k}}}
\]
for every $k\in[p]$.
\end{lemma}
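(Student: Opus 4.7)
The plan is to prove existence, uniqueness, and the least-squares characterization in one stroke by constructing $w$ via a backward induction that matches the recursive definition of the primal LLS direction in \eqref{def:aff-x}, and then checking that any $(w,v)$ satisfying the linear system must coincide with this construction. The crucial linear-algebra ingredient is that for every $k \in [p]$, the subspaces $W_{{\cal J}, k}$ and $W_{{\cal J}, k}^\perp$ are orthogonal complements in $\R^{J_k}$. Orthogonality is verified by lifting to $\R^n$: if $u \in W_{{\cal J}, k}$ and $v \in W_{{\cal J}, k}^\perp$ are restrictions of $\tilde u \in W$ with $\tilde u_{J_{>k}} = 0$ and $\tilde v \in W^\perp$ with $\tilde v_{J_{<k}} = 0$, then $\langle \tilde u, \tilde v \rangle = \langle u, v \rangle$ and the left-hand side vanishes by $W \perp W^\perp$. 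Complementarity then follows from a dimension count using that $\dim W + \dim W^\perp = n$.

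For the construction, proceed by backward induction on $k = p, p-1, \dots, 1$: having fixed $w_{J_{>k}}$, define $w_{J_k}$ as the unique $\delta_{J_k}$-weighted orthogonal projection of $t_{J_k}$ onto the affine subspace $A_k := \{z_{J_k} : z \in W,\, z_{J_{>k}} = w_{J_{>k}}\}$. The directions of $A_k$ are exactly $W_{{\cal J}, k}$, so the first-order optimality condition reads $\langle \delta_{J_k}^2(t_{J_k} - w_{J_k}),\, u \rangle = 0$ for all $u \in W_{{\cal J}, k}$. By the orthogonal-complement fact just established, this gives $v_{J_k} := \delta_{J_k}^2(t_{J_k} - w_{J_k}) \in W_{{\cal J}, k}^\perp$, and setting $v := \delta^2(t-w)$ yields a vector satisfying $\delta w + \delta^{-1} v = \delta t$ with the required block-wise membership. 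By construction $w_{J_k}$ is the minimizer appearing in the lemma's statement.

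For uniqueness, given any $(w,v)$ satisfying the system, rearrange to $v = \delta^2(t-w)$. For any $z \in W$ with $z_{J_{>k}} = w_{J_{>k}}$, the difference has $(z-w)_{J_{>k}} = 0$, so $(z-w)_{J_k} \in W_{{\cal J}, k}$. A Pythagorean expansion yields
\[
\|\delta_{J_k}(t_{J_k} - z_{J_k})\|^2 = \|\delta_{J_k}(t_{J_k} - w_{J_k})\|^2 + \|\delta_{J_k}(z_{J_k} - w_{J_k})\|^2,
\]
since the cross term equals $\langle v_{J_k},\, (z-w)_{J_k}\rangle$, which vanishes because $v_{J_k} \in W_{{\cal J}, k}^\perp$ annihilates $W_{{\cal J}, k}$. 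Hence $w_{J_k}$ is the unique minimizer at level $k$ given $w_{J_{>k}}$, and backward induction starting from $k = p$ forces $w$ (and therefore $v$) to agree with the inductive construction. The main obstacle is really just the linear-algebra bookkeeping for the orthogonal-complement identification between $W_{{\cal J}, k}$ and $W_{{\cal J}, k}^\perp$ in $\R^{J_k}$; once this is in hand the induction, the optimality condition for a weighted projection, and the Pythagorean cross-term cancellation fit together mechanically.
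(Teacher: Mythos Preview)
Your argument is correct. Both your proof and the paper's rest on the same key fact---that $W_{{\cal J},k}$ and $W_{{\cal J},k}^\perp$ are orthogonal complements in $\R^{J_k}$---and both use a backward induction on $k$, but the organization differs. The paper first proves well-definedness abstractly: it shows $W \cap \bigl(\bigoplus_k W_{{\cal J},k}^\perp\bigr)=\{0\}$ by a direct induction, then obtains existence nonconstructively by taking the orthogonal complement of the symmetric statement $W^\perp \cap \bigl(\bigoplus_k W_{{\cal J},k}\bigr)=\{0\}$, and only afterwards checks the optimality condition. You instead build $w$ constructively via the recursive weighted projections, read off $v_{J_k}\in W_{{\cal J},k}^\perp$ from the first-order conditions, and then deduce uniqueness from the Pythagorean identity. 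Your route makes the link to the layered definition~\eqref{def:aff-x} more explicit and avoids the duality trick; the paper's route cleanly separates ``the linear system has a unique solution'' from ``that solution is the layerwise minimizer.'' One point worth spelling out in your write-up: the dimension count for complementarity is a telescoping argument---$\sum_k \dim W_{{\cal J},k}=\dim W$ and $\sum_k \dim W_{{\cal J},k}^\perp=\dim W^\perp$, so the total is $n=\sum_k |J_k|$, and since orthogonality gives $\dim W_{{\cal J},k}+\dim W_{{\cal J},k}^\perp\le |J_k|$ termwise, equality must hold for every $k$.
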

In the notation of the above lemma we have,
for ordered partitions $\mathcal J = (J_1,J_2,\dots,J_p)$,
$\bar{\cal J} = (J_p,J_{p-1},\dots,J_1)$,
and $(x,y,s) \in \mathcal P^{++} \times \mathcal D^{++}$
with $\delta = s^{1/2}x^{-1/2}$,
that $\Delta x^\lal = \mathrm{LLS}^{W,\delta}_{\mathcal J}(-x)$
and $\Delta s^\lal = \mathrm{LLS}^{W^\perp,\delta^{-1}}_{\cal {\bar J}}(-s)$.

\begin{proof}[Proof of \Cref{lem:lls-as-linsys}]
We first prove the equality
$W \cap (W^\perp_{\mathcal J,1} \times \dots \times W^\perp_{\mathcal J,p}) = \set{0}$,
and by a similar argument we have
$W^\perp \cap (W_{\mathcal J,1} \times \dots \times W_{\mathcal J,p}) = \set{0}$.
By duality, this last equality tells us that
\[(W^\perp \cap (W_{\mathcal J,1} \times \dots \times W_{\mathcal J,p}))^\perp = W + (W^\perp_{\mathcal J,1} \times \dots \times W^\perp_{\mathcal J,p}) = \R^n.\]
Thus, the linear decomposition defining $\mathrm{LLS}^{W,\delta}_{\mathcal J}(t)$
has a solution and its solution is unique.

Suppose $y \in W \cap (W^\perp_{\mathcal J,1} \times \dots \times
W^\perp_{\mathcal J,p})$. We prove $y_{J_k} = 0$ by induction on $k$, starting
at $k=p$. The induction hypothesis is that $y_{J_{>k}} = 0$, which is an empty
requirement when $k = p$. The hypothesis $y_{J_{>k}} = 0$ together with the
assumption $y \in W$ is equivalent to $y \in W \cap \R^n_{J_{\leq k}}$, and
implies $y_{J_k} \in \pi_{J_k}(W \cap \R^n_{J_{\leq k}}) \coloneqq  W_{{\cal J},k}$. Since
we also have $y_{J_k} \in W_{{\cal J},k}^\perp$ by assumption, which is the
orthogonal complement of $W_{{\cal J},k}$, we must have $y_{J_k} = 0$. Hence, by
induction $y = 0$. This finishes the proof that
$\mathrm{LLS}^{W,\delta}_{\mathcal J}(t)$ is well-defined.

Next we prove that $w$ is a minimizer of $\min\set{\norm{\delta_{J_k}(t_{J_k} -
z_{J_k})} : z \in W, z_{J_{>k}} = w_{J_{>k}}}$.  The optimality condition is for
$\delta_{J_k}(t_{J_k} - z_{J_k})$ to be orthogonal to $\delta_{J_k}u$ for any $u
\in W_{{\cal J},k}$. By the LLS equation, we have $\delta_{J_k}(t_{J_k} - w_{J_k})
= \delta_{J_k}^{-1} v_{J_k}$, where $v_{J_k} \in W^\perp_{\mathcal J, k}$.
Noting then that $\langle \delta_{J_k} u, \delta_{J_k}^{-1} v\rangle = \langle
u_{J_{k}}, v_{J_k} \rangle = 0$ for $u \in W_{{\cal J},k}$, the optimality
condition follows immediately.
\end{proof}

With these tools, we can prove that the lifting costs are self-dual.
This explains the reverse order in the dual vs primal LLS step
and justifies our attention on the lifting cost in a self-dual algorithm.
The next proposition generalizes the result of \cite{gonzaga_lara}.
\begin{proposition}[name=\prooftext\pageref{proof:prop:self-dual}, {restate=[name=\restatetext]propselfdual}]\label{prop:self-dual}
For a linear subspace $W \subseteq \R^n$ and index set $I \subseteq [n]$
with $J = [n]\setminus I$,
\[
\|L_I^W\| \leq \max\{1, \|L_J^{W^\perp}\|\}.
\]
In particular, $\ell^W(I) = \ell^{W^\perp}(J)$.
\end{proposition}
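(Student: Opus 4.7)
The plan is to exploit the orthogonality characterization of the lifting map: $z := L_I^W(p)$ is the unique element of $W$ with $z_I = p$ that is orthogonal to $W \cap \R^n_J$, as noted immediately after the definition of $L_I^W$. The core idea is to use this orthogonality to transfer $z_J$ ``to the dual side'' by showing it lies in $\pi_J(W^\perp)$, and then to control it via $L_J^{W^\perp}$ together with a short Cauchy--Schwarz argument.

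First, I would fix a nonzero $p \in \pi_I(W)$ and set $z = L_I^W(p)$. The standard subspace identity $(W \cap \R^n_J)^\perp = W^\perp + \R^n_I$, combined with $z \perp W \cap \R^n_J$, produces a decomposition $z = u + v$ with $u \in W^\perp$ and $v \in \R^n_I$, so $z_J = u_J \in \pi_J(W^\perp)$. If $z_J = 0$ (which is automatic when $\pi_J(W^\perp) = \{0\}$) then $\|L_I^W(p)\| = \|p\|$ and the desired bound $\|L_I^W\| \leq \max\{1, \|L_J^{W^\perp}\|\}$ is immediate.

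Otherwise, apply $L_J^{W^\perp}$ to $z_J$ to obtain $w \in W^\perp$ with $w_J = z_J$ and $\|w\| \leq \|L_J^{W^\perp}\| \cdot \|z_J\|$. Since $\|L_J^{W^\perp}\| \geq 1$ always (as $L_J^{W^\perp}(q)_J = q$), decomposing $\|w\|^2 = \|z_J\|^2 + \|w_I\|^2$ gives $\|w_I\| \leq \sqrt{\|L_J^{W^\perp}\|^2 - 1} \cdot \|z_J\|$. The key pairing $\langle z, w\rangle = 0$ (since $z \in W$, $w \in W^\perp$) splits by index as $\langle p, w_I\rangle + \|z_J\|^2 = 0$, and Cauchy--Schwarz together with the previous estimate yields $\|z_J\|^2 \leq \|p\| \cdot \|w_I\| \leq \sqrt{\|L_J^{W^\perp}\|^2 - 1} \cdot \|p\| \cdot \|z_J\|$. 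Dividing by $\|z_J\|$ and substituting into $\|L_I^W(p)\|^2 = \|p\|^2 + \|z_J\|^2$ gives $\|L_I^W(p)\|^2 \leq \|L_J^{W^\perp}\|^2 \|p\|^2$, as desired.

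For the ``in particular'', I would apply the main inequality with the roles of $(W, I)$ and $(W^\perp, J)$ swapped; since $\|L_I^W\|, \|L_J^{W^\perp}\| \geq 1$ whenever the relevant projections are nonzero, the two inequalities sandwich to equality, and $\ell^W(I) = \sqrt{\|L_I^W\|^2 - 1} = \ell^{W^\perp}(J)$ follows (the degenerate cases $\pi_I(W) = \{0\}$ or $\pi_J(W^\perp) = \{0\}$ are absorbed by the convention $\ell = 0$). The only non-routine ingredient is the primal-to-dual transfer $z_J \in \pi_J(W^\perp)$; once that is in hand, the rest is Cauchy--Schwarz and some bookkeeping.
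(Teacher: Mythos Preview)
Your proof is correct and takes a genuinely different, more elementary route than the paper. The paper proceeds via a general claim about oblique projections: it sets up complementary subspaces $U = W_{\mathcal J,1}^\perp \oplus W_{\mathcal J,2}^\perp$ and $V = W$ (for the two-layer partition $\mathcal J = (J,I)$), identifies the maps $T$ and $T'$ with the LLS operators $\mathrm{LLS}^{W,1}_{\mathcal J}$ and $\mathrm{LLS}^{W^\perp,1}_{\bar{\mathcal J}}$, and then shows $\|T\| = \|T'\|$ by analyzing the singular value decomposition of $T$ and explicitly constructing a witness vector $v_1 - \sigma_1^{-1} u_1$ for $T'$. It then needs an additional argument to identify $\|T\|$ with $\|L_I^W\|$. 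Your approach bypasses all of this machinery: the single observation that $z \perp W \cap \R^n_J$ implies $z_J \in \pi_J(W^\perp)$ lets you immediately pair $z$ against $L_J^{W^\perp}(z_J) \in W^\perp$, and the inequality falls out of Cauchy--Schwarz in two lines. Your argument is shorter, avoids the SVD and the LLS framework entirely, and makes the duality $\ell^W(I) = \ell^{W^\perp}(J)$ transparent as a direct consequence of the orthogonality characterization of the minimum-norm lift. The paper's detour through Claim~\ref{cl:subspace-angle} does yield a slightly more general statement about oblique projections, but for the proposition at hand your method is cleaner.
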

We defer the proof to Section~\ref{sec:prox-proof}.
Note that this proposition also implies \Cref{prop:chibar}\ref{i:chibar-dual}.

\subsubsection{Partition lifting scores}\label{sec:partition-lifting}
A key insight is that if the layering $\cal J$ is ``well-separated'',
then we indeed have $x \Delta s^\lal  + s \Delta x^\lal \approx -xs$,
that is, the LLS direction is close to the affine scaling
direction. This will be shown in Lemma~\ref{lem:ll-decompose}.
The notion of ``well-separatedness'' can be formalized as follows.
Recall the definition of the lifting score \eqref{def:lifting-score}.
The lifting score of the layering ${\cal J}=(J_1,
J_2,\ldots, J_p)$ of $[n]$ with respect to $W$ is defined as
\[
\ell^W({\cal J})\coloneqq \max_{2 \leq k \leq p}\ell^W(J_{\ge k})\, .
\]
For $\delta\in \R^n_{++}$, we use $\ell^{W,\delta}(I)
\coloneqq \ell^{\diag(\delta)W}(I)$ and $\ell^{W,\delta}({\cal J})
\coloneqq \ell^{\diag(\delta)W}({\cal J})$. When the context is clear, we omit $W$ and
write $\ell^{\delta}(I) \coloneqq  \ell^{W,\delta}(I)$ and $\ell^\delta({\cal J}) \coloneqq 
\ell^{W,\delta}({\cal J})$.

The following important duality claim asserts that the lifting score
of a layering equals the lifting score of the reverse layering in the
orthogonal complement subspace. It is an immediate consequence of \Cref{prop:self-dual}.
\begin{lemma}\label{lem:balanced-dual}
Let $W \subseteq \R^n$ be a linear subspace, $\delta \in \R^n_{++}$.
For an ordered partition ${\cal J}=(J_1,J_2,\ldots, J_p)$, let $\mathcal{
    \bar J}=(J_p,J_{p-1},\ldots,J_1)$ denote the reverse ordered
    partition.
Then, we have
\[
\ell^{W,\delta}({\cal J})=\ell^{W^\perp,\delta^{-1}}(\cal{
  \bar J}).
\]
\end{lemma}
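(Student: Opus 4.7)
The proof is essentially a bookkeeping reduction to \Cref{prop:self-dual}, which has already been invoked. The plan is to unpack the two sides and match them up termwise.

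First I would rewrite both sides in a common index. By definition, $\ell^{W,\delta}(\mathcal J) = \max_{2 \le k \le p} \ell^{W,\delta}(J_{\ge k})$, and with the substitution $k' = k-1$ this equals $\max_{1 \le k' \le p-1} \ell^{W,\delta}(J_{> k'})$. For the reverse partition $\bar{\mathcal J} = (J_p, J_{p-1}, \dots, J_1)$, writing $J'_\ell = J_{p-\ell+1}$ we have $J'_{\ge \ell} = J_{\le p-\ell+1}$, so another change of variable gives $\ell^{W^\perp,\delta^{-1}}(\bar{\mathcal J}) = \max_{1 \le k' \le p-1} \ell^{W^\perp,\delta^{-1}}(J_{\le k'})$. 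It therefore suffices to prove, for every $k \in [1,p-1]$, the single-set identity
\[
\ell^{W,\delta}(J_{>k}) \;=\; \ell^{W^\perp,\delta^{-1}}(J_{\le k}).
\]

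Next I would reduce this single-set identity to \Cref{prop:self-dual} applied to the rescaled subspace $W' := \diag(\delta)\, W$. The key sub-step is the standard observation that $(\diag(\delta)\, W)^\perp = \diag(\delta^{-1})\, W^\perp$: indeed, if $w \in W$ and $v' \in (W')^\perp$, then $0 = \langle \diag(\delta) w, v'\rangle = \langle w, \diag(\delta) v'\rangle$ for all $w \in W$, forcing $\diag(\delta) v' \in W^\perp$. With this identification, and recalling that $\ell^{W,\delta}(I) = \ell^{\diag(\delta) W}(I)$ by definition, \Cref{prop:self-dual} applied with $I = J_{>k}$ and complementary index set $[n] \setminus I = J_{\le k}$ yields exactly
\[
\ell^{\diag(\delta)W}(J_{>k}) \;=\; \ell^{(\diag(\delta)W)^\perp}(J_{\le k}) \;=\; \ell^{\diag(\delta^{-1})W^\perp}(J_{\le k}),
\]
which is the required identity. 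Taking the maximum over $k \in [1,p-1]$ completes the proof.

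There is no real obstacle here: the substance is entirely in \Cref{prop:self-dual}, whose proof is deferred to \Cref{sec:prox-proof}. The only thing to be careful about is the indexing convention—that $\ell^{W,\delta}(\mathcal J)$ ranges over the \emph{tail} unions $J_{\ge k}$ for $k \ge 2$ (equivalently over all nontrivial ``lower'' unions $J_{>k}$), and that reversing the partition swaps these tails with the complementary ``upper'' unions $J_{\le k}$, which is precisely the complementation pattern that \Cref{prop:self-dual} handles.
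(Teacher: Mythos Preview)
Your proof is correct and follows essentially the same approach as the paper: both reduce immediately to \Cref{prop:self-dual} applied to the rescaled subspace $U=\diag(\delta)W$ (with $U^\perp=\diag(\delta^{-1})W^\perp$), matching the tail union $J_{\ge k}$ with its complement $J_{\le k-1}$ and then taking the maximum. The only cosmetic difference is that you shift the index upfront ($k'=k-1$) while the paper carries $J_{\ge k}$ and $J_{\le k-1}$ through and identifies $J_{\le k-1}=\bar J_{\ge p-k+2}$ inline.
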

\begin{proof}
Let $U = \diag(\delta)W$. Note that $U^\perp = \diag(\delta^{-1}) W^\perp$. Then by \Cref{prop:self-dual}, for $2 \leq k \leq p$, we
have that
\[
\ell^{W,\delta}(J_{\geq k}) = \ell^{U}(J_{\geq k}) = \ell^{U^\perp}(J_{\leq k-1}) =
\ell^{U^\perp}(\bar{J}_{\geq p-k+2}) = \ell^{W^\perp,\delta^{-1}}(\bar{J}_{\geq
p-k+2}).
\]
In particular, $\ell^{W,\delta}({\cal J}) = \ell^{W^\perp,\delta^{-1}}(\cal{
\bar J})$, as needed.
\end{proof}

The next lemma summarizes key properties of the LLS steps, assuming
the partition has a small lifting score. We show that if $\ell^\delta({\cal J})$ is sufficiently small, then on the one hand, the LLS step will be very close to the affine scaling step, and on the other hand, on each layer $k\in [p]$, it will be very close to the affine scaling step restricted to this layer for the subspace $W_{{\cal J},k}$.
The proof is
deferred to Section~\ref{sec:prox-proof}.

\begin{lemma}[name=\prooftext\pageref{proof:lem:ll-decompose}, {restate=[name=\restatetext]lldecompose}]\label{lem:ll-decompose}
Let  $w=(x,y,s)\in {\cal N}(\beta)$ for $\beta\in (0,1/4]$, let
$\mu=\mu(w)$ and  $\delta=\delta(w)$.
Let ${\cal J}=(J_1,\ldots,J_p)$ be a layering with
$\ell^\delta({\cal J})\le \beta/(32 n^2)$, and let $\Delta w^\lal = (\Delta x^\lal, \Delta y^\lal, \Delta
s^\lal)$ denote the LLS direction for the layering
${\cal J}$. Let furthermore
$\epsilon^\lal(w)=\max_{i\in [n]}\min\{|\Rx_i^\lal|,|\Rs_i^\lal|\}$, and
define the maximal step length as
\begin{align*}
\alpha^* &\coloneqq  \sup\{\alpha' \in [0,1] : \forall \bar\alpha \in [0,\alpha']: w + \bar\alpha \Delta w^\lal \in
         \mathcal{N}(2\beta)\}\, .
\end{align*}
Then the following properties hold.
\begin{enumerate}[label=(\roman*)]
\item\label{i:prox} We have
\begin{align}
\|\delta_{J_k} \Delta x^\lal_{J_k} + \delta^{-1}_{J_k} \Delta s^\lal_{J_k} +x^{1/2}_{J_k} s^{1/2}_{J_k}\|
&\le 6n\ell^\delta({\cal J})\sqrt{\mu}\, , \quad \forall k\in [p], \mbox{ and} \label{eq:prox-layer}\\
\label{eq:prox-all}
\|\delta \Delta x^\lal + \delta^{-1} \Delta s^\lal +x^{1/2} s^{1/2}\|
&\le 6n^{3/2}\ell^\delta({\cal J})\sqrt{\mu}\, .
\end{align}
\item\label{i:lls-aff}
For the affine scaling direction $\Delta w^\as=(\Delta x^\as,\Delta
y^\as,\Delta s^\as)$,
\[
\|\Rx^\lal -\Rx^\as\|, \|\Rs^\lal-\Rs^\as\|\le  6n^{3/2}\ell^\delta({\cal J})\, .
\]
\item\label{i:norm}
For the residuals of the LLS steps we have
$\|\Rx^\lal\|,\|\Rs^\lal\|\le \sqrt{2n}$. For each   $i \in [n]$,
$\max\{|\Rx^\lal_i|,|\Rs^\lal_i|\}\ge \frac 12-\frac{3}4 \beta$.
\item\label{i:progress}

We have 
\begin{equation}\label{eq:alpha-max}
\alpha^*\ge 1-\frac{3\sqrt{n}\epsilon^\lal(w)}{\beta}\, ,
\end{equation}
and for any $\alpha\in [0,1]$
\begin{equation*}
\mu(w + \alpha \Delta w^\lal) = (1-\alpha)\mu\,, 
\end{equation*}

\item\label{i:terminate}
We have $\epsilon^\lal(w)=0$ if and only if $\alpha^*=1$. These are further
equivalent to $w+ \Delta w^\lal=(x+\Delta x^\lal, y+\Delta y^\lal,s+ \Delta
s^\lal)$ being an optimal solution to \eqref{LP_primal_dual}.
\end{enumerate}
\end{lemma}

\subsection{The layering procedure}
\label{sec:layering}

Our algorithm performs LLS steps on a layering with a low lifting
score. A further requirement is that
within  each layer, the circuit imbalances $\Le^\delta_{ij}$ defined
in \eqref{eq:Le-def} are suitably bounded. The rescaling here is with respect
to $\delta=\delta(w)$ for the current iterate $w=(x,y,s)$.
To define the precise requirement on the layering, we first introduce
an auxiliary graph.
Throughout we use the parameter
\begin{equation}\label{def:gamma}
\gamma\coloneqq \frac{\beta}{2^{10} n^{5}}\, .
\end{equation}

\paragraph{The auxiliary graph} For a vector $\delta\in \R^n_{++}$ and $\sigma>0$, we define the directed graph
$G_{\delta,\sigma}=([n],E_{\delta,\sigma})$ such that $(i,j)\in
E_{\delta,\sigma}$ if $\Le^\delta_{ij}\ge \sigma$. This is a subgraph
of the {\em circuit ratio digraph} studied in
Section~\ref{sec:rescale}, including only the edges where the circuit
ratio is at least the threshold $\sigma$. Note that we do not
have direct access to this graph, as we cannot efficiently compute the values $\Le^\delta_{ij}$.

At the beginning of the entire algorithm, we run the subroutine
\textsc{Find-Circuits}($A$)  as in
Theorem~\ref{thm:pairwise_circuits_in_matroids},
where $W=\ker(A)$. We assume the matroid
${\cal M}(A)$ is non-separable. For a separable matroid, we can solve the
subproblems of our LP on the components separately. Thus, for each $i\neq j$,
$i,j\in [n]$, we obtain an estimate $\hat \Le_{ij}\le
\Le_{ij}$. These estimates will be gradually improved throughout the algorithm.

Note that $\Le^\delta_{ij}=\Le_{ij}\delta_j/\delta_i$ and $\hat \Le^\delta_{ij}=\hat \Le_{ij}\delta_j/\delta_i$.
If $\hat \Le^\delta_{ij}\ge \sigma$, then we are
guaranteed $(i,j)\in E_{\delta,\sigma}$.

\begin{definition}
\label{def:aux-graph}
Define $\hat
G_{\delta,\sigma}=([n],\hat E_{\delta,\sigma})$ to be the directed graph
with edges $(i,j)$ such that $\hat \Le^\delta_{ij}\ge
\sigma$; clearly, $\hat G_{\delta,\sigma}$ is a subgraph of $G_{\delta,\sigma}$.
\end{definition}

\begin{lemma}\label{lem:tournament}
Let $\delta\in\R^n_{++}$. For every $i\neq j$, $i,j\in[n]$,
$\hat \kappa_{ij}^\delta\cdot\hat \kappa_{ji}^\delta\ge 1$. Consequently, for
any $0<\sigma\le 1$, at least one of $(i,j)\in \hat
E_{\delta,\sigma}$ or $(j,i)\in \hat E_{\delta,\sigma}$.
\end{lemma}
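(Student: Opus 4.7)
The plan is to reduce the claim to a statement purely about the unscaled estimates $\hat\Le_{ij}$, and then exploit the fact that the same circuit $C\in\hat\circuits$ witnessing a lower bound for $\hat\Le_{ij}$ also witnesses one for $\hat\Le_{ji}$.

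First I would observe the trivial rescaling identity
\[
\hat\Le_{ij}^\delta\cdot\hat\Le_{ji}^\delta
=\Bigl(\hat\Le_{ij}\tfrac{\delta_j}{\delta_i}\Bigr)\Bigl(\hat\Le_{ji}\tfrac{\delta_i}{\delta_j}\Bigr)
=\hat\Le_{ij}\cdot\hat\Le_{ji},
\]
so it suffices to prove $\hat\Le_{ij}\cdot\hat\Le_{ji}\ge 1$. Here the standing assumption that ${\cal M}(A)$ is non-separable (so we may restrict to one component) ensures that by \Cref{thm:pairwise_circuits_in_matroids} there is at least one circuit $C\in\hat\circuits$ with $i,j\in C$, and $\hat\Le_{ij}$ and $\hat\Le_{ji}$ are both defined as maxima over such circuits.

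Next, fix any circuit $C\in\hat\circuits$ with $i,j\in C$ and let $g=g^C$. By the definition of $\hat\Le_{ij}$ as a maximum, the particular ratio realized by $C$ is a lower bound on $\hat\Le_{ij}$, and likewise for $\hat\Le_{ji}$:
\[
\frac{|g_j|}{|g_i|}\le \hat\Le_{ij},\qquad \frac{|g_i|}{|g_j|}\le \hat\Le_{ji}.
\]
Multiplying these two inequalities gives $1\le \hat\Le_{ij}\cdot\hat\Le_{ji}$, which combined with the rescaling identity proves the first assertion.

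For the consequence, suppose for contradiction that both $(i,j)\notin\hat E_{\delta,\sigma}$ and $(j,i)\notin\hat E_{\delta,\sigma}$, i.e.\ $\hat\Le_{ij}^\delta<\sigma$ and $\hat\Le_{ji}^\delta<\sigma$. Then $\hat\Le_{ij}^\delta\cdot\hat\Le_{ji}^\delta<\sigma^2\le 1$ since $\sigma\le 1$, contradicting the first part. Hence at least one of $(i,j),(j,i)$ lies in $\hat E_{\delta,\sigma}$. There is no real obstacle here beyond correctly unwinding the definitions; the only mild subtlety is to remember the non-separability assumption guaranteeing the existence of a circuit through any chosen pair $i,j$.
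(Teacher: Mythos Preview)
Your proof is correct and follows essentially the same approach as the paper: reduce to the unscaled product via the rescaling identity, then use that a single circuit $C\in\hat\circuits$ through $i$ and $j$ simultaneously lower-bounds both $\hat\Le_{ij}$ and $\hat\Le_{ji}$, so their product is at least $1$. The only small point the paper adds that you omit is that the estimates $\hat\Le_{ij}$ may be increased during the algorithm (via \textsc{Verify-Lift}); the paper handles this by noting the inequality holds at initialization and is preserved since updates only increase the values, whereas your argument tacitly works with the initial definition from \Cref{thm:pairwise_circuits_in_matroids}.
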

\begin{proof}
\b{We show that this property holds at the initialization. Since the estimates can only increase, it remains true throughout the algorithm.}
Recall the definition of $\hat\Le_{ij}$ from
Theorem~\ref{thm:pairwise_circuits_in_matroids}. This is defined as the maximum of $|g_j/g_i|$ such that $g\in W$, $\supp(g)=C$ for some $C\in
\hat \circuits$ containing $i$ and $j$. For the same vector $g$, we get
$\hat\Le_{ji}\ge |g_i/g_j|$. Consequently, $\hat\Le_{ij}\cdot
\hat\Le_{ji}\ge 1$, and also $\hat\Le^\delta_{ij}\cdot
\hat\Le_{ji}^\delta\ge 1$. The second claim follows by the assumption
$\sigma\le 1$.
\end{proof}

\paragraph{Balanced layerings}
We are ready to define the requirements on the layering in the
algorithm. In the algorithm, $\delta=\delta(w)$ will correspond to the scaling
of the current iterate $w=(x,y,s)$.

\begin{definition}\label{def:balanced}
Let $\delta\in \R^n_{++}$. The layering
${\cal J}=(J_1,
J_2,\ldots, J_p)$ of $[n]$
is \emph{$\delta$-balanced} if
\begin{enumerate}[label=(\roman*)]
\item $\ell^\delta({\cal J})\le \gamma$, and\label{i:bound-norm}
\item  $J_k$ is strongly connected in $G_{\delta,\gamma/n}$ for all $k\in [p]$.\label{i:strong-conn}
\end{enumerate}
\end{definition}

The following lemma shows that within each layer, the
$\Le_{ij}^\delta$ values are within a bounded range. This will play an
important role in our potential analysis.
\begin{lemma}\label{lem:balanced-bound} Let  $0<\sigma< 1$ and $t>0$,
and $i,j\in [n]$, $i\neq j$.
\begin{enumerate}[label=(\roman*)]
\item\label{i-j-upper} If the graph $G_{\delta,\sigma}$
contains a directed path of at most $t-1$ edges from $j$ to $i$,  then
\[
\Le_{ij}^\delta<\left(\frac{\kappa^*}{\sigma}\right)^{t}\, .
\]
\item\label{i-j-lower}
If $G_{\delta,\sigma}$ contains a directed path of at most $t-1$ edges from $i$ to $j$, then
\[
\Le_{ij}^\delta>
\left(\frac{\sigma}{\kappa^*}\right)^{t}\, .
\]
\end{enumerate}
\end{lemma}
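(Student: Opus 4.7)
The strategy is to close the given directed path into a cycle and then apply the min–max characterization of Theorem~\ref{thm:low_bound_chi_bar_with_circuits}. Recall from \eqref{eq:cycle-invariant} that cycle weights are invariant under diagonal rescaling, so the theorem applied to the subspace $\diag(\delta)W$ yields $\Le^\delta(H) \le (\kappa^*)^{|H|}$ for every cycle $H$ in the circuit ratio digraph. We may assume throughout that $\Le^\delta_{ij}, \Le^\delta_{ji} > 0$: otherwise both vanish and the conclusions hold trivially (noting $(\sigma/\kappa^*)^t < 1$ and $\kappa^*\ge 1$).

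For part~\ref{i-j-upper}, let $j = u_0 \to u_1 \to \cdots \to u_\ell = i$ be the given directed path in $G_{\delta,\sigma}$, so $\ell \le t-1$ and each $\Le^\delta_{u_{r-1}u_r} \ge \sigma$. Adding the edge $(i,j)$ closes this into a cycle $H$ of length $|H|=\ell+1\le t$. Theorem~\ref{thm:low_bound_chi_bar_with_circuits} gives
\[
\sigma^{\ell}\cdot \Le^\delta_{ij} \;\le\; \prod_{r=1}^{\ell} \Le^\delta_{u_{r-1}u_r} \cdot \Le^\delta_{ij} \;=\; \Le^\delta(H) \;\le\; (\kappa^*)^{\ell+1},
\]
whence $\Le^\delta_{ij} \le \kappa^*(\kappa^*/\sigma)^{\ell} \le \kappa^* (\kappa^*/\sigma)^{t-1} < (\kappa^*/\sigma)^t$, the last inequality using $\kappa^* \ge 1 > \sigma$.

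For part~\ref{i-j-lower}, I first apply part~\ref{i-j-upper} with the roles of $i$ and $j$ swapped: the path from $i$ to $j$ yields the upper bound $\Le^\delta_{ji} < (\kappa^*/\sigma)^t$. To convert this into a lower bound on $\Le^\delta_{ij}$, I use the reciprocal inequality $\Le^\delta_{ij}\cdot \Le^\delta_{ji}\ge 1$, which follows by the same reasoning as Lemma~\ref{lem:tournament}: for any circuit $C$ with $i,j \in C$, the rescaled solution $\diag(\delta)g^C$ is a circuit solution in $\diag(\delta)W$ that certifies both $\Le^\delta_{ij}\ge |g^C_j/g^C_i|\cdot \delta_j/\delta_i$ and $\Le^\delta_{ji}\ge |g^C_i/g^C_j|\cdot \delta_i/\delta_j$, whose product equals $1$. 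Combining these, $\Le^\delta_{ij}\ge 1/\Le^\delta_{ji} > (\sigma/\kappa^*)^t$, as required.

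The main obstacle is not a deep one: the whole argument is a careful bookkeeping of which cycle to form and which direction the bound runs. The only real subtlety is the two-step route for part~\ref{i-j-lower}: applying Theorem~\ref{thm:low_bound_chi_bar_with_circuits} to a path from $i$ to $j$ naturally bounds $\Le^\delta_{ji}$ from above rather than $\Le^\delta_{ij}$ from below, so one needs the auxiliary reciprocal bound to invert.
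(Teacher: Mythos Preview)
Your proof is correct and follows essentially the same approach as the paper: close the given path into a cycle with the edge $(i,j)$, apply Theorem~\ref{thm:low_bound_chi_bar_with_circuits} to bound the cycle weight, and for part~\ref{i-j-lower} swap $i,j$ and invert via $\Le^\delta_{ij}\Le^\delta_{ji}\ge 1$. One cosmetic remark: your ``trivial case'' $\Le^\delta_{ij}=0$ is actually vacuous under either hypothesis (the existence of a path in $G_{\delta,\sigma}$ between $i$ and $j$ forces them into the same connected component of the circuit hypergraph, hence $\Le_{ij}>0$), so the slightly off justification there is harmless.
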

\begin{proof}
For part~\ref{i-j-upper},
let $j=i_1,i_2,\ldots,i_h=i$ be a path in $G_{\delta,\sigma}$ in $J$
from $j$ to $i$ with $h\le t$. That is, $\Le^\delta_{i_\ell i_{\ell+1}}\ge \sigma$ for
each $\ell\in [h]$. \Cref{thm:low_bound_chi_bar_with_circuits}
yields
\[
(\bar\kappa^*)^{t}\ge \Le_{ij}^\delta\cdot \sigma^{h-1}>
\Le_{ij}^\delta\cdot \sigma^{t}\, ,
\]
since $h\le t$ and $\sigma< 1$.
Part~\ref{i-j-lower} follows using part \ref{i-j-upper} for $j$ and
$i$, and that $\Le_{ij}^\delta\cdot \Le_{ji}^\delta\ge 1$
according to Lemma~\ref{lem:tournament}.
\end{proof}

\paragraph{Description of the layering subroutine}
Consider
an iterate $w=(x,y,s)\in {\cal N}(\beta)$ of the algorithm with $\delta=\delta(w)$,
The
subroutine \textsc{Layering}$(\delta,\hat\Le)$, described in Algorithm~\ref{alg:layering_procedure}, constructs a
$\delta$-balanced layering. We recall that the approximated auxilliary graph $\hat
G_{\delta,\gamma/n}$ with respect to $\hat{\Le}$ is as in \Cref{def:aux-graph}

\begin{algorithm}[htb]
\caption{\textsc{Layering}($\delta, \hat\Le$)}
\label{alg:layering_procedure}
\SetKwInOut{Input}{Input}
\SetKwInOut{Output}{Output}
\SetKw{And}{\textbf{and}}

\Input{$\delta\in \R^{n}_{++}$ and $\hat\Le\in \R^E_{++}$.}
\Output{$\delta$-balanced layering $\mathcal J =({J}_1,
  \ldots,{J}_p)$ and updated values $\hat\Le\in \R^E_{++}$.}
Compute the strongly connected components $C_1,C_2,\ldots,C_\ell$ of $\hat G_{\delta,\gamma/n}$, listed in
the ordering imposed by $\hat G_{\delta,\gamma/n}$\;
$\bar E\gets \hat E_{\delta,\gamma/n}$\;
\For{$k=2,\ldots,\ell$}{
Call \textsc{Verify-Lift}$(\diag(\delta)W, C_{\ge k},\gamma)$ that answers
{\em `pass'} or {\em `fail'}\;
\If{the answer is {\em `fail'}}{
Let  $i\in C_{\ge k}$, $j\in C_{<k}$, and $t$ be the output of
 \textsc{Verify-Lift} such that
$\gamma/n\le t\le \Le^\delta_{ij}$ \;
$\hat\Le_{ij}\gets t\delta_i/\delta_j$\;
$\bar E\gets \bar E\cup \{(i,j)\}$\;
}
}
Compute strongly connected components $J_1,J_2,\ldots,J_p$ of
$([n],\bar E)$, listed in the ordering imposed by $\hat G_{\delta,\gamma/n}$\;
\Return{${\cal J}=(J_1,J_2,\ldots,J_p), \hat{\Le}$.}
\end{algorithm}

We now give an overview of the subroutine  \textsc{Layering}$(\delta,\hat\Le)$. We start by computing the strongly connected components (SCCs) of the
directed graph $\hat G_{\delta,\gamma/n}$. The edges of this graph are
obtained using the current estimates $\hat\Le_{ij}^\delta$. According to
\Cref{lem:tournament}, we have $(i,j) \in \hat
E_{\delta,\gamma/n}$ or $(j,i)\in \hat
E_{\delta,\gamma/n}$ for every $i,j\in [n]$, $i\neq j$. Hence,
 there is a
linear ordering of the components $C_1,C_2,\ldots,C_\ell$ such that
$(u,v)\in \hat E_{\delta,\gamma/n}$ whenever
$u\in C_i$, $v\in C_j$, and $i<j$. We call this the ordering
imposed by $\hat G_{\delta, \gamma/n}$.

Next, for each $k= 2,\ldots,\ell$, we
use the subroutine \textsc{Verify-Lift}$(\diag(\delta)W, C_{\ge
  k},\gamma)$ described in
\Cref{lem:purify}. If the subroutine returns `pass', then we conclude
$\ell^\delta(C_{\ge k})\le \gamma$, and  proceed to the
next layer.  If the answer is `fail', then the
subroutine returns as certificates $i\in C_{\ge k}$, $j\in C_{<k}$, and $t$ such that
$\gamma/n \le t\le\Le_{ij}^\delta$.
In this case, we update $\hat\Le_{ij}^\delta$ to the higher
value $t$. We add  $(i,j)$ to an edge set $\bar E$;
this edge set was initialized to contain $\hat E_{\delta,\gamma/n}$.
After adding $(i,j)$, all components $C_\ell$ between those containing $i$
and $j$ will be merged into a single strongly connected
component. To see this, recall that if $i'\in C_{\ell}$ and $j'\in
C_{\ell'}$ for $\ell<\ell'$, then $(i',j')\in \hat
E_{\delta,\gamma/n}$ according to \Cref{lem:tournament}.

Finally, we compute the strongly connected components of
$([n],\bar E)$. We let $J_1,J_2,\ldots,J_p$ denote
their unique acyclic order, and return these layers.

\begin{lemma} \label{lem:layering_works_correctly}
The subroutine \textsc{Layering}$(\delta,\hat \Le)$ returns a $\delta$-balanced
layering in $O(nm^2 + n^2)$ time.
\end{lemma}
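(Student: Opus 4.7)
My plan is to verify the two defining properties of $\delta$-balancedness (\Cref{def:balanced}) and then bound the running time. The key structural observation is that $(J_1,\dots,J_p)$ is a coarsening of $(C_1,\dots,C_\ell)$: by \Cref{lem:tournament}, every edge of $\hat E_{\delta,\gamma/n}$ between distinct $C$-components is oriented forward in the imposed ordering, so the only edges of $\bar E$ that can merge different $C$-blocks into a common SCC are the backward edges added inside the for loop, and each such edge merges an entire interval of consecutive $C$-blocks. Hence every $J$-boundary coincides with a $C$-boundary.

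For property \ref{i:strong-conn} it suffices to show $\bar E\subseteq E_{\delta,\gamma/n}$, since each $J_r$ is by construction an SCC of $([n],\bar E)$ and so remains strongly connected in any supergraph. This inclusion holds initially because $\hat\Le\le\Le$ gives $\hat E_{\delta,\gamma/n}\subseteq E_{\delta,\gamma/n}$, and every edge added in the loop comes with a certificate $\gamma/n\le t\le \Le^\delta_{ij}$ from \textsc{Verify-Lift}, hence lies in $E_{\delta,\gamma/n}$ as well.

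The main step is property \ref{i:bound-norm}: $\ell^\delta(J_{\ge r})\le \gamma$ for every $2\le r\le p$. By the structural observation above, there exists some $k^\ast\in\set{2,\dots,\ell}$ with $J_{\ge r}=C_{\ge k^\ast}$ and such that no added edge $(i,j)$ satisfies $i\in C_{\ge k^\ast}$ and $j\in C_{<k^\ast}$; otherwise such an edge would merge a $C$-block on each side of $k^\ast$, contradicting $J_{\ge r}=C_{\ge k^\ast}$. Consequently the call \textsc{Verify-Lift}$(\diag(\delta)W,\,C_{\ge k^\ast},\,\gamma)$ must have returned \emph{`pass'}, for if it had returned \emph{`fail'} its certificate pair would by construction be exactly such a crossing edge. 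By \Cref{lem:purify} this certificate yields $\ell^\delta(J_{\ge r})=\ell^\delta(C_{\ge k^\ast})\le \gamma$, as required.

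For the running time, both SCC decompositions cost $O(n^2)$ via Tarjan's algorithm on graphs with at most $n^2$ edges. The for loop makes at most $n-1$ calls to \textsc{Verify-Lift}, and the main obstacle is that a black-box application of the per-call $O(n(n-m)^2)$ bound from \Cref{lem:purify} summed over all calls is too coarse to reach $O(nm^2+n^2)$. I plan to precompute once, at total cost $O(nm^2)$, a reduced basis of $\diag(\delta)W$, and then observe that along the nested chain $C_{\ge 2}\supseteq C_{\ge 3}\supseteq\dots\supseteq C_{\ge \ell}$ the minimal independent subset $I'\subseteq C_{\ge k}$ and the lifting matrix $B$ used inside \textsc{Verify-Lift} can be maintained incrementally as columns are dropped. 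The additional work across all calls is then also $O(nm^2)$, and adding the two SCC computations gives the claimed $O(nm^2+n^2)$ bound.
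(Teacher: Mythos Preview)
Your correctness argument is essentially the paper's: you establish that the $J$-layers are unions of consecutive $C$-blocks, that $\bar E\subseteq E_{\delta,\gamma/n}$ gives property~\ref{i:strong-conn}, and that at every surviving boundary $k^\ast$ the call \textsc{Verify-Lift}$(\diag(\delta)W,C_{\ge k^\ast},\gamma)$ must have returned `pass', giving property~\ref{i:bound-norm}. This part is fine.

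The running-time argument has a genuine gap. Your plan to amortize the \textsc{Verify-Lift} calls across the nested chain $C_{\ge 2}\supseteq\cdots\supseteq C_{\ge \ell}$ is the right idea, and with care (shared Gaussian elimination plus block Gram--Schmidt) it does bring the total cost down from the naive $O(n\cdot n(n-m)^2)$ to $O(n(n-m)^2)$. But that is $O(n(n-m)^2)$, not $O(nm^2)$: the lifting matrices $B$ you are maintaining have up to $n-m$ columns (since $|I'|\le\dim W=n-m$), and the orthogonalization cost scales with $(n-m)^2$, not $m^2$. Your claim that ``the additional work across all calls is then also $O(nm^2)$'' does not follow from working with $\diag(\delta)W$.

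The missing ingredient is the self-duality of lifting scores, $\ell^{W}(I)=\ell^{W^\perp}([n]\setminus I)$ (\Cref{prop:self-dual}). Because the layering depends only on these scores, you may run the entire \textsc{Verify-Lift} amortization on $W^\perp$ (with the reversed chain $C_{<2}\subseteq\cdots\subseteq C_{<\ell}$) instead of $W$. Since $\dim W^\perp=m$, the same amortization now yields $O(nm^2)$, which together with the two $O(n^2)$ SCC computations gives the stated bound.
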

The difficult part of the proof is showing the running time bound. We note that the weaker bound $O(n^2 m^2)$ can be obtained by a simpler argument.
\begin{proof}
We first verify that the output layering is indeed $\delta$-balanced.
For property \ref{i:bound-norm} of \Cref{def:balanced}, note that each $J_q$ component is the union of some
of the $C_k$'s. In particular, for every $q\in[p]$, the set $J_{\ge
  q}=C_{\ge k}$ for some $k\in [\ell]$. Assume now
$\ell^\delta(C_{\ge k})>\gamma$.  At step $k$ of the main cycle, the
subroutine \textsc{Verify-Lift} returned the answer `fail', and a new edge
$(i,j)\in E$ was added with $i\in C_{\ge k}$, $j\in C_{<k}$. Note that
we already had
$(j,i)\in \hat E_{\delta,\gamma/n}$, since $j\in C_r$ for some $r<k$,
and $i\in C_{r'}$ for $r'\ge k$. This contradicts the choice of
$J_{\ge q}$ as a maximal strongly connected component in $([n],E)$.

Property \ref{i:strong-conn} follows since all new edges added to $E$
have $\Le_{ij}\ge \gamma/n$. Therefore, $([n],E)$ is a subgraph of
$G_{\delta,\gamma/n}$.

\medskip

Let us now turn to the computational cost. The initial
strongly-connected components can be obtained in time $O(n^2)$, and
the same bound holds for the computation of the final
components. (The
latter can be also done in linear time, exploiting the special structure
that the components $C_i$ have a complete linear ordering.)

The second computational bottleneck is the subroutine \textsc{Verify-Lift}. We
assume a matrix $M\in \R^{n \times (n-m)}$ is computed at the very beginning
such that ${\rm range}(M)=W$. We first explain how to implement one call to
\textsc{Verify-Lift} in $O(n (n-m)^2)$ time. We then sketch how to amortize the
work across the different calls to \textsc{Verify-Lift}, using the nested structure
of the layering, to implement the whole procedure in $O(n (n-m)^2)$ time.  To
turn this into $O(n m^2)$, we recall that the layering procedure is the same for
$W$ and $W^\perp$ due to duality (Proposition~\ref{prop:self-dual}).  Since
$\dim(W^\perp)=m$, applying this subroutine on $W^\perp$ instead of $W$ achieves
the same result but in time $O(nm^2)$.

We now explain the implementation of \textsc{Verify-Lift}, where we are given as
input $C \subseteq [n]$ and the basis matrix $M \in \R^{n \times (n-m)}$ as
above with ${\rm range}(M) = W$. Clearly, the running time is dominated by the
computation of the set $I \subseteq C$ and the matrix $B \in \R^{([n] \setminus C)
\times |I|}$ satisfying $L_C^W(x)_{[n] \setminus C} = B x_{I}$, for $x \in
\pi_C(W)$. We explain how to compute $I$ and $B$ from $M$ using column
operations (note that these preserve the range). The valid choices for $I
\subseteq C$ are in correspondence with maximal sets of linear independent rows
of $M_{C,\sbullet}$, noting then that $|I| = r$ where $r \coloneqq  \rk(M_{C,\sbullet})$. Let
$D_1 = [n-m-r]$ and $D_2 = [n-m] \setminus [n-m-r]$. By applying columns
operations to $M$, we can compute $I \subseteq C$ such that $M_{I,D_2} = \mathbf I_{r}$
($r \times r$ identity) and $M_{C,D_1} = 0$. This requires $O(n(n-m)|C|)$ time
using Gaussian elimination. At this point, note that $\pi_C(W) = {\rm
range}(M_{C,D_2})$, $\pi_{I}(W) = \R^{I}$ and ${\rm range}(M_{\sbullet, D_1}) = W
\cap \R^n_{[n] \setminus C}$. To compute $B$, we must transform the columns of
$M_{\sbullet,D_2}$ into minimum norm lifts of $e_i \in \pi_{I}(W)$ into $W$, for
all $i \in I$. For this purpose, it suffices to make the columns of $M_{[n] \setminus
C,D_2}$ orthogonal to the range of $M_{[n] \setminus C,D_1}$. Applying
Gram-Schmidt orthogonalization, this requires $O((n-|C|)(n-m)(n-m-r))$ time.
From here, the desired matrix $B = M_{[n] \setminus C, D_2}$. Thus, the total
running time of \textsc{Verify-Lift} is $O(n(n-m)|C| + (n-|C|)(n-m)(n-m-r)) =
O(n(n-m)^2)$.

We now sketch how to amortize the work of all the calls of \textsc{Verify-Lift}
during the layering algorithm, to achieve a total $O(n(n-m)^2)$ running time.
Let $C_1,\dots,C_\ell$ denote the candidate SCC layering. Our task is to compute
the matrices $B_k$, $2 \leq k \leq \ell$, needed in the calls to
\textsc{Verify-Lift} on $W, C_{\geq k}$, $2 \leq k\leq \ell$, in total
$O(n(n-m)^2)$ time. We achieve this in three steps working with the basis matrix $M$
as above. Firstly, by applying column operations to $M$, we compute sets $I_k
\subseteq C_k$ and $D_k = [|I_{\leq k}|] \setminus [|I_{< k}|]$, $k \in [\ell]$,
such that $M_{I_k,D_k} = \mathbf I_{r_k}$, where $r_k = |I_k|$, and $M_{C_{\geq
k},D_{<k}} = 0$, $2 \leq k \leq \ell$. Note that this enforces $\sum_{k=1}^\ell r_k =
(n-m)$. This computation requires $O(n(n-m)^2)$ time using Gaussian elimination.
This computation achieves ${\rm range}(M_{C_k,D_k}) = \pi_{C_k}(W \cap
\R^n_{C_{\leq k}})$, ${\rm range}(M_{C_{\geq k},D_{\geq k}}) = \pi_{C_{\geq
k}}(W)$ and ${\rm range}(M_{\sbullet,D_{\leq k}}) = W \cap \R^n_{C_{\leq k}}$, for
all $k \in [\ell]$.

From here, we block orthogonalize $M$, such that the columns of $M_{\sbullet, D_k}$
are orthogonal to the range of $M_{\sbullet,D_{<k}}$, $2 \leq k \leq \ell$. Applying
an appropriately adapted Gram-Schmidt orthogonalization, this requires
$O(n(n-m)^2)$ time. Note that this operation maintains $M_{I_k,D_k} = \mathbf I_{r_k}$,
$k \in [\ell]$, since $M_{C_{\geq k},D_{<k}} = 0$. At this point, for $k \in [\ell]$
the columns of $M_{\sbullet,D_k}$ are in correspondence with minimum norm lifts of
$e_i \in \pi_{D_{\geq k}(W)}$ into $W$, for all $i \in I_k$. Note that to
compute the matrix $B_k$ we need the lifts of $e_i \in \pi_{D_{\geq k}(W)}$, for
all $i \in I_{\geq k}$ instead of just $i \in I_k$.

We now compute the matrices $B_\ell,\dots,B_2$ in this order via the following
iterative procedure. Let $k$ denote the iteration counter, which decrements from
$\ell$ to $2$. For $k=\ell$ (first iteration), we let $B_\ell= M_{C_{<\ell},D_\ell}$ and
decrement $k$. For $k < \ell$, we eliminate the entries of $M_{I_k,D_{>k}}$ by
using the columns of $M_{\sbullet,D_k}$. We then let $B_k = M_{C_{<k},D_{\geq k}}$
and decrement $k$. To justify correctness, one only has to notice that at the end
of iteration $k$, we maintain the orthogonality of $M_{\sbullet,D_{\geq k}}$ to
the range of $M_{\sbullet,D_{< k}}$ and that $M_{I_{\geq k},D_{\geq k}} = \mathbf I_{|I_{\geq
k}|}$ is the appropriate identity. The cost of this procedure is the same as a
full run of Gaussian elimination and thus is bounded by $O(n(n-m)^2)$. The
calls to \textsc{Verify-Lift} during the layering procedure can thus be executed in
$O(n(n-m)^2))$ amortized time as claimed.
\end{proof}

\subsection{The overall algorithm}
\label{sec: the_algorithm}

\begin{algorithm}[htb]
\caption{\textsc{LP-Solve}($A,b,c,w^0$)}
\label{alg:overall}
\SetKwInOut{Input}{Input}
\SetKwInOut{Output}{Output}
\SetKw{And}{\textbf{and}}

\Input{$A\in \R^{m\times n}$, $b\in \R^m$, $c\in \R^n$, and an initial
feasible solution $w^0=(x^0,y^0,s^0)\in {\cal N}(1/8)$ to \eqref{LP_primal_dual}.
}
\Output{Optimal solution $w^*=(x^*,y^*,s^*)$ to
  \eqref{LP_primal_dual}.} Call \textsc{Find-Circuits}$(A)$ to obtain the lower bounds
$\hat\Le_{ij}$ for each $i,j\in [n]$, $i\neq j$\;
$k\gets 0, \alpha \gets 0$\;
\Repeat{$\mu(w^k)=0$}{
\tcc{Predictor step}
Compute  affine scaling direction $\Delta w^\as=(\Delta x^\as,\Delta
y^\as,\Delta s^\as)$ for $w$\;
\If(\tcp*[f]{Recall $\epsilon^\as(w)$ defined in \eqref{def:epsilon}}){$\epsilon^\as(w)<10n^{3/2}\gamma$}
{$\delta\gets (s^k)^{1/2}(x^k)^{-1/2}$\;
$({\cal J}, \hat{\Le}) \gets $\textsc{Layering}($\delta$, $\hat{\Le}$)\;
Compute Layered Least Squares direction $\Delta w^\lal=(\Delta x^\lal,\Delta
y^\lal,\Delta s^\lal)$ for the layering $\cal J$ and $w$\;
$\Delta w\gets \Delta w^\lal$\;
$\alpha\gets1-24\sqrt{n}\epsilon^\lal(w)$\tcp*{As in \Cref{lem:ll-decompose}\ref{i:stepsize}}
}
\Else{$\Delta w\gets \Delta w^\as$\;
$\alpha\gets \min \left\{1/(8\sqrt n),1-{8\|\Delta
    x^\as\Delta s^\as\|}/{\mu(w)}\right\}$\tcp*{As in \Cref{prop:predictor-corrector}\ref{i:stepsize}}}
$w'\gets w^k+\alpha \Delta w$\;
\tcc{Corrector step}
Compute centrality direction $\Delta w^\cs=(\Delta x^\cs,\Delta
y^\cs,\Delta s^\cs)$ for $w'$\;
$w^{k+1}\gets w'+ \Delta w^\cs$\;
$k\gets k+1$\;
}
\Return{$w^{k}=(x^k,y^k,s^k)$.}
\end{algorithm}

Algorithm~\ref{alg:overall} presents the overall algorithm
\textsc{LP-Solve}$(A,b,c,w^0)$.
We assume that an initial feasible solution $w^0=(x^0,y^0,s^0)\in
{\cal N}(\beta)$ is given. We address this in
Section~\ref{sec:initialization}, by adapting the extended system used
in \cite{Vavasis1996}. We note that this subroutine requires an upper bound
on $\bar \chi^*$. Since computing $\bar\chi^*$ is hard, we can
implement it by a doubling search on $\log \bar\chi^*$, as explained
in Section~\ref{sec:initialization}. Other than for initialization,
the algorithm does not require an estimate on $\bar\chi^*$.

The algorithm starts with the subroutine \textsc{Find-Circuits}$(A)$
as in Theorem~\ref{thm:pairwise_circuits_in_matroids}.  The iterations are
similar to the MTY Predictor-Corrector  algorithm  \cite{MTY}. The
main difference is that certain affine scaling steps are replaced by
LLS steps. In every predictor step, we compute the affine scaling
direction, and consider the quantity $\epsilon^\as(w)=\max_{i\in
  [n]}\min\{|\Rx^\as_i|,|\Rs^\as_i|\}$. If this is above the threshold
$10n^{3/2}\gamma$, then we perform the affine scaling step. However, in
case $\epsilon^\as(w)<10n^{3/2}\gamma$, we use the LLS direction instead.
In each such iteration, we call the subroutine
\textsc{Layering}($\delta,\hat\Le$) (Algorithm~\ref{alg:layering_procedure}) to compute
the layers, and we compute the LLS step for this layering.

Another important difference is that the algorithm does not require a
final rounding step. It terminates with the exact optimal solution
$w^*$ once a predictor step is able to perform a full step with
$\alpha=1$.

\begin{theorem}\label{thm:overall-running}
For given $A\in \R^{m\times n}$, $b\in \R^m$, $c\in \R^n$, and an initial
feasible solution $w^0=(x^0,y^0,s^0)\in {\cal N}(1/8)$,
Algorithm~\ref{alg:overall} finds an optimal solution
to \eqref{LP_primal_dual}
in $O(n^{2.5}\log n \log( \bar\chi^*_A+n))$
iterations.
\end{theorem}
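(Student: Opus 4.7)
The plan is to establish the iteration bound via an amortized potential function argument, following the roadmap in Section~\ref{sec:lls-overview}. First, I would separate iterations into two regimes based on the trigger for choosing between the affine scaling step and the LLS step. In the ``fast'' regime, where $\epsilon^\as(w) \ge 10n^{3/2}\gamma$, Lemma~\ref{lem:affscale-progress} gives a multiplicative reduction in $\mu$ by a factor $(1 - \beta/\sqrt{n})$; consecutive fast iterations therefore contribute only $O(\sqrt{n}\log(\mu_0/\mu))$ to the total, which will be subsumed by the final bound once $\mu$ is bounded below in terms of $\bar\chi^*_A$. The substantive work is the ``slow'' regime, where LLS is invoked.

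Next, I would define the combinatorial potential. Following the overview, the state is captured by the strongly connected components of the long edge graph $G_{w,\sigma}$ (with $\sigma$ a suitable polynomial in $n$ times a constant). I would define the potential
\[
\Psi(w) \;=\; \sum_{i,j\in[n]}\;\log_2\bigl(\text{size of the SCC in }G_{w,\sigma}\text{ containing }i\text{ and }j\bigr),
\]
interpreted as $0$ when $i,j$ lie in distinct SCCs. The key monotonicity claim is that $\Psi$ is non-increasing along the trajectory of central path parameters $\mu$ (once an edge $(i,j)\in E_{w,\sigma}$ enters the long edge graph, it stays; once two indices separate into different SCCs, they never reunite). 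This monotonicity follows from the cycle characterization in Theorem~\ref{thm:low_bound_chi_bar_with_circuits}, from the triangle inequality Lemma~\ref{lem:imbalance_triangle_inequality}\ref{i:triangle}, and from ``near-monotonicity'' of the central path in Lemma~\ref{lem: central_path_bounded_l1_norm} — these ensure that a variable pair which has once crossed a polynomial threshold in $\kappa_{ij}^W x_i/x_j$ cannot cross back if $\sigma$ is chosen small enough relative to $(\kappa^*_A)^{O(n)}$. This bounds the total decrease in $\Psi$ from start to termination by $O(n^2\log n)$, as each of the $\binom{n}{2}$ pairs contributes at most $\log_2 n$.

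The heart of the argument — and the main obstacle — is the progress lemma for LLS iterations: I would show that for each slow iteration there exists a ``scale'' $\tau\ge 2$ such that within the next $O(\sqrt{n}\,\tau\log(\bar\chi^*_A+n))$ iterations, $\Psi$ drops by at least $\tau-1$. The scale $\tau$ will be chosen as the size of the critical layer $J_k$ in the balanced layering returned by \textsc{Layering}($\delta,\hat\Le$), namely the layer maximizing $\min\{\|\Rx^\lal_{J_k}\|,\|\Rs^\lal_{J_k}\|\}$. Using Lemma~\ref{lem:ll-decompose}\ref{i:prox}, the LLS restricted to $J_k$ behaves like the affine scaling direction on the sub-LP induced by $W_{\mathcal J,k}$, so the overview's ``a primal and a dual variable converge'' argument applies inside $J_k$: there exist $i,j\in J_k$ whose converged status causes $\kappa^W_{ij}x_i/x_j$ to grow, while Lemma~\ref{lem:balanced-bound} and Theorem~\ref{thm:low_bound_chi_bar_with_circuits} force the inverse edge $(j,i)$ to eventually fall out of $E_{w,\sigma}$. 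A case split is needed here: either the step $\alpha$ is already close to $1$ and $\mu$ drops enough in one step to trigger a crossover, producing the ``layer crashes'' event of \Cref{lem: case_layer_crashes}; or the partition lifting cost $\ell^{1/x^+}(\mathcal J)$ remains small, in which case the converged-variable argument of \Cref{lem: case_lemma_does_not_crash} applies in $J_k$ and after $O(\sqrt{n}\log(\bar\chi^*_A+n))$ additional iterations the pair $(i,j)$ splits off, together with all pairs $(i,k),(k,j)$ for $k\in J_k\setminus\{i,j\}$ by the triangle inequality — contributing $\tau-1$ to $\Psi$'s decrease.

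Finally, I would combine these pieces: if $T_\tau$ denotes the number of slow iterations whose critical layer has size $\tau$, then the progress lemma gives $\sum_\tau T_\tau \cdot \frac{\tau-1}{O(\sqrt{n}\,\tau\log(\bar\chi^*_A+n))} \le \Delta\Psi \le O(n^2\log n)$, so that $\sum_\tau T_\tau \le O(n^{2.5}\log n \log(\bar\chi^*_A+n))$. Adding the fast iterations (bounded by the same quantity via a standard argument that $\log\mu_0/\mu^*$ is $O(n\log(\bar\chi^*_A+n))$ by Proposition~\ref{prop:near-central} and an a-posteriori bound on the optimal $\mu^*$), and finally noting that the algorithm terminates exactly when the LLS full step $\alpha=1$ is feasible by Lemma~\ref{lem:ll-decompose}\ref{i:terminate}, yields the claimed $O(n^{2.5}\log n\log(\bar\chi^*_A+n))$ iteration bound. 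The main technical obstacle in executing this plan is the careful verification that the LLS-within-layer convergence argument yields a variable which has truly converged in the $O(n)$-approximate sense required for $\Psi$ to decrease, despite only working with the estimated imbalances $\hat\kappa_{ij}$ and the approximate long edge graph $\hat G_{w,\sigma}$; this is where the balancedness property of the layering returned by \textsc{Layering} and the quantitative proximity bounds of Lemma~\ref{lem:ll-decompose} must interact cleanly.
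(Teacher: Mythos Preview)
Your proposal has the right skeleton---phases tied to a layer size $\tau$, Lemmas~\ref{lem: case_lemma_does_not_crash} and~\ref{lem: case_layer_crashes} for the two sub-cases, and an amortization against a potential bounded by $O(n^2\log n)$---but there are two genuine gaps that prevent it from going through.

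\textbf{The potential is not monotone.} Your potential records the current SCC structure of $G_{w,\sigma}$, and you assert that ``once an edge enters the long edge graph, it stays; once two indices separate into different SCCs, they never reunite''. This is false. The edge weight $\kappa^W_{ij}x_i/x_j$ is not monotone along the central path: near-monotonicity (Lemma~\ref{lem: central_path_bounded_l1_norm}) only bounds $x_i'/x_i\le n$, so ratios $x_i/x_j$ can oscillate by factors of $n^2$ in either direction, and edges can enter and leave $E_{w,\sigma}$ repeatedly. Your fix of ``choosing $\sigma$ small enough relative to $(\kappa^*_A)^{O(n)}$'' would require the algorithm to know $\kappa^*_A$, which it does not; and even granting that, a fixed threshold cannot absorb oscillations that happen at every scale. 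The paper sidesteps this entirely: its potential $\Psi^\mu(i,j)=\max\{1,\min\{2n,\inf_{0<\mu'<\mu}\rho^{\mu'}(i,j)\}\}$ is monotone \emph{by construction} (the infimum is taken over a shrinking set as $\mu$ decreases), and then Lemma~\ref{lem:rho-bound} translates $\Psi^\mu(i,j)\ge t$ into a constraint on all \emph{future} balanced layerings. You need this infimum trick or an equivalent ``high-water-mark'' device; the raw SCC structure does not work.

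\textbf{The fast-regime bound is vacuous.} You bound affine-scaling iterations by $O(\sqrt{n}\log(\mu_0/\mu^*))$ and claim $\log(\mu_0/\mu^*)=O(n\log(\bar\chi^*_A+n))$ via ``an a-posteriori bound on the optimal $\mu^*$''. But the algorithm terminates at $\mu=0$ exactly (Lemma~\ref{lem:ll-decompose}\ref{i:terminate}), and there is no positive lower bound on $\mu$ prior to termination that depends only on $A$: any such bound would depend on $b$ and $c$, which is precisely what the theorem avoids. The paper does not separate fast from slow iterations at all. Instead, \emph{every} iteration---affine scaling or LLS---belongs to some phase. When $\epsilon^\as(w)\ge 10n^{3/2}\gamma$ the proximity bound in Lemma~\ref{lem:ll-decompose}\ref{i:lls-aff} forces $\xi^\lal_{\cal J}(w)\ge 4\gamma n$, which is Case~I; so affine-scaling iterations are covered by the same phase structure and contribute to the same potential increase via Lemma~\ref{lem: lower_bounds_for_crossover_events} and Lemma~\ref{lem:potential-master}. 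Your amortization also needs the phases to be \emph{disjoint} (each new phase starts only after the previous one ends), otherwise the inequality $\sum_\tau T_\tau(\tau-1)/O(\sqrt{n}\tau\log(\bar\chi^*_A+n))\le\Delta\Psi$ double-counts.
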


\begin{remark}\label{rem:amortized}\em
Whereas using LLS steps enables us to give a strong bound on the total
number of iterations, finding LLS directions has a significant computational
overhead as compared to finding affine scaling directions.
The layering $\cal J$ can be computed in time $O(nm^2)$
(\Cref{lem:layering_works_correctly}), and the LLS steps also require $O(nm^2)$ time,
see \cite{Vavasis1996,MMT98}. This is in contrast to
the computational cost $O(n^\omega)$ of an affine scaling direction.
Here $\omega<2.373$ is the matrix multiplication constant \cite{Vassilevska2012}.

We now sketch a possible approach to amortize the computational cost
of the LLS steps over the sequence of affine scaling steps. It was
shown in \cite{MonteiroT05} that for the MTY P-C algorithm, the
``bad'' scenario between two crossover events amounts to a series of
affine scaling steps where the progress in $\mu$ increases
exponentially from every iteration to the next. This corresponds  to the term $O(\min\{n^2 \log \log
(\mu_0/\eta), \log (\mu_0/\eta)\})$ in their running time analysis. Roughly
speaking, such a sequence of affine scaling steps indicates that an LLS
step is necessary.

Hence, we could observe these accelerating
sequences of affine scaling steps, and perform an LLS step after we
see a sequence of length $O(\log n)$. The progress made by these
affine scaling steps offsets the cost of computing the LLS direction.
\end{remark}

\section{The potential function and the overall analysis}\label{sec:analysis}
Let $\mu>0$ and $\delta(\mu)=s(\mu)^{1/2}x(\mu)^{-1/2}=\sqrt{\mu}/x(\mu)=s(\mu)/\sqrt{\mu}$ correspond to
the point on the central path and recall the definition of $\gamma$ in \eqref{def:gamma}. For $i,j\in [n]$, $i\neq j$, we define
\[
\rho^\mu(i,j):=\frac{\log \Le_{ij}^{\delta(\mu)}}{ \log
\left(4n\Le^*_W/\gamma\right)}\, ,
\]
and the main potentials in the algorithm as
\[
\Psi^\mu(i,j):=\max\left\{1,\min\left\{2n,\inf_{0<\mu'<\mu}\rho^{\mu'}(i,j)\right\}\right\}
\quad \mbox{and} \quad  \Psi(\mu):=\sum_{i,j\in [n], i\neq j}\log
\Psi^\mu(i,j)\, .
\]
The motivation for $\rho^\mu(i,j)$ and $\Psi^\mu(i,j)$ comes from
\Cref{lem:balanced-bound}, using $\sigma=\gamma/(4n)$. Thus, 
$\log \Le_{ij}^{\delta(\mu)}/ \log \left(4n\Le^*_W/\gamma\right)$ can be seen as a lower bound on the length of the shortest $j$--$i$ path.
Recall that the layers are defined as strongly connected components of $\hat G_{\delta,\gamma/n}$, which is a subgraph of $G_{\delta(\mu),\gamma/(4n)}$ (using the bound \eqref{eq:delta-beta}). Consequently,  whenever $\rho^\mu(i,j)\ge n$, the nodes $i$ and $j$ cannot be in the same strongly connected component for the normalized duality gap $\mu$.
Thus, our potentials $\Psi^\mu(i,j)$ can be seen as fine-grained
analogues of the crossover events analyzed in
\cite{Vavasis1996,Monteiro2003,MonteiroT05}: the definition of $\Psi^\mu(i,j)$ contains a minimization over $0<\mu'<\mu$; therefore, $\Psi^\mu(i,j)> n$ implies that $i$ and $j$ may never appear on the same layer for any $\mu'\le \mu$.
On the other hand, these potentials are more fine-grained: even for $t < n$, if $\Psi^\mu(i,j)\ge t$ then whenever a layer contains both $i$ and $j$ for $\mu'\le \mu$, this layer must have size $\ge t$.

By definition, for all pairs $(i,j) \in [n] \times [n]$ we have $\Psi^{\mu'}(i,j)\ge \Psi^{\mu}(i,j)$ for $0<\mu'\le \mu$; and we enforce $\Psi^{\mu}(i,j)\in [1,2n]$. The upper bound can be imposed since values $\Psi^{\mu'}(i,j)\ge n$ do not yield any new information on the layering.  Hence, the overall potential $\Psi(\mu)$ is between 0 and $O(n^2\log n)$. The overall analysis in the proof of Theorem~\ref{thm:overall-running} divides the iterations into phases. In each phase, we can identify a set $J\subseteq [n]$, $|J|>1$ arising as a layer or as the union of two layers in the LLS step at the beginning of the phase.
We show that  $\Psi^{\mu}(i,j)$ doubles for at least $|J|-1$ pairs $(i,j) \in J \times J$ during the  subsequent $O(\sqrt{n}|J|\log(\bar\chi^*+n))$ iterations; consequently, $\Psi(\mu)$ increases by at least $|J|-1$ during these iterations. This leads to the overall iteration bound $O(n^{2.5}\log (n)\log(\bar\chi^*+n))$. In comparison, the crossover analysis would correspond to showing that within $O(n^{1.5}\log(\bar\chi^*+n))$ iterations, one of the $\Psi^{\mu}(i,j)$ values previously $<n$ becomes larger than $n$.
The following statement formalizes the above mentioned properties of $\Psi^{\mu}(i,j)$.

\begin{lemma}\label{lem:rho-bound}
Let $w=(x,y,s)\in {\cal N}(\beta)$ for $\beta\in (0,1/4]$. Let $i,j\in [n]$, $i\neq j$, and let $\mu=\mu(w)$.\begin{enumerate}
\item If $\hat G_{\delta,\gamma/n}$ contains a path from $j$
to $i$ of at most $t-1$ edges, then $\rho^\mu(i,j)<t$.
\item If $\hat G_{\delta,\gamma/n}$ contains a path from $i$ to $j$ of at most $t-1$ edges,
then $\rho^\mu(i,j) > -t$.
\item
If $\Psi^\mu(i,j)\ge t$, then in any $\delta(w')$-balanced
layering, where $w'=(x',y',s')\in {\cal N}(\beta)$ with $\mu(w') \leq \mu$,
\begin{itemize}
\item $i$ and $j$ cannot be
together on a layer of size at most $t$, and 
\item $j$ cannot be on a layer preceding the layer containing $i$.
\end{itemize}
\end{enumerate}
\end{lemma}
\begin{proof}
From  \eqref{eq:delta-beta}, we see that for any $i,j$, 
\[\hat\Le^\delta_{ij}\le \Le^\delta_{ij}\le (1-2\beta)^{-1}\Le^{\delta(\mu)}_{ij}\le 4\Le^{\delta(\mu)}_{ij}\, .
\]
Consequently, $\hat G_{\delta,\gamma/n}$ is a subgraph of $G_{\delta(\mu),\gamma/(4n)}$. The statement now follows from \Cref{lem:balanced-bound} with $\sigma=\gamma/(4n)$.
\end{proof}
In what follows, we formulate four important lemmas crucial for the proof of
\Cref{thm:overall-running}.
For the lemmas, we only highlight some key ideas here, and defer the full proofs to Section~\ref{sec:main-lemmas}.

For a triple $w\in {\cal N}(\beta)$, $\Delta w^\lal$ refers to the
LLS direction found in the algorithm, and $\Rx^\lal$ and $\Rs^\lal$
denote the residuals as in \eqref{eq:residuals}. For a subset $I \subset [n]$ recall the definition
\begin{align*}
\epsilon_I^\lal(w) := \max_{i \in I} \min \{|\Rx_i^\lal|,
  |\Rs_i^\lal|\}\, .
\end{align*}
We introduce another important quantity $\xi$ for the analysis:
\begin{equation}
\label{eq:xi}
\xi^\lal_I(w):= \min \{\|\Rx^\lal_I\|, \|\Rs^\lal_I\|\}\,
\end{equation}
for a subset $I \subset [n]$.
For a layering ${\cal J}=(J_1,J_2,\ldots,J_p)$, we let
\[
\xi^\lal_{\cal J}(w)=\max_{k \in [p]} \xi^\lal_{J_k}(w)\, .
\]
The key idea of the analysis is to extract information about the
optimal solution $w^*=(x^*,y^*,s^*)$ from the LLS direction.
The first main lemma shows that if $\|\Rx^\lal_{J_q}\|$ is large
on some
layer $J_q$, then for at least one index $i\in J_q$,
$x^*_i/x_i\ge 1/\mathrm{poly}(n)$, \b{i.e., the variable $x_i$ has ``converged''}.  The analogous statement holds on the dual side for $\|\Rs^\lal_{J_q}\|$ and an index $j \in J_q$.

\begin{lemma}[name=\prooftext\pageref{proof:lem: lower_bounds_for_crossover_events}, {restate=[name=\restatetext]lemlowerboundsforcrossoverevents}]\label{lem: lower_bounds_for_crossover_events}
Let $w = (x,y,s) \in \mathcal N(\beta)$ for $\beta\in (0,1/8]$ and let $w^* = (x^*, y^*, s^*)$ be the optimal solution corresponding to $\mu^* = 0$ on the central path. Let further ${\cal J}=(J_1, \ldots,  J_p)$ be a $\delta(w)$-balanced layering (\Cref{def:balanced}),
and let $\Delta w^\lal=(\Delta x^\lal, \Delta y^\lal, \Delta s^\lal)$ be the
corresponding LLS direction.
Then the following statement holds for every $q \in [p]$:
\begin{enumerate}[label=(\roman*)]
\item \label{case:lower_x} There exists $i \in  J_q$ such that
\begin{align}
x_i^* \geq\frac{2x_i}{3\sqrt{n}}\cdot(\|\Rx_{J_q}^\lal\| - 2\gamma n)\, . \label{eq: lower_bound_x}
\end{align}
\item \label{case:lower_s}  There exists $j \in  J_q$ such that
\begin{align}
{s_j^*}\geq \frac{2s_j}{3\sqrt{n}} \cdot (\|\Rs_{J_q}^\lal\| - 2\gamma
  n)\, . \label{eq: lower_bound_s}
\end{align}
\end{enumerate}
\end{lemma}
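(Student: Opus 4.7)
I will focus on part~\ref{case:lower_x}; part~\ref{case:lower_s} follows by applying the identical argument to $W^\perp$ in place of $W$, to $s$ in place of $x$, to $\delta^{-1}$ in place of $\delta$, and to the reverse ordered partition $\bar{\mathcal J}=(J_p,\ldots,J_1)$, invoking \Cref{lem:balanced-dual} to get $\ell^{W^\perp,\delta^{-1}}(\bar{\mathcal J})=\ell^\delta(\mathcal J)\le\gamma$. The core idea is to exploit the layerwise minimality of $\Delta x^\lal$ (\Cref{lem:lls-as-linsys}), namely
\[
\|\delta_{J_q}(x_{J_q}+\Delta x^\lal_{J_q})\|\le\|\delta_{J_q}(x_{J_q}+z_{J_q})\|\qquad\forall\,z\in W\text{ with }z_{J_{>q}}=\Delta x^\lal_{J_{>q}},
\]
against a competitor $\hat x\in W$ built from the ``true'' displacement $x^*-x\in W$.

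Working in $W':=\diag(\delta)W$, let $\tilde x:=\delta(x^*-x)\in W'$ and $\tilde u:=\delta_{J_{>q}}(x^*-x-\Delta x^\lal)_{J_{>q}}\in\pi_{J_{>q}}(W')$, and define $\hat x\in W$ by $\delta\hat x:=\tilde x-L^{W'}_{J_{>q}}(\tilde u)$. Then $\hat x_{J_{>q}}=\Delta x^\lal_{J_{>q}}$, while on the $J_q$ block
\[
\delta_{J_q}(x_{J_q}+\hat x_{J_q})=\delta_{J_q}x^*_{J_q}-L^{W'}_{J_{>q}}(\tilde u)_{J_q},
\]
so minimality and the triangle inequality give
\[
\sqrt{\mu}\,\|\Rx_{J_q}^\lal\|\le\|\delta_{J_q}x^*_{J_q}\|+\|L^{W'}_{J_{>q}}(\tilde u)_{J_q}\|.
\]

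I will bound the two terms separately. For the \emph{main term}, near-centrality (\Cref{prop:x_i-s_i}) gives $\delta_i\le\sqrt{(1+\beta)\mu}/x_i$, whence
\[
\|\delta_{J_q}x^*_{J_q}\|\le\sqrt{(1+\beta)n\mu}\,\max_{i\in J_q}\frac{x^*_i}{x_i}.
\]
For the \emph{error term}, the definition of the lifting score together with $\delta$-balancedness of $\mathcal J$ ($\ell^\delta(J_{>q})\le\gamma$) yields
\[
\|L^{W'}_{J_{>q}}(\tilde u)_{J_q}\|\le\gamma\,\|\tilde u\|\le\gamma\big(\|\delta_{J_{>q}}x^*_{J_{>q}}\|+\sqrt{\mu}\,\|\Rx_{J_{>q}}^\lal\|\big),
\]
and I then bound $\|\delta_{J_{>q}}x^*_{J_{>q}}\|\le\sqrt{(1+\beta)\mu}\,\|x^*/x\|_1\le\sqrt{(1+\beta)\mu}\,n$ via \Cref{lem: central_path_bounded_l1_norm} and $\|\Rx_{J_{>q}}^\lal\|\le\|\Rx^\lal\|\le\sqrt{2n}$ via \Cref{lem:ll-decompose}\ref{i:norm}; for $\beta=1/8$ and $n\ge 3$ these combine to at most $2\gamma n\sqrt{\mu}$.

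Dividing by $\sqrt{\mu}$ and rearranging gives $\|\Rx_{J_q}^\lal\|-2\gamma n\le\sqrt{(1+\beta)n}\,\max_{i\in J_q}x^*_i/x_i$, so selecting $i\in J_q$ attaining the maximum yields $x^*_i\ge\frac{x_i}{\sqrt{(1+\beta)n}}(\|\Rx_{J_q}^\lal\|-2\gamma n)$; for $\beta=1/8$ the factor $1/\sqrt{1+\beta}=2\sqrt{2}/3\ge 2/3$ matches \eqref{eq: lower_bound_x}. The \emph{main obstacle} I anticipate is the bookkeeping around the lifting step: the competitor must live in $W$ to be admissible in the LLS minimization, yet the lifting-score bound only controls norms in the rescaled subspace $W'=\diag(\delta)W$, so the passage through $\diag(\delta)$ must be handled carefully in order to apply $\ell^\delta(J_{>q})\le\gamma$ directly to the weighted $J_q$-cost of the lift.
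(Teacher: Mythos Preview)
Your argument is correct and is essentially identical to the paper's: your competitor $x+\hat x$ coincides with the vector $z$ the paper builds via \Cref{lem:force} (applied with $u=x^*$, $v=x+\Delta x^\lal$, $I=J_{>q}$), and the remaining estimates match line for line.

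One small imprecision: when you write $\|x^*/x\|_1\le n$ ``via \Cref{lem: central_path_bounded_l1_norm}'', note that lemma is stated for the \emph{central path point} $x(\mu)$, not for $x\in\mathcal N(\beta)$. The paper handles this by first passing to $x(\mu)$ through \eqref{eq:delta-beta} and \Cref{prop:near-central}, obtaining $\|\delta_{J_{>q}}x^*_{J_{>q}}\|/\sqrt{\mu}\le n/\sqrt{1-2\beta}$; you would need the analogous correction $\|x^*/x\|_1\le n/(1-\beta)$. This slightly perturbs your constant in the error term (for $n=3$, $\beta=1/8$ your claimed $2\gamma n$ is marginally exceeded), but the fix is immediate and the final bound \eqref{eq: lower_bound_x} still follows.
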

We outline the main idea of the proof of part \ref{case:lower_x}; part
\ref{case:lower_s} follows analogously using the duality of the
lifting scores
(\Cref{lem:balanced-dual}). On layer $q$, the LLS step minimizes
$\|\delta_{J_q}(x_{J_q}+\Delta x_{J_q})\|$, subject to $\Delta
x_{J_{>q}}=\Delta x_{J_{>q}}^\lal$ and subject to existence of $\Delta x_{J_{<q}}$ such that $\Delta x \in W$. By  making use of
$\ell^{\delta(w)}(J_{>q})\le\gamma$ due to $\delta(w)$-balancedness, we can show the existence of a point $z\in W+x^*$
such that $\|\delta_{J_q}(z_{J_q}-x^*_{J_q})\|$ is small, and
$z_{J_{>q}}=x_{J_{>q}}+\Delta x^\lal_{J_{>q}}$. By the choice of $\Delta
x^\lal_{J_q}$, we have $\|\delta_{J_q} z_{J_q}\|\geq\|\delta_{J_q}(x_{J_q}+\Delta x_{J_q}^\lal)\|=\sqrt{\mu}\|\Rx^\lal_{J_q}\|$. Therefore,
$\|\delta_{J_q}x^*_{J_q}/\sqrt{\mu}\|$ cannot be much smaller than
$\|\Rx^\lal_{J_q}\|$. Noting that  $\delta_{J_q}x^*_{J_q}/\sqrt{\mu} \approx
x^*_{J_q}/x_{J_q}$, we obtain a lower bound on
$x_i^*/x_i$ for some  $i\in J_q$.

We emphasize that the lemma only shows the existence of such indices $i$
and $j$, but does not provide an efficient algorithm to identify them. It is also
useful to note that for any $i \in [n]$,
$\max\{|\Rx^\lal_i|,|\Rs^\lal_i|\}\ge \frac 12-\frac{3}{4}\beta$ according to
\Cref{lem:ll-decompose}\ref{i:norm}. Thus, for each $q\in [p]$,
we obtain a strong and positive lower bound in either case
\ref{case:lower_x} on $x_i/x_i^*$ or case \ref{case:lower_s} on $s_i/s_i^*$ for some $i \in J_q$.

\medskip

The next lemma allows us to argue that the potential function $\Psi^{\cdot}(\cdot,\cdot)$ increases
for multiple pairs of variables, if we have
strong lower bounds on both $x_i^*$ and $s_j^*$ for some $i,j\in [n]$, along
with a lower and upper bound on $\rho^\mu(i,j)$.

\begin{lemma}[name=\prooftext\pageref{proof:lem:potential-master}, {restate=[name=\restatetext]lempotentialmaster}]\label{lem:potential-master}
Let $w=(x,y,s)\in {\cal N}(2\beta)$ for $\beta\in (0,1/8]$, let
$\mu=\mu(w)$ and $\delta=\delta(w)$.
Let $i,j\in [n]$ and $2 \le \tau \le n$
such that for the optimal solution $w^*=(x^*,y^*,s^*)$, we have
$x_i^*\ge \beta x_i/(2^{10}n^{5.5})$ and $s_j^*\ge\beta s_j/(2^{10}n^{5.5})$, and assume $\rho^\mu(i,j)\ge -\tau$.
After $O(\beta^{-1}\sqrt{n}\tau\log(\bar\chi^*+n))$ further iterations the duality gap $\mu'$ fulfills
$\Psi^{\mu'}(i,j)\ge 2\tau$, and for every $\ell\in [n]\setminus\{i,j\}$,
either $\Psi^{\mu'}(i,\ell)\ge 2\tau$, or
$\Psi^{\mu'}(\ell,j)\ge 2\tau$.
\end{lemma}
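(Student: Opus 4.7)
The plan has two parts: first show $\Psi^{\mu'}(i,j)\ge 2\tau$ by propagating the hypotheses on $x_i^*$ and $s_j^*$ through the central-path convergence machinery; then transfer this bound, via the triangle inequality for circuit imbalances (\Cref{lem:imbalance_triangle_inequality}), to the $\ell$-claim.

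For the first part, the hypothesis $x_i^*\ge\beta x_i/(2^{10}n^{5.5})$ combined with \Cref{prop:near-central} gives $x_i(\mu)=O(n^{5.5}/\beta)\cdot x_i^*$; \Cref{lem: central_path_bounded_l1_norm} further gives $x_i(\mu'')\ge x_i^*/n$ for every $\mu''\in(0,\mu]$, so $x_i(\mu'')=\Theta(x_i^*)$ up to $\mathrm{poly}(n)/\beta$ factors throughout the run, and the dual analysis yields $s_j(\mu'')=\Theta(s_j^*)$. Consequently $x_i(\mu'')/x_j(\mu'')=\Theta(x_i^* s_j^*/\mu'')$, so substituting into the definition of $\rho$ gives
\[
\rho^{\mu''}(i,j)\ \ge\ \rho^{\mu}(i,j)+\frac{\log(\mu/\mu'')}{\log(4n\Le^*/\gamma)}-O(1)\ \ge\ -\tau+\frac{\log(\mu/\mu'')}{\log(4n\Le^*/\gamma)}-O(1),
\]
where the $O(1)$ slack is dominated since $\tau\ge 2$. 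Thus $\rho^{\mu''}(i,j)\ge 4\tau$ once $\log(\mu/\mu')\ge(5\tau+O(1))\log(4n\Le^*/\gamma)$. By \Cref{prop:predictor-corrector}, $O(\beta^{-1}\sqrt n\cdot t)$ predictor-corrector iterations reduce $\mu$ by factor at least $e^t$, so with $t=O(\tau\log(\bar\chi^*+n))$ (using $\Le^*\le\bar\chi^*$ from \Cref{thm:chi-approx} and $\gamma=\beta/\mathrm{poly}(n)$) we obtain $\Psi^{\mu'}(i,j)\ge 4\tau\ge 2\tau$ within the claimed iteration budget.

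For the $\ell$-claim, \Cref{lem:imbalance_triangle_inequality} gives $\rho^{\mu''}(i,j)\le\rho^{\mu''}(i,\ell)+\rho^{\mu''}(\ell,j)$ at every $\mu''$, so $\rho^{\mu''}(i,j)\ge 4\tau$ forces $\max\{\rho^{\mu''}(i,\ell),\rho^{\mu''}(\ell,j)\}\ge 2\tau$ pointwise. To lift this pointwise statement to the infima defining $\Psi^{\mu'}$, I appeal to strict complementarity of the central-path limit: for each $\ell$, exactly one of $x_\ell^*>0$ or $s_\ell^*>0$ holds. In the first case, repeating the convergence argument above with $\ell$ in place of $i$ yields the monotone lower bound $x_\ell(\mu'')/x_j(\mu'')\ge x_\ell^*s_j^*/(n^2\mu'')$, which is non-decreasing in $1/\mu''$. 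Coupled with the triangle-inequality pinning at the endpoint $\mu''=\mu'$, this propagates to give $\Psi^{\mu'}(\ell,j)\ge 2\tau$; the case $s_\ell^*>0$ is dual and yields $\Psi^{\mu'}(i,\ell)\ge 2\tau$.

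The main technical obstacle is executing the $\ell$-claim uniformly across all $\ell$ within the same iteration budget, since the monotone lower bound for the ``growing'' $\rho$ involves the product $\Le_{\ell j}x_\ell^*s_j^*$ (or $\Le_{i\ell}x_i^*s_\ell^*$), which cannot be individually lower-bounded as a function of $\bar\chi^*$. The resolution, I expect, is to use strict complementarity only as a qualitative case split identifying for each $\ell$ which summand in the triangle inequality is the ``growing'' one, and then to let the triangle inequality at $\mu''=\mu'$ pin the initial value of that summand above $2\tau$ at the endpoint---with the monotone lower bound from \Cref{lem: central_path_bounded_l1_norm} carrying the $\ge 2\tau$ bound from the endpoint down to every smaller $\mu''$, thereby bypassing any per-$\ell$ lower bound on $x_\ell^*$ or $s_\ell^*$.
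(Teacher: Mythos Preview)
Your first part—showing $\Psi^{\mu'}(i,j)\ge 2\tau$—is correct and essentially matches the paper's argument. You also correctly identify the real difficulty in the $\ell$-claim: the triangle inequality gives $\max\{\rho^{\mu''}(i,\ell),\rho^{\mu''}(\ell,j)\}\ge 2\tau$ only pointwise in $\mu''$, whereas $\Psi^{\mu'}$ is defined via an infimum.

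Your proposed resolution via strict complementarity does not close this gap. Two things go wrong. First, the triangle inequality at $\mu'$ pins only the \emph{maximum} of the two summands; it does not pin the specific summand you have tagged as ``growing'' from the sign of $x_\ell^*$ versus $s_\ell^*$, and there is no reason these should coincide. Second, even if the tagged summand did exceed $2\tau$ at $\mu'$, your ``monotone lower bound'' $x_\ell(\mu'')/x_j(\mu'')\ge x_\ell^* s_j^*/(n^2\mu'')$ is not what you need: it is an increasing floor, but that floor sits at a level depending on the uncontrolled quantity $x_\ell^*$ and may lie far below $2\tau$ at $\mu'$. The actual value $\rho^{\mu''}(\ell,j)$ is free to dip from its $\mu'$-value down toward that floor.

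The paper's fix avoids any hypothesis on $\ell$ altogether. On the central path,
\[
\Theta\bigl(\rho^{\bar\mu}(i,\ell)-\rho^{\mu'}(i,\ell)\bigr)=\log\frac{x_i(\bar\mu)}{x_i(\mu')}-\log\frac{x_\ell(\bar\mu)}{x_\ell(\mu')}\, .
\]
The first term is bounded below using only the hypothesis on $x_i^*$ together with $x_i(\bar\mu)\ge x_i^*/n$. The second term is bounded above by $\log n$ because near-monotonicity (\Cref{lem: central_path_bounded_l1_norm}) gives $x_\ell(\bar\mu)\le n\, x_\ell(\mu')$ for \emph{every} $\ell$. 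The symmetric dual argument (via the $s_j^*$ hypothesis and $s_\ell(\bar\mu)\le n\,s_\ell(\mu')$) handles $\rho^{\bar\mu}(\ell,j)$. Thus both summands dip by at most $O(1)$ below their $\mu'$-values, so whichever one exceeds $2\tau+O(1)$ at $\mu'$—and one must, since you have established $\rho^{\mu'}(i,j)\ge 4\tau+O(1)$—stays $\ge 2\tau$ for all $\bar\mu<\mu'$. No case split on $\ell$ is needed.
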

We note that $i$ and $j$ as in the lemma are necessarily
different, since $i=j$ would imply $0=x_i^*  s^*_i\ge \beta^2 \mu/(2^{20} n^{11}) > 0$.

Let us illustrate the idea of the proof of $\Psi^{\mu'}(i,j)\ge
2\tau$. For $i$ and $j$ as in the lemma, and for a central path element $w'=w(\mu')$ for
$\mu'<\mu$, we have $x'_i\ge x_i^*/n\ge \beta x_i/(2^{10}n^{6.5})$ and $s'_j\ge
s_j^*/n\ge\beta  s_j/(2^{10}n^{6.5})$ by the near-monotonicity of the
central path (\Cref{lem: central_path_bounded_l1_norm}).
Note that
\[
\Le_{ij}^{\delta'}=\Le_{ij}\cdot\frac{\delta'_j}{\delta'_i}=\Le_{ij}\cdot\frac{x'_is'_j}{\mu'}\ge
\Le_{ij}\cdot\frac{\beta^2 x_is_j}{2^{20}n^{13}\mu'}\ge\frac{\beta^2(1-\beta)^2}{ 2^{20} n^{13}}\cdot \Le_{ij}^\delta \cdot \frac{\mu}{\mu'}\, ,
\]
where the last inequality uses \Cref{prop:x_i-s_i}.
Consequently, as $\mu'$ sufficiently decreases,   $\Le_{ij}^{\delta'}$
will become much larger than  $\Le_{ij}^\delta$.
The claim on $\ell\in [n]\setminus\{i,j\}$ can be
shown by using the triangle inequality $\Le_{ik}\cdot\Le_{kj}\ge \Le_{ij}$ shown in \Cref{lem:imbalance_triangle_inequality}.

\medskip
Assume now $\xi^\lal_{J_q}(w)\ge 4\gamma n$ for some $q\in [p]$ in the LLS step. Then,
\Cref{lem: lower_bounds_for_crossover_events} guarantees the
existence of $i,j\in J_q$ such that $x_i^*/x_i, s_j^*/s_j\ge \frac{4}{3\sqrt{n}}\gamma n
>\beta/(2^{10}n^{5.5})$. Further, \Cref{lem:rho-bound} gives
$\rho^\mu(i,j)\ge -|J_q|$.
Hence,
\Cref{lem:potential-master} is applicable for $i$ and $j$ with $\tau=|J_q|$.

\medskip

The overall potential argument in the proof of
\Cref{thm:overall-running} uses \Cref{lem:potential-master} in three
cases: $\xi^\lal_{{\cal J}}(w)\ge
4\gamma n$ (\Cref{lem: lower_bounds_for_crossover_events} is
applicable as above); $\xi^\lal_{{\cal J}}(w)<
4\gamma n$ and $\ell^{\delta^+}({\cal J})\le 4\gamma n$
(\Cref{lem: case_lemma_does_not_crash}); and $\xi^\lal_{{\cal J}}(w)<
4\gamma n$ and $\ell^{\delta^+}({\cal J})> 4\gamma n$
(\Cref{lem: case_layer_crashes}). Here, $\delta^+$ refers to the
value of $\delta$ after the LLS step. Note that $\delta^+ > 0$ is well-defined, unless the algorithm terminated with an optimal solution.

To prove these lemmas, we need to study how the layers ``move'' during the
LLS step. We let
$\llspartition B = \{t \in [n] : |\Rs_t^\lal| < 4\gamma n\}$ and $\llspartition N=\{t \in [n] :
|\Rx_t^\lal| < 4\gamma n\}$.
The assumption  $\xi_{{\cal J}}^\lal(w) < 4\gamma n$ means that for
each layer $J_k$, either $J_k\subseteq \llspartition B$ or $J_k\subseteq \llspartition N$;
we  accordingly refer to $\llspartition B$-layers and $\llspartition N$-layers.

\begin{lemma}[name=\prooftext\pageref{proof:lem: case_lemma_does_not_crash}, {restate=[name=\restatetext]lemcaselemmadoesnotcrash}]\label{lem: case_lemma_does_not_crash}
Let $w = (x,y,s) \in \mathcal N(\beta)$ for $\beta\in (0,1/8]$, and
let ${\cal J}=(J_1, \ldots,  J_p)$ be a $\delta(w)$-balanced
partition. Assume that $\xi_{{\cal J}}^\lal(w) < 4\gamma n$, and let $w^+ = (x^+, y^+, s^+)\in \overline{\cal N}(2\beta)$ be the
next iterate obtained by the LLS step with $\mu^+=\mu(w^+)$ and assume $\mu^+ > 0$.
Let $q\in[p]$ such that
$\xi_{{\cal J}}^\lal(w)=\xi_{J_q}^\lal(w)$. If
$\ell^{\delta^+}(\mathcal J) \leq 4\gamma n$, then there exist $i,j\in
J_q$ such that $x_i^*\ge \beta x_i^+/(16n^{3/2})$ and $s_j^*\ge
\beta s_j^+/(16n^{3/2})$. Further, for any $\ell,\ell'\in J_q$, we have  $\rho^{\mu^+}(\ell,\ell')\ge -|J_q|$.
\end{lemma}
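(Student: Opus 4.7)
Without loss of generality, we assume $J_q \subseteq \llspartition B$, i.e., $\|\Rs^\lal_{J_q}\| < 4\gamma n$ (the case $J_q \subseteq \llspartition N$ is dual via \Cref{lem:balanced-dual}). The pointwise bound $\max\{|\Rx^\lal_i|,|\Rs^\lal_i|\} \geq \tfrac12 - \tfrac34\beta$ from \Cref{lem:ll-decompose}\ref{i:norm} forces $|\Rx^\lal_i| \geq \tfrac12 - \tfrac34\beta$ for every $i \in J_q$, so $\|\Rx^\lal_{J_q}\| \geq (\tfrac12 - \tfrac34\beta)\sqrt{|J_q|}$. Applying \Cref{lem: lower_bounds_for_crossover_events}\ref{case:lower_x} at the $\delta(w)$-balanced partition $\mathcal J$ produces some $i \in J_q$ with $x_i^* = \Omega(x_i/\sqrt n)$. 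On $J_q$, the bound \eqref{eq:prox-layer} of \Cref{lem:ll-decompose}\ref{i:prox} gives $|s_i + \Delta s^\lal_i| = O(\gamma n)\,s_i$, so $s_i^+ = (1 - \alpha + O(\gamma n))\,s_i$. Under the LLS-triggering condition $\epsilon^\as(w) < 10 n^{3/2}\gamma$, \Cref{lem:ll-decompose}\ref{i:lls-aff}--\ref{i:progress} give $1-\alpha = O(n^2 \gamma/\beta) \gg \gamma n$, hence $s_i^+ = \Theta((1-\alpha)s_i)$ and, by centrality of $w^+$, $x_i^+ = \Theta(x_i)$. Substituting in the primal estimate yields $x_i^* \geq \beta x_i^+/(16 n^{3/2})$.

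The dual bound requires more care, since $\|\Rs^\lal_{J_q}\|$ is small at $w$, rendering a direct use of \Cref{lem: lower_bounds_for_crossover_events}\ref{case:lower_s} at $w$ useless. Instead, we apply a mild generalization of that lemma at $w^+$ with the same partition $\mathcal J$: the weakened assumption $\ell^{\delta^+}(\mathcal J) \leq 4\gamma n$ (in place of $\gamma$-balancedness) merely inflates the error term in the proof from $2\gamma n$ to $O(\gamma n^2)$, which remains negligible. This produces some $j \in J_q$ with $s_j^* \geq \tfrac{2 s_j^+}{3\sqrt n}\bigl(\|\Rs^{\lal,+}_{J_q}\| - O(\gamma n^2)\bigr)$, where $\Rs^{\lal,+}$ denotes the LLS residual computed at $w^+$ for the partition $\mathcal J$. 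The main technical obstacle is establishing $\|\Rs^{\lal,+}_{J_q}\| = \Omega(\beta/n)$. The argument combines the layer-wise identity $\Rx^{\lal,+}_{J_q} + \Rs^{\lal,+}_{J_q} = e_{J_q} + O(\gamma n^{3/2})$, obtained by applying \Cref{lem:ll-decompose}\ref{i:prox} at $w^+$, with an upper bound on $\|\Rx^{\lal,+}_{J_q}\|$: since $x^+_{J_q} \approx x_{J_q}$ has hardly moved and $(\delta^+)_{J_q}x^+_{J_q}/\sqrt{\mu^+} \approx e_{J_q}$, the new LLS step at $w^+$ cannot meaningfully reduce the primal residual on $J_q$, forcing $\|\Rs^{\lal,+}_{J_q}\|$ to carry the bulk of the norm of $e_{J_q}$. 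The resulting estimate yields $s_j^* \geq \beta s_j^+/(16 n^{3/2})$.

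The third conclusion on $\rho^{\mu^+}$ follows from \Cref{lem:rho-bound}: it suffices to show that $J_q$ is strongly connected in $G_{\delta^+, \gamma/(4n)}$, furnishing a path of length $\leq |J_q|-1$ between any $\ell, \ell' \in J_q$. Since $\mathcal J$ is $\delta(w)$-balanced, $J_q$ is strongly connected in $G_{\delta, \gamma/n}$. For $a, b \in J_q$, the relation $\delta^+_a/\delta^+_b = (1 + O(\gamma n))\,\delta_a/\delta_b$ (the $\sqrt{1-\alpha}$ scaling cancels on a $\llspartition B$-layer) implies $\Le^{\delta^+}_{ab} \geq \tfrac12 \Le^\delta_{ab} \geq \gamma/(2n) \geq \gamma/(4n)$, so connectivity is preserved. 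The principal difficulty of the entire proof lies in the dual bound's lower estimate on $\|\Rs^{\lal,+}_{J_q}\|$: one must argue carefully that, after the partial LLS step from $w$, the structure of admissible LLS directions at $w^+$ leaves no room to further shrink both the primal and dual residuals on $J_q$ simultaneously.
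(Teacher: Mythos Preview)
Your argument for the final statement on $\rho^{\mu^+}$ matches the paper's and is fine. The ``easy'' bound (in your WLOG, the primal one) is also essentially the paper's argument, but your route to $x_i^+=\Theta(x_i)$ is flawed: you assert $1-\alpha \gg \gamma n$, yet the lemma only assumes $\mu^+>0$, so $1-\alpha$ can be arbitrarily small and your chain $s_i^+=\Theta((1-\alpha)s_i)\Rightarrow x_i^+=\Theta(x_i)$ breaks. The correct argument (essentially \Cref{lem:near_uniform_shootdown}\ref{i:stay-same}) bounds $|\delta_i\Delta x_i^\lal|$ directly from $|\Rs_i^\lal|<4\gamma n$ and \eqref{eq:prox-layer}, giving $|x_i^+/x_i-1|\le 1/32$ without any lower bound on $1-\alpha$.

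The real gap is in the ``hard'' bound. You propose to compute a \emph{second} LLS step at $w^+$ with the same partition $\mathcal J$ and invoke (a variant of) \Cref{lem: lower_bounds_for_crossover_events} there, which would require $\|\Rs^{\lal,+}_{J_q}\|=\Omega(\beta/n)$. Your justification is internally inconsistent: you claim an \emph{upper} bound on $\|\Rx^{\lal,+}_{J_q}\|$, but the reasoning you give (``$(\delta^+)_{J_q}x^+_{J_q}/\sqrt{\mu^+}\approx e_{J_q}$ and the step cannot meaningfully reduce the primal residual'') would, if anything, make $\Rx^{\lal,+}_{J_q}\approx e_{J_q}$, hence \emph{large}, forcing $\Rs^{\lal,+}_{J_q}$ to be small---the opposite of what you need. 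In fact nothing in your setup rules out that the hypothetical LLS step at $w^+$ again drives the dual residual on $J_q$ to $O(\gamma n)$: at $w$ one already had $\|\Rs^\lal_{J_q}\|<4\gamma n$, and since the rescaling $\delta\mapsto\delta^+$ on $J_q$ is essentially uniform, the same near-cancellation is available. You have not supplied any mechanism that forces the residual pattern to flip.

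The paper avoids a second LLS step entirely. Working in the dual WLOG $J_q\subseteq\llspartition N$, the hard bound is the primal one, and the proof uses the linear-system characterization \eqref{eq:ll-primal}: there exists $\Delta s\in\bigoplus_k W^\perp_{\mathcal J,k}$ with $\delta\Delta x^\lal+\delta^{-1}\Delta s=-\delta x$, so $\|\delta^{-1}_{J_q}\Delta s_{J_q}\|=\sqrt{\mu}\,\|\Rx^\lal_{J_q}\|=\sqrt{\mu}\,\xi^\lal_{\mathcal J}$. One then constructs, via the lifting at $\delta^+$ (this is precisely where $\ell^{\delta^+}(\mathcal J)\le 4\gamma n$ enters), a vector $w\in W$ with $w_{J_{>q}}=0$ and $x^*_{J_q}\approx x^+_{J_q}+w_{J_q}$. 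Since $w_{J_q}\in W_{\mathcal J,q}\perp\Delta s_{J_q}$, Cauchy--Schwarz together with the identity $x^+=-\delta^{-2}\Delta s-(1-\alpha)\Delta x^\lal$ yields $\|\delta_{J_q}(x^+_{J_q}+w_{J_q})\|\ge\tfrac12\sqrt{\mu}\,\xi^\lal_{\mathcal J}\ge \beta\mu^+/(6\sqrt{n\mu})$. Comparing with $\|\delta_{J_q}x^+_{J_q}\|\le 3\sqrt n\,\mu^+/\sqrt\mu$ (from \Cref{lem:near_uniform_shootdown}\ref{i:scale-down}) gives the claim. The crux---that $\Delta s_{J_q}$ lies in $W^\perp_{\mathcal J,q}$ and hence is orthogonal to the correction $w_{J_q}$---is what your approach is missing.
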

For the proof sketch,
without loss of generality, let $\xi_{\cal
  J}^\lal=\xi_{J_q}^\lal=\|\Rx_{J_q}^\lal\|$, that is, $J_q$ is an $\llspartition N$-layer. The case
$\xi_{J_q}^\lal=\|\Rs_{J_q}^\lal\|$ can be treated analogously.
Since the residuals $\|\Rx_{J_q}^\lal\|$ and $\|\Rs_{J_q}^\lal\|$ cannot
be both small,
\Cref{lem: lower_bounds_for_crossover_events} readily provides a $j\in J_q$
such that $s_j^*/s_j\ge 1/(6\sqrt{n})$. Using \Cref{lem:
  central_path_bounded_l1_norm} and \Cref{prop:near-central}, $s_j^*/s_j^+ = s_j^*/s_j \cdot s_j/s_j^+ > (1-\beta)/(6(1+4\beta)n^{3/2})>\beta/(16n^{3/2})$.

The key ideas of
showing the existence of an $i\in
J_q$ such that $x_i^*\ge x_i^+/(16n^{3/2})$ are the following. With  $\approx$, $\lessapprox$ and $\gtrapprox$, we write equalities and inequalities that hold up to small polynomial factors. First, we show that {\em
  (i)} $\|\delta_{J_q}x^+_{J_q}\|\lessapprox \mu^+/ \sqrt{\mu}$, and then, that {\em (ii)}
$\|\delta_{J_q} x^*_{J_q}\| \gtrapprox \mu^+/\sqrt{\mu}\, .$

If we can show {\em (i)} and {\em (ii)} as above, we obtain that $\|\delta_{J_q}x^*_{J_q}\|\gtrapprox
\|\delta_{J_q}x^+_{J_q}\|$, and thus, $x_i^*\gtrapprox x_i^+$ for some $i\in J_q$.

Let us now sketch the first step.
By the assumption $J_q \subset \llspartition N$, one can show $x_{J_q}^+/x_{J_q} \approx \mu^+/\mu$, and therefore
\[\|\delta_{J_q}x^+_{J_q}\| \approx \frac{\mu^+}{\mu} \|\delta_{J_q}x_{J_q}\| \approx \frac{\mu^+}{\mu} \sqrt{\mu} = \frac{\mu^+}{\sqrt{\mu}}\, .
\]
The second part of the proof, namely, lower bounding
$\|\delta_{J_q}x^*_{J_q}\|$, is more difficult.  Here, we only sketch it for the special case when $J_q=[n]$. That is, we have a single layer only; in particular, the LLS step is the same as the affine scaling step $\Delta x^\lal=\Delta x^\as$. The general case of multiple layers follows by making use of \Cref{lem:ll-decompose}, i.e.\ exploting that for a sufficiently small $\ell^\delta({\cal J})$, the LLS step is close to the affine scaling step.

Hence, assume that $\Delta x^\lal=\Delta x^\as$. Using the equivalent definition of the affine scaling step \eqref{eq:aff-minnorm} as a minimum-norm point, we have  $\|\delta x^*\|\ge \|\delta(x+\Delta x^\lal)\|=\sqrt{\mu}\|\Rx^\lal\|=\sqrt{\mu}\xi_{\cal J}^\lal$.
From \Cref{lem:affscale-progress},
$\mu^+/\mu\le 2\sqrt{n}\epsilon^\as(w)/\beta\le 2\sqrt{n}\xi_{\cal J}^\lal/\beta$. Thus, we see that $\|\delta x^*\|\ge \beta\mu^+/(2\sqrt{n\mu})$.

The final statement on lower bounding $\rho^{\mu^+}(\ell,\ell')\ge -|J_q|$ for
any $\ell,\ell'\in J_q$ follows by showing that
$\delta^+_\ell/\delta^+_{\ell'}$ remains close to $\delta_\ell/\delta_{\ell'}$, and hence the values of
$\Le^{\mu^+}(\ell,\ell')$ and $\Le^\mu(\ell,\ell')$ are sufficiently
close for indices on the same layer (\Cref{lem:near_uniform_shootdown}).

\begin{lemma}[name=\prooftext\pageref{proof:lem: case_layer_crashes}, {restate=[name=\restatetext]lemcaselayercrashes}]\label{lem: case_layer_crashes}
Let $w = (x,y,s) \in \mathcal N(\beta)$ for $\beta\in (0,1/8]$, and
let ${\cal J}=(J_1, \ldots,  J_p)$ be a $\delta(w)$-balanced
partition. Assume that $\xi_{{\cal J}}^\lal(w) < 4\gamma n$, and let $w^+ = (x^+, y^+, s^+)\in \overline{\cal N}(2\beta)$ be the
next iterate obtained by the LLS step with $\mu^+=\mu(w^+)$ and assume $\mu^+ > 0$.
If $\ell^{\delta^+}(\mathcal J) > 4\gamma n$, then
there exist two layers $J_q$ and $J_r$
 and $i\in J_q$ and $j\in J_r$ such that $x_i^*\ge x^+_i/(8n^{3/2})$, and $s_j^*\ge s^+_j/(8n^{3/2})$.
Further, $\rho^{\mu^+}(i,j)\ge -|J_q\cup J_r|$, and for all
$\ell,\ell'\in J_q\cup J_r$, $\ell \neq \ell'$ we have $\Psi^\mu(\ell,\ell')\le |J_q\cup J_r|$.
\end{lemma}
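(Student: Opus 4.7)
The plan is to exploit the crash condition $\ell^{\delta^+}({\cal J}) > 4\gamma n$ to locate a $\llspartition B$-layer $J_q$ and an $\llspartition N$-layer $J_r$ with $r < q$; all four claimed bounds then follow from \Cref{lem: lower_bounds_for_crossover_events} applied at $w$, \Cref{lem:rho-bound}, and near-central-path proximity bounds. Recall that $\xi^\lal_{\cal J}(w) < 4\gamma n$ forces each layer $J_k$ to be contained entirely in $\llspartition B = \{t : |\Rs^\lal_t| < 4\gamma n\}$ or entirely in $\llspartition N = \{t : |\Rx^\lal_t| < 4\gamma n\}$, so the labels $\llspartition B$-layer and $\llspartition N$-layer are well-defined on each $J_k$.

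First I would invoke \Cref{lem:purify} (and \Cref{rem:B-i-j}) on $\diag(\delta^+)W$ for $I = J_{\ge k}$ at a boundary $k$ realizing $\ell^{\delta^+}(J_{\ge k}) > 4\gamma n$, producing $i^* \in J_{\ge k}$, $j^* \in J_{<k}$ with $\Le^{\delta^+}_{i^* j^*} > 4\gamma$. Next I would track the per-coordinate change in $\delta$: combining \Cref{lem:ll-decompose}\ref{i:norm},\ref{i:progress}, \Cref{prop:near-central} for $w^+ \in \mathcal N(2\beta)$, and the identity $\Rx^\as + \Rs^\as = x^{1/2}s^{1/2}/\sqrt\mu$ of \Cref{lem:affscale}\ref{i:affscale-identity} together with \Cref{lem:ll-decompose}\ref{i:lls-aff}, the ratio $\delta^+_t/\delta_t$ is within a bounded factor of $\sqrt{\mu^+/\mu}$ for $t \in \llspartition B$ and of $\sqrt{\mu/\mu^+}$ for $t \in \llspartition N$. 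Since $\Le^{\delta^+}_{i^* j^*} = (\delta^+_{j^*}/\delta^+_{i^*})(\delta_{i^*}/\delta_{j^*}) \, \Le^\delta_{i^* j^*}$, a scaling analysis combined with the balancedness bound on the lift entries at $\delta$ (from $\ell^\delta({\cal J}) \le \gamma$ via \Cref{lem:purify}) forces $i^* \in \llspartition B$ and $j^* \in \llspartition N$. Setting $J_q, J_r$ to be the layers containing $i^*, j^*$ gives $r < q$, $J_q \subseteq \llspartition B$, $J_r \subseteq \llspartition N$.

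The bounds $x^*_i \ge x^+_i/(8 n^{3/2})$ and $s^*_j \ge s^+_j/(8 n^{3/2})$ now follow by applying \Cref{lem: lower_bounds_for_crossover_events} at $w$: since $J_q \subseteq \llspartition B$, every $t \in J_q$ satisfies $|\Rx^\lal_t| \ge \frac12 - O(\beta + n^{3/2}\gamma)$ (from \Cref{lem:affscale}\ref{i:affscale-identity}, \Cref{lem:ll-decompose}\ref{i:lls-aff}, and \Cref{prop:x_i-s_i}), so $\|\Rx^\lal_{J_q}\| = \Omega(1)$; part \ref{case:lower_x} then produces $i \in J_q$ with $x^*_i \ge x_i/(10\sqrt n)$. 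Since $i \in \llspartition B$ and $|\Rx^\lal_i| \le \sqrt 2$, the identity $\delta_i(x_i+\Delta x^\lal_i)/\sqrt\mu = \Rx^\lal_i$ yields $x^+_i \le 2 x_i$, so $x^*_i \ge x^+_i/(8 n^{3/2})$. The symmetric argument on $J_r \subseteq \llspartition N$ via part \ref{case:lower_s} gives $s^*_j \ge s^+_j/(8 n^{3/2})$.

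For $\rho^{\mu^+}(i,j) \ge -|J_q \cup J_r|$, I would build an $i \to j$ path in $G_{\delta(\mu^+), \gamma/(4n)}$ of at most $|J_q\cup J_r|-1$ edges and invoke \Cref{lem:rho-bound}: concatenate an $i \to i^*$ path of length $\le |J_q|-1$ inside $J_q$ (strongly connected in $G_{\delta, \gamma/n}$ by balancedness, transferring to $G_{\delta^+, \Omega(\gamma/n)}$ since $\delta^+/\delta$ is uniform on $J_q$), the crash edge $i^* \to j^*$ from $\Le^{\delta^+}_{i^* j^*} > 4\gamma \gg \gamma/(4n)$, and a $j^* \to j$ path of length $\le |J_r|-1$ inside $J_r$; \Cref{prop:near-central} transfers the path from $\delta^+$ to $\delta(\mu^+)$ up to constants. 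For $\Psi^\mu(\ell,\ell') \le |J_q \cup J_r|$ on distinct $\ell,\ell' \in J_q \cup J_r$, within-layer pairs use strong connectivity in $G_{\delta, \gamma/n}$ in both directions together with \Cref{lem:rho-bound}, cross-layer pairs use the direct $\hat G_{\delta, \gamma/n}$ edge from the upper to the lower layer, and continuity of $\rho^{\mu'}$ in $\mu'$ transfers the $\rho^\mu$ bound to the infimum. The main obstacle is rigorously completing the second paragraph: the $\delta \to \delta^+$ rescaling is only piecewise uniform, so the lift matrix at $\delta^+$ does not factor cleanly through the lift at $\delta$ when $J_{\ge k}$ contains variables of mixed type; disentangling the weighted-lift contributions from $J_{\ge k} \cap \llspartition B$ versus $J_{\ge k} \cap \llspartition N$ to force $i^* \in \llspartition B$ and $j^* \in \llspartition N$ is where the bulk of the technical work lies.
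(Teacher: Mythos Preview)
Your overall architecture matches the paper's: locate a ``crash'' edge $(i',j')\in E_{\delta^+,\gamma/(4n)}$ with $i'\in J_q\subseteq\llspartition B$, $j'\in J_r\subseteq\llspartition N$, $r<q$; then derive the four conclusions from \Cref{lem: lower_bounds_for_crossover_events}, \Cref{lem:near_uniform_shootdown}, and \Cref{lem:rho-bound}. Those downstream arguments are essentially correct as you sketch them.

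The gap is exactly where you suspect, but the obstacle is more serious than you indicate. Running \textsc{Verify-Lift} on $\diag(\delta^+)W$ gives you some pair $(i^*,j^*)$ with $\Le^{\delta^+}_{i^*j^*}>4\gamma$; to force $i^*\in\llspartition B$, $j^*\in\llspartition N$ via \Cref{lem:near_uniform_shootdown} you need the ratio $\Le^{\delta^+}_{i^*j^*}/\Le^{\delta}_{i^*j^*}>4$, i.e.\ an upper bound $\Le^\delta_{i^*j^*}<\gamma$. But $\delta$-balancedness gives you no such bound: $\ell^\delta({\cal J})\le\gamma$ bounds the min-norm lift, not individual circuit imbalances, and the layering is built from the estimates $\hat\kappa$, so a true back-edge $\Le^\delta_{i^*j^*}\gg\gamma$ with $i^*\in J_{\ge k}$, $j^*\in J_{<k}$ is entirely possible. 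Your proposed ``scaling analysis combined with the balancedness bound on the lift entries at $\delta$'' therefore does not go through, and this is not merely a matter of disentangling mixed types in $J_{\ge k}$.

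The paper's resolution is to avoid recomputing anything at $\delta^+$. Instead it takes the \emph{same} matrix $B$ that the algorithm computed when \textsc{Verify-Lift}$(\diag(\delta)W,J_{\ge k},\gamma)$ returned `pass', and rescales it to $\bar B=\diag(\delta^+\delta^{-1})\,B\,\diag((\delta^+_{I'})^{-1}\delta_{I'})$. This $\bar B$ is a (non-minimal) lift in $\diag(\delta^+)W$, so $\|\bar B\|\ge\ell^{\delta^+}(J_{\ge k})>4\gamma n$ and hence some entry $|\bar B_{j'i'}|>4\gamma$. Crucially, \Cref{rem:B-i-j} guarantees \emph{every} entry $|B_{j'i'}|\le\gamma$ from the `pass'. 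Since $\bar B_{j'i'}/B_{j'i'}=(\delta^+_{j'}\delta_{i'})/(\delta^+_{i'}\delta_{j'})$, this single entry comparison yields the needed ratio $>4$, and \Cref{lem:identify} then lifts the entry bound $|B_{j'i'}|$ to the circuit-imbalance bound $\Le^\delta_{i'j'}\ge|B_{j'i'}|$, giving $\Le^{\delta^+}_{i'j'}>4\gamma$. The trick is that reusing the matrix from the actual algorithmic call gives you matching upper and lower bounds on the \emph{same} entry, which a fresh call at $\delta^+$ cannot provide.
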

Consider now any $\ell \in J_k\subseteq \llspartition B$. Then, since $\Rx_\ell^\lal$
is multiplicatively close to 1, $x_\ell^+\approx x_\ell$; on the other hand
$s_\ell^+$ will ``shoot down'' close to the small value
$\Rs_\ell^\lal \cdot s_\ell$. Conversely, for $\ell\in J_k\subseteq \llspartition N$,
$s_\ell^+\approx s_\ell$, and $x_\ell^+$ will ``shoot down'' to a small
value.

The key step of the analysis is showing that the increase in
$\ell^{\delta^+}(\mathcal J)$ can be attributed to an $\llspartition N$-layer $J_r$ ``crashing
into'' a $\llspartition B$-layer $J_q$. That is, we show the existence of an edge $(i',j')\in E_{\delta^+,\gamma/(4n)}$ for $i'\in
J_q$ and $j'\in J_r$, where $r<q$ and $J_q\subseteq \llspartition B$,  $J_r\subseteq
\llspartition N$. This can be achieved by analyzing the  matrix $B$ used in the
subroutine \textsc{Verify-Lift}.

For the layers $J_q$ and $J_r$, we can use Lemma~\ref{lem:
  lower_bounds_for_crossover_events} to show that
there exists an $i\in J_q$ where $x_i^*/x_i$ is lower bounded, and there exists a $j\in J_r$ where $s_j^*/s_j$ is lower
bounded. The lower bound on $\rho^{\mu^+}(i,j)$ and
the upper bounds on the $\Psi^\mu(\ell,\ell')$ values can be shown
by tracking the changes between the $\Le^\delta(\ell,\ell')$ and
$\Le^{\delta^+}(\ell,\ell')$
values, and applying  \Cref{lem:rho-bound} both at $w$ and at $w^+$.

\begin{proof}[Proof of \Cref{thm:overall-running}]
We analyze the overall potential function $\Psi(\mu)$. By the {\em iteration at
  $\mu$} we mean the iteration where the normalized duality gap of the
current iterate is $\mu$.

By \Cref{prop:predictor-corrector}\ref{i:stepsize} and \Cref{lem:ll-decompose}\ref{i:stepsize}, the predictor step gives $w'\in\overline{\cal N}(1/4)$ in every iteration, and thus by \Cref{prop:predictor-corrector}\ref{i:corrector}, if $\mu(w') > 0$, the iterate $w^{\cs}$ after a corrector step fulfills $w^{\cs} \in{\cal N}(1/8)$.
If $\mu^+ = 0$ at the end of an iteration, the algorithm terminates with an optimal solution. Recall from Lemma~\ref{lem:ll-decompose}\ref{i:terminate} that this happens if and only if $\epsilon^\lal(w)=0$  at a certain iteration.

From now on, assume that $\mu^+ > 0$.
We distinguish three cases at each iteration. These cases are
well-defined even at iterations where affine scaling steps are used. At such
iterations, $\xi^\lal_{{\cal J}}(w)$ still refers to the LLS
residuals, even if these have not been computed by the algorithm.
{\em (Case I)} $\xi^\lal_{{\cal J}}(w)\ge
4\gamma n$; {\em (Case II)} $\xi^\lal_{{\cal J}}(w) <
4\gamma n$ and $\ell^{\delta^+}({\cal J})\le 4\gamma n$; and {\em (Case III)} $\xi^\lal_{{\cal J}}(w) <
4\gamma n$ and $\ell^{\delta^+}({\cal J})> 4\gamma n$.

Recall that the algorithm uses an LLS direction instead of the affine
scaling direction whenever $\epsilon^\as(w)<10n^{3/2}\gamma$. Consider now
the case when an affine scaling direction is used, that is,
$\epsilon^\as(w)\ge 10n^{3/2}\gamma$.
According to
\Cref{lem:ll-decompose}\ref{i:lls-aff}, $\|\Rx^\lal -\Rx^\as\|,
\|\Rs^\lal-\Rs^\as\|\le  6n^{3/2}\gamma$. This implies that
$\xi^\lal_{{\cal J}}(w)\ge 4n^{3/2}\gamma\ge 4n\gamma$. Therefore, in cases II and III, an LLS step will be performed.

Starting with any given iteration, in each case we will identify a set
$J\subseteq [n]$ of indices with $|J|>1$, and start a {\em phase} of $O(\sqrt{n}|J|\log(\bar\chi^*+n))$
iterations (that can be either affine scaling or LLS steps). In each phase,
 we will guarantee that $\Psi$ increases by at least $|J|-1$. By definition, $0\le
 \Psi(\mu)\le n(n-1)(\log_2n+1)$, and if
 $\mu'<\mu$ then $\Psi(\mu')\ge \Psi(\mu)$. As we can partition the union of all iterations into disjoint phases,
 this yields the bound $O(n^{2.5}\log n\log(\bar\chi^*+n))$ on the
 total number of iterations.

We now consider each of the cases. We always let $\mu$ denote the
normalized duality gap at the current iteration, and  we let $q\in
[p]$ be the
layer such that $\xi^\lal_{{\cal J}}(w)= \xi^\lal_{J_q}(w)$.
\paragraph{Case I: $\xi^\lal_{{\cal J}}(w)\ge
4\gamma n$.}
\Cref{lem:
lower_bounds_for_crossover_events}  guarantees the existence of $x_i,s_j\in J_q$ such that $x_i^*/x_i, s_j^*/s_j\ge
4\gamma n/(3\sqrt{n})>1/(2^{10}n^{5.5})$.
Further, according to \Cref{lem:rho-bound}, $\rho^{\mu}(i,j)\ge
-|J_q|$. Thus, \Cref{lem:potential-master} is applicable for
$J=J_q$. The phase starting at $\mu$ comprises
$O(\sqrt{n}|J_q|\log(\bar\chi^*+n))$ iterations, after which we get a
normalized duality gap $\mu'$ such that $\Psi^{\mu'}(i,j)\ge 2|J_q|$, and
for each $\ell\in [n]\setminus \{i,j\}$, either
$\Psi^{\mu'}(i,\ell)\ge 2|J_q|$, or $\Psi^{\mu'}(\ell,j)\ge 2|J_q|$.

We can take advantage of these bounds for indices $\ell\in J_q$. Again by
\Cref{lem:rho-bound},
for any $\ell,\ell'\in J_q$, we have $\Psi^\mu(\ell,\ell')\le
\rho^\mu(\ell,\ell')\le |J_q|$. Thus, there are at least $|J_q|-1$ pairs
of indices $(\ell,\ell')$ for which  $\Psi^\mu(\ell,\ell')$ increases by at least
a factor 2 between iterations at $\mu$ and $\mu'$. The increase in the contribution of these terms to $\Psi(\mu)$ is at least $|J_q|-1$ during these iterations.

We note that this analysis works regardless whether an LLS
step or an affine scaling step was performed in the iteration at $\mu$.

\paragraph{Case II: $\xi^\lal_{{\cal J}}(w) <
4\gamma n$ and $\ell^{\delta^+}({\cal J})\le 4\gamma n$.} As
explained above, in this case we perform an LLS step in the iteration
at $\mu$, and we let $w^+$ denote the iterate obtained by the LLS
step.
 For $J=J_q$, \Cref{lem: case_lemma_does_not_crash}
guarantees the existence of $i,j\in J_q$ such that
$x_i^*/x_i^+,s_j^*/s_j^+>\beta/(16n^{3/2})$, and further, $\rho^{\mu^+}(i,j)>-|J_q|$.
We can therefore apply Lemma~\ref{lem:potential-master}. The phase
starting at $\mu$ includes the LLS step leading to $\mu^+$ (and the
subsequent centering step), and the additional
$O(\sqrt{n}|J_q|\log(\bar\chi^*+n))$ iterations ($\beta$ is a fixed constant in \Cref{alg:overall}) as in  Lemma~\ref{lem:potential-master}. As in Case I,
we get the desired potential increase compared to the potentials at $\mu$ in
layer $J_q$.

\paragraph{Case III: $\xi^\lal_{{\cal J}}(w) <
4\gamma n$ and $\ell^{\delta^+}({\cal J})>4\gamma n$.}
Again, the iteration at $\mu$ will use an LLS step. We apply
\Cref{lem: case_layer_crashes}, and set $J=J_q\cup J_r$ as in the
lemma.
The argument is the same as in Case II, using that \Cref{lem:
  case_layer_crashes} explicitly states that $\Psi^\mu(\ell,\ell')\le
|J|$ for any $\ell,\ell'\in J$, $\ell \neq \ell'$.
\end{proof}

\subsection{The iteration complexity bound for the Vavasis-Ye algorithm}\label{sec:VY}
We now show that the potential analysis described above also gives an
improved bound $O(n^{2.5}\log n$ $\log(\bar\chi_A+n))$ for the original VY algorithm
\cite{Vavasis1996}.

We recall the VY layering step. Order the variables via $ \pi$ such
that $\delta_{\pi(1)}\le \delta_{\pi(2)}\le\ldots\le
\delta_{\pi(n)}$. The layers will be consecutive sets in the ordering;
a new layer starts with $\pi(i+1)$ each time
$\delta_{\pi(i+1)}>g\delta_{\pi(i)}$, for a parameter $g={\rm
  poly}(n)\bar\chi_A$.

As outlined in the Introduction, the VY
algorithm can be seen as a special implementation of our
algorithm by setting $\hat \Le_{ij}=g\gamma/n$. With these
edge weights, we have that $\hat \Le^\delta_{ij}\ge \gamma/n$
precisely if $g\delta_j\ge \delta_i$.\footnote{For simplicity, in the
  Introduction we used $gx_i\ge x_j$ instead, which is almost the same in the
proximity in the central path.}

With these edge weights, it is easy to see that our \textsc{Layering}($\delta,\hat\Le$) subroutine finds
the exact same components as VY.
Moreover, the layers will be the
initial strongly connected components $C_i$ of $G_{\delta,\gamma/n}$: due to
the choice of $g$, this partition is automatically
$\delta$-balanced. There is no need to call \textsc{Verify-Lift}.

The essential difference compared to our algorithm is that  the values  $\hat\Le_{ij}=g\gamma/n$ are not lower bounds on
$\Le_{ij}$ as we require, but upper bounds instead. This is convenient
to simplify the construction of the layering. On the negative side, the
strongly connected components of $\hat G_{\delta,\gamma/n}$ may not anymore be
strongly connected in $G_{\delta,\gamma/n}$. Hence, we cannot use
\Cref{lem:rho-bound}, and consequently, \Cref{lem:potential-master}
does not hold.

Still, the $\hat\Le_{ij}$ bounds are
overestimating $\Le_{ij}$ by at most a factor
poly$(n)\bar\chi_A$. Therefore, the strongly connected components of
$\hat G_{\delta,n/\gamma}$ are strongly connected in
$G_{\delta,\sigma}$ for some $\sigma=1/({\rm poly}(n)\bar\chi_A)$.

Hence, the entire argument described in this section is applicable to
the VY algorithm, with a
different potential function defined with $\bar\chi_A$ instead of
$\bar\chi^*_A$. This is the reason why the iteration bound in
\Cref{lem:potential-master}, and therefore in \Cref{thm:overall-running}, also changes to $\bar\chi_A$ dependency.

It is worth noting that due to the overestimation of the $\Le_{ij}$
values, the VY algorithm uses a coarser layering than our
algorithm. Our algorithm splits up the VY layers into smaller parts so that
$\ell^\delta({\cal J})$ remains small, but within each part, the gaps
between the variables are bounded as a function of $\bar\chi^*_A$ instead of $\bar\chi_A$.

\section{Properties of the layered least square step}\label{sec:prox-proof}
This section is dedicated to the proofs of \Cref{prop:self-dual} on the duality of lifting scores and \Cref{lem:ll-decompose} on properties of LLS steps.

\propselfdual*
\begin{proof}
\label{proof:prop:self-dual}
We first treat the case where $\pi_I(W) = \{0\}$ or $\pi_J(W^\perp) =
\{0\}$. If $\pi_I(W) = \set{0}$ then $\|L_I^W\| = \ell^W(I) = 0$. Furthermore, in
this case $\R^I = \pi_I(W)^\perp = \pi_I(W^\perp \cap \R^n_I)$, and thus
$\{(0, w_J) : w \in W^\perp\} \subseteq W^\perp$. In particular, $\|L_J^W\| \leq 1$ and
$\ell^{W^\perp}(J) = 0$. Symmetrically, if $\pi_J(W^\perp) = \{0\}$ then
$\|L_J^{W^\perp}\| = \ell^{W^\perp}(J) = 0$, $\|L_I^W\| \leq 1$ and $\ell^{W}(I)
= 0$.

We now restrict our attention to the case where both $\pi_I(W),\pi_J(W^\perp)
\neq \{0\}$. Under this assumption, we show that $\|L_I^W\| = \|L_J^{W^\perp}\|$
and thus that $\ell^W(I) = \ell^{W^\perp}(J)$. Note that by non-emptyness, we
clearly have that $\|L_I^W\|,\|L_J^{W^\perp}\| \geq 1$.

We formulate a more general claim. Let $\{0\} \neq U, V \subset \R^n$ be linear
subspaces such that $U + V = \R^n$ and $U \cap V = \{0\}$.  Note that for the
orthogonal complements in $\R^n$, we also have $\{0\} \neq U^\perp,V^\perp$,
$U^\perp + V^\perp = \R^n$ and $U^\perp \cap V^\perp = \{0\}$.

\begin{claim}
\label{cl:subspace-angle}
Let $\{0\} \neq U, V \subset \R^n$ be linear subspaces
such that $U + V = \R^n$ and $U \cap V = \{0\}$. Thus,
for $z \in \R^n$,
there are unique decompositions $z = u + v$
with $u\in U$, $v \in V$ and $z=u'+v'$ with $u' \in U^\perp$ and $v' \in V^\perp$. Let $T : \R^n \to V$ be the map sending $Tz = v$.
Let $T' : \R^n \to V^\perp$ be the map sending $T'z = v'$.
Then, $\|T\| = \|T'\|$.
\end{claim}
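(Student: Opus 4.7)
The plan is to recognize both $T$ and $T'$ as oblique projections and exploit transpose duality, reducing the claim to the classical identity $\|P\| = \|I-P\|$ for any nontrivial projection $P$.

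First I would verify that $T$ is linear and idempotent, with $\operatorname{range}(T) = V$ and $\ker T = U$, making $T$ the oblique projection onto $V$ along $U$; the analogous statement will hold for $T'$ with $V^\perp$ and $U^\perp$. Next I would observe that $T^\T$ is also a projection, whose range and kernel are determined by the standard identities $\operatorname{range}(T^\T) = (\ker T)^\perp = U^\perp$ and $\ker(T^\T) = \operatorname{range}(T)^\perp = V^\perp$. Consequently $I - T^\T$ is the unique projection with range $V^\perp$ and kernel $U^\perp$, which must coincide with $T'$. This gives the key identity $T' = I - T^\T$.

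Combining this with $\|T\| = \|T^\T\|$, the claim reduces to $\|T^\T\| = \|I - T^\T\|$. Since $U, V \neq \{0\}$ forces $U^\perp, V^\perp \neq \{0\}$, the projection $T^\T$ is distinct from both $0$ and $I$, so this is an instance of the classical identity $\|P\| = \|I - P\|$ for any nontrivial projection $P$.

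The main (and only nontrivial) point is establishing this identity for a general nontrivial projection $P$. My plan is to choose orthonormal bases of $\operatorname{range}(P)^\perp$ and $\operatorname{range}(P)$, which puts $P$ into the block form $P = \bigl(\begin{smallmatrix} 0 & 0 \\ C & I_r\end{smallmatrix}\bigr)$ for some $C \in \R^{r\times(n-r)}$, where $r = \dim \operatorname{range}(P)$. A routine calculation on vectors $z = (a,b)$ with $\|a\|^2 + \|b\|^2 = 1$ will then show $\|P\|^2 = \|I-P\|^2 = 1 + \sigma_1(C)^2$, where $\sigma_1(C)$ is the largest singular value of $C$. Chaining the identities completes the proof: $\|T\| = \|T^\T\| = \|I - T^\T\| = \|T'\|$.
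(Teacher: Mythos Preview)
Your proof is correct and takes a genuinely different route from the paper. The paper argues directly via the SVD of $T$: writing $T = \sum_i \sigma_i v_i u_i^\top$, it exhibits an explicit test vector $v_1 - \sigma_1^{-1} u_1$ on which $T'$ attains norm at least $\sigma_1 = \|T\|$, and then invokes symmetry between $T$ and $T'$ to get equality. Your approach instead recognizes the structural identity $T' = I - T^\top$ (both being the projection onto $V^\perp$ along $U^\perp$), which together with $\|T\| = \|T^\top\|$ reduces the claim to the classical fact $\|P\| = \|I-P\|$ for any idempotent $P \notin \{0,I\}$. Your route is shorter and more conceptual, tying the result to a well-known identity for oblique projections; the paper's route is fully self-contained and produces an explicit witness, which fits the constructive style of the surrounding arguments. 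One minor remark: in your block form $P = \bigl(\begin{smallmatrix} 0 & 0 \\ C & I_r\end{smallmatrix}\bigr)$, the equality $\|I-P\|^2 = 1 + \sigma_1(C)^2$ is immediate from $(I-P)^\top(I-P) = \bigl(\begin{smallmatrix} I + C^\top C & 0 \\ 0 & 0\end{smallmatrix}\bigr)$, but showing $\|P\|^2 = 1 + \sigma_1(C)^2$ requires a short eigenvalue computation on $P^\top P$ rather than a pure norm estimate on vectors $(a,b)$; this is still routine, just worth spelling out.
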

\begin{proof}
To prove the statement, we claim that it suffices to show that if $\|T\| > 1$
then $\|T'\| \geq \|T\|$. To prove sufficiency, note that by symmetry, we
also get that if $\|T'\| > 1$ then $\|T\| \geq \|T'\|$. Note that $V,V^\perp \neq
\{0\}$ by assumption, and $Tz=z$ for $z\in V$, $T'z=z$ for $z\in V^\perp$. Thus, we always have $\|T\|, \|T'\| \geq 1$, and therefore the
equality $\|T\| = \|T'\|$ must hold in all cases. We now assume $\|T\| > 1$ and
show $\|T'\| \geq \|T\|$.

Representing $T$ as an $n \times n$ matrix, we write
$T = \sum_{i=1}^k \sigma_i v_i u_i^\T$ using a singular
value decomposition with $\sigma_1 \geq \dots \geq \sigma_k > 0$.
As such, $v_1,\dots,v_k$ is an orthonormal basis of $V$, since the ${\rm range}(T) =
V$, and $u_1,\dots,u_k$ is an orthonormal basis of $U^\perp$, since $\ker(T) =
U$, noting that we have
restricted to the singular vectors associated with positive singular values.
By assumption, we have that $\|T\| = \|Tu_1\| = \sigma_1 > 1$.

The proof is complete by showing that
\begin{equation}\label{eq:T-prime}
\left\|T'(v_1 - u_1/\sigma_1)\right\|\ge \sigma_1\|v_1 - u_1/\sigma_1\|,
\end{equation}
and that $\|v_1-u_1/\sigma_1\| > 0$, since then the vector $v_1 - u_1/\sigma_1$
will certify that $\|T'\| \geq \sigma_1$.

The map $T$ is a linear projection with $T^2 = T$.
Hence $\pr{u_i}{v_i} = \sigma_i^{-1}$ and
$\pr{u_i}{v_j} = 0$ for all $i \neq j$.

We show that $v_1 - \sigma_1^{-1}u_1$ can be decomposed as
$v_1 - \sigma_1 u_1 + (\sigma_1-\sigma_1^{-1}) u_1$ such that
$v_1 - \sigma_1 u_1\in V^\perp$ and $(\sigma_1-\sigma_1^{-1}) u_1\in U^\perp$.
\b{Therefore, $T'(v_1 - \sigma_1^{-1}u_1)=v_1 - \sigma_1 u_1$.}

The containment $(\sigma_1-\sigma_1^{-1})u_1\in U^\perp$ is immediate.
To show $v_1 - \sigma_1 u_1\in V^\perp$,  we need  that $\pr{v_1 - \sigma_1 u_1}{v_i}=0$ for all $i\in [k]$. For $i\ge 2$, this is true  since $\pr{u_i}{v_j} = 0$  and
$\pr{v_i}{v_j} = 0$.  For $i=1$, we have $\pr{v_1-\sigma_1 u_1}{v_1}=0$
since $\|v_1\|=1$ and $\pr{u_1}{v_1}=\sigma_1^{-1}$. Consequently,
$T'(v_1 - \sigma_1^{-1}u_1)=v_1 - \sigma_1 u_1$.

We compute $\left\|v_1 - \sigma_1^{-1} u_1\right\| = \sqrt{1 - \sigma_1^{-2}} > 0$, since
$\sigma_1 > 1$, and $\|v_1 - \sigma_1 u_1\| = \sqrt{\sigma_1^2 - 1}$. This verifies \eqref{eq:T-prime}, and thus  $\|T'\| \geq \sigma_1 = \|T\|$.
\end{proof}

To prove the lemma, we define
$\mathcal J = (J, I)$,
$U = W_{\mathcal J, 1}^\perp \times W_{\mathcal J, 2}^\perp$
and $V = W$ and let $T: \R^n \rightarrow V$ and $T': \R^n \rightarrow V^\perp$
be as in \Cref{cl:subspace-angle}. By assumption, $\{0\} \neq \pi_I(W) \Rightarrow \{0\} \neq V$ and
$\{0\} \neq \pi_J(W^\perp) = W_{\mathcal J, 1}^\perp \Rightarrow \{0\} \neq U$.
Applying \Cref{lem:lls-as-linsys}, $U, V$ satisfy
the conditions of \Cref{cl:subspace-angle} and $T = \mathrm{LLS}^{W,1}_{\cal
J}$. In particular, $\|T'\|=\|T\|$. Using the fact that $U^\perp = W_{\mathcal
J,1} \times W_{\mathcal J,2}$ and $V^\perp = W^\perp$, we similarly get that $T'
= \mathrm{LLS}^{W^\perp,1}_{\cal{ \bar J}}$, where
${\cal {\bar J}} = (I,J)$. By \eqref{def:aff-x} we have, for any $t
\in \pi_{\R^n_I}(W)$, that $Tt = \mathrm{LLS}^{W,1}_{\mathcal J}(t) =
L_I^W(t_I)$. Thus, $\|T\| \geq \|L_I^W\| \geq 1$.

To finish the proof of the lemma from the claim,
we show that $\|T\| \leq \|L^W_I\|$. By a symmetric
argument we get $\|T'\| = \|L^{W^\perp}_J\|$.

If $x \in \R^n_J$, then $Tx \in W\cap\R^n_J$ because
any $s \in W_{\mathcal J, 2}^\perp, t \in \pi_I(W)$ with
$s + t = 0$ must have $s = t= 0$ since
$W_{\mathcal J, 2}^\perp$ is orthogonal to $\pi_I(W)$.
But $W \cap \R^n_J$ and $W_{\mathcal J, 1}^\perp$ are orthogonal,
so $\|Tx\| \leq \|x\|$ because $x = Tx + (x - Tx)$ is an orthogonal decomposition.

If $y \in \R^n_I$, then $y_J = 0$ and hence $(Ty)_J = (Ty-y)_J$. Since
$(Ty-y)_J \in W_{\mathcal J,1}^\perp = \pi_J(W \cap \R^n_J)^\perp$, we see that
$Ty \in (W \cap \R^n_J)^\perp$. As such, for any $x \in \R^n_J, y \in \R^n_I$,
we see that $x \perp y$ and $Tx \perp Ty$. For $x,y \neq 0$, we thus have that
\[
\frac{\|T(x+y)\|^2}{\|x+y\|^2} = \frac{\|T(x)\|^2+\|T(y)\|^2}{\|x\|^2+\|y^2\|}
\leq \max \left\{\frac{\|T(x)\|^2}{\|x\|^2},\frac{\|T(y)\|^2}{\|y\|^2}\right\}
\leq \max \left\{1,\frac{\|T(y)\|^2}{\|y\|^2}\right\}.
\]
Since $\|L_I^W\| \geq 1$, we must have that $\|Tt\|/\|t\|$ is maximized
by some $t \in \R^n_I$.
From $\ker(T) = U$ it is clear that $\|Tt\|/\|t\|$ is maximized
by some $t \in U^\perp$.
Now, $U^\perp \cap \R^n_I = \pi_{\R^n_I}(W)$, so any $t$ maximizing
$\|Tt\|/\|t\|$ satisfies $Tt = L_I^W(t_I)$.
Therefore, $\|L_I^W\| \geq \|T\|$.
\end{proof}

Our next goal is to show \Cref{lem:ll-decompose}: for a layering
with small enough $\ell^\delta({\cal J})$, the LLS step approximately
satisfies \eqref{aff:sum}, that is, $\delta \Delta x^\lal +
\delta^{-1} \Delta s^\lal\approx -x^{1/2} s^{1/2}$. This also enables
us to derive bounds on the norm of
the residuals and on the step-length. We start by proving a few auxiliary
technical claims.
The next simple lemma allows us to take advantage of
low lifting scores in the layering.
\begin{lemma}\label{lem:force}
Let $u,v\in \R^n$ be two vectors such that $u-v\in W$.  Let
$I\subseteq [n]$, and $\delta\in \R^n_{++}$.
 Then there exists a vector $u' \in W + u$  satisfying $u'_I=v_I$ and 
\[
\|\delta_{[n]\setminus I} (u'_{[n]\setminus I}-u_{[n]\setminus I})\|\le
\ell^\delta(I)\|\delta_I(u_I-v_I)\|\, .
\]
\end{lemma}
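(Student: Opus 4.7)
The plan is to reduce the problem to a direct application of the lifting map $L_I^{\diag(\delta)W}$ after appropriate rescaling. The key observation is that the statement involves the rescaled lifting score $\ell^\delta(I) = \ell^{\diag(\delta)W}(I)$, which by definition \eqref{def:lifting-score} measures how much norm growth occurs in the off-$I$ coordinates when we lift an element of $\pi_I(\diag(\delta)W)$ to a minimum-norm element of $\diag(\delta)W$.

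First I would translate the problem into the rescaled subspace. Setting $p := \delta_I(v_I - u_I)$, note that $\delta(v-u) \in \diag(\delta)W$ since $u - v \in W$, and its restriction to $I$ is exactly $p$; hence $p \in \pi_I(\diag(\delta)W)$. Applying the lifting map, let $z := L_I^{\diag(\delta)W}(p) \in \diag(\delta)W$, so $z_I = p$. Then I would take $u' := u + \diag(\delta)^{-1} z$. By construction $u' - u \in W$, i.e.\ $u' \in W + u$, and $u'_I - u_I = \delta_I^{-1} z_I = v_I - u_I$, so $u'_I = v_I$, giving the desired feasibility.

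For the norm bound, I would use the orthogonality property of the lifting map: since $z$ is the minimum-norm preimage of $p$, it is orthogonal to $\diag(\delta)W \cap \R^n_{[n]\setminus I}$, and in particular the orthogonal decomposition $\|z\|^2 = \|z_I\|^2 + \|z_{[n]\setminus I}\|^2$ yields
\[
\|z_{[n]\setminus I}\|^2 = \|z\|^2 - \|p\|^2 \leq (\|L_I^{\diag(\delta)W}\|^2 - 1)\|p\|^2 = \ell^\delta(I)^2 \|p\|^2.
\]
Since $\delta_{[n]\setminus I}(u'_{[n]\setminus I} - u_{[n]\setminus I}) = z_{[n]\setminus I}$ by construction, the claimed bound follows immediately.

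There is no real obstacle here; the only subtlety is being careful with the rescaling so that the lifting inequality gives the sharper factor $\ell^\delta(I)$ rather than $\|L_I^{\diag(\delta)W}\| = \sqrt{1+\ell^\delta(I)^2}$. This is achieved precisely by choosing $u'$ through the lifting map so that the $I$-coordinates of $\delta(u'-u)$ equal $p$ exactly and the $([n]\setminus I)$-coordinates carry only the orthogonal component whose norm is controlled by $\ell^\delta(I)\|p\|$.
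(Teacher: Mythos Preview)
Your proposal is correct and takes exactly the same approach as the paper: the paper defines $u':=u+\delta^{-1}L^\delta_I(\delta_I(v_I-u_I))$ and says the claim follows from the definition of the lifting score, which is precisely your construction with $z = L^\delta_I(p)$. Your write-up simply spells out in more detail why $\|z_{[n]\setminus I}\| \le \ell^\delta(I)\|p\|$, which the paper states as an immediate consequence of the definition right after \eqref{def:lifting-score}.
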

\begin{proof}
We let
\[
u':=u+\delta^{-1}L^\delta_I(\delta_I(v_I-u_I))\, .
\]
The claim follows by the definition of the lifting score $\ell^\delta(I)$.
\end{proof}

The next lemma will be the key tool to prove
Lemma~\ref{lem:ll-decompose}. It is helpful to recall the characterization of
the LLS step in Section~\ref{sec:linsys}.
\begin{lemma}\label{lem:prox-sum-space}
Let  $w=(x,y,s)\in {\cal N}(\beta)$ for $\beta\in (0,1/4]$, let
$\mu=\mu(w)$ and  $\delta=\delta(w)$.
Let ${\cal J}=(J_1,\ldots,J_p)$ be a \b{$\delta(w)$-balanced}
layering, and let $\Delta w^\lal = (\Delta x^\lal, \Delta y^\lal, \Delta
s^\lal)$ denote the corresponding LLS direction.
Let $\Delta x\in \bigtimes_{k=1}^p W_{{\cal J},k}$ and $\Delta s\in
\bigtimes_{k=1}^p W^\perp_{{\cal J},k}$ as in \eqref{eq:ll-primal} and
\eqref{eq:ll-dual}, that is
\begin{align}
\delta \Delta x^\lal + \delta^{-1} \Delta s +x^{1/2} s^{1/2}=0 \, ,\label{eq:ll-primal-2}\\
\delta \Delta x  + \delta^{-1} \Delta s^\lal +x^{1/2} s^{1/2}=0\label{eq:ll-dual-2}.
\end{align}
Then, there exist vectors $\Delta \bar x\in \bigtimes_{k=1}^p W_{{\cal
    J},k}$ and $\Delta \bar s\in \bigtimes_{k=1}^p W^\perp_{{\cal
    J},k}$ such that
\begin{align}
\|\delta_{J_k}(\Delta \bar x_{J_k} - \Delta x_{J_k}^\lal)\|&\le 2n\ell^\delta({\cal
  J})\sqrt{\mu}\quad
                                               \forall k\in [p]\,
                                               \quad \mbox{and}\label{eq:x-bar-x}\\
\|\delta^{-1}_{J_k}(\Delta \bar s_{J_k} - \Delta s_{J_k}^\lal)\|&\le
                                                                  2n\ell^\delta({\cal
  J})   \sqrt{\mu}\quad
                                               \forall k\in [p]\, .\label{eq:s-bar-s}
\end{align}
\end{lemma}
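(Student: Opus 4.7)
The plan is to build $\Delta\bar x$ and $\Delta\bar s$ layer by layer by using \Cref{lem:force} to ``zero out'' the tail coordinates of the LLS directions, and then to control the cost of this modification by comparing the LLS step to the affine scaling step. Concretely, for each $k\in[p]$, I would apply \Cref{lem:force} with $u=\Delta x^\lal\in W$, $v=0$, and $I=J_{>k}$. This produces a vector $u^{(k)}\in W$ satisfying $u^{(k)}_{J_{>k}}=0$ (hence $u^{(k)}\in W\cap \R^n_{J_{\le k}}$, so $u^{(k)}_{J_k}\in W_{{\cal J},k}$) together with the bound $\|\delta_{J_k}(u^{(k)}_{J_k}-\Delta x^\lal_{J_k})\|\le \ell^\delta(J_{>k})\|\delta_{J_{>k}}\Delta x^\lal_{J_{>k}}\|$. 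Setting $\Delta\bar x_{J_k}:=u^{(k)}_{J_k}$ assembles a vector $\Delta\bar x\in \bigoplus_k W_{{\cal J},k}$, and since $\ell^\delta(J_{>k})\le\ell^\delta({\cal J})$, the required bound \eqref{eq:x-bar-x} reduces to showing $\|\delta\Delta x^\lal\|=O(\sqrt{n\mu})$.

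The heart of the proof is this latter norm bound, which I would obtain by comparing $\Delta x^\lal$ to the affine scaling direction $\Delta x^\as$. We already know from \Cref{lem:affscale} that $\|\delta(x+\Delta x^\as)\|=\sqrt\mu\|\Rx^\as\|\le\sqrt{n\mu}$, so $\|\delta\Delta x^\as\|\le 2\sqrt{n\mu}$. For the LLS direction, the optimality of $\Delta x^\lal_{J_k}$ in \eqref{def:aff-x} gives, for any feasible lift $\tilde x\in W$ with $\tilde x_{J_{>k}}=\Delta x^\lal_{J_{>k}}$,
\[
\|\delta_{J_k}(x_{J_k}+\Delta x^\lal_{J_k})\|\le \|\delta_{J_k}(x_{J_k}+\tilde x_{J_k})\|.
\]
I would produce such a $\tilde x$ by invoking \Cref{lem:force} a second time with $u=\Delta x^\as$, $v=\Delta x^\lal$, and $I=J_{>k}$ (legal because $u-v\in W$), yielding $\|\delta_{J_{\le k}}(\tilde x_{J_{\le k}}-\Delta x^\as_{J_{\le k}})\|\le \ell^\delta(J_{>k})\|\delta_{J_{>k}}(\Delta x^\as_{J_{>k}}-\Delta x^\lal_{J_{>k}})\|$. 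Writing $\gamma_k:=\|\delta_{J_k}(\Delta x^\lal_{J_k}-\Delta x^\as_{J_k})\|$ and $\beta_k:=\|\delta_{J_k}(x_{J_k}+\Delta x^\as_{J_k})\|$, the triangle inequality then yields the per-layer recursion
\[
\gamma_k\le 2\beta_k+\ell^\delta({\cal J})\sqrt{\textstyle\sum_{j>k}\gamma_j^2}.
\]

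Setting $T_k:=\sqrt{\sum_{j\ge k}\gamma_j^2}$, the recursion becomes $T_k^2\le (1+2\ell^\delta({\cal J})^2)T_{k+1}^2+8\beta_k^2$. This is the main obstacle: one must unroll it cleanly while tracking constants. Because $\ell^\delta({\cal J})\le \beta/(32n^2)$ by hypothesis, the multiplicative factor $(1+2\ell^\delta({\cal J})^2)^p$ is at most a constant; combined with $\sum_k\beta_k^2=\|\delta(x+\Delta x^\as)\|^2\le n\mu$, this gives $T_1=O(\sqrt{n\mu})$ and hence $\|\delta\Delta x^\lal\|\le \|\delta\Delta x^\as\|+T_1=O(\sqrt{n\mu})$. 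Feeding this back into the first step bounds $\|\delta_{J_k}(\Delta\bar x_{J_k}-\Delta x^\lal_{J_k})\|$ by $\ell^\delta({\cal J})\cdot O(\sqrt{n\mu})$, which is absorbed into $2n\ell^\delta({\cal J})\sqrt\mu$.

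For the dual side $\Delta\bar s$, I would apply the entire argument verbatim to the triple $(W^\perp,\delta^{-1},\bar{\cal J})$ with $\bar{\cal J}=(J_p,\dots,J_1)$: by \Cref{lem:balanced-dual} we have $\ell^{W^\perp,\delta^{-1}}(\bar{\cal J})=\ell^{W,\delta}({\cal J})$, and one checks $W^\perp_{\bar{\cal J},k}$ in this reversed partition corresponds to $W^\perp_{{\cal J},p+1-k}$, so the LLS step for the dual problem in the reversed partition is exactly the one defining $\Delta s^\lal$. Running the same construction and the same affine-scaling-based norm bound on $\|\delta^{-1}\Delta s^\lal\|$ then yields \eqref{eq:s-bar-s}. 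The only conceptually delicate piece is the recursion unrolling in the previous paragraph; everything else is a direct application of \Cref{lem:force} together with basic inequalities.
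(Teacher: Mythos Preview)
Your approach is correct and takes a genuinely different route from the paper. The paper constructs $\Delta\bar x$ by an \emph{incremental} induction: starting from $\Delta x^{(p)}=\Delta x^\lal$, it obtains $\Delta x^{(k)}$ by applying \Cref{lem:force} to $\Delta x^{(k+1)}$ (not to $\Delta x^\lal$) with $I=J_{>k}$, so that at each step only the single layer $J_{k+1}$ needs to be zeroed out. The required norm bound $\|\delta_{J_{k+1}}\Delta\bar x_{J_{k+1}}\|\le 2\sqrt{\mu|J_{k+1}|}$ comes directly from the linear-system form \eqref{eq:ll-primal-2} and the orthogonality of $\Delta\bar x_{J_{k+1}}\in W_{{\cal J},k+1}$ with $\Delta s_{J_{k+1}}\in W^\perp_{{\cal J},k+1}$; the affine scaling direction never enters, and the error terms $2\lambda\sqrt{\mu|J_q|}$ simply add up over $q>k$ without any recursion to unroll.

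You instead zero out all of $J_{>k}$ at once from $\Delta x^\lal$ and must therefore bound the full norm $\|\delta\Delta x^\lal\|$, which you do via a separate layer-by-layer recursion comparing to $\Delta x^\as$ through the variational characterization \eqref{def:aff-x}. This works, but it is longer and essentially rederives the LLS--affine-scaling proximity that the paper obtains later (as \Cref{lem:ll-decompose}\ref{i:lls-aff}) \emph{from} the present lemma; the paper's orthogonality argument is shorter and more self-contained. One minor caveat on constants: as written, with $\|\delta\Delta x^\as\|\le 2\sqrt{n\mu}$ and the crude $(a+b)^2\le 2a^2+2b^2$ in the recursion, the final bound $\lambda\|\delta\Delta x^\lal\|$ slightly overshoots $2n\lambda\sqrt\mu$ for small $n$. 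To make it fit for all $n\ge 3$, use the orthogonality bound $\|\delta\Delta x^\as\|\le\|x^{1/2}s^{1/2}\|\le\sqrt{(1+\beta)n\mu}$ and bound $\|\delta(x+\Delta x^\lal)\|$ directly from the per-layer inequalities $\|\delta_{J_k}(x_{J_k}+\Delta x^\lal_{J_k})\|\le\beta_k+\lambda T_{k+1}$ (rather than passing through $T_1$); with $\lambda\le\gamma$ the cross terms are negligible and the constant drops comfortably below $2\sqrt n$.
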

\begin{proof}
Throughout, we use the shorthand notation $\lambda=\ell^\delta({\cal
  J})$.
 We
construct  $\Delta\bar x$; one can obtain
$\Delta\bar s$, using that the reverse layering has lifting score
$\lambda$ in $W^\perp\diag(\delta^{-1})$ according to
Lemma~\ref{lem:balanced-dual}.

We proceed by induction, constructing $\Delta \bar x_{J_k}\in W_{{\cal
    J},k}$ for $k=p,p-1,\ldots,1$. This will be given as
$\Delta \bar x_{J_k}=\Delta  x^{(k)}_{J_k}$ for a vector $\Delta
 x^{(k)}\in W$ such that $\Delta x^{(k)}_{J_{>k}}=0$. We prove
 the inductive hypothesis
\begin{equation}\label{ind-x-bar-x}
\left\|\delta _{J_{\le k}}\left(\Delta x^{(k)}_{J_{\le k}}-\Delta x^\lal_{J_{\le
    k}}\right)\right\|\le 2\lambda\sqrt{\mu} \sum_{q=k+1}^p \sqrt{|J_q|}\, .
\end{equation}
Note that \eqref{eq:x-bar-x} follows by restricting the norm on the
LHS to $J_k$ and since the sum on the RHS is $\le n$.

For $k=p$, the RHS is 0. We simply set $\Delta x^{(p)}=\Delta x^\lal$, that is,
$\Delta \bar x_{J_p}=\Delta x^\lal_{J_p}$, trivially satisfying the
hypothesis. Consider now $k<p$, and assume that we have a
$\Delta \bar x_{J_{k+1}}=\Delta  x^{(k+1)}_{J_{k+1}}$ satisfying
\eqref{ind-x-bar-x} for $k+1$.
From \eqref{eq:ll-primal-2} and the induction hypothesis,
we get that
\[
\begin{aligned}
&\|\delta_{J_{k+1}} \Delta \bar x_{J_{k+1}} + \delta^{-1}_{J_{k+1}} \Delta s_{J_{k+1}}\|\le
\|x^{1/2}_{J_{k+1}}
s^{1/2}_{J_{k+1}}\|+\|\delta_{J_{k+1}}(\Delta \bar x_{J_{k+1}}-
\Delta x_{J_{k+1}}^\lal)\|\\
&\le \|x^{1/2}_{J_{k+1}}
s^{1/2}_{J_{k+1}}\| +2\lambda\sqrt{\mu}\sum_{q=k+2}^p \sqrt{|J_q|}
\le \sqrt{1+\beta}\sqrt{\mu|J_{k+1}|}+2n\lambda \sqrt{\mu }<2 \sqrt{\mu|J_{k+1}|}\, ,
\end{aligned}
\]

using also that $w\in {\cal N}(\beta)$, Proposition~\ref{prop:x_i-s_i}, and the assumptions $\beta\le
1/4$, $\lambda\le \beta/(32n^2)$.
Note that $\Delta \bar x_{J_{k+1}}\in W_{{\cal J},k}$ and $\Delta
s_{J_{k+1}}\in W^\perp_{{\cal J},k}$ are orthogonal vectors.
The above inequality therefore implies
\[
\|\delta_{J_{k+1}} \Delta \bar x_{J_{k+1}} \|\le
2\sqrt{\mu|J_{k+1}|}\, .
\]
Let us now use Lemma~\ref{lem:force} to obtain $\Delta  x^{(k)}$ for
$u= \Delta  x^{(k+1)}$, $v=0$, and $I=J_{>k}$. That is, we get $\Delta
x^{(k)}_{J_{>k}}=0$, $\Delta
x^{(k)}\in W$, and
\[
\begin{aligned}
\|\delta_{J_{\le k}}( \Delta  x^{(k)}_{J_{\le k}}-\Delta
x^{(k+1)}_{J_{\le k}})\|&\le \lambda \|\delta_{J_{> k}} \Delta
x^{(k+1)}_{J_{> k}}\|\\
&=\lambda \|\delta_{J_{k+1}} \Delta \bar
x_{J_{k+1}}\|\le 2\lambda \sqrt{\mu|J_{k+1}|}\, .
\end{aligned}
\]
By the triangle inequality and the induction hypothesis \eqref{ind-x-bar-x} for $k+1$,
\begin{align*}
\|\delta_{J_{\le k}}(\Delta x^{(k)}_{J_{\le k}}- \Delta x^{\lal}_{J_{\leq k}})\| &\leq \|\delta_{J_{\le k}}(\Delta x^{(k)}_{J_{\le k}} - \Delta x^{(k+1)}_{J_{\le k}})\| + \|\delta_{J_{\le k}}(\Delta x^{(k+1)}_{J_{\leq k}} - \Delta x^{\lal}_{J_{\leq k}})\| \\
&\leq 2\lambda \sqrt{\mu|J_{k+1}|} + 2 \lambda \sum_{q=k+2}^p \sqrt{\mu |J_q|} ,
\end{align*}
yielding the induction hypothesis for $k$.
\end{proof}

\lldecompose*
\begin{proof}\label{proof:lem:ll-decompose}
Again, we use $\lambda=\ell^\delta({\cal
  J})$.

\noindent{\bf Part \ref{i:prox}.}
Clearly, \eqref{eq:prox-layer} implies \eqref{eq:prox-all}.
To show \eqref{eq:prox-layer}, we use Lemma~\ref{lem:prox-sum-space}
to obtain $\Delta \bar x$ and $\Delta \bar s$ as in \eqref{eq:x-bar-x}
and \eqref{eq:s-bar-s}. We will also use
$\Delta x\in \bigtimes_{k=1}^p W_{{\cal J},k}$ and $\Delta s\in
\bigtimes_{k=1}^p W^\perp_{{\cal J},k}$ as in \eqref{eq:ll-primal-2} and
\eqref{eq:ll-dual-2}.

Select any layer
$k\in [p]$.
From \eqref{eq:ll-primal-2}, we get that
\begin{equation}\label{eq:x-bar-x-diff}
\|\delta_{J_k} \Delta \bar x_{J_k} + \delta^{-1}_{J_k} \Delta s_{J_k}
+x^{1/2}_{J_k} s^{1/2}_{J_k}\|=\|\delta_{J_{k}}(\Delta \bar x_{J_k}-
\Delta x_{J_k}^\lal)\|\le  2n\lambda\sqrt{\mu}\, .
\end{equation}
Similarly, from \eqref{eq:ll-dual-2}, we see that
\[
\|\delta^{-1}_{J_k} \Delta \bar s_{J_k} + \delta_{J_k} \Delta x_{J_k}
+x^{1/2}_{J_k} s^{1/2}_{J_k}\|=\|\delta^{-1}_{J_{k}}(\Delta \bar s_{J_k}-
\Delta s_{J_k}^\lal)\|\le  2n\lambda\sqrt{\mu}\, .
\]
From the above inequalities, we see that
\[
\|\delta_{J_k} (\Delta \bar x_{J_k} -\Delta x_{J_k})+
\delta^{-1}_{J_k} (\Delta s_{J_k}-\Delta\bar s_{J_k})\|\le 4 n\lambda\sqrt{\mu}\, .
\]
Since $\delta_{J_k} (\Delta \bar x_{J_k} -\Delta x_{J_k})$ and
$\delta^{-1}_{J_k} (\Delta s_{J_k}-\Delta\bar s_{J_k})$ are orthogonal
vectors, we have
\[
\|\delta_{J_k} (\Delta \bar x_{J_k} -\Delta x_{J_k})\|,\,
\|\delta^{-1}_{J_k} (\Delta s_{J_k}-\Delta\bar s_{J_k})\|\le 4n\lambda\sqrt{\mu}\, .
\]
 Together with \eqref{eq:x-bar-x}, this yields
$\|\delta_{J_k} (\Delta x^\lal_{J_k} -\Delta x_{J_k})\|\le
6n\lambda \sqrt{\mu}$.
Combined with \eqref{eq:ll-dual}, we get
\[
\|\delta_{J_k} \Delta x^\lal_{J_k} + \delta^{-1}_{J_k} \Delta s^\lal_{J_k}
+x^{1/2}_{J_k} s^{1/2}_{J_k}\| = \|\delta_{J_k} (\Delta x^\lal_{J_k} -\Delta x_{J_k})\|\le
6n \lambda\sqrt{\mu}\, ,
\]
thus, \eqref{eq:prox-layer} follows.

\paragraph{Part \ref{i:lls-aff}.}  Recall from
Lemma~\ref{lem:affscale}\ref{i:affscale-identity} that
$\sqrt{\mu} \Rx^\as+\sqrt{\mu} \Rs^\as={x^{1/2}s^{1/2}}$. From part \ref{i:prox},
  we can similarly see that
\[
\|\sqrt{\mu} \Rx^\lal +\sqrt{\mu}
\Rs^\lal-{x^{1/2}s^{1/2}}\|\le
6n^{3/2}\lambda\sqrt{\mu}\, .
\]
From these, we get
\[
\| (\Rx^\lal-\Rx^\as)+
(\Rs^\lal-\Rs^\as)\|\le
6n^{3/2}\lambda\, .
\]
The claim follows since $\Rx^\lal-\Rx^\as\in \diag(\delta) W$ and $\Rs^\lal-\Rs^\as\in
\diag(\delta^{-1}) W^\perp $ are orthogonal vectors.
\paragraph{Part \ref{i:norm}.}
Both bounds follow from the previous part and
Lemma~\ref{lem:affscale}\ref{i:lower-bound-in-neighbourhood}, using
the assumption $\ell^\delta({\cal J})\le \beta/(32n^2)$.

\paragraph{Part \ref{i:progress}.}
Let $w^+=w+\alpha \Delta w^\lal$. We need to find
the largest value $\alpha>0$ such
that $w^+\in {\cal N}(2\beta)$. 
To begin, we first show that the normalized
duality gap $\mu(w^+)$ fulfills $\mu(w^+) = (1-\alpha)\mu$ for any $\alpha \in \R$. For this
purpose, we use the decomposition:
\begin{equation}
\label{eq:decomp-w}
(x + \alpha \Delta x^{\lal})(s + \alpha \Delta s^{\lal})  = (1-\alpha) xs +
\alpha (x + \Delta x^{\lal})(s+ \Delta s^{\lal}) - \alpha(1-\alpha) \Delta
x^{\lal} \Delta s^{\lal}.
\end{equation}
Recall from Part~\ref{i:prox} that there exists $\Delta x\in \bigtimes_{k=1}^p
W_{{\cal J},k}$ and $\Delta s\in \bigtimes_{k=1}^p W^\perp_{{\cal J},k}$ as in
\eqref{eq:ll-primal-2} and \eqref{eq:ll-dual-2} such that $\delta \Delta
x^{\lal} + \delta^{-1} \Delta s =
- \delta x$ and $\delta \Delta x + \delta^{-1} \Delta s^{\lal} = -\delta^{-1}
  s$. In particular, $x + \Delta x^{\lal} = -\delta^{-2} \Delta s$ and $s + \Delta
s^{\lal} = -\delta^2 \Delta x$. Noting that $\Delta x^{\lal} \perp \Delta s^\lal$
and $\Delta x \perp \Delta s$, taking the average of the coordinates on both
sides of~\eqref{eq:decomp-w}, we get that
\begin{align}
\mu(w + \alpha \Delta w^\lal) &=
(1-\alpha) \mu(w) + \alpha \langle x + \Delta x^{\lal}, s + \Delta s^\lal\rangle/n -
\alpha(1-\alpha) \langle \Delta x^{\lal}, \Delta s^\lal \rangle/n \nonumber \\
&= (1-\alpha) \mu(w) + \alpha \langle \delta^{-2} \Delta s, \delta^2 \Delta x
\rangle/n
\nonumber \\
&= (1-\alpha) \mu(w), \label{eq:exact-alpha}
\end{align}
as needed.

Let $\epsilon := \eps^{\lal}(w)$. To obtain the desired lower bound on the
step-length, given~\eqref{eq:exact-alpha} it suffices to show that for all $0
\leq \alpha <  1-\frac{3 \sqrt{n} \epsilon}{\beta}$ that
\begin{equation}\label{eq:approx-alpha}
\left\|\frac{(x+\alpha \Delta x^\lal)(s+\alpha \Delta
    s^\lal)}{(1-\alpha)\mu}-e\right\|\le 2\beta\, .
\end{equation}
We will need a bound on the product of the LLS residuals:
\begin{equation}\label{eq:LLS-residuals}
\begin{aligned}
\left\|\Rx^\lal \Rs^\lal-\frac{1}{\mu}\Delta x^\lal \Delta
  s^\lal \right\|&=\left\|\frac{x^{1/2}s^{1/2}}{\sqrt{\mu}}\cdot \frac{\delta
  \Delta x^\lal +\delta^{-1}\Delta s^\lal + x^{1/2}s^{1/2}}{\sqrt{\mu}}\right\|\\
&\le 6(1+2\beta)n^{3/2}\lambda \le \frac{\beta }{4}\, ,
\end{aligned}
\end{equation}
using Proposition~\ref{prop:near-central},
part~\ref{i:prox}, and the assumptions $\lambda\le \beta/(32n^2)$,
$\beta\le 1/4$. Another useful bound will be
\begin{equation}\label{eq:prod-bound-epsilon}
\begin{aligned}
\|\Rx^\lal \Rs^\lal\|^2 &= \sum_{i \in [n]} \left|\Rx^\lal_i\right|^2\left|\Rs^\lal_i\right|^2 \leq \epsilon^2 \sum_{i \in [n]} \max\Big\{\left|\Rx^\lal_i\right|^2,\left|\Rs^\lal_i\right|^2\Big\} \\
& \leq \epsilon^2(\|\Rx^\lal\|^2 + \|\Rs^\lal\|^2)  \leq 2n \epsilon^2\, .
\end{aligned}
\end{equation}
The last inequality uses part~\ref{i:norm}.
With \eqref{eq:decomp-w} we are ready to get the bound in \eqref{eq:approx-alpha}, as 
\begin{align*}
\Big\|\frac{(x + \alpha \Delta x^\lal)(s + \alpha \Delta s^\lal)}{(1-\alpha)\mu} - e\Big\|
& \leq \beta + \Big\|\frac{\alpha}{(1-\alpha)\mu}(x+\Delta x^\lal)(s +
  \Delta s^\lal) - \frac{\alpha}{\mu} \Delta x^\lal  \Delta s^\lal \Big\|
  \\
&= \beta + \Big\|\Big(\frac{\alpha}{1 - \alpha} - \alpha\Big)\Rx^\lal \Rs^\lal + \alpha
  \Big(\Rx^\lal \Rs^\lal - \frac{1}{\mu}\Delta x^\lal \Delta
  s^\lal\Big)\Big\| \, \\
&
\le \beta + \frac{\alpha^2}{1 - \alpha}\|\Rx^\lal \Rs^\lal\| + \alpha
\Big\|\Rx^\lal \Rs^\lal - \frac{1}{\mu}\Delta x^\lal \Delta
s^\lal\Big\| \\
&\leq \beta + \frac{\sqrt{2n}\epsilon}{1 - \alpha} +
  \frac{\beta}{4} \le \frac{5}{4}\beta +
  \frac{\sqrt{2n}\epsilon}{1 - \alpha}\, .
\end{align*}
This value is $\le 2\beta$ whenever ${2\sqrt{n}\epsilon}/({1 - \alpha})\le (3/4)
\beta \Leftarrow \alpha < 1 - \frac{3 \sqrt{n} \epsilon}{\beta}$, as
needed.

\paragraph{Part \ref{i:terminate}.}
From  part \ref{i:progress}, it is immediate that $\epsilon^\lal(w)=0$ implies
$\alpha=1$. If $\alpha=1$, we have that $w+\Delta w^\lal$ is the limit of
(strictly) feasible solutions to~\eqref{LP_primal_dual} and thus is also a
feasible solution. Optimality of $w+\Delta w^\lal$ now follows from Part
\ref{i:progress}, since $\alpha=1$ implies
$\mu(w+\Delta w^\lal)=0$.
The remaining implication is that if $w+\Delta w^\lal$ is optimal, then $\epsilon^\lal(w)=0$. Recall that $\Rx_i^\lal=\delta_i(x_i+\Delta x_i^\lal)/\sqrt{\mu}$ and $\Rs_i^\lal=\delta^{-1}_i(s_i+\Delta s_i^\lal)/\sqrt{\mu}$. The optimality of $w+\Delta w^\lal$ means that for each $i\in [n]$, either $x_i+\Delta x_i^\lal=0$ or $s_i+\Delta s_i^\lal=0$. Therefore, $\epsilon^\lal(w)=0$.
\end{proof}

\section{Proofs of the main lemmas for the potential analysis}\label{sec:main-lemmas}
\lemlowerboundsforcrossoverevents*
\begin{proof}[Proof of Lemma~\ref{lem: lower_bounds_for_crossover_events}]
  \label{proof:lem: lower_bounds_for_crossover_events}
We prove part (i); part (ii) follows analogously using
Lemma~\ref{lem:balanced-dual}.
Let $z$ be a vector fulfilling the statement of Lemma~\ref{lem:force}
for $u=x^*$, $v=x+\Delta x^\lal$, and $I=J_{>q}$. Then $z \in W + x$,
$z_{J_{>q}}=x_{J_{>q}}+\Delta x_{J_{>q}}^\lal$ and by $\ell^\delta(\cal J) \le \gamma$
\[
\left\|\delta_{J_{\le q}} (x^*_{J_{\le q}}-z_{J_{\le q}})\right\|\le \gamma \left\|{\delta_{J_{>q}} \big(x^*_{J_{>q}}-(x_{J_{>q}}+\Delta
x^\lal_{J_{>q}})\big)}\right\|.
\]
Restricting to the components in $J_q$, and dividing by
$\sqrt{\mu}$, we get
\begin{equation}\label{eq:H-lift}
\left\|\frac{\delta_{J_q}(x^*_{J_q}-z_{J_q})}{\sqrt{\mu}}\right\|\le \gamma \left\|\frac{\delta_{J_{>q}}\big(x^*_{J_{>q}}-(x_{J_{>q}}+\Delta
x^\lal_{J_{>q}})\big)}{\sqrt{\mu}}\right\|\le \gamma \left\|\frac{\delta_{J_{>q}}x^*_{J_{>q}}}{\sqrt{\mu}}\right\|+\gamma\|\Rx^\lal_{J_{>q}}\|
\, .
\end{equation}
Since $w\in {\cal N}(\beta)$, from
\Cref{prop:near-central} and \eqref{eq:delta-beta} we see that for $i \in [n]$
\[
\frac{\delta_i}{\sqrt{\mu}}\le \frac{1}{\sqrt{1-2\beta}}\cdot
\frac{\delta_i(w(\mu))}{\sqrt{\mu}}=\frac{1}{\sqrt{1-2\beta}}\cdot
\frac{1}{x_i(\mu)}\, ,
\]
and therefore
\[
\left\|\frac{\delta_{J_{>q}}x^*_{J_{>q}}}{\sqrt{\mu}}\right\|\le \frac{1}{\sqrt{1-2\beta}}
\left\|{x(\mu)^{-1}_{J_{>q}}x^*_{J_{>q}}}\right\|\,
\le
\frac{1}{\sqrt{1-2\beta}}\cdot\left\|{x(\mu)^{-1}_{J_{>q}}x^*_{J_{>q}}}\right\|_1\le
\frac{n}{\sqrt{1-2\beta}}\,,
\]
where the last inequality follows by \Cref{lem: central_path_bounded_l1_norm}.

Using the above bounds with \eqref{eq:H-lift}, along with
$\|\Rx^\lal_{J_{\ge q}}\|\le \|\Rx^\lal\|\le \sqrt{2n}$ from
Lemma~\ref{lem:ll-decompose}\ref{i:norm}, we get
\[
\left\|\frac{\delta_{J_q} z_{J_q}}{\sqrt{\mu}}\right\|\le
\left\|\frac{\delta_{J_q} x^*_{J_q}}{\sqrt{\mu}}\right\|+
\frac{\gamma n}{\sqrt{1-2\beta}}+\gamma \sqrt{2n}\le
\left\|\frac{\delta_{J_q} x^*_{J_q}}{\sqrt{\mu}}\right\|+2\gamma n \, ,
\]
using that $\beta\le 1/8$ and $n\ge 3$.
Note that $z$ is a feasible solution to the least-squares problem
which is optimally solved by $x_{J_q}^\lal$ for layer $J_q$ and so
\[
 \|R x_{J_q}^\lal\|\le\left\|\frac{\delta_{J_q} z_{J_q}}{\sqrt{\mu}}\right\|\, .
\]
It follows that
\[
\left\|\frac{\delta_{J_q} x^*_{J_q}}{\sqrt{\mu}}\right\|\ge\|R
x_{J_q}^\lal\|-2\gamma n\, .
\]
Let us pick $i=\argmax_{t\in J_q}|\delta_t x^*_t|$. Using \Cref{prop:x_i-s_i},
\[
\frac{x^*_i}{x_i}\ge \frac{1}{1+\beta}  \cdot\frac{\delta_i
  x^*_i}{\sqrt{\mu}}\ge \frac{\|R
x_{J_q}^\lal\|-2\gamma n}{(1+\beta)\sqrt{n}}\ge
\frac{2}{3\sqrt{n}}\cdot (\|\Rx_{J_q}^\lal\| - 2\gamma n)\, ,
\]
completing the proof.
\end{proof}

\lempotentialmaster*
\begin{proof}[Proof of \Cref{lem:potential-master}]
  \label{proof:lem:potential-master}
Let us select a value $\mu'$ such that
\[
\log \mu - \log \mu'\ge 5\tau \log\left(\frac{4n\Le^*}{\gamma}\right) +31\log n+44-4\log\beta\, .
\]
The normalized duality gap
decreases to such value within $O(\beta^{-1}\sqrt{n}\tau\cdot\log(\bar\chi^* + n))$
iterations, recalling that $\log(\bar\chi^* + n) = \Theta(\log(\Le^* + n))$. The step-lengths for the affine scaling and LLS steps are stated in \Cref{prop:predictor-corrector} and \Cref{lem:ll-decompose}\ref{i:progress}. Whenever the algorithm chooses an LLS
step,  $\epsilon^\as(w) < 10n^{3/2}\gamma$. Thus, the progress in $\mu$
will be at least as much (in fact, much better) than the
$1-\beta/\sqrt{n}$ guarantee for the affine scaling step in \Cref{prop:predictor-corrector}.

Let  $w'=(x',y',s')$ be the central path element corresponding to
$\mu'$, and let $\delta'=\delta(w')$. From now on we use the shorthand notation
\[
\potentialshorthand := \log \left(\frac{4n\Le^*}{\gamma}\right)\, .
\]
We first show that
\begin{equation}
  \label{eq:mu_prime_bound}
  \potentialshorthand\rho^{\mu'}(i,j)\ge 4\potentialshorthand\tau+18\log n+ 22 \log 2 - 2 \log \beta
\end{equation} for
$\mu'$, and therefore, $\potentialshorthand\Psi^{\mu'}(i,j)\ge \min(2\potentialshorthand n, 4\potentialshorthand\tau+18\log n+ 22 \log 2 - 2 \log \beta) \ge 2\potentialshorthand\tau $ as $\tau \le n$.
Recalling the
definition $\Le_{ij}^\delta=\Le_{ij}\delta_j/\delta_i$, we see that
according to \Cref{prop:x_i-s_i}, \[
\Le_{ij}^\delta\le \frac{\Le_{ij}}{(1-\beta)^2}\cdot\frac{x_is_j}{\mu},
\quad \mbox{and}\quad
\Le_{ij}^{\delta'}={\Le_{ij}}\cdot \frac{x'_is'_j}{\mu'}\, .
\]
Thus,
\begin{align*}
\potentialshorthand \rho^{\mu'}(i,j) &\ge\potentialshorthand \rho^\mu(i,j)+ {\log\mu-\log\mu' +2\log(1-\beta) +
\log x_i'-\log x_i+\log s'_j-\log s_j}\\
&\ge \potentialshorthand\rho^\mu(i,j)+ 5\potentialshorthand\tau + {31\log n + 44
  -4\log \beta +2\log(1-\beta) +
\log x_i'-\log x_i+\log s'_j-\log s_j}.
\end{align*}

Using the near-monotonicity of the central path (\Cref{lem:
  central_path_bounded_l1_norm}), we have $x_i'\ge x^*_i/n$ and
$s_j'\ge s^*_j/n$. Together with our assumptions $x_i^*\ge
\beta x_i/(2^{10}n^{5.5})$ and $s_i^*\ge \beta s_i/(2^{10}n^{5.5})$, we see that
\[
\log x_i'-\log x_i+\log s'_j-\log s_j\ge -13\log n-20\log 2+2\log\beta\, .
\]
Using the assumption $\rho^\mu(i,j)>-\tau$ of the lemma, we can establish \eqref{eq:mu_prime_bound} as $\beta < 1/8$.

\medskip

Next, consider any $\ell\in [n]\setminus\{i,j\}$.
From the triangle inequality \Cref{lem:imbalance_triangle_inequality}\ref{i:triangle} it follows that
$
\Le_{ij}^{\delta'} \le \Le_{i\ell}^{\delta'} \cdot \Le_{\ell j}^{\delta'}\, ,
$
which gives
$\rho^{\mu'}(i,\ell) + \rho^{\mu'}(\ell,j) \ge \rho^{\mu'}(i,j).$
We therefore get
\[\max\{\potentialshorthand\rho^{\mu'}(i,\ell),\potentialshorthand\rho^{\mu'}(\ell,j) \} \ge \frac12 \potentialshorthand \rho^{\mu'}(i,j) \stackrel{\eqref{eq:mu_prime_bound}}{\ge} 2\potentialshorthand\tau+9\log n+11\log 2-\log\beta.\]

We next show that if $\potentialshorthand\rho^{\mu'}(i,\ell)\ge 2\potentialshorthand\tau+9\log n+11\log 2-\log\beta$, then
$\Psi^{\mu'}(i,\ell)\ge 2\tau$. The case $\potentialshorthand\rho^{\mu'}(\ell,j)\ge
2\potentialshorthand\tau+9\log n+11\log 2-\log \beta$ follows analogously.

Consider any $0<\bar \mu<\mu'$
with the corresponding central path point $\bar w=(\bar x,\bar y,\bar
s)$. The proof is complete by showing $\potentialshorthand\rho^{\bar \mu}(i,\ell)\ge
\potentialshorthand\rho^{\mu'}(i,\ell)-9\log n-11\log 2+\log \beta$.
Recall that for central path elements, we have
$\Le^{\delta'}_{ij}=\Le_{ij}x'_i/x'_j$, and $\Le^{\bar
  \delta}_{ij}=\Le_{ij}\bar x_i/\bar x_j$.
Therefore
\[
\potentialshorthand\rho^{\bar\mu}(i,j)=\potentialshorthand\rho^{\mu'}(i,j)+ {\log \bar
x_i-\log x_i'-\log \bar x_j+\log x_j'}\, .
\]
Using \Cref{prop:near-central}, \Cref{lem: central_path_bounded_l1_norm} and the assumption
$x^*_i\ge \beta x_i/(2^{10}n^{5.5})$, we have
$\bar
x_j\le nx_j'$ and

\[\bar x_i\ge \frac{x_i^*}{n}\ge
\frac{\beta x_i}{2^{10}n^{6.5}}\ge \frac{\beta (1-\beta) x'_i}{2^{10}n^{7.5}} \ge \frac{\beta x'_i}{2^{11}n^{7.5}}\, .
\]

Using these bounds, we get
\begin{align*}
\potentialshorthand\rho^{\bar\mu}(i,j) &\ge \potentialshorthand\rho^{\mu'}(i,j) - {9\log n -
                            11\log 2+\log \beta},
\end{align*}
completing the proof.
\end{proof}

It remains to prove \Cref{lem: case_lemma_does_not_crash} and
\Cref{lem: case_layer_crashes},  addressing the more difficult case $\xi_{\cal
  J}^\lal < 4\gamma n$. It is useful to decompose the variables into
two sets. We let
\begin{equation}\label{eq:B-N}
\llspartition B := \{t \in [n] : |\Rs_t^\lal| < 4\gamma n\},\quad \mbox{and}\quad \llspartition  N:=\{t \in [n] :
|\Rx_t^\lal| < 4\gamma n\}\, .
\end{equation}
The assumption $\xi_{\cal  J}^\lal < 4\gamma n$ implies that for every
layer $J_k$, either $J_k\subseteq \llspartition B$ or $J_k\subseteq \llspartition N$. The next two
lemmas describe the relations between $\delta$ and $\delta^+$.

\begin{lemma} \label{lem:near_uniform_shootdown}
Let $w\in {\cal N}(\beta)$ for $\beta\in (0,1/8]$, and assume
$\ell^\delta({\cal J})\le \gamma$ and  $\epsilon^\lal(w)
< 4\gamma n$.
 For the next iterate $w^+ = (x^+, y^+, s^+) \in \overline{\mathcal{N}}(2\beta)$,
we have
\begin{enumerate}[label=(\roman*)]
\item\label{i:stay-same} For $i \in \llspartition B$,
\[
\frac12 \cdot \sqrt{\frac{\mu^+}{\mu}} \le
\frac{\delta^+_i}{\delta_i}\le 2 \cdot
\sqrt{\frac{\mu^+}{\mu}}\,\quad \mbox{and}\quad  \delta_i^{-1}s_i^+\le
\frac{3\mu^+}{\sqrt{\mu}}\, .
\]
\item\label{i:scale-down} For $i \in \llspartition N$,
\[
\frac12\cdot \sqrt{\frac{\mu}{\mu^+}} \le
\frac{\delta^+_i}{\delta_i}\le 2 \cdot
\sqrt{\frac{\mu}{\mu^+}}\, \quad \mbox{and}\quad  \delta_ix_i^+\le
\frac{3\mu^+}{\sqrt{\mu}}\, .
\]
\item \label{i:both-i-j} If
$i,j \in \llspartition B$ or $i,j\in \llspartition N$, then
\begin{align*}
\frac{1}{4} \leq
  \frac{\Le_{ij}^{\delta}}{\Le_{ij}^{\delta^+}}=\frac{\delta^+_i
  \delta_j}{\delta_i \delta^+_j}  \leq 4\, .
\end{align*}
\item\label{i:N-to-B}
If $i\in \llspartition N$ and $j\in \llspartition B$, then
\[
\frac{\Le_{ij}^{\delta}}{\Le_{ij}^{\delta^+}}
\ge 4n^{3.5}\, .
\]
\end{enumerate}
\end{lemma}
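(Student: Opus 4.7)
The four parts all reduce to sharp pointwise multiplicative estimates of $x_i^+/x_i$ and $s_i^+/s_i$ on the index classes $\llspartition B$ and $\llspartition N$. Three ingredients drive these: the proximity bound $\|\delta\Delta x^\lal+\delta^{-1}\Delta s^\lal+x^{1/2}s^{1/2}\|\le 6n^{3/2}\gamma\sqrt{\mu}$ from \Cref{lem:ll-decompose}\ref{i:prox}; the step-length bound $\alpha\ge 1-3\sqrt{n}\epsilon^\lal(w)/\beta$ from \Cref{lem:ll-decompose}\ref{i:progress}, which combined with the hypotheses $\epsilon^\lal(w)<4\gamma n$ and $\gamma=\beta/(2^{10}n^5)$ yields the crucial estimate $\mu^+/\mu=1-\alpha<1/(64n^{3.5})$; and the neighborhood conditions forcing both $\sqrt{x_is_i/\mu}$ and $\sqrt{x_i^+s_i^+/\mu^+}$ into $[1-2\beta,1+2\beta]$ by \Cref{prop:x_i-s_i}.

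For parts \ref{i:stay-same} and \ref{i:scale-down}, I would first derive coordinatewise residual bounds. Dividing the proximity inequality by $\sqrt{\mu}$ gives $|\Rx_i^\lal+\Rs_i^\lal-\sqrt{x_is_i/\mu}|\le 6n^{3/2}\gamma$, and combining with $|\Rs_i^\lal|<4\gamma n$ (for $i\in\llspartition B$) and $10n^{3/2}\gamma\le\beta$ yields $\Rx_i^\lal\in[1-2\beta,1+2\beta]$ for $i\in\llspartition B$; symmetrically for $\llspartition N$. Writing $w'=w+\alpha\Delta w^\lal$ for the post-predictor iterate and using the identities $x_i+\Delta x_i^\lal=\sqrt{\mu}\,\Rx_i^\lal/\delta_i$ and $s_i+\Delta s_i^\lal=\sqrt{\mu}\,\delta_i\Rs_i^\lal$, this translates, for $i\in\llspartition B$, into $x'_i\in[x_i/2,2x_i]$ and $s'_i\in[(\mu^+/\mu)s_i/2,\, 2(\mu^+/\mu)s_i]$ after absorbing small $O(\beta)$ factors; the symmetric estimates govern $\llspartition N$. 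The corrector step preserves $\mu^+$, and since $w',w^+\in\mathcal N(2\beta)$ at the same normalized duality gap, a standard centering argument yields $x^+_i/x'_i,\, s^+_i/s'_i$ bounded coordinatewise by absolute constants. Substituting into $\delta^+_i/\delta_i=\sqrt{(s^+_ix^+_i)/(s_ix_i)}\cdot\sqrt{x_i/x^+_i}$ and using $s^+_ix^+_i/(s_ix_i)=(\mu^+/\mu)(1\pm O(\beta))$ gives \ref{i:stay-same} and \ref{i:scale-down}; the auxiliary bounds $\delta^{-1}_is^+_i\le 3\mu^+/\sqrt{\mu}$ and $\delta_ix^+_i\le 3\mu^+/\sqrt{\mu}$ then follow from $\sqrt{x^+_is^+_i}\le\sqrt{(1+2\beta)\mu^+}$ combined with the just-proven ratios.

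Parts \ref{i:both-i-j} and \ref{i:N-to-B} are mechanical consequences of \ref{i:stay-same} and \ref{i:scale-down}. For \ref{i:both-i-j}, both factors $\delta^+_i/\delta_i$ and $\delta^+_j/\delta_j$ scale by the same $\sqrt{\mu^+/\mu}^{\pm 1}$, and hence cancel in the ratio $\delta^+_i\delta_j/(\delta_i\delta^+_j)$, placing it in $[1/4,4]$. For \ref{i:N-to-B}, these scalings instead compound: $\delta^+_i\delta_j/(\delta_i\delta^+_j)\ge(1/4)\cdot\mu/\mu^+>16n^{3.5}\ge 4n^{3.5}$, where the middle inequality invokes the step-length estimate from the first paragraph. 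The main obstacle throughout the argument is the clean propagation of pointwise multiplicative bounds through the corrector step; I expect the right workaround is to exploit that the corrector is a bounded-norm update on the $\mathcal N(2\beta)$ neighborhood and that the product $x^+_is^+_i$ is pinned by $\mu^+$ up to a $(1\pm 2\beta)$ factor, so that each individual coordinate can only drift by a small constant factor that is absorbable into the constants $2$ and $3$ appearing in the statements.
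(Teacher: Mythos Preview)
Your approach is essentially the paper's, but you have misread what $w^+$ denotes. The statement places $w^+\in\mathcal N(2\beta)$, not $\mathcal N(\beta)$: in the algorithm, the predictor step lands in $\mathcal N(2\beta)$ and only the subsequent corrector brings the iterate back to $\mathcal N(\beta)$. Thus $w^+$ here is the \emph{post-predictor} point $w+\alpha\Delta w^\lal$, i.e.\ exactly what you call $w'$, and the paper's proof confirms this by writing $|x_i^+-x_i|=\alpha|\Delta x_i^\lal|$. Once you identify $w^+=w'$, your self-declared ``main obstacle'' of propagating pointwise bounds through the corrector disappears entirely, and the proof is complete along the lines you sketch.

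Two smaller points. First, the intermediate bound $s'_i\in[(\mu^+/\mu)s_i/2,\,2(\mu^+/\mu)s_i]$ for $i\in\llspartition B$ does \emph{not} follow from $|\Rs_i^\lal|<4\gamma n$ in the way you suggest: from $s'_i=(1-\alpha)s_i+\alpha(s_i+\Delta s_i^\lal)$ the second summand has size of order $\gamma n\sqrt{\mu}\,\delta_i$, which need not be dominated by $(\mu^+/\mu)s_i$ since there is no lower bound on $\mu^+/\mu$. You do not actually need this bound, because your own identity
\[
\frac{\delta_i^+}{\delta_i}=\sqrt{\frac{x_i^+s_i^+}{x_is_i}}\cdot\frac{x_i}{x_i^+}
\]
(note: no square root on the second factor) already gives part~\ref{i:stay-same} directly from $x_i^+/x_i\in[1-1/32,\,1+1/32]$ together with the two neighborhood estimates on $x_is_i$ and $x_i^+s_i^+$; this is precisely what the paper does. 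The auxiliary bound $\delta_i^{-1}s_i^+\le 3\mu^+/\sqrt{\mu}$ then follows from $\delta_i^{-1}s_i^+=(\delta_i^+/\delta_i)\sqrt{x_i^+s_i^+}$. Your treatment of parts~\ref{i:both-i-j} and~\ref{i:N-to-B} matches the paper.
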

\begin{proof}
{\bf Part (i).}  By
\Cref{lem:ll-decompose}\ref{i:prox}, we see that
\begin{align*}
\|\delta_B \Delta x^{\lal}_B\|_\infty
&\leq \|\delta_B \Delta x^{\lal}_B + \delta_B^{-1} \Delta s^{\lal}_B + x^{1/2}_B
s^{1/2}_B\|_\infty + \|\delta^{-1}_B(\Delta s^\lal_B + s_B)\|_\infty
  \\
&=\|\delta_B \Delta x^{\lal}_B + \delta_B^{-1} \Delta s^{\lal}_B + x^{1/2}_B
s^{1/2}_B\|_\infty + \sqrt{\mu}\|\Rs^\lal_B\|_\infty\\
&\leq \sqrt{\mu}\left(6 n \ell^\delta({\cal J}) +4n\gamma\right)\le 10n\gamma\sqrt{\mu}\le
  \sqrt{\mu}/64\, ,
\end{align*}
by the assumption on $\ell^\delta({\cal J}) $ and the definition of $\llspartition B$.

By construction of the LLS step, $|x_i^+-x_i|=\alpha^+|\Delta x_i^\lal|\le |\Delta x_i^\lal|$,
 recalling that $0 \leq \alpha^+ \leq 1$. Using the bound derived above, for $i\in \llspartition B$ we get
\[
\left|\frac{x_i^+}{x_i}-1\right|\le \left|\frac{\Delta x_i^\lal}{x_i}\right|=
\frac{|\delta_i \Delta x_i^\lal|}{\delta_i x_i}\le
\frac{\sqrt{\mu}}{64\delta_ix_i}\le
  \frac{1}{32}\, ,
\]
where the last inequality follows from \Cref{prop:x_i-s_i}.
As
\[
\frac{\delta^+_i}{\delta_i}=\sqrt{\frac{x^+_is^+_i}{x_is_i}}\cdot \frac{x_i}{x^+_i} \quad \text {and} \quad \frac{1 - 2\beta}{1 + \beta} \frac{\sqrt{\mu^+}}{\sqrt{\mu}} \le \sqrt{\frac{x^+_is^+_i}{x_is_i}} \le \frac{1 + 2\beta}{1 - \beta} \frac{\sqrt{\mu^+}}{\sqrt{\mu}}
\]
by \Cref{prop:x_i-s_i} the claimed bounds follow with $\beta\le 1/8$.

To get the upper bound on $\delta^{-1}_is_i^+$, again with \Cref{prop:x_i-s_i}
\[
\delta^{-1}_is_i^+=\frac{\delta^+_i}{\delta_i\delta^+_i}
s_i^+=\frac{\delta^+_i}{\delta_i} \cdot \sqrt{x_i^+s_i^+} \le 2 \sqrt{\frac{\mu^+}{\mu}} \cdot (1+2\beta) \sqrt{\mu^+}
\le \frac{3\mu^+}{\sqrt{\mu}} \, .
\]

\paragraph{Part \ref{i:scale-down}.} Analogously to \ref{i:stay-same}.
\paragraph{Part \ref{i:both-i-j}.} Immediate from parts \ref{i:stay-same} and \ref{i:scale-down}.
\paragraph{Part \ref{i:N-to-B}.} Follows by parts \ref{i:stay-same} and \ref{i:scale-down}, and by the
lower bound on $\sqrt{\mu/\mu^+}$ obtained from \Cref{lem:ll-decompose}\ref{i:progress}
as follows
\[\frac{\Le_{ij}^{\delta}}{\Le_{ij}^{\delta^+}} = \frac{\delta^+_i
\delta_j}{\delta_i \delta^+_j} \ge \frac{\mu}{4\mu^+} = \frac{1}{4(1-\alpha^+)} \ge
\frac{\beta}{12\sqrt{n}\epsilon^\lal(w)}\ge 4n^{3.5}. \qedhere\]
\end{proof}

\lemcaselemmadoesnotcrash*
\begin{proof}[Proof of \Cref{lem: case_lemma_does_not_crash}]
  \label{proof:lem: case_lemma_does_not_crash}
Without loss of generality, let $\xi_{\cal
  J}^\lal=\xi_{J_q}^\lal=\|\Rx_{J_q}^\lal\|$ for a layer $q$ with $J_q\subseteq \llspartition N$. The case
$\xi_{J_q}^\lal=\|\Rs_{J_q}^\lal\|$ and $J_q\subseteq \llspartition B$ can be treated analogously.

By \Cref{lem:ll-decompose}\ref{i:norm}, $\|\Rs_{J_q}^\lal\|\ge
\frac{1}{2}-\frac{3}{4}\beta>\frac{1}4+2n\gamma$, and therefore
\Cref{lem: lower_bounds_for_crossover_events} provides a $j\in J_q$
such that $s_j^*/s_j\ge 1/(6\sqrt{n})$. Using \Cref{lem:
  central_path_bounded_l1_norm} and \Cref{prop:near-central} we find that $s_j^+/s_j \le 2n$ and so $s_j^*/s_j^+ = s_j^*/s_j \cdot s_j/s_j^+ \ge 1/(12 n^{3/2}) > 1/(16 n^{3/2})$.

The final statement  $\rho^{\mu^+}(\ell,\ell')\ge -|J_q|$ for any
$\ell,\ell'\in J_q$ is also straightforward. From
\Cref{lem:near_uniform_shootdown}\ref{i:both-i-j} and the strong connectivity of $J_q$
in $G_{\delta,\gamma/n}$, we obtain that $J_q$ is strongly connected
in $G_{\delta^+,\gamma/(4n)}$. Hence,  $\rho^{\mu^+}(\ell,\ell')\ge -|J_q|$ follows
by \Cref{lem:rho-bound}.

The rest of the proof is dedicated to showing the existence of an $i\in J_q$
such that $x_i^* \ge \beta x_i^+/(16 n^{3/2})$. For this purpose, we will prove following claim.

\begin{restatementclaim}\label{claim:delta_x_star_bound}
$\|\delta_{J_q} x^*_{J_q}\| \geq \frac{\beta \mu^+}{8\sqrt{n\mu}}$.
\end{restatementclaim}

In order to prove \Cref{claim:delta_x_star_bound}, we define
\[
z := (\delta^+)^{-1} L^{\delta^+}_{J_{>q}}\left(\delta^+_{J_{>q}}(x^*_{J_{>q}}-x^+_{J_{>q}})\right)
\quad \text{ and } w := x^*-x^+-z\, ,
\]
as in \Cref{lem:force}. By construction, $w \in W$ and $w_{J_{>q}} =
0$. Thus, $w_{J_q} \in W_{{\cal
J},q}$ as defined in \Cref{sec:linsys}.

Using the triangle inequality, we get
\begin{equation}\label{eq:4-4-triangle}
\|\delta_{J_q}x^*_{J_q}\|\ge \|\delta_{J_q}(x^+_{J_q}+w_{J_q})\| -
\|\delta_{J_q}z_{J_q}\|\, .
\end{equation}
We bound the two terms separately, starting with an upper bound on $\|\delta_{J_q}z_{J_q}\|$.
Since
$\ell^{\delta^+}({\cal J}) \leq 4 \gamma n$, we have with \Cref{lem:force} that
\begin{equation}
\begin{aligned}
\left\|\delta^+_{J_q} z_{J_q}\right\| &\leq \ell^{\delta^+}({\cal J}) \left\|\delta^+_{J_{>q}}\left(x^*_{J_{>q}}-x^+_{J_{>q}}\right)\right\| \\
& \le 4n \gamma \left\|\delta^+_{J_{>q}}\left(x^*_{J_{>q}}-x^+_{J_{>q}}\right)\right\| \\
&= 4n \gamma \left\|\delta_{J_{>q}}^+ x_{J_{>q}}^+\left(\frac{x_{J_{>q}}^*}{x_{J_{>q}}^+} - e\right)\right\| \\
& \le 4n\gamma \left(\|\delta^+ x^+\|_\infty \cdot \left\|\frac{x^*}{x^+}\right\|_1 + \sqrt{n\mu^+}\right) \\
& \le 4n\gamma \left(\frac32 \sqrt{\mu^+} \cdot \frac43 n + \sqrt{n\mu^+}\right) \\
&\le  16n^2\sqrt{\mu^+}  \gamma,
\end{aligned}
\end{equation}
where the penultimate inequality follows by \Cref{prop:x_i-s_i} and \Cref{lem:
central_path_bounded_l1_norm}.
We can use this and
\Cref{lem:near_uniform_shootdown}\ref{i:scale-down} to obtain
\begin{equation}\label{eq:delta-z}
\|\delta_{J_q} z_{J_q}\| \leq
\|\delta_{J_q}/\delta^+_{J_q}\|_\infty\cdot \|\delta^+_{J_q}
z_{J_q}\| \leq \frac{32n^2\gamma\mu^+} {\sqrt{\mu}}\leq
\frac{\beta\mu^+}{32n^3\sqrt{\mu}}\, ,
\end{equation}
using the definition of $\gamma$.

 The first RHS term in \eqref{eq:4-4-triangle} will be bounded as follows.
\begin{restatementclaim}\label{claim:xi-bound}
$\|\delta_{J_q}(x^+_{J_q}+w_{J_q})\| \ge \frac12 \sqrt{\mu}\xi^\lal_{{\cal
    J}}$ .
\end{restatementclaim}

\begin{claimproof}[\Cref{claim:xi-bound}]
We recall the characterization  \eqref{eq:ll-primal}
of the LLS step $\Delta x^\lal \in W$. Namely, there exists
$\Delta s \in W_{{\cal J},1}^\perp \times \cdots \times W_{{\cal
    J},q}^\perp$ that is
the unique solution to $\delta^{-1} \Delta s + \delta \Delta x^\lal = -\delta
x$. From the above, note that
\[
\|\delta^{-1}_{J_q} \Delta s_{J_q}\| = \|\delta_{J_q} (x_{J_q}+\Delta x_{J_q}^\lal)\|=\sqrt{\mu} \|\Rx_{J_q}^\lal\|=
\sqrt{\mu}\xi^\lal_{{\cal J}}\,  .
\]
From the Cauchy-Schwarz inequality,
\begin{equation}\label{eq:4-4-cauchy}
\begin{aligned}
\|\delta^{-1}_{J_q} \Delta s_{J_q}\|\cdot
\|\delta_{J_q}(x^+_{J_q}+w_{J_q})\|&\ge\left|\left\langle
    \delta_{J_q}^{-1} \Delta s_{J_q},
    \delta_{J_q}(x^+_{J_q}+w_{J_q})\right\rangle\right|\\
&=\left|\left\langle
    \delta_{J_q}^{-1} \Delta s_{J_q},
    \delta_{J_q}x^+_{J_q}\right\rangle\right|\, .
\end{aligned}
\end{equation}
Here, we used that $\Delta s_{J_q}\in W^\perp_{{\cal J},q}$ and
$w_{J_q}\in W_{{\cal J},q}$.
Note that
\[
x^+=x+\alpha \Delta x^\lal=x+\Delta x^\lal- (1-\alpha)\Delta
x^\lal=-\delta^{-2}\Delta s- (1-\alpha)\Delta
x^\lal\, .
\]
Therefore,
\[
\begin{aligned}
\left|\left\langle    \delta_{J_q}^{-1} \Delta s_{J_q},
    \delta_{J_q}x^+_{J_q}\right\rangle\right|
&=\left|\left\langle    \delta_{J_q}^{-1} \Delta s_{J_q},
    -\delta_{J_q}^{-1} \Delta s_{J_q} - (1-\alpha) \delta_{J_q} \Delta
    x^\lal_{J_q}\right\rangle\right|\\
&\ge \|\delta_{J_q}^{-1} \Delta s_{J_q}\|^2 - (1-\alpha) \left|\left\langle    \delta_{J_q}^{-1} \Delta s_{J_q},\delta_{J_q} \Delta
    x^\lal_{J_q}\right\rangle\right|\, .
\end{aligned}
\]
By
\Cref{lem:prox-sum-space}, there exists $\Delta
\bar x \in W_{{\cal
J},1} \times \cdots \times W_{{\cal J},p}$ such that $\|\delta_{J_q}(\Delta
x_{J_q}^\lal-\Delta \bar x_{J_q})\| \leq
2n\ell^\delta({\cal J})\sqrt{\mu} $. Therefore, using the
orthogonality of $\Delta s_{J_q}$ and $\Delta \bar x_{J_q}$, we get
that
\[
\left|\left\langle    \delta_{J_q}^{-1} \Delta s_{J_q},\delta_{J_q}
    \Delta  x^\lal_{J_q}\right\rangle\right|=
\left|\left\langle    \delta_{J_q}^{-1} \Delta s_{J_q},\delta_{J_q}
    (\Delta  x^\lal_{J_q}-\Delta \bar
    x^\lal_{J_q})\right\rangle\right|\le 2n\ell^\delta({\cal
  J})\sqrt{\mu} \cdot \|\delta_{J_q}^{-1} \Delta s_{J_q}\|\, .
\]
From the above inequalities, we see that
\[
\|\delta_{J_q}(x^+_{J_q}+w_{J_q})\|\ge \|\delta^{-1}_{J_q} \Delta s_{J_q}\| -2(1-\alpha) n\ell^\delta({\cal
  J})\sqrt{\mu} =\sqrt{\mu}\xi^\lal_{{\cal J}} - 2(1-\alpha) n\ell^\delta({\cal
  J})\sqrt{\mu}\, .
\]
It remains to show $(1-\alpha) n\ell^\delta({\cal
  J})\le \xi^\lal_{{\cal J}}/4$. From
\Cref{lem:ll-decompose}\ref{i:progress}, we obtain
\[
(1-\alpha) n\ell^\delta({\cal  J})\le 3n^{3/2}\ell^\delta( {\cal J}) \xi^\lal_{{\cal
      J}}\beta^{-1},\,
\]
using $\xi^{\lal}_{\cal J} \ge \eps^{\lal}$.
The claim now follows by the assumption $\ell^\delta( {\cal J})\le
  \gamma$, and the choice of $\gamma$.
\end{claimproof}
\begin{claimproof}[\Cref{claim:delta_x_star_bound}]
  Using
  \Cref{lem:ll-decompose}\ref{i:progress},
  \[
  \mu^+\leq \frac{3\sqrt{n}\xi^\lal_{{\cal J}}\mu}{\beta},
  \]
  implying $\|\delta_{J_q}(x^+_{J_q}+w_{J_q})\|\geq \beta\mu^+/(6\sqrt{n\mu})$ by \Cref{claim:xi-bound}.
  Now the claim follows using \eqref{eq:4-4-triangle} and
  \eqref{eq:delta-z}.
\end{claimproof}
By \Cref{lem:near_uniform_shootdown}\ref{i:scale-down}, we see that
 \[
\|\delta_{J_q} x^+_{J_q}\|\leq \sqrt{n} \|\delta_{J_q}
x^+_{J_q}\|_\infty\leq \frac{ 3 \sqrt{n} \mu^+}{\sqrt{\mu}}\, .
\]
Thus, the lemma follows immediately from \Cref{claim:delta_x_star_bound}: for at least one $i\in J_q$, we
must have
\begin{equation*}
  \frac{x_i^*}{x_i} \ge  \frac{\|\delta_{J_q} x^*_{J_q}\|}{\|\delta_{J_q} x^+_{J_q}\|} \ge \frac{\beta}{24n} \ge \frac{\beta}{16n^{3/2}}\,. \qedhere
\end{equation*}

\end{proof}

\lemcaselayercrashes*
\begin{proof}[Proof of \Cref{lem: case_layer_crashes}]
  \label{proof:lem: case_layer_crashes}
Recall the sets $\llspartition B$ and $\llspartition N$ defined in \eqref{eq:B-N}. The key is to
show the existence of an edge
\begin{equation}\label{eq:i-j-prime}
 (i',j')\in E_{\delta^+,\gamma/(4n)}\quad \mbox{such that} \quad
 i'\in J_q\subseteq \llspartition B, \quad j'\in J_r\subseteq \llspartition N,\quad r<q\, .
\end{equation}
Before proving the existence of such $i'$ and $j'$, we show how the
rest of the statements follow. Note that $x^+ \le (1-\beta)^{-1}(1+2 \cdot 2\beta) nx \le \frac74 nx$ by \Cref{lem: central_path_bounded_l1_norm} and \Cref{prop:near-central}. Further, we have $\|\Rx^\lal_{J_q}\| - 2\gamma n \ge \frac12 - \frac34 \beta - 2\gamma n \ge \frac{2}{5}$ by \Cref{lem:ll-decompose} \ref{i:norm}. The existence of $i \in J_q$ such that $x_i^*\ge x^+_i/(8n^{3/2})$ now follows immediately from \Cref{lem: lower_bounds_for_crossover_events}, as there is an $i \in J_q$ such that 
\begin{equation}
  x_i^* \geq\frac{2x_i}{3\sqrt{n}}\cdot(\|\Rx_{J_q}^\lal\| - 2\gamma n) \ge \frac{2}{3\sqrt{n}} \frac{4x_i^+}{7n}\frac{2}{5} \ge \frac{x_i^+}{8n^{3/2}}.
\end{equation}
With analogous argumentation it can be shown that there exists $j \in J_r$ such that $s_j^*\ge s^+_j/(8n^{3/2})$.
The other statements are that
$\rho^{\mu^+}(i,j)\ge -|J_q\cup J_r|$, and for each
$\ell,\ell'\in J_q\cup J_r$, $\ell \neq \ell'$, $\Psi^\mu(\ell,\ell')\le |J_q\cup
J_r|$. According to \Cref{lem:rho-bound}, the latter is true (even with the
stronger bound $\max\{|J_q|,|J_r|\}$) whenever
$\ell,\ell'\in J_q$, or $\ell,\ell'\in J_r$, or if $\ell\in J_q$ and
$\ell'\in J_r$.
 It is left to show the lower bound on
$\rho^{\mu^+}(i,j)$ and $\Psi^\mu(\ell,\ell')\le |J_q\cup
J_r|$ for $\ell'\in J_q$ and $\ell\in J_r$.

From \Cref{lem:near_uniform_shootdown}\ref{i:both-i-j}, we have that if $\ell,\ell'\in
J_q\subseteq \llspartition B$ or $\ell,\ell'\in J_r\subseteq \llspartition N$, then
$ \Le_{\ell\ell'}^{\delta}/4\le {\Le_{\ell\ell'}^{\delta^+}}$.
Hence, the strong connectivity of $J_r$ and $J_q$ in $G_{\delta,\gamma}$
implies the strong connectivity of these sets in $G_{\delta^+,
\gamma/(4n)}$.
Together with the edge $(i',j')$, we see that every $\ell'\in J_q$ can reach every $\ell\in J_r$ on a
directed path of length $\le|J_q\cup J_r|-1$ in
$G_{\delta^+,\gamma/(4n)}$. Applying \Cref{lem:rho-bound} for this
setting, we obtain $\Psi^\mu(\ell,\ell')\le
\rho^{\mu^+}(\ell,\ell')\le |J_q\cup J_r|$ for all such pairs, and
also $\rho^{\mu^+}(i,j)\ge -|J_q\cup J_r|$.

\medskip

The rest of the proof is dedicated to showing the existence of $i'$
and $j'$ as in \eqref{eq:i-j-prime}.
We let $k \in [p]$ such that
$\ell^{\delta^+}(J_{\geq k}) = \ell^{\delta^+}(\mathcal J) >
4n\gamma$. To simplify the notation, we let $I=J_{\ge k}$.

When constructing $\cal J$ in \textsc{Layering}($\delta,\hat\kappa$), the
subroutine \textsc{Verify-Lift}($\diag(\delta)W,I,\gamma$) was called for the
set $I=J_{\geq k}$, with the answer `pass'.
 Besides $\ell^\delta(I)\le \gamma$, this guaranteed the stronger
 property that $\max_{ji}|B_{ji}|\le \gamma$ for the matrix $B$
 implementing the lift (see \Cref{rem:B-i-j}).

Let us recall how this matrix $B$ was obtained. The subroutine starts
by finding a minimal $I'\subset I$ such that $\dim(\pi_{I'}(W)) = \dim(\pi_I(W))$.
Recall that $\pi_{I'}(W) = \R^{I'}$ and  $L_I^\delta(p) = L_{I'}^\delta(p_{I'})$
for every $p \in \pi_I(\diag(\delta)W)$.

Consider the optimal lifting $L_I^\delta:\pi_I(\diag(\delta)W)\to \diag(\delta)W$.
We defined $B \in \R^{([n] \setminus I) \times I'}$ as  the matrix sending any $q \in \pi_{I'}(\diag(\delta)W)$
to the corresponding vector $[L_{I'}^\delta(q)]_{[n]\setminus I}$. The column $B_i$
can be computed as $[L_{I'}^\delta(e^i)]_{[n]\setminus I}$ for $e^i \in \R^{I'}$.

We consider the transformation
\[
{\bar B}:=\diag(\delta^+\delta^{-1})B\diag\big((\delta^+_{I'})^{-1}\delta_{I'}\big).
\]
This maps $\pi_{I'}(\diag(\delta^+)W)\to
\pi_{[n]\setminus I} (\diag(\delta^+)W)$.

Let $z \in\pi_I(\diag(\delta^+) W)$ be the singular vector corresponding to the
maximum singular value of $L_I^{\delta^+}$, namely,
 $\|[L_{I}^{\delta^+}(z)]_{[n]\setminus I}\| > 4n\gamma\| z\|$.
Let us normalize $z$ such that $\|z_{I'}\|=1$. Thus,
\[
\norm{[L_{I'}^{\delta^+}(z_{I'})]_{[n]\setminus I}}> 4n\gamma\, .
\]
Let us now apply ${\bar B}$ to $z_{I'}\in \pi_{I'}(\diag(\delta^+)W)$. Since
$L_I^{\delta^+}$ is the minimum-norm lift operator, we see
that
\[
\norm{{\bar B} z_{I'}}\ge \norm{[L_{I'}^{\delta^+}(z_{I'})]_{n\setminus I}}>
4n\gamma\, .
\]
We can upper bound  the operator norm by the Frobenius norm
$\|{\bar B}\| \leq \|{\bar B}\|_F = \sqrt{\sum_{ji} {{\bar B}_{ji}}^2} \leq n\max_{ji}
|{\bar B}_{ji}|$, and therefore
\[
\max_{ji} |{\bar B}_{ji}|> 4\gamma\, .
\]
Let us fix $i'\in I'$ and $j'\in [n]\setminus I$  as the indices giving
the maximum value of ${\bar B}$. Note that
${\bar B}_{j'i'}=B_{j'i'}\delta^+_{j'}\delta_{i'}/(\delta^+_{i'}\delta_{j'})$.

Let us now use Lemma~\ref{lem:identify} for the pair $i',j'$, the matrix $B$ and the
subspace $\diag(\delta)W$. Noting that
$B_{j'i'}=[L_{I'}^\delta(e^{i'})]_{j'}$, we obtain $\Le_{i'j'}^\delta\ge |B_{j'i'}|$.
Now,
\begin{align}\label{eq: lem44_bound_imbalance}
\Le_{i'j'}^{\delta^+} = \Le_{i'j'}^\delta \cdot \frac{\delta^+_{j'}\delta_{i'}}{\delta^+_{i'}\delta_{j'}}\ge
|B_{j'i'}|\cdot \frac{\delta^+_{j'}\delta_{i'}}{\delta^+_{i'}\delta_{j'}}= |{\bar B}_{j'i'}| >
4\gamma\, .
\end{align}

The next claim finishes the proof.
\begin{claim}
For $i'$ and $j'$ selected as above, \eqref{eq:i-j-prime} holds.
\end{claim}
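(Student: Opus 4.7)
The plan is to verify the four requirements in \eqref{eq:i-j-prime}: (a) $(i',j')\in E_{\delta^+,\gamma/(4n)}$; (b) $r<q$ where $i'\in J_q$ and $j'\in J_r$; (c) $J_q\subseteq \llspartition B$; and (d) $J_r\subseteq \llspartition N$. Condition (a) follows at once from \eqref{eq: lem44_bound_imbalance}, since $\Le^{\delta^+}_{i'j'}>4\gamma\geq \gamma/(4n)$. Condition (b) is also immediate: by construction $i'\in I'\subseteq I=J_{\geq k}$ and $j'\in [n]\setminus I=J_{<k}$, so $q\geq k>r$.

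The heart of the proof is (c) and (d). I first sharpen the upper bound on $|B_{j'i'}|$. Since the call \textsc{Verify-Lift}$(\diag(\delta)W,I,\gamma)$ returned ``pass'', \Cref{rem:B-i-j} gives $n|B_{j'i'}|\le \gamma$, i.e.\ $|B_{j'i'}|\leq \gamma/n$. Combining this with $|\bar B_{j'i'}|>4\gamma$ and the formula $\bar B_{j'i'}=B_{j'i'}\cdot (\delta^+_{j'}\delta_{i'})/(\delta^+_{i'}\delta_{j'})$ yields
\begin{equation}\label{eq:ratio-lb}
\frac{\delta^+_{j'}/\delta_{j'}}{\delta^+_{i'}/\delta_{i'}}=\frac{|\bar B_{j'i'}|}{|B_{j'i'}|}>4n.
\end{equation}

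I then case-split on the membership of $i'$ and $j'$ in $\llspartition B$ and $\llspartition N$, using the two-sided bounds from \Cref{lem:near_uniform_shootdown}\ref{i:stay-same}--\ref{i:scale-down}. Since $\mu^+<\mu$, one has $\sqrt{\mu^+/\mu}<1<\sqrt{\mu/\mu^+}$, and a short computation shows that in each of the three unwanted cases---$i',j'\in\llspartition B$; $i',j'\in\llspartition N$; and $i'\in\llspartition N$, $j'\in\llspartition B$---the left-hand side of \eqref{eq:ratio-lb} is at most $4$, contradicting the bound $4n\ge 8$. Hence only the case $i'\in\llspartition B$, $j'\in\llspartition N$ (more precisely, $i'\notin\llspartition N$ and $j'\notin\llspartition B$) is consistent, which translates to $|\Rx^\lal_{i'}|\geq 4\gamma n$ and $|\Rs^\lal_{j'}|\geq 4\gamma n$.

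Finally, I invoke the layer dichotomy coming from the hypothesis $\xi^\lal_{\cal J}(w)<4\gamma n$: for each layer $J_\ell$, $\min\{\|\Rx^\lal_{J_\ell}\|,\|\Rs^\lal_{J_\ell}\|\}<4\gamma n$. Since $\|\Rx^\lal_{J_q}\|\geq |\Rx^\lal_{i'}|\geq 4\gamma n$, we are forced into $\|\Rs^\lal_{J_q}\|<4\gamma n$, i.e.\ $J_q\subseteq \llspartition B$; symmetrically, $\|\Rs^\lal_{J_r}\|\geq |\Rs^\lal_{j'}|\geq 4\gamma n$ forces $J_r\subseteq \llspartition N$. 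The main obstacle is the case analysis; in particular, the strengthened bound $|B_{j'i'}|\leq \gamma/n$ (rather than just $\gamma$) is essential, since the extra factor of $n$ on the right-hand side of \eqref{eq:ratio-lb} is exactly what rules out the three unwanted configurations uniformly in $\mu/\mu^+$.
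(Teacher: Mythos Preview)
Your proof is correct and follows essentially the same route as the paper: bound the ratio $(\delta^+_{j'}\delta_{i'})/(\delta^+_{i'}\delta_{j'})$ using the pass condition of \textsc{Verify-Lift}, then invoke \Cref{lem:near_uniform_shootdown} to rule out the three unwanted $\llspartition B/\llspartition N$ configurations.

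One minor inaccuracy in your commentary: the sharpened bound $|B_{j'i'}|\le \gamma/n$ is \emph{not} essential. The paper uses only $|B_{j'i'}|\le \gamma$, which yields the weaker inequality $(\delta^+_{j'}\delta_{i'})/(\delta^+_{i'}\delta_{j'})>4$; this already suffices, since \Cref{lem:near_uniform_shootdown}\ref{i:both-i-j} gives the ratio $\le 4$ when $i',j'$ lie in the same set, and \ref{i:N-to-B} gives the ratio $\le 1/(4n^{3.5})<4$ when $i'\in\llspartition N$, $j'\in\llspartition B$. Your stronger bound simply gives more slack. Also, your final detour through $|\Rx^\lal_{i'}|\ge 4\gamma n$ to deduce $J_q\subseteq\llspartition B$ is unnecessary: once you know $i'\in\llspartition B$, the layer dichotomy (each layer is contained in $\llspartition B$ or in $\llspartition N$, and the two sets are disjoint) immediately forces $J_q\subseteq\llspartition B$.
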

\begin{proof}
$(i',j') \in E_{\delta^+, \gamma/(4n)}$ holds by \eqref{eq: lem44_bound_imbalance}.
From the above, we have
\[
|B_{j'i'}| > 4\gamma\cdot \frac{\delta^+_{i'}
  \delta_{j'}}{\delta_{i'} \delta^+_{j'}}\, .
\]
According to \Cref{rem:B-i-j}, $|B_{j'i'}|\le \gamma$ follows since
\textsc{Verify-Lift}($\diag(\delta)W,I,\gamma$) returned with `pass'.
We thus have
\[
 \frac{\delta^+_{i'}
  \delta_{j'}}{\delta_{i'} \delta^+_{j'}} < \frac{1}{4}.
\]
\Cref{lem:near_uniform_shootdown} excludes the
scenarios $i',j'\in \llspartition N$, $i',j'\in \llspartition B$, and $i'\in \llspartition N$, $j'\in \llspartition B$,
leaving  $i'\in \llspartition B$ and $j'\in \llspartition N$ as  the
only possibility. Therefore, $i'\in J_q\subseteq \llspartition B$ and $j'\in
J_r\subseteq \llspartition N$. We have $r<q$ since $i\in I=J_{\ge k}$ and $j\in
[n]\setminus I=J_{<k}$.
\end{proof}
\phantom{\qedhere}
\end{proof}

\section{Initialization}\label{sec:initialization}
Our main algorithm (\Cref{alg:overall} in Section~\ref{sec:
  the_algorithm}), requires an initial solution $w^0=(x^0,y^0,s^0)\in
{\cal N}(\beta)$. In this section, we remove this assumption by adapting the initialization
method of \cite{Vavasis1996} to our setting.

We use the ``big-$M$ method'', a standard initialization approach for
path-following interior point methods that introduces an auxiliary
system whose optimal solutions map back to the optimal solutions of the original system.
The primal-dual system we consider is
\begin{align}
\label{initialization_LP} \tag{Init-LP}
\begin{aligned}
\min \; c^\top x + &Me^\top \ubarx & \max \; y^\top b + 2 &Me^\top z \\
Ax - A\ubarx& = b & A^\top y + z + s &= c\\
x + \barx &= 2Me & z + \bars &= 0 \\
x,\barx,\ubarx& \geq 0 &  -A^\top y + \ubars &= Me \\
& & s,\bars, \ubars &\geq 0.
\end{aligned}
\end{align}
The constraint matrix used in this system is
\begin{align*}
\hat{A} =
\begin{pmatrix}
A & -A & 0 \\
I & 0  & I \\
\end{pmatrix}
\end{align*}
The next lemma asserts that the $\bar \chi$ condition number of $\hat
A$ is not much bigger than that of $A$ of the original system
\eqref{LP_primal_dual}.
\begin{lemma}[{\cite[Lemma 23]{Vavasis1996}}] \label{vy: extended_chibar}
$\bar{\chi}_{\hat{A}} \leq 3\sqrt{2}(\chibar + 1).$
\end{lemma}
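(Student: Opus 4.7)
The plan is to invoke the submatrix characterization \Cref{prop:chibar}\ref{i:submatrix_characterisation}, which reduces the claim to proving $\|\hat B^{-1}\hat A\|\le 3\sqrt{2}(\bar\chi_A+1)$ for every non-singular $(m+n)\times(m+n)$ submatrix $\hat B$ of $\hat A$. I would parametrize such a $\hat B$ by the column index sets $S_1,S_2,S_3\subseteq[n]$ chosen from the three blocks of $\hat A$. Because the bottom $n$ rows of $\hat A$ are $[I_n\mid 0\mid I_n]$, non-singularity of $\hat B$ forces $S_1\cup S_3=[n]$; writing $U=S_1\setminus S_3$, $V=S_3\setminus S_1$ and $T=S_1\cap S_3$, the identity $|S_1|+|S_2|+|S_3|=m+n$ reduces to $|T|+|S_2|=m$, and the $m\times m$ matrix $\tilde B=[A_T\mid -A_{S_2}]$ extracted from $A$ must itself be non-singular.

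With this structure in hand, the next step is to compute $\hat B^{-1}$ explicitly by solving the block system $\hat B w=r$. The bottom $n$ equations force the components of $w$ from the $U$- and $V$-columns to equal the corresponding entries of $r$, and force the $T$-components of $w$ coming from the first and third blocks to sum to the corresponding entries of $r$; the top $m$ equations then collapse to $\tilde B$ applied to the concatenation of the $T$- and $S_2$-components, equalling $r_{\mathrm{top}}-A_Ur_U$. This expresses $\hat B^{-1}$ as a short combination of $\tilde B^{-1}$ and identity embeddings, and \Cref{prop:chibar}\ref{i:submatrix_characterisation} applied to $A$ yields $\|\tilde B^{-1}A\|\le\bar\chi_A$.

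Finally I would split $\hat B^{-1}\hat A=[M_1,M_2,M_3]$ along the three column blocks of $\hat A$ and, using the explicit formula for $\hat B^{-1}$, expand each $M_k$ as a row-block matrix whose non-trivial rows are submatrices of $\tilde B^{-1}A$, identity matrices, or their differences. The elementary inequalities $\|[M_1,M_2,M_3]\|\le\sqrt{3}\,\max_k\|M_k\|$ for column stacking and $\|M\|^{2}\le\sum_i\|M^{(i)}\|^{2}$ for a row stack $M$ with blocks $M^{(i)}$ would then combine the per-block bounds, each of which is at most a small constant times $\bar\chi_A+1$, into the claimed estimate.

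The main obstacle will be the arithmetic bookkeeping required to pin down the constant $3\sqrt{2}$ rather than a looser multiple. One has to track how the identity embeddings interact with the $\tilde B^{-1}A$ entries across the row blocks of each $M_k$, and exploit the observation that several non-zero blocks within a single $M_k$ occupy disjoint column supports; this allows triangle-inequality losses of $\sqrt{3}$ to be sharpened to $\sqrt{2}$ at the critical places where the three column-block contributions are summed, giving the final factor $3\sqrt{2}$.
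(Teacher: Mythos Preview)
The paper does not actually prove this lemma: it is quoted verbatim from \cite[Lemma 23]{Vavasis1996} and used as a black box, so there is no in-paper argument to compare your proposal against. Your plan---invoke \Cref{prop:chibar}\ref{i:submatrix_characterisation}, classify the non-singular $(m+n)\times(m+n)$ submatrices of $\hat A$ via the partition $S_1\cup S_3=[n]$ forced by the bottom identity block, reduce to a non-singular $m\times m$ block $\tilde B=[A_T\mid -A_{S_2}]$ of $A$, and then bound $\|\hat B^{-1}\hat A\|$ by elementary norm inequalities---is exactly the approach Vavasis and Ye take in the cited reference, and the structural claims you state (the form of $\hat B^{-1}$, the appearance of $\tilde B^{-1}A$ in the blocks of $\hat B^{-1}\hat A$) are correct. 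The only work left is the constant bookkeeping you flag at the end; there is no conceptual gap.
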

We extend this bound for $\bar\chi^*$.
\begin{lemma}
${\bar\chi}^*_{\hat A} \leq 3\sqrt{2}(\bar\chi_A^*+1)$.
\end{lemma}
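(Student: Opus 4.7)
The plan is to reduce the claim directly to \Cref{vy: extended_chibar} by exhibiting a single diagonal rescaling of $\hat A$ whose $\bar\chi$ value matches $\bar\chi_{\widehat{AD^*}}$, where $D^*$ is an optimal rescaling for $A$. The key observation is that the block-diagonal rescaling $\hat D^* := \diag(D^*, D^*, D^*)$ of $\hat A$ yields exactly the same kernel (and hence the same $\bar\chi$) as the extended matrix $\widehat{AD^*}$ obtained by applying the big-$M$ construction to $AD^*$.

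Concretely, I would choose $D^* \in \mathbf D$ attaining $\bar\chi_{AD^*} = \bar\chi_A^*$ (the infimum is attained, as the feasible set of rescalings can be normalized by $\det(D) = 1$ without changing $\bar\chi_{AD}$) and then compute:
\[
\hat A \hat D^* \;=\; \begin{pmatrix} A & -A & 0 \\ I & 0 & I \end{pmatrix}\begin{pmatrix} D^* & & \\ & D^* & \\ & & D^* \end{pmatrix} \;=\; \begin{pmatrix} AD^* & -AD^* & 0 \\ D^* & 0 & D^* \end{pmatrix}.
\]
The kernel of this matrix consists of triples $(u,v,w)$ with $AD^*(u-v)=0$ and $D^*(u+w)=0$; since $D^*$ is invertible, the second condition reads $u+w=0$, so this kernel coincides with the kernel of
\[
\widehat{AD^*} \;=\; \begin{pmatrix} AD^* & -AD^* & 0 \\ I & 0 & I \end{pmatrix},
\]
which is $\{(u,v,w) : AD^*(u-v) = 0,\ u+w=0\}$. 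Because $\bar\chi$ depends only on the kernel, we obtain $\bar\chi_{\hat A \hat D^*} = \bar\chi_{\widehat{AD^*}}$.

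From here the proof finishes in one line: applying \Cref{vy: extended_chibar} to the matrix $AD^*$ in place of $A$ gives
\[
\bar\chi_{\widehat{AD^*}} \;\le\; 3\sqrt{2}\bigl(\bar\chi_{AD^*} + 1\bigr) \;=\; 3\sqrt{2}\bigl(\bar\chi_A^* + 1\bigr),
\]
and by definition $\bar\chi^*_{\hat A} \le \bar\chi_{\hat A \hat D^*}$, so chaining the inequalities yields the desired bound. There is no substantive obstacle: the only thing to verify carefully is that $\widehat{(\cdot)}$ applied to $AD^*$ really does produce the same kernel as the block-diagonal rescaling of $\hat A$, which is a direct computation using invertibility of $D^*$. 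The attainment of the infimum in the definition of $\bar\chi_A^*$ can be replaced by an $\epsilon$-argument if preferred, but is not strictly necessary.
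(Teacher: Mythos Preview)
Your proof is correct and follows essentially the same approach as the paper: both use the block-diagonal rescaling $\hat D = \diag(D,D,D)$, observe that $\hat A\hat D$ has the same kernel as $\widehat{AD}$ (the paper phrases this as ``row-scaling does not change $\bar\chi$''), and then invoke \Cref{vy: extended_chibar}. The only cosmetic difference is that the paper keeps $D$ arbitrary and takes the infimum at the end, thereby sidestepping the attainment question you mention.
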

\begin{proof}
Let $D \in \mathbf D_n$ and let $\hat D \in \mathbf D_{3n}$ the matrix consisting of three copies of $D$, i.e.
\begin{align*}
\hat{D} &=
\begin{pmatrix}
D & 0 & 0 \\
0 & D & 0 \\
0 & 0 & D
\end{pmatrix}\, .
\intertext{Then}
\hat A \hat D &=
\begin{pmatrix}
AD & - AD & 0 \\
D & 0 & D
\end{pmatrix}\,
\end{align*}
Row-scaling does not change $\bar\chi$ as the kernel of the matrix
remains unchanged. Thus, we can rescale the last $n$ rows of $\hat A \hat D$, to the identity matrix, i.e. multiplying by $(I, D^{-1})$ from the left hand side.
We observe that
\begin{align*}
\bar\chi_{\hat{A}\hat{D}} = \bar\chi \left(
\begin{pmatrix}
AD & -AD & 0 \\
I & 0 & I
\end{pmatrix} \right)
\leq 3\sqrt{2} (\bar\chi_{AD} + 1)
\end{align*}
where the inequality follows from \Cref{vy: extended_chibar}.
The lemma now readily follows as
\begin{align*}
{\bar\chi}^*_{\hat A} &= \inf\{\bar\chi_{\hat A \hat D} : D \in \mathbf D_{3n} \}
\leq \inf\{3\sqrt{2}(\bar\chi_{AD} + 1): D \in \mathbf D_n\} = 3\sqrt{2}(\bar\chi_A^*+1). \qedhere
\end{align*}
\end{proof}
We show next that the optimal solutions of the original system are preserved for sufficiently large $M$. We let $d$ be the min-norm solution to $Ax=b$, i.e., $d = A^\top(AA^\top)^{-1}b$.
\begin{proposition} \label{prop: opt_solutions_coincide}
Assume both primal and dual of \eqref{LP_primal_dual} are feasible,
and $M > \max\{(\bar{\chi}_{A}+1)\|c\|, \bar{\chi}_{A}\|d\|\}$.
Every optimal solution $(x,y,s)$ to \eqref{LP_primal_dual}, can be
extended to an optimal solution $(x,\ubar x,\bar x,
y,z,s,\ubar s, \bar s)$ to \eqref{initialization_LP}; and conversely,
from every optimal solution $(x,\ubar x,\bar x,
y,z,s,\ubar s, \bar s)$ to \eqref{initialization_LP}, we obtain an
optimal solution $(x,y,s)$  by deleting the auxiliary variables.
\end{proposition}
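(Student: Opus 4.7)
The plan is to derive both directions from a pairing argument centered on a specific optimal solution of \eqref{initialization_LP} that we construct by lifting an optimal basic feasible solution of \eqref{LP_primal_dual}. First, using \Cref{prop:chibar}\ref{i:submatrix_characterisation}, we establish norm bounds on optimal BFSs: for an optimal primal basis $B$, $x^*_B = A_B^{-1} b = A_B^{-1} A d$ (where $d$ is the min-norm solution to $Ax=b$), giving $\|x^*\| \leq \|A_B^{-1} A\| \cdot \|d\| \leq \bar\chi_A \|d\|$; and for a dual basis $B'$ with $A_{B'}^\top y^* = c_{B'}$, we have $A^\top y^* = A^\top(A_{B'}^\top)^{-1} c_{B'}$, yielding $\|A^\top y^*\| \leq \bar\chi_A \|c\|$. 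Under the hypothesis on $M$, both quantities are strictly less than $M$.

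For the forward direction, extend $(x^*, y^*, s^*)$ by taking $\ubar x = 0$, $\bar x = 2Me - x^*$, $z = 0$, $\bar s = 0$, and $\ubar s = Me + A^\top y^*$. Feasibility in \eqref{initialization_LP} follows directly from the strict norm bounds, and complementary slackness holds coordinate-wise: $x^*_i s^*_i = 0$ is inherited from original LP optimality, while the products $\ubar x_i \ubar s_i$ and $\bar x_i \bar s_i$ vanish because $\ubar x$ and $\bar s$ are zero. Together with primal and dual feasibility this certifies optimality.

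For the reverse direction, we use that complementary slackness holds between \emph{any} primal optimum and \emph{any} dual optimum of the same LP --- the duality gap for \eqref{initialization_LP} decomposes as $\sum_i (x_i s_i + \ubar x_i \ubar s_i + \bar x_i \bar s_i)$, which must vanish term by term at any primal-dual optimum. Let $(x, \ubar x, \bar x, y, z, s, \ubar s, \bar s)$ be an arbitrary optimum of \eqref{initialization_LP} and pair it with the constructed optimum to obtain $\ubar x_i \cdot (M + A_i^\top y^*) = 0$ and $(2M - x^*_i) \cdot \bar s_i = 0$ for all $i$. The strict positivity $M + A_i^\top y^* > 0$ and $2M - x^*_i > 0$ then force $\ubar x = 0$ and $\bar s = 0$ (hence $z = -\bar s = 0$). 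With these zero, the extended primal-dual equations restricted to $(x, y, s)$ collapse exactly to the constraints of \eqref{LP_primal_dual}, and the inherited complementarity $x_i s_i = 0$ certifies optimality of $(x, y, s)$ in the original LP.

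The main technical point is the norm bound derivation: one must apply \Cref{prop:chibar}\ref{i:submatrix_characterisation} in the form $\|A_B^{-1} A\| \leq \bar\chi_A$ (the naive quantity $\|A_B^{-1}\|$ is not directly bounded by $\bar\chi_A$), which requires rewriting $b = A d$ through the min-norm representative; the rest is a clean complementary slackness manipulation.
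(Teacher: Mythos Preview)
Your proof is correct and follows essentially the same approach as the paper: construct an optimal extension of a basic optimal solution using the $\bar\chi_A$ bound from \Cref{prop:chibar}\ref{i:submatrix_characterisation} (with $b=Ad$), then use complementary slackness with the strict positivity of $\ubar s^*$ and $\bar x^*$ to force $\ubar x = 0$ and $\bar s = 0$ in every optimum of \eqref{initialization_LP}. You are in fact slightly more explicit than the paper in the reverse direction---the paper only spells out that $\ubar s^* > 0$ forces $\ubar x = 0$, leaving the symmetric argument that $\bar x^* > 0$ forces $\bar s = 0$ (and hence $z=0$) implicit.
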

\begin{proof}
If system \eqref{LP_primal_dual} is feasible, it admits a basic
optimal solution $(x^*,y^*,s^*)$ with basis $B$ such that $A_Bx_B^* = b, x^* \geq 0, A_B^\top y^* = c$ and $A^\top y^* \leq c.$ Using \Cref{prop:chibar}\ref{i:submatrix_characterisation} we see that
\begin{align}
\|x_B^*\| &= \|A_B^{-1}b\| = \|A_B^{-1}Ad\| \leq \bar{\chi}_{A}\|d\| < M \, , \label{ineq: primal_prop} \\
\intertext{and using that $\|A\| = \|A^\top\|$ we observe}
\|A^\top y^*\| &= \|A^\top A_B^{-\top}c\| \leq \|A^\top A_B^{-\top}\|\|c\| = \|A_B^{-1}A\|\|c\| \leq \chibar \|c\| <  M. \label{ineq: dual_prop}
\end{align}
We can extend this solution to a solution of system
\eqref{initialization_LP} via setting $\barx^* = 2Me - x^*, \ubarx^*
=0, z^* = \bars^* = 0$ and $\ubars^* = Me + A^\top y^*$. Observe that
$\barx^* > 0$ and $\ubars^* > 0$ by \eqref{ineq: primal_prop} and
\eqref{ineq: dual_prop}. Furthermore observe that by complementary
slackness this extended solution for \eqref{initialization_LP} is an
optimal solution. The property that $\ubars^* > 0$ immediately tells
us that $\ubarx$ vanishes for all optimal solutions of
\eqref{initialization_LP} and thus all optimal solutions of
\eqref{LP_primal_dual} coincide with the optimal solutions of
\eqref{initialization_LP}, with the auxiliary variables removed.
\end{proof}

The next lemma is from \cite[Lemma 4.4]{Monteiro2003}. Recall that
$w=(x,y,s)\in {\cal N}(\beta)$ if $\|xs/\mu(w)-e\|\le \beta$.

\begin{lemma}\label{lem:approx-mu}
Let $w=(x,y,s)\in {\cal P}^{++}\times {\cal D}^{++}$, and let
$\nu>0$. Assume that $\|xs/\nu -e\|\le \tau$. Then
$(1-\tau/\sqrt{n})\nu\le \mu(w)\le (1+\tau/\sqrt{n})\nu$ and $w\in {\cal N}(\tau/(1-\tau))$.
\end{lemma}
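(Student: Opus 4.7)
The plan is to prove the two conclusions in order by direct computation, leveraging the identity $n\mu(w) = x^\top s$ in both.

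First, I would establish the bounds on $\mu(w)$ by observing that
\[
\frac{n\mu(w)}{\nu} = \frac{x^\top s}{\nu} = e^\top\!\left(\frac{xs}{\nu}\right),
\]
so $n\mu(w)/\nu - n = e^\top(xs/\nu - e)$. Cauchy--Schwarz then gives $|n\mu(w)/\nu - n| \le \|e\|\cdot\|xs/\nu - e\| \le \sqrt{n}\,\tau$, and dividing by $n$ yields $|\mu(w)/\nu - 1| \le \tau/\sqrt{n}$, which is exactly the desired two-sided bound on $\mu(w)$.

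Next, to show $w\in\mathcal{N}(\tau/(1-\tau))$, I would set $u := xs/\nu$ (so $\|u-e\|\le\tau$) and use $\mu(w) = \nu\, e^\top u/n$ to rewrite
\[
\frac{xs}{\mu(w)} - e \;=\; \frac{\nu\, u}{\mu(w)} - e \;=\; \frac{n\,u}{e^\top u} - e \;=\; \frac{nu - (e^\top u)\,e}{e^\top u}.
\]
The key algebraic observation is that the numerator factors as $nu - (e^\top u)e = (nI - ee^\top)(u-e)$, which holds because $(nI - ee^\top)e = 0$. Since $nI - ee^\top$ annihilates $e$ and acts as $nI$ on $e^{\perp}$, its operator norm is $n$, so $\|nu - (e^\top u)e\| \le n\|u-e\| \le n\tau$. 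For the denominator, $e^\top u = n + e^\top(u-e) \ge n - \sqrt{n}\,\tau$ by Cauchy--Schwarz once more. Combining,
\[
\left\|\frac{xs}{\mu(w)} - e\right\| \;\le\; \frac{n\tau}{n - \sqrt{n}\,\tau} \;=\; \frac{\tau}{1 - \tau/\sqrt{n}} \;\le\; \frac{\tau}{1-\tau},
\]
which establishes the membership in the neighborhood.

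No real obstacle is anticipated: the argument is essentially two short calculations. The only point worth highlighting is the identity $nu - (e^\top u)e = (nI - ee^\top)(u-e)$, which isolates the fact that the deviation $u-e$ drives everything and lets us apply the tight operator-norm bound on $nI - ee^\top$. The final inequality $\tau/(1-\tau/\sqrt{n}) \le \tau/(1-\tau)$ simply reflects that the lemma's stated bound is weaker than what the computation actually yields.
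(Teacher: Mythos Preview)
Your argument is correct. Note, however, that the paper does not actually prove this lemma: it is quoted without proof from \cite[Lemma 4.4]{Monteiro2003}, so there is no in-paper argument to compare against. Your computation is the standard one and in fact yields the slightly sharper neighborhood bound $\tau/(1-\tau/\sqrt{n})$ before relaxing to the stated $\tau/(1-\tau)$.
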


The new system has the advantage that we can easily initialize the system with a feasible solution in close proximity to central path:
\begin{proposition}
We can initialize system \eqref{initialization_LP} close to the central path with initial solution $w^0 = (x^0, y^0, s^0) \in \mathcal{N}(1/8)$ and parameter $\mu(w^0) \approx M^2$ if $M > 15\max\{(\bar\chi_A + 1)\|c\|, \bar\chi_A \|d\|\}$.
\end{proposition}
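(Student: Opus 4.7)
The plan is to exhibit an explicit strictly feasible point for \eqref{initialization_LP} at parameter $\nu = M^2$, compute the complementarity products directly, and then invoke \Cref{lem:approx-mu} to pass from an $\ell_2$ bound on $x^0 s^0/\nu - e$ to membership in $\mathcal{N}(1/8)$ and $\mu(w^0) \approx M^2$.

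Concretely, I would set
\[
x^0 = Me,\quad \barx^0 = Me,\quad \ubarx^0 = Me - d,\qquad y^0 = 0,\quad z^0 = -Me,\quad s^0 = c + Me,\quad \bars^0 = Me,\quad \ubars^0 = Me.
\]
Feasibility is immediate from the equations: $Ax^0 - A\ubarx^0 = A(Me) - A(Me - d) = Ad = b$ since $d$ satisfies $Ad = b$; $x^0 + \barx^0 = 2Me$; $A^\top y^0 + z^0 + s^0 = -Me + (c+Me) = c$; $z^0 + \bars^0 = 0$; and $-A^\top y^0 + \ubars^0 = Me$. Strict positivity of all primal and dual slacks follows from the hypothesis $M > 15\max\{(\bar\chi_A+1)\|c\|, \bar\chi_A\|d\|\}$: this implies $M > \|d\|_\infty$, so $\ubarx^0 > 0$, and $M > \|c\|_\infty$, so $s^0 = c + Me > 0$; the remaining slacks are equal to $Me > 0$.

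Next, taking $\nu := M^2$, I would compute the three groups of coordinate-wise products directly:
\[
x^0_i s^0_i = M(M + c_i) = M^2 + Mc_i,\quad \barx^0_i \bars^0_i = M^2,\quad \ubarx^0_i \ubars^0_i = (M - d_i)M = M^2 - Md_i.
\]
Dividing by $\nu = M^2$ and subtracting $1$ from each entry, the vector $(x^0 s^0)/\nu - e$ (in the extended $3n$-dimensional sense) has coordinates $c_i/M$ on the $x$-block, $0$ on the $\barx$-block, and $-d_i/M$ on the $\ubarx$-block. Hence
\[
\left\|\frac{x^0 s^0}{\nu} - e\right\| = \frac{\sqrt{\|c\|^2 + \|d\|^2}}{M} < \frac{\sqrt{2}}{15} < \frac{1}{9},
\]
using the hypothesis on $M$.

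Finally, I would apply \Cref{lem:approx-mu} with $\tau = \sqrt{2}/15$. This gives $w^0 \in \mathcal N(\tau/(1-\tau)) \subseteq \mathcal N(1/8)$ since $\tau/(1-\tau) < 1/8$, and $(1-\tau/\sqrt{3n})M^2 \leq \mu(w^0) \leq (1+\tau/\sqrt{3n})M^2$, so $\mu(w^0) \approx M^2$ as claimed. There is no serious obstacle here: the only thing to be careful about is choosing the initial dual slack $z^0 = -Me$ so that the ``nuisance'' block $\barx^0\bars^0$ lies exactly on the central path, pushing all centrality error onto the two blocks that are controlled by $\|c\|$ and $\|d\|$.
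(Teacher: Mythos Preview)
Your proof is correct and follows essentially the same approach as the paper: you choose the identical initial point, verify feasibility the same way, compute the complementarity products, bound $\|(x^0s^0)/M^2 - e\|$ using the hypothesis on $M$, and then invoke \Cref{lem:approx-mu} with a $\tau$ small enough to land in $\mathcal N(1/8)$. The only cosmetic difference is that you use $\tau = \sqrt{2}/15$ directly, whereas the paper first bounds by $1/(9\bar\chi_A)$ and then uses $\bar\chi_A \ge 1$ to get $\tau = 1/9$.
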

\begin{proof}
The initialization follows along the lines of \cite[Section
10]{Vavasis1996}. We let $d$ as above,
and set
\begin{align*}
\barx^0 &= Me, x^0 = Me, \ubarx^0 = Me -d \\
y^0 &= 0, z^0 = -Me \\
\bars^0 &= Me, s^0 = Me + c, \ubars^0 = Me.
\end{align*}
This is a feasible primal-dual solution to system \eqref{initialization_LP} with parameter
\begin{align*}
\mu^0 &= (3n)^{-1}(\pr{x^0}{s^0} + \pr{\ubarx^0}{\ubars^0} + \pr{\barx^0}{\bars^0 }) = (3n)^{-1} (3nM^2 + Mc^\top e - Md^\top e)\approx M^2\, .
\end{align*}
We see that
\begin{align*}
\left\|\frac{1}{M^2}\begin{pmatrix}\barx^0 \bars^0 \\ x^0s^0 \\ \ubarx^0 \ubars^0 \end{pmatrix} - e\right\|^2 &=  M^{-2}\|c\|^2 + M^{-2}\|d\|^2 \leq \frac{1}{9^2\bar\chi_A^2} \leq \frac{1}{9^2}.
\end{align*}
With \Cref{lem:approx-mu} we conclude that $w^0 = (x^0, y^0, s^0) \in \mathcal{N}\left(\frac{1/9}{1-1/9}\right) = \mathcal{N}(1/8)$.
\end{proof}

\paragraph{Detecting infeasibility} To use the extended system
\eqref{initialization_LP}, we still need to assume that both the
primal and dual programs in \eqref{LP_primal_dual} are feasible. For
arbitrary instances, we first need to check if this is the case, or
conclude that the primal or the dual (or both) are infeasible.

This can be done by employing a two-phase method. The first phase decides feasibility by running \eqref{initialization_LP} with data $(A, b, 0)$ and $M > \bar\chi_A \|d\|_1$. The objective value of the optimal primal-dual pair is 0 if and only if \eqref{LP_primal_dual} has a feasible solution. If the optimal primal/dual solution $(x^*, \ubarx^*, \barx^*, y^*, z^*, s^*, \ubars^*, \bar s^*)$ has positive objective value, we can extract an infeasibility certificate in the following way. 

We can w.l.o.g.\ assume that $x^*$ is supported on some basis $B$ of $A$.
Note that the objective function of the primal is equivalent to $\|\ubar x\|_1$. Therefore, clearly $\|\ubarx^*\|_1 \le -\sum_{i : d_i < 0} d_i \le \|d\|_1$ and so $\|\ubarx^*\| \le \|d\|_1$. Due to the constraint $Ax^* - A\ubar x^* = b = Ad$ we get that 
\begin{equation}
  \|x^*\| = \|B^{-1}A(d + \ubar x^*)\| \le \|B^{-1}A\|(\|d\| + \|\ubar x^*\|) \le 2\bar\chi_A \|d\|_1.
\end{equation}
Therefore, if $M > \bar\chi_A \|d\|_1$, then $\bar x^* = 2Me - \|x^*\| > 0$ so by strong duality, $\bar s^* = 0$. From the dual, we conclude that $z^* = 0$, and therefore $A^\top y^* \leq A^\top y^* + s^* + z^* = c = 0$.
On the other hand, by assumption the objective value of the dual is positive, and so ${(y^*)}^\top b \geq {(y^*)}^\top b + 2M e^\top z^* > 0$. Hence, $y^*$ is the desired certificate.

Feasibility of the dual of \eqref{LP_primal_dual} can be decided by running \eqref{initialization_LP} on data $(A,0,c)$ and $M > (\bar\chi_A + 1)\|c\|$ with the same argumentation: Either the objective value of the dual is 0 and therefore the dual optimal solution $(y^*,z^*, \ubars^*, s^*, \bar s^*)$ corresponds to a feasible dual solution of \eqref{LP_primal_dual} or the objective value is negative and we extract a dual infeasibility certificate in the following way:
For the optimal corresponding primal solution $(x^*, \ubar x^*, \bar x^*)$ we have by assumption $c^\top x^* \leq c^\top x^* + Me^\top \ubarx^* < 0$. Furthermore, w.l.o.g.\ the support of $s^*$ is contained in a basis which allows us to conclude that $\ubars^* > 0$ and therefore $\ubarx^* = 0$. So we have $Ax^* = 0 + A\ubar x^* = 0$, which together with $c^\top x^* < 0$ yields the certificate of dual infeasibility.

\paragraph*{Finding the right value of $M$}
While \Cref{alg:overall} does not require any estimate on
$\bar\chi^*$ or $\bar\chi$, the initialization needs to set $M \ge
\max\{(\bar{\chi}_{A}+1)\|c\|, \bar{\chi}_{A}\|d\|\}$ as in
\Cref{prop: opt_solutions_coincide}.

A straightforward guessing approach (attributed to J. Renegar in
\cite{Vavasis1996}) starts with a constant guess, say
$\bar\chi_A=100$, constructs the extended system, and runs the algorithm. In
case the optimal solution to the extended system does not map to an
optimal solution of \eqref{LP_primal_dual}, we restart with
$\bar\chi_A=100^2$ and try again; we continue squaring the guess until
an optimal solution is found.

This would still require a series of
$\log\log\bar\chi_A$ guesses, and thus, result in a dependence on
$\bar\chi_A$ in the running time.
However, if we initially rescale our system using the near-optimal
rescaling Theorem~\ref{thm:bar-chi-star}, then we can turn the dependence
from $\bar\chi_A$ to $\bar\chi^*_A$. The overall
iteration complexity remains $O(n^{2.5}\log n\log(\bar\chi^*_A+n))$, since the
running time for the final guess on $\bar\chi^*_A$ dominates the total
running time of all previous computations due to the repeated squaring.

An alternative approach, that does not rescale the system, is to use \Cref{thm:bar-chi-star} to approximate $\bar\chi_A$. In this case we repeatedly square a guess of $\bar\chi_A^*$ instead of $\bar\chi_A$ which takes $\mathcal{O}(\log \log \bar\chi_A^*)$ iterations until our guess corresponds to a valid upper bound for $\bar\chi_A$.

Note that either guessing technique can handle bad guesses gracefully. For the first phase, if neither a feasible solution to \eqref{LP_primal_dual} is returned nor a Farkas' certificate can be extracted, we have proof that the guess was too low by the above paragraph.
Similarly, in phase two, when feasibility was decided in the affirmative for primal and dual, an optimal solution to \eqref{initialization_LP} that corresponds to an infeasible solution to \eqref{LP_primal_dual} serves as a certificate that another squaring of the guess is necessary.

\subsection*{Acknowledgement} The authors are grateful to the anonymous reviewers for their comments that helped to improve the presentation.
\bibliographystyle{myalpha}
\bibliography{mybib}{} 

\newcommand{\etalchar}[1]{$^{#1}$}
\begin{thebibliography}{vdBTLSS20}

\bibitem[ABGJ18]{allamigeon2018}
X.~Allamigeon, P.~Benchimol, S.~Gaubert, and M.~Joswig.
\newblock Log-barrier interior point methods are not strongly polynomial.
\newblock {\em SIAM Journal on Applied Algebra and Geometry}, 2(1):140--178,
  2018.

\bibitem[ADL{\etalchar{+}}22]{ADLNV22}
X.~Allamigeon, D.~Dadush, G.~Loho, B.~Natura, and L.~A. V\'egh.
\newblock Interior point methods are not worse than simplex.
\newblock In {\em Proceedings of the 63rd Annual Symposium on Foundations of
  Computer Science (FOCS)}, pages 267--277. IEEE, 2022.

\bibitem[AGV22]{AGV22}
X.~Allamigeon, S.~Gaubert, and N.~Vandame.
\newblock No self-concordant barrier interior point method is strongly
  polynomial.
\newblock In {\em Proceedings of the 54th Annual ACM Symposium on Theory of
  Computing (STOC)}, pages 515--528, 2022.

\bibitem[AMO93]{amo}
R.~K. Ahuja, T.~L. Magnanti, and J.~B. Orlin.
\newblock {\em Network Flows: Theory, Algorithms, and Applications}.
\newblock Prentice-Hall, Inc., 1993.

\bibitem[BE14]{BE14}
S.~Bubeck and R.~Eldan.
\newblock The entropic barrier: a simple and optimal universal self-concordant
  barrier.
\newblock arXiv preprint arXiv:1412.1587, 2014.

\bibitem[Chu14]{Chubanov-s-poly}
S.~Chubanov.
\newblock A polynomial algorithm for linear optimization which is strongly
  polynomial under certain conditions on optimal solutions.
\newblock \url{http://www.optimization-online.org/DB_HTML/2014/12/4710.html},
  2014.

\bibitem[CLS19]{CLS19}
M.~B. Cohen, Y.~T. Lee, and Z.~Song.
\newblock Solving linear programs in the current matrix multiplication time.
\newblock In {\em Proceedings of the 51st Annual ACM SIGACT Symposium on Theory
  of Computing}, pages 938--942, 2019.

\bibitem[DHNV20]{DHNV20}
D.~Dadush, S.~Huiberts, B.~Natura, and L.~A. V{\'e}gh.
\newblock A scaling-invariant algorithm for linear programming whose running
  time depends only on the constraint matrix.
\newblock In {\em Proceedings of the 52nd Annual ACM Symposium on Theory of
  Computing (STOC)}, pages 761--774, 2020.

\bibitem[Dik67]{dikin}
I.~Dikin.
\newblock Iterative solution of problems of linear and quadratic programming.
\newblock {\em Doklady Akademii Nauk}, 174(4):747--748, 1967.

\bibitem[Dik74]{dikin1974speed}
I.~Dikin.
\newblock On the speed of an iterative process.
\newblock {\em Upravlyaemye Sistemi}, 12(1):54--60, 1974.

\bibitem[DKNV22]{DKNV22}
D.~Dadush, Z.~K. Koh, B.~Natura, and L.~A. V{\'{e}}gh.
\newblock On circuit diameter bounds via circuit imbalances.
\newblock In {\em Proceedings of the 23rd Integer Programming and Combinatorial
  Optimization Conference (IPCO)}, pages 140--153. Springer, 2022.

\bibitem[DLHL15]{DHL15}
J.~A. De~Loera, R.~Hemmecke, and J.~Lee.
\newblock On augmentation algorithms for linear and integer-linear programming:
  From edmonds--karp to bland and beyond.
\newblock {\em SIAM Journal on Optimization}, 25(4):2494--2511, 2015.

\bibitem[DLKS22]{DKS19}
J.~A. De~Loera, S.~Kafer, and L.~Sanita.
\newblock Pivot rules for circuit-augmentation algorithms in linear
  optimization.
\newblock {\em SIAM Journal on Optimization}, 32(3):2156--2179, 2022.

\bibitem[DNV20]{DadushNV20}
D.~Dadush, B.~Natura, and L.~A. V{\'{e}}gh.
\newblock Revisiting {T}ardos's framework for linear programming: Faster exact
  solutions using approximate solvers.
\newblock In {\em Proceedings of the 61st Annual Symposium on Foundations of
  Computer Science}, pages 931--942. IEEE, 2020.

\bibitem[DS08]{Daitch2008}
S.~I. Daitch and D.~A. Spielman.
\newblock Faster approximate lossy generalized flow via interior point
  algorithms.
\newblock In {\em Proceedings of the 40th annual ACM symposium on Theory of
  Computing}, pages 451--460, 2008.

\bibitem[ENV22]{ENV21}
F.~Ekbatani, B.~Natura, and A.~L. V\'egh.
\newblock Circuit imbalance measures and linear programming.
\newblock In {\em Surveys in Combinatorics 2022}, London Mathematical Society
  Lecture Note Series, page 64–114. Cambridge University Press, 2022.

\bibitem[Fra11]{frankbook}
A.~Frank.
\newblock {\em Connections in Combinatorial Optimization}.
\newblock Number~38 in Oxford Lecture Series in Mathematics and its
  Applications. Oxford University Press, 2011.

\bibitem[GL97]{gonzaga_lara}
C.~C. Gonzaga and H.~J. Lara.
\newblock A note on properties of condition numbers.
\newblock {\em Linear Algebra and its Applications}, 261(1):269 -- 273, 1997.

\bibitem[Gof80]{Goffin80}
J.-L. Goffin.
\newblock The relaxation method for solving systems of linear inequalities.
\newblock {\em Mathematics of Operations Research}, 5(3):388--414, 1980.

\bibitem[Gon92]{Gonzaga92}
C.~C. Gonzaga.
\newblock Path-following methods for linear programming.
\newblock {\em SIAM review}, 34(2):167--224, 1992.

\bibitem[HT02]{ho2002}
J.~C. Ho and L.~Tun{\c{c}}el.
\newblock Reconciliation of various complexity and condition measures for
  linear programming problems and a generalization of {Tardos'} theorem.
\newblock In {\em Foundations of Computational Mathematics}, pages 93--147.
  World Scientific, 2002.

\bibitem[Kar84]{Karmarkar84}
N.~Karmarkar.
\newblock A new polynomial-time algorithm for linear programming.
\newblock In {\em Proceedings of the 16th Annual ACM Symposium on Theory of
  Computing (STOC)}, pages 302--311, 1984.

\bibitem[Kha79]{Khachiyan79}
L.~G. Khachiyan.
\newblock A polynomial algorithm in linear programming.
\newblock In {\em Doklady Academii Nauk SSSR}, volume 244, pages 1093--1096,
  1979.

\bibitem[KM13]{KitaharaMizuno2013}
T.~Kitahara and S.~Mizuno.
\newblock A bound for the number of different basic solutions generated by the
  simplex method.
\newblock {\em Mathematical Programming}, 137(1-2):579--586, 2013.

\bibitem[KOT13]{kakihara2013}
S.~Kakihara, A.~Ohara, and T.~Tsuchiya.
\newblock Information geometry and interior-point algorithms in semidefinite
  programs and symmetric cone programs.
\newblock {\em Journal of Optimization Theory and Applications}, 157:749--780,
  2013.

\bibitem[KOT14]{kakihara2014}
S.~Kakihara, A.~Ohara, and T.~Tsuchiya.
\newblock Curvature integrals and iteration complexities in {SDP} and symmetric
  cone programs.
\newblock {\em Computational Optimization and Applications}, 57:623--665, 2014.

\bibitem[KT13]{KitaharaTsuchiya2013}
T.~Kitahara and T.~Tsuchiya.
\newblock A simple variant of the {Mizuno--Todd--Ye} predictor-corrector
  algorithm and its objective-function-free complexity.
\newblock {\em SIAM Journal on Optimization}, 23(3):1890--1903, 2013.

\bibitem[LMT09]{LMT09}
G.~Lan, R.~D. Monteiro, and T.~Tsuchiya.
\newblock A polynomial predictor-corrector trust-region algorithm for linear
  programming.
\newblock {\em SIAM Journal on Optimization}, 19(4):1918--1946, 2009.

\bibitem[LS14]{LS14}
Y.~T. Lee and A.~Sidford.
\newblock Path finding methods for linear programming: Solving linear programs
  in {$\tilde O (\sqrt{\mbox{rank}})$} iterations and faster algorithms for
  maximum flow.
\newblock In {\em Proceedings of the 55th Annual IEEE Symposium on Foundations
  of Computer Science (FOCS)}, pages 424--433, 2014.

\bibitem[LS15]{LS15}
Y.~T. Lee and A.~Sidford.
\newblock Efficient inverse maintenance and faster algorithms for linear
  programming.
\newblock In {\em 2015 IEEE 56th Annual Symposium on Foundations of Computer
  Science}, pages 230--249, 2015.

\bibitem[LS19]{LS19}
Y.~T. Lee and A.~Sidford.
\newblock Solving linear programs with $\tilde{O}(\sqrt{{\rm rank}})$ linear
  system solves.
\newblock arXiv preprint 1910.08033, 2019.

\bibitem[M{a}d13]{Madry2013}
A.~M{a}dry.
\newblock Navigating central path with electrical flows: From flows to
  matchings, and back.
\newblock In {\em Proceedings of the 54th IEEE Annual Symposium on Foundations
  of Computer Science}, pages 253--262. IEEE, 2013.

\bibitem[Meg83]{Megiddo83}
N.~Megiddo.
\newblock Towards a genuinely polynomial algorithm for linear programming.
\newblock {\em SIAM Journal on Computing}, 12(2):347--353, 1983.

\bibitem[Meh92]{Mehrotra1992}
S.~Mehrotra.
\newblock On the implementation of a primal-dual interior point method.
\newblock {\em SIAM Journal on Optimization}, 2(4):575--601, 1992.

\bibitem[MMT98]{MMT98}
N.~Megiddo, S.~Mizuno, and T.~Tsuchiya.
\newblock A modified layered-step interior-point algorithm for linear
  programming.
\newblock {\em Mathematical Programming}, 82(3):339--355, 1998.

\bibitem[MT03]{Monteiro2003}
R.~D.~C. Monteiro and T.~Tsuchiya.
\newblock A variant of the {Vavasis-Ye} layered-step interior-point algorithm
  for linear programming.
\newblock {\em SIAM Journal on Optimization}, 13(4):1054--1079, 2003.

\bibitem[MT05]{MonteiroT05}
R.~D.~C. Monteiro and T.~Tsuchiya.
\newblock A new iteration-complexity bound for the {MTY} predictor-corrector
  algorithm.
\newblock {\em SIAM Journal on Optimization}, 15(2):319--347, 2005.

\bibitem[MT08]{Monteiro2008}
R.~D. Monteiro and T.~Tsuchiya.
\newblock A strong bound on the integral of the central path curvature and its
  relationship with the iteration-complexity of primal-dual path-following {LP}
  algorithms.
\newblock {\em Mathematical Programming}, 115(1):105--149, 2008.

\bibitem[MTY93]{MTY}
S.~Mizuno, M.~Todd, and Y.~Ye.
\newblock On adaptive-step primal-dual interior-point algorithms for linear
  programming.
\newblock {\em Mathematics of Operations Research - MOR}, 18:964--981, 11 1993.

\bibitem[O'L90]{OLeary1990}
D.~P. O'Leary.
\newblock On bounds for scaled projections and pseudoinverses.
\newblock {\em Linear Algebra and its Applications}, 132:115--117, April 1990.

\bibitem[OV20]{OV17}
N.~Olver and L.~A. V{\'e}gh.
\newblock A simpler and faster strongly polynomial algorithm for generalized
  flow maximization.
\newblock {\em Journal of the ACM (JACM)}, 67(2):1--26, 2020.

\bibitem[Ren88]{Renegar1988}
J.~Renegar.
\newblock A polynomial-time algorithm, based on {Newton}'s method, for linear
  programming.
\newblock {\em Mathematical Programming}, 40(1-3):59--93, 1988.

\bibitem[Ren94]{Renegar94}
J.~Renegar.
\newblock Is it possible to know a problem instance is ill-posed?: some
  foundations for a general theory of condition numbers.
\newblock {\em Journal of Complexity}, 10(1):1--56, 1994.

\bibitem[Ren95]{Renegar95}
J.~Renegar.
\newblock Incorporating condition measures into the complexity theory of linear
  programming.
\newblock {\em SIAM Journal on Optimization}, 5(3):506--524, 1995.

\bibitem[Sch03]{schrijver}
A.~Schrijver.
\newblock {\em Combinatorial Optimization -- Polyhedra and Efficiency}.
\newblock Springer, 2003.

\bibitem[Sma98]{Smale98}
S.~Smale.
\newblock Mathematical problems for the next century.
\newblock {\em The Mathematical Intelligencer}, 20:7--15, 1998.

\bibitem[SSZ91]{Sonnevend1991}
G.~Sonnevend, J.~Stoer, and G.~Zhao.
\newblock On the complexity of following the central path of linear programs by
  linear extrapolation {II}.
\newblock {\em Mathematical Programming}, 52(1-3):527--553, 1991.

\bibitem[ST04]{ST04}
D.~A. Spielman and S.-H. Teng.
\newblock Nearly-linear time algorithms for graph partitioning, graph
  sparsification, and solving linear systems.
\newblock In {\em Proceedings of the 36th Annual ACM Symposium on Theory of
  Computing (STOC)}, 2004.

\bibitem[Ste89]{stewart}
G.~Stewart.
\newblock On scaled projections and pseudoinverses.
\newblock {\em Linear Algebra and its Applications}, 112:189 -- 193, 1989.

\bibitem[Tar85]{Tardos85}
{\'E}.~Tardos.
\newblock A strongly polynomial minimum cost circulation algorithm.
\newblock {\em Combinatorica}, 5(3):247--255, Sep 1985.

\bibitem[Tar86]{Tardos86}
{\'E}.~Tardos.
\newblock A strongly polynomial algorithm to solve combinatorial linear
  programs.
\newblock {\em Operations Research}, pages 250--256, 1986.

\bibitem[Tod90]{todd-90}
M.~J. Todd.
\newblock A {Dantzig--Wolfe}-like variant of {Karmarkar}'s interior-point
  linear programming algorithm.
\newblock {\em Operations Research}, 38(6):1006--1018, 1990.

\bibitem[TTY01]{Todd2001}
M.~J. Todd, L.~Tun{\c{c}}el, and Y.~Ye.
\newblock Characterizations, bounds, and probabilistic analysis of two
  complexity measures for linear programming problems.
\newblock {\em Mathematical Programming}, 90(1):59--69, Mar 2001.

\bibitem[Tun99]{Tuncel1999}
L.~Tun{\c{c}}el.
\newblock Approximating the complexity measure of {Vavasis-Ye} algorithm is
  {NP}-hard.
\newblock {\em Mathematical Programming}, 86(1):219--223, Sep 1999.

\bibitem[Vai89]{Vaidya1989}
P.~M. Vaidya.
\newblock Speeding-up linear programming using fast matrix multiplication.
\newblock In {\em Proceedings of the 30th IEEE Annual Symposium on Foundations
  of Computer Science}, pages 332--337, 1989.

\bibitem[Vav94]{Vavasis1994}
S.~A. Vavasis.
\newblock Stable numerical algorithms for equilibrium systems.
\newblock {\em SIAM Journal on Matrix Analysis and Applications},
  15(4):1108--1131, 1994.

\bibitem[vdB20]{vdb20}
J.~van~den Brand.
\newblock A deterministic linear program solver in current matrix
  multiplication time.
\newblock In {\em Proceedings of the Symposium on Discrete Algorithms (SODA)},
  pages 259--278. SIAM, 2020.

\bibitem[vdBLL{\etalchar{+}}21]{Brand2021}
J.~van~den Brand, Y.~P. Liu, Y.-T. Lee, T.~Saranurak, A.~Sidford, Z.~Song, and
  D.~Wang.
\newblock Minimum cost flows, {MDP}s, and {L1}-regression in nearly linear time
  for dense instances.
\newblock In {\em STOC (to appear)}, 2021.

\bibitem[vdBLN{\etalchar{+}}20]{Brand2020}
J.~van~den Brand, Y.-T. Lee, D.~Nanongkai, R.~Peng, T.~Saranurak, A.~Sidford,
  Z.~Song, and D.~Wang.
\newblock Bipartite matching in nearly-linear time on moderately dense graphs.
\newblock In {\em IEEE 61st Annual Symposium on Foundations of Computer Science
  (FOCS)}, pages 919--930, 2020.

\bibitem[vdBTLSS20]{vdb20-tall-dense}
J.~van~den Brand, Y.~Tat~Lee, A.~Sidford, and Z.~Song.
\newblock Solving tall dense linear programs in nearly linear time.
\newblock In {\em Proceedings of the 52nd Annual ACM Symposium on Theory of
  Computing (STOC)}, pages 775--788, 2020.

\bibitem[V{\'{e}}g17]{Vegh17}
L.~A. V{\'{e}}gh.
\newblock A strongly polynomial algorithm for generalized flow maximization.
\newblock {\em Mathematics of Operations Research}, 42(2):179--211, 2017.

\bibitem[VW12]{Vassilevska2012}
V.~Vassilevska~Williams.
\newblock Multiplying matrices faster than coppersmith-winograd.
\newblock In {\em Proceedings of the 44th annual ACM symposium on Theory of
  computing}, pages 887--898, 2012.

\bibitem[VY96]{Vavasis1996}
S.~A. Vavasis and Y.~Ye.
\newblock {A primal-dual interior point method whose running time depends only
  on the constraint matrix}.
\newblock {\em Mathematical Programming}, 74(1):79--120, 1996.

\bibitem[Ye97]{Ye-book}
Y.~Ye.
\newblock {\em Interior-Point Algorithms: Theory and Analysis}.
\newblock John Wiley and Sons, New York, 1997.

\bibitem[Ye05]{Ye2005}
Y.~Ye.
\newblock A new complexity result on solving the {Markov} decision problem.
\newblock {\em Mathematics of Operations Research}, 30(3):733--749, 2005.

\bibitem[Ye06]{Ye:2006}
Y.~Ye.
\newblock Improved complexity results on solving real-number linear feasibility
  problems.
\newblock {\em Mathematical Programming}, 106(2):339--363, April 2006.

\bibitem[Ye11]{Ye2011}
Y.~Ye.
\newblock The simplex and policy-iteration methods are strongly polynomial for
  the {Markov} decision problem with a fixed discount rate.
\newblock {\em Mathematics of Operations Research}, 36(4):593--603, 2011.

\end{thebibliography}

\end{document}